\newcolumntype{H}{>{\setbox0=\hbox\bgroup}c<{\egroup}@{}}
\newcommand{\R}{{\mathbb R}}
\newcommand{\E}{{\mathbb E}}
\renewcommand{\P}{{\mathcal P}}
\renewcommand{\Pr}{{\mathbb P}}
\newcommand{\N}{{\mathbb N}}
\newcommand{\eps}{\varepsilon}
\newcommand{\B}{\mathcal B}
\newcommand{\X}{\mathcal X}
\newcommand{\conv}{\text{conv}}
\newcommand{\cl}{\text{cl}}
\newcommand{\dtv}{d_{\text{TV}}}
\newcommand{\dH}{d_{\text{H}}}
\DeclareMathOperator{\Var}{Var}
\DeclareMathOperator{\diag}{diag}
\newtheorem{theorem}{Theorem}[section]
\newtheorem{proposition}[theorem]{Proposition}
\newtheorem{lemma}[theorem]{Lemma}
\newtheorem{corollary}[theorem]{Corollary}
\newtheorem{remark}[theorem]{Remark}
\newtheorem*{remark*}{Remark}
\newtheorem*{definition*}{Definition}
\numberwithin{equation}{section}
\newcounter{rcnt}[section]
\newcommand{\rem}[1]{}
\newcounter{desccount}
\newcommand{\descref}[1]{\hyperref[#1]{#1}}
\begin{document}

\begin{frontmatter}

\title{Geometrizing  rates of convergence under local differential privacy constraints\thanksref{T1}}
\runauthor{Rohde, A. and Steinberger, L.}

\runtitle{Geometrizing rates under local differential privacy}

\thankstext{T1}{Supported by the DFG Research Grant RO 3766/4-1.}
\begin{aug}
\author{\fnms{Angelika} \snm{Rohde}\ead[label=e1]{angelika.rohde@stochastik.uni-freiburg.de}}
\and
\author{\fnms{Lukas} \snm{Steinberger}\ead[label=e2]{lukas.steinberger@stochastik.uni-freiburg.de}}
\affiliation{University of Freiburg}

\address{
	Institute of Mathematics\\
	Albert-Ludwigs-Universit\"at Freiburg  \\
	Ernst-Zermelo-Stra{\ss}e 1\\
	79104 Freiburg im Breisgau\\
	\printead{e1}\\
	\printead{e2}	}
	
\end{aug}

\begin{abstract}
We study the problem of estimating a functional $\theta(\Pr)$ of an unknown probability distribution $\Pr \in\P$ in which the original iid sample $X_1,\dots, X_n$ is kept private even from the statistician via an $\alpha$-local differential privacy constraint. Let $\omega_{TV}$ denote the modulus of continuity of the functional $\theta$ over $\P$, with respect to total variation distance. For a large class of loss functions $l$ and a fixed privacy level $\alpha$, we prove that the privatized minimax risk is equivalent to $l(\omega_{TV}(n^{-1/2}))$ to within constants, under regularity conditions that are satisfied, in particular, if $\theta$ is linear and $\P$ is convex. Our results complement the theory developed by \citet{Donoho91} with the nowadays highly relevant case of privatized data. Somewhat surprisingly, the difficulty of the estimation problem in the private case is characterized by $\omega_{TV}$, whereas, it is characterized by the Hellinger modulus of continuity if the original data $X_1,\dots, X_n$ are available. We also find that for locally private estimation of linear functionals over a convex model a simple sample mean estimator, based on independently and binary privatized observations, always achieves the minimax rate. 
We further provide a general recipe for choosing the functional parameter in the optimal binary privatization mechanisms and illustrate the general theory in numerous examples. Our theory allows to quantify the price to be paid for local differential privacy in a large class of estimation problems. This price appears to be highly problem specific.
\end{abstract}

\begin{keyword}[class=MSC]
\kwd[Primary ]{62G05}
\kwd[; secondary ]{62C20}
\end{keyword}

\begin{keyword}
\kwd{local differential privacy}
\kwd{minimax estimation}
\kwd{rate of convergence}
\kwd{moduli of continuity}
\kwd{non-parametric estimation}
\end{keyword}

\end{frontmatter}

\section{Introduction}

One of the many new challenges for statistical inference in the information age is the increasing concern of data privacy protection. Nowadays, massive amounts of data, such as medical records, smart phone user behavior or social media activity, are routinely being collected and stored. On the other side of this trend is an increasing reluctance and discomfort of individuals to share this sometimes sensitive information with companies or state officials. Over the last few decades, the problem of constructing privacy preserving data release mechanisms has produced a vast literature, predominantly in computer science. One particularly fruitful approach to data protection that is insusceptible to privacy breaches is the concept of differential privacy \citep[see][]{Dwork06, Dinur03, Dwork04, Dwork08a, Evfim03}. In a nutshell, differential privacy is a form of randomization, where, instead of the original data, a perturbed version of the data is released, offering plausible deniability to the data providers, who can always argue that their true answer was different from the one that was actually provided. 
Aside from the academic discussion, (local) differential privacy has also found its way into real world applications. For instance, the Apple Inc. privacy statement explains the notion quite succinctly as follows.

\begin{quotation}
``It is a technique that enables Apple to learn about the user community without learning about individuals in the community. Differential privacy transforms the information shared with Apple before it ever leaves the user's device such that Apple can never reproduce the true data.''\footnote{\url{https://images.apple.com/privacy/docs/Differential_Privacy_Overview.pdf}}
\end{quotation}
The qualification of `local' differential privacy refers to a procedure which randomizes the original data already on the user's `local' machine and the original data is never released, whereas (central) differential privacy may also be employed to privatize and release an entire database that was previously compiled by a trusted curator. Here, we focus only on the local version of differential privacy.

More recently, differential privacy has also received some attention from a statistical inference perspective \citep[see, e.g.,][]{Duchi13a, Duchi13b, Duchi14, Duchi17, Wasserman10, Smith08, Smith11, Dwork10, Ye17, Awan18}. In this line of research, the focus is more on the inherent trade-off between privacy protection and efficient statistical inference and on the question what optimal privacy preserving mechanisms may look like. \citet{Duchi13a, Duchi13b, Duchi14, Duchi17} introduced new variants of the Le Cam, Fano and Assouad techniques to derive lower bounds on the privatized minimax risk. On this way, they were the first to provide minimax rates of convergence for specific estimation problems under privacy constraints in a very insightful case by case study. Here, we develop a general theory, in the spirit of \citet{Donoho91}, to characterize the differentially private minimax rate of convergence. Characterizing the minimax rate of convergence under differential privacy, and comparing it to the minimax risk in the non-private case, is one way to quantify the price, in terms of statistical accuracy, that has to be paid for privacy protection. The theory also allows us to develop (asymptotically) minimax optimal privatization schemes for a large class of estimation problems. 

To be more precise, consider $n$ individuals who possess data $X_1,\dots, X_n$, assumed to be iid from some probability distribution $\Pr\in\P$. However, the statistician does not get to see the original data $X_1,\dots, X_n$, but only a \emph{privatized} version of observations $Z$. The conditional distribution of $Z$ given $X=(X_1,\dots, X_n)$ is denoted by $Q$ and referred to as a channel distribution or a privatization scheme, i.e. $Pr(Z\in A| X=x) = Q(A|x)$. For $\alpha\in(0,\infty)$, the channel $Q$ is said to provide $\alpha$-differential privacy if
\begin{equation}\label{eq:DiffPriv}
\sup_{A} \sup_{x,x' : d_0(x,x')=1} \frac{Pr(Z\in A|X=x)}{Pr(Z\in A|X=x')} \quad\le\quad e^\alpha,
\end{equation}
where the first supremum runs over all measurable sets and $d_0(x,x'):=|\{i:x_i\ne x_i'\}|$ denotes the number of distinct entries of $x$ and $x'$. This definition is due to \citet{Dwork06} \citep[see also][]{Evfim03}. It captures the idea that the distribution of the observation $Z$ does not change too much if the data of any single individual in the database is changed, thereby protecting its privacy. The smaller $\alpha\in(0,\infty)$, the stronger is the privacy constraint \eqref{eq:DiffPriv}. More formally, (if we consider the original data $X$ as fixed) \citet[][Theorem~2.4]{Wasserman10} show that under $\alpha$-differential privacy, any level-$\gamma$ test using $Z$ to test $H_0:X=x$ versus $H_1:X=x'$ has power bounded by $\gamma e^\alpha$. As mentioned above, in this paper we focus on a special case of differential privacy, namely, local differential privacy. Somewhat informally, a channel satisfying \eqref{eq:DiffPriv} is said to provide local differential privacy, if $Z=(Z_1,\dots, Z_n)$ is a random $n$-vector, and if the $i$-th individual can generate its privatized data $Z_i$ using only its original data $X_i$ and possibly other information, but without sharing $X_i$ with anyone else. The point of this definition is that for such a protocol to be realized we do not need a trusted third party to collect or process data. It is reminiscent of the idea of randomized response \citep{Warner65}. We will see more concrete instances of such protocols below.

Suppose now that we want to estimate a real parameter $\theta(\Pr)$ based on the privatized observation vector $Z$, whose unconditional distribution is equal to $Q\Pr^{\otimes n}(dz) := \int Q(dz|x)\,\Pr^{\otimes n}(dx)$, where $\Pr^{\otimes n}$ is the $n$-fold product measure of $\Pr$. The $Q$-privatized minimax risk of estimation under a loss function $l:\R\to\R$ is therefore given by
\begin{equation}\label{eq:QMinMax}
\mathcal M_n(Q,\P, \theta) \quad:=\quad \inf_{\hat{\theta}_n} \sup_{\Pr\in\P} \E_{Q\Pr^{\otimes n}}\left[ l(|\hat{\theta}_n-\theta(\Pr)|)\right],
\end{equation}
where the infimum runs over all estimators $\hat{\theta}_n$ taking $Z$ as input data. Note that if the channel $Q$ is given by $Q(A|x) = Pr(Z\in A|X=x) = \mathds 1_A(x)$, then there is no privatization at all and the $Q$-privatized minimax risk reduces to the conventional minimax risk of estimating $\theta(\Pr)$. If we want to guarantee $\alpha$-differential privacy, then we may choose any channel $Q$ that satisfies \eqref{eq:DiffPriv} and we will try to make \eqref{eq:QMinMax} as small as possible. This leads us to the $\alpha$-private minimax risk
$$
\mathcal M_{n,\alpha}(\P, \theta) \quad:=\quad \inf_{Q\in\mathcal Q_\alpha}\mathcal M_n(Q,\P, \theta),
$$ 
where $\mathcal Q_\alpha$ is some set of $\alpha$-differentially private channels. It is this additional infimum over $\mathcal Q_\alpha$ that makes the theory of private minimax estimation deviate fundamentally from the conventional minimax estimation approach. In particular, this situation is different from the statistical inverse problem setting, because the Markov kernel $Q$ can be chosen in an optimal way and is not given a priori. A sequence of channels $Q^{(n)}\in\mathcal Q_\alpha$, for which $\mathcal M_n(Q^{(n)},\P, \theta)$ is of the order of $\mathcal M_{n,\alpha}(\P, \theta)$, is referred to as a minimax rate optimal channel and may depend on the specific estimation problem under consideration, i.e., on $\theta$ and $\P$. We write $\mathcal M_{n,\infty}(\P, \theta)$ for the classical (non-private) minimax risk.

The novel contribution of this article is to characterize the rate at which $\mathcal M_{n,\alpha}(\P, \theta)$ converges to zero as $n\to\infty$, in high generality, and to provide concrete minimax rate optimal $\alpha$-locally differentially private estimation procedures. 
To this end, we utilize the modulus of continuity of the functional $\theta:\P\to\R$ with respect to the total variation distance $\dtv(\Pr_0,\Pr_1)$, that is, 
$$
\omega_{TV}(\eps) := \sup\{|\theta(\Pr_0)-\theta(\Pr_1)| : \dtv(\Pr_0,\Pr_1)\le \eps, \Pr_0,\Pr_1\in\P \},
$$
and we show that for any fixed $\alpha\in(0,\infty)$,
\begin{equation}\label{eq:MainRes}
\mathcal M_{n,\alpha}(\P, \theta) \quad\asymp\quad l\left(\omega_{TV}\left(n^{-1/2}\right)\right).
\end{equation}
Here, $a_n\asymp b_n$ means that there exist constants $0<c_0<c_1<\infty$ and $n_0\in\N$, not depending on $n$, so that $c_0 b_n \le a_n \le c_1 b_n$, for all $n\ge n_0$.
The lower bound on $\mathcal M_{n,\alpha}(\P, \theta)$ that we establish holds in full generality, whereas, in order to obtain a matching upper bound, it is necessary to impose some regularity conditions on $\P$ and $\theta$. These will be satisfied, in particular, if $\P$ is convex and dominated and $\theta$ is linear and bounded, but also hold in some cases of non-convex and potentially non-dominated $\P$. It is important to compare \eqref{eq:MainRes} to the analogous result for the non-private minimax risk. This was established in the seminal paper by \citet{Donoho91}, who, under regularity conditions similar to those imposed here, showed that
\begin{equation}\label{eq:Donoho}
\mathcal M_{n,\infty}(\P, \theta) \quad\asymp\quad l\left(\omega_H\left(n^{-1/2}\right)\right),
\end{equation}
where $\omega_H(\eps) = \sup\{|\theta(\Pr_0)-\theta(\Pr_1)| : \dH(\Pr_0,\Pr_1)\le \eps, \Pr_0,\Pr_1\in\P \}$ and $\dH$ is the Hellinger distance.
Comparing \eqref{eq:Donoho} to \eqref{eq:MainRes}, we notice that the Hellinger modulus $\omega_H$ of $\theta$ is replaced by the total variation modulus $\omega_{TV}$. This may, and typically will, lead to different rates of convergence in private and non-private problems. Note that even in cases where we do or can not compute the moduli $\omega_{TV}$ and $\omega_H$ explicitly, we always have the a priori information that
$$
\omega_H(\eps) \le \omega_{TV}(\eps) \le \omega_H(\sqrt{2\eps}),
$$
because $\dtv\le \dH \le \sqrt{2\dtv}$ \citep[see, for instance][]{Tsybakov09}. This means that the private rate of estimation is never faster than the non-private rate and is never slower than the square root of the non-private rate. We shall see that both extremal cases can occur (see Section~\ref{SEC:EX}). We shall also see that the fastest possible private rate of convergence over a convex model $\P$ is $l(n^{-1/2})$, whereas it is $l(n^{-1})$ in the non-private case (see Lemma~\ref{lemma:bestL1Rate} in Section~\ref{sec:AppAux} of the supplement). Also note that in \eqref{eq:MainRes} we have suppressed constants that depend on $\alpha$. Our results reveal that if $\alpha$ is small, the effective sample size reduces from $n$ to $n(e^\alpha-1)^2$ when $\alpha$-differential privacy is required. That differential privacy leads to slower minimax rates of convergence was already observed by \citet{Duchi13a, Duchi13b, Duchi17}, for specific estimation problems. Here, we develop a unifying general theory to quantify the privatized minimax rates of convergence in a large class of different estimation problems, including (even irregular) parametric and non-parametric cases. This is also the first step towards a fundamental theory of adaptive estimation under differential privacy that will be pursued elsewhere. 

We also provide a general construction of $\alpha$-locally differentially private estimation procedures that is minimax rate optimal if $\P$ is convex and dominated and $\theta$ is linear and bounded. The construction relies on a functional parameter $\ell\in L_\infty$. Each individual generates $Z_i$ independently and binary distributed on $\{-z_0, z_0\}$, with 
$$
Pr(Z_i = z_0 | X_i=x_i) = \frac{1}{2}\left(1+\frac{\ell(x_i)}{z_0}\right)
$$
and $z_0 = \|\ell\|_\infty \frac{e^\alpha+1}{e^\alpha-1}$. The final estimator is then simply given by the sample mean $\bar{Z}_n = \frac{1}{n}\sum_{i=1}^n Z_i$. For appropriate $\ell = \ell_n$, this yields an $\alpha$-locally differentially private procedure that attains the minimax rate in \eqref{eq:MainRes}. The choice of functional parameter $\ell$ is problem specific but can often be guided by considering optimality of the estimator $\E[\bar{Z}_n|X_1,\dots, X_n] = \frac{1}{n}\sum_{i=1}^n \ell(X_i)$ in the problem with direct observations. We exemplify this choice in many classical moment or density estimation problems (cf. Section~\ref{SEC:EX}). We point out, however, that there are cases where a certain $\ell$ leads to rate optimal locally private estimation even though the estimator $\frac{1}{n}\sum_{i=1}^n \ell(X_i)$ is not rate optimal in the direct problem.

The paper is organized as follows. In the next section (Section~\ref{sec:Prelim}), we formally introduce the private estimation problem, several classes of locally private channel distributions and a few tools required for the analysis of the $\alpha$-private minimax risk $\mathcal M_{n,\alpha}(\P, \theta)$. Section~\ref{sec:lowerB} presents a general lower bound on $\mathcal M_{n,\alpha}(\P, \theta)$. That this lower bound is attainable, in surprisingly high generality and by the simple linear estimation procedure described above, is then established in Section~\ref{sec:Attainability}. The results of that section, however, do not offer an explicit construction of the functional parameter $\ell$. In Section~\ref{sec:Constructive} we then provide some guidance on choosing $\ell$, as well as a high-level condition for optimality of $\ell$ that we verify in all our examples. We illustrate the general theory by a number of concrete examples that are presented in Section~\ref{SEC:EX}. Most of the technical arguments are deferred to the supplementary material.


\section{Preliminaries and notation}
\label{sec:Prelim}

Let $\P$ be a set of probability measures on the measurable space $(\mathcal X,\mathcal F)$. Let $\theta : \P \to \R$ be a functional of interest. In case $\P$ is convex, we say that the functional $\theta:\P\to\R$ is linear if for $\Pr_0,\Pr_1\in\P$ and $\lambda\in[0,1]$, we have $\theta(\lambda\Pr_0+(1-\lambda)\Pr_1) = \lambda\theta(\Pr_0) + (1-\lambda)\theta(\Pr_1)$.
We are given the privatized data $Z_1,\dots, Z_n$ on the measurable space $(\mathcal Z, \B(\mathcal Z))$, $\mathcal Z= \R^q$, where $\B(\mathcal Z)$ denotes the Borel sets with respect to the usual topology. The conditional distribution of the observations $Z=(Z_1,\dots, Z_n)$ given the original sample $X=(X_1,\dots, X_n)$ is described by the \emph{channel distribution} $Q$. That is, $Q$ is a Markov probability kernel from $(\mathcal X^n, \mathcal F^{\otimes n})$ to $(\mathcal Z^n, \B(\mathcal Z^n))$. For ease of notation we suppress its dependence on $n$. Hence, if the $X_i$ are distributed iid according to $\Pr\in\P$ and $\Pr^{\otimes n}$ denotes the corresponding product measure, then the joint distribution of the observation vector $Z = (Z_1, \dots, Z_n)$ on $\mathcal Z^n$ is given by $Q \Pr^{\otimes n} $, i.e., the measure $A\mapsto \int_{\X^n}Q(A|x)d\Pr^{\otimes n}(x)$.

\subsection{Locally differentially private minimax risk}

Recall that for $\alpha\in(0,\infty)$, a channel distribution $Q$ is called \emph{$\alpha$-differentially private}, if 
\begin{equation}\label{eq:alphaPriv}
\sup_{A\in\B(\mathcal Z^n)}\sup_{\substack{x,x'\in\mathcal X^n\\ d_0(x,x')=1}} \frac{Q(A|x)}{Q(A|x')} \quad\le \quad e^\alpha,
\end{equation}
where $d_0(x,x') := |\{i:x_i\ne x_i'\}|$ is the number of distinct components of $x$ and $x'$.
Note that for this definition to make sense, the probability measures $Q(\cdot|x)$, for different $x\in\X^n$, have to be equivalent and we interpret $\frac{0}{0}$ as equal to $1$. 

Next, we introduce two specific classes of locally differentially private channels. A channel distribution $Q: \mathcal B(\mathcal Z^n)\times \mathcal X^n \to [0,1]$ is said to be $\alpha$-\emph{sequentially interactive} (or provides $\alpha$-sequentially interactive differential privacy) if the following two conditions are satisfied. First, we have for all $A\in\B(\mathcal Z^n)$ and $x_1,\dots, x_n\in\mathcal X$,
\begin{align}\label{eq:Seq}
&Q\left( A\Big|x_1,\dots, x_n\right)\notag \\
&\quad=
\int_{\mathcal Z}\dots \int_{\mathcal Z} Q_n(A_{z_{1:n-1}}|x_n,z_{1:n-1}) 
Q_{n-1}(dz_{n-1} |x_{n-1}, z_{1:n-2})\dots Q_1(dz_1|x_1),
\end{align}
where, for each $i=1,\dots, n$, $Q_i$ is a channel from $\mathcal X\times \mathcal Z^{i-1}$ to $\mathcal Z$. Here, $z_{1:n} = (z_1,\dots, z_n)^T$ and $A_{z_{1:n-1}} = \{z\in\mathcal Z : (z_1,\dots, z_{n-1},z)^T\in A\}$ is the $z_{1:n-1}$-section of $A$. Second, we require that the conditional distributions $Q_i$ satisfy
\begin{equation}\label{eq:alphaSeq}
\sup_{A\in\B(\mathcal Z)}\sup_{x_i,x_i',z_1,\dots, z_{i-1}} \frac{Q_i(A|x_i, z_1,\dots, z_{i-1})}{Q_i(A|x_i', z_1,\dots, z_{i-1})} \quad\le \quad e^\alpha \quad\quad\forall i=1,\dots, n.
\end{equation}
By the usual approximation of integrands by simple functions, it is easy to see that \eqref{eq:Seq} and \eqref{eq:alphaSeq} imply \eqref{eq:alphaPriv}. 
This notion coincides with the definition of sequentially interactive channels in \citet[][Definition~1]{Duchi17}.
We note that \eqref{eq:alphaSeq} only makes sense if for all $x_i, x_i', z_1,\dots, z_{i-1}$, the probability measure $Q_i(\cdot|x_i,z_{1:i-1})$ is absolutely continuous with respect to $Q_i(\cdot|x_i',z_{1:i-1})$.
Here, the idea is that individual $i$ can only use $X_i$ and previous $Z_j$, $j<i$, in its local privacy mechanism, thus leading to the sequential structure in the above definition. In the rest of the paper we only consider $\alpha$-sequentially interactive channels, to which we also refer simply as $\alpha$-private channels.

An important subclass of sequentially interactive channels are the so called \emph{non-interactive} channels $Q$ that are of product form
\begin{equation}\label{eq:non-Inter}
Q\left( A_1\times\dots\times A_n\Big|x_1,\dots, x_n\right) = \prod_{i=1}^n Q_i(A_i|x_i), \quad\quad \forall A_i\in\B(\mathcal Z), x_i\in\X.
\end{equation}
Clearly, a non-interactive channel $Q$ satisfies \eqref{eq:alphaPriv} if, and only if, 
$$
\sup_{A\in\B(\mathcal Z)}\sup_{x,x'\in\mathcal X}\frac{Q_i(A|x)}{Q_i(A|x')}\quad \le\quad e^\alpha \quad\quad\forall i=1,\dots, n.
$$
In that case it is also called $\alpha$-non-interactive. Both, $\alpha$-non-interactive and $\alpha$-sequentially interactive channels satisfy the $\alpha$-local differential privacy constraint as defined in the introduction. Of course, every $\alpha$-non-interactive channel is also $\alpha$-sequentially interactive.

If we measure the error of estimation by the measurable loss function $l:\R_+\to \R_+$, where $\R_+:=[0,\infty)$, the minimax risk of the above estimation problem is given by
\begin{align}\label{eq:Qminimax}
\mathcal M_n(Q,\P, \theta) \quad=\quad \inf_{\hat{\theta}_n} \sup_{\Pr\in\P} \E_{Q \Pr^{\otimes n}}\left[ l\left(|\hat{\theta}_n - \theta(\Pr)| \right)\right],
\end{align} 
where the infimum runs over all estimators $\hat{\theta}_n : \mathcal Z^n\to\R$. Finally, define the set of $\alpha$-private channels
\begin{equation}\label{eq:SIset}
\mathcal Q_\alpha := \bigcup_{q\in\N}\left\{ Q : Q \text{ is $\alpha$-sequentially interactive from $\mathcal X^n$ to $\mathcal Z^n = \R^{n\times q}$} \right\}.
\end{equation}
Therefore, the $\alpha$-private minimax risk is given by 
\begin{align}\label{eq:QminimaxPriv}
\mathcal M_{n,\alpha}(\P, \theta) \quad=\quad \inf_{Q\in\mathcal Q_\alpha} \mathcal M_n(Q,\P, \theta).
\end{align}
Note that the above infimum includes all possible dimensions $q$ of $\mathcal Z= \R^q$.

\subsection{Testing affinities and minimax identities}

Let $\P$, $\P_0$ and $\P_1$ be sets of probability measures on a measurable space $(\Omega, \mathcal A)$ and for $\Pr_0\in\P_0$, $\Pr_1\in\P_1$, define the testing affinity
\begin{align}\label{eq:pi}
\pi(\Pr_0,\Pr_1) \quad=\quad \inf_{\text{tests }\phi} \E_{\Pr_0} [\phi] + \E_{\Pr_1}[1-\phi], 
\end{align}
where the infimum runs over all (randomized) tests $\phi:\Omega \to [0,1]$. Moreover, we write
\begin{align}
\pi(\P_0,\P_1) \quad=\quad \sup_{\substack{\Pr_j\in\P_j, j=0,1}} \pi(\Pr_0,\Pr_1).
\end{align}
Throughout, we follow the usual conventions that $\sup \varnothing = -\infty$ and $\inf \varnothing = +\infty$.
If $\theta:\P\to\R$ is a functional of interest, then for $t\in\R$ and $\Delta>0$, denote $\P_{\le t} := \{\Pr\in\P : \theta(\Pr) \le t\}$ and $\P_{\ge t+\Delta} := \{ \Pr\in\P: \theta(\Pr) \ge t+\Delta\}$ and let $\P_{\le t}^{(n)}$ and $\P_{\ge t+\Delta}^{(n)}$ be the sets of $n$-fold product measures with identical marginals from $\P_{\le t}$ and $\P_{\ge t+\Delta}$, respectively.
If $Q$ is a Markov probability kernel, then we write $Q \P^{(n)}$ for the set of all probability measures of the form $Q \Pr^{\otimes n}$, where $\Pr\in\P$. Recall that a family of measures on a common probability space is dominated if there exists a $\sigma$-finite measure $\mu$ such that every element of that family is absolutely continuous with respect to $\mu$. We define the convex hull $\conv(\P)$ in the usual way to be the set of all finite convex combinations $\sum_{i=1}^m \lambda_i \Pr_i$, for $m\in\N$, $\lambda_i\ge0$, $\sum_{i=1}^m\lambda_i=1$ and $\Pr_i\in\P$. 
For $\Pr_0,\Pr_1\in\P$, we consider the Hellinger distance 
$$
\dH(\Pr_0,\Pr_1) := \sqrt{\int_{\Omega} \left(\sqrt{p_0(x)}-\sqrt{p_1(x)}\right)^2\,d\mu(x)},
$$ 
where $p_0$ and $p_1$ are densities of $\Pr_0$ and $\Pr_1$ with respect to some dominating measure $\mu$ (e.g., $\mu=\Pr_0+\Pr_1$), and the total variation distance is defined as $\dtv(\Pr_0,\Pr_1) := \sup_{A\in\mathcal A}|\Pr_0(A)-\Pr_1(A)|$.
Furthermore, for a monotone function $g:\R\to\R$, we write $g(x^-) = \lim_{y\uparrow x}g(y)$ and $g(x^+) = \lim_{y\downarrow x}g(y)$, for the left and right limits of $g$ at $x\in\R$, respectively, and we write $g(\infty^-) = \lim_{x\to\infty} g(x)$ and $g([-\infty]^+) = \lim_{x\to -\infty} g(x)$.
We also make use of the abbreviations $a\lor b = \max(a,b)$ and $a\land b = \min(a,b)$.

Next, we define the \emph{upper affinity}
\begin{equation}\label{eq:etaA}
\eta_A^{(n)}(Q,\Delta) \quad=\quad \sup_{t\in\R}\; \pi\left(\conv\left(Q \P_{\le t}^{(n)} \right), \conv\left( Q \P_{\ge t+ \Delta}^{(n)} \right) \right)
\end{equation}
and its generalized inverse for $\eta\in[0,1)$,
\begin{equation}\label{eq:DeltaA}
\Delta_A^{(n)}(Q,\eta)\quad=\quad \sup\{ \Delta\ge 0 : \eta_A^{(n)}(Q,\Delta) > \eta\}.
\end{equation}
Note that for $\eta<1$ the set in the previous display is never empty, since $\eta_A^{(n)}(Q,0) = 1$, and thus $\Delta_A^{(n)}(Q,\eta)\ge 0$. Also note that $\Delta\mapsto\eta_A^{(n)}(Q,\Delta)$ is non-increasing.

In order to show that our subsequent lower bounds on $\mathcal M_{n,\alpha}(\P, \theta)$ are attained for convex and dominated models $\P$ and linear and bounded functionals $\theta:\P\to\R$, we will need the following consequence of a fundamental minimax theorem of \citet[][Corollary~3.3]{Sion58}. See Section~\ref{sec:AppSion} of the supplementary material for the proof.

\begin{proposition}\label{prop:Sion}
Fix constants $-\infty < a\le b < \infty$. Let $\mathbb S$ be a convex set of finite signed measures on a measurable space $(\Omega, \mathcal A)$, so that $\mathbb S$ is dominated by a $\sigma$-finite measure $\mu$. 
Furthermore, let $\mathbb T = \{\phi\in L_\infty(\Omega, \mathcal A, \mu) : a \le \int_{\Omega} \phi f\,d\mu \le b, \forall f\in L_1(\Omega, \mathcal A, \mu): \|f\|_{L_1}\le1\}$. Then
$$
\sup_{\phi \in\mathbb T} \inf_{\sigma\in\mathbb S} \;\int_{\Omega} \phi \;d\sigma
\quad=\quad
\inf_{\sigma\in\mathbb S}\sup_{\phi \in\mathbb T}  \;\int_{\Omega} \phi \;d\sigma.
$$
\end{proposition}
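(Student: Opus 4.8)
The plan is to recast both sides of the asserted equality as the minimax value of a single bilinear functional over a pair of convex sets, to arrange for one of these sets to be compact in a suitable topology, and then to apply the minimax theorem of \citet{Sion58} (Corollary~3.3) directly. We may assume throughout that $\mathbb S\ne\varnothing$ and $\mathbb T\ne\varnothing$, since otherwise the identity is immediate from the conventions $\sup\varnothing=-\infty$ and $\inf\varnothing=+\infty$.

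Since $\mathbb S$ is dominated by the $\sigma$-finite measure $\mu$, each $\sigma\in\mathbb S$ has a Radon--Nikodym density $s:=d\sigma/d\mu$, and $s\in L_1(\Omega,\mathcal A,\mu)$ because $\sigma$ is a finite signed measure. The map $\sigma\mapsto s$ is linear and injective, hence identifies $\mathbb S$ with a convex subset $\widetilde{\mathbb S}\subseteq L_1(\Omega,\mathcal A,\mu)$, and under this identification $\int_\Omega\phi\,d\sigma=\int_\Omega\phi s\,d\mu=:\langle\phi,s\rangle$ for every $\phi\in L_\infty$. I would equip $L_1(\Omega,\mathcal A,\mu)$ with its norm topology, so that $\widetilde{\mathbb S}$ is a convex subset of a Banach space, and---using once more that $\mu$ is $\sigma$-finite, so that $L_1(\Omega,\mathcal A,\mu)^{*}=L_\infty(\Omega,\mathcal A,\mu)$---equip $L_\infty(\Omega,\mathcal A,\mu)$ with the weak-$*$ topology $\sigma(L_\infty,L_1)$.

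The crucial point is that $\mathbb T$ is weak-$*$ compact. For each fixed $f\in L_1$ with $\|f\|_{L_1}\le1$, the functional $\phi\mapsto\langle\phi,f\rangle$ is weak-$*$ continuous by definition of $\sigma(L_\infty,L_1)$, so $\{\phi\in L_\infty:a\le\langle\phi,f\rangle\le b\}$ is weak-$*$ closed and convex; intersecting over all such $f$ shows that $\mathbb T$ is weak-$*$ closed and convex. Moreover, since $-f$ is admissible in the defining constraint whenever $f$ is, one readily gets $\|\phi\|_{L_\infty}\le(-a)\wedge b<\infty$ for every $\phi\in\mathbb T$, so $\mathbb T$ is norm-bounded; a norm-bounded, weak-$*$ closed subset of a dual Banach space is weak-$*$ compact by the Banach--Alaoglu theorem. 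Now consider the functional $(\phi,s)\mapsto\langle\phi,s\rangle$ on $\mathbb T\times\widetilde{\mathbb S}$: it is real-valued and finite (as $\phi$ is bounded and $s$ is integrable), it is affine---hence both quasi-concave and quasi-convex---in each argument with the other held fixed, it is weak-$*$ continuous in $\phi$ for each fixed $s$, and (since $|\langle\phi,s\rangle-\langle\phi,s'\rangle|\le\|\phi\|_{L_\infty}\|s-s'\|_{L_1}$) it is norm-continuous in $s$ for each fixed $\phi$. The hypotheses of Sion's minimax theorem are therefore satisfied, with $\mathbb T$ playing the role of the compact factor (if the precise form of Corollary~3.3 requires the outer supremum to range over the non-compact factor, one first replaces the functional by its negative, which preserves bilinearity and hence all the needed continuity and quasi-concavity/convexity properties). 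We conclude
\[
\sup_{\phi\in\mathbb T}\inf_{s\in\widetilde{\mathbb S}}\langle\phi,s\rangle=\inf_{s\in\widetilde{\mathbb S}}\sup_{\phi\in\mathbb T}\langle\phi,s\rangle,
\]
and translating back through the bijection $\sigma\leftrightarrow s$ gives the assertion.

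I do not anticipate a genuine obstacle: the proposition is in essence a repackaging of Sion's theorem. The only points requiring care are the choice of the two topologies so that the required semicontinuity holds on each side while $\mathbb T$ remains compact---which is exactly what forces the weak-$*$ topology on the $\phi$-side and makes the $\sigma$-finiteness of $\mu$ (needed both for Radon--Nikodym densities and for the duality $L_1^{*}=L_\infty$) the essential hypothesis---together with the verification that $\mathbb T$ is weak-$*$ closed and bounded, so that Banach--Alaoglu yields compactness. The remaining bookkeeping (matching the order of supremum and infimum to the exact statement of Corollary~3.3, and disposing of the trivial empty-set cases) is routine.
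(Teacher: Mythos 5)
Your proposal is correct and follows essentially the same route as the paper's proof: both invoke Sion's Corollary~3.3 with the bilinear pairing, put the norm (total-variation, equivalently $L_1$-density) topology on the $\mathbb S$-side and the weak-$*$ topology induced by the $L_1$--$L_\infty$ duality on the $\phi$-side, and obtain compactness of $\mathbb T$ from Banach--Alaoglu together with its weak-$*$ closedness and norm-boundedness. The only differences are cosmetic (you pass to Radon--Nikodym densities where the paper works with the measures and transports the weak-$*$ topology through the isometric isomorphism $L_\infty\cong L_1^*$), so no further changes are needed.
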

Proposition~\ref{prop:Sion} implies that for arbitrary subsets $\P_0$ and $\P_1$ of $\P$, and if the class $Q  \P^{(n)}$ is dominated by some $\sigma$-finite measure (note that this is always the case if $Q$ is $\alpha$-private), we have the identity
\begin{align}\label{eq:KraftLeCam}
\inf_{\text{tests } \phi} \sup_{\substack{\Pr_0\in Q \P_0^{(n)}\\ \Pr_1 \in Q \P_1^{(n)}}} 
\E_{\Pr_0}[\phi]
+
\E_{\Pr_1}[1-\phi]
&=
\sup_{\substack{\Pr_0\in \conv(Q \P_0^{(n)})\\ \Pr_1 \in \conv( Q \P_1^{(n)} ) } } \inf_{\text{tests } \phi}
\E_{\Pr_0}[\phi]
+
\E_{\Pr_1}[1-\phi] \\\notag
&= \pi\left(\conv\left(Q \P_0^{(n)} \right), \conv\left( Q \P_1^{(n)} \right) \right).
\end{align}
To see this, note that the left-hand side of \eqref{eq:KraftLeCam} does not change if we replace $Q\P_r^{(n)}$ by its convex hull, for $r=0,1$, because for $\Pr_{r,i}\in Q\P_r^{(n)}$,
\begin{align*}
\E_{\sum_{i=1}^k \alpha_i \Pr_{0,i}}[\phi] + \E_{\sum_{j=1}^l\beta_j \Pr_{1,j}}[1-\phi]
&= \sum_{i,j} \alpha_i\beta_j \left(\E_{\Pr_{0,i}}[\phi] + \E_{\Pr_{1,j}}[1-\phi]\right)\\
&\le
\sup_{\substack{\Pr_0\in Q \P_0^{(n)}\\ \Pr_1 \in Q \P_1^{(n)}}} \E_{\Pr_{0}}[\phi] + \E_{\Pr_{1}}[1-\phi].
\end{align*}
Now apply Proposition~\ref{prop:Sion} with $\mathbb S = \{\Pr_0-\Pr_1:\Pr_r\in \conv(Q\P_r^{(n)}), r=0,1\}$ and $a=0$, $b=1$. 

The identity \eqref{eq:KraftLeCam} was prominently used by \citet{Donoho91} -- in the non-private case where $Q(A|x) = \mathds 1_A(x)$ -- in order to derive their lower bounds on the minimax risk. It is due to C. Kraft and L. Le Cam (Theorem~5 of \cite{Kraft55}, see also page 40 of \citet{LeCam73}), who derived it more directly. We will also make use of \eqref{eq:KraftLeCam} to derive lower bounds (see the proof of Theorem~\ref{thm:lowerDeltaA} in the supplement).
However, in order to show that there exist channel distributions $Q^{(n)}$ so that $\mathcal M_n(Q^{(n)},\P, \theta)$ attains the rate of the lower bound, we need the generality of Proposition~\ref{prop:Sion} (see Section~\ref{sec:GenUpper} below).

\section{A general lower bound on the $\alpha$-private minimax risk}
\label{sec:lowerB}

In this section we establish a lower bound on $\mathcal M_{n,\alpha}(\P, \theta) = \inf_{Q\in\mathcal Q_\alpha} \mathcal M_n(Q,\P, \theta)$, $\alpha\in(0,\infty)$, in terms of the total variation and Hellinger moduli of continuity $\omega_{TV}$ and $\omega_H$ of the functional $\theta:\P\to\R$. We also bridge the gap to the non-private case $\alpha=\infty$ in which the rate is characterized by $\omega_H$ only, and therefore, we extend results of \citet{Donoho91} to the case of privatized data. These extensions, however, do not constitute our main contribution. Therefore, we defer the technical details to Section~\ref{sec:ApplowerB} of the supplement. 
Our main conceptual innovation is to show that the lower bounds are rate optimal for a large class of possible estimation problems.

\begin{corollary}\label{corr:lowerModulus}
Fix $\eta_0,\eps_0\in(0,1)$, $\alpha\in(0,\infty)$ and let $l:\R_+\to\R_+$ be a non-decreasing loss function. Then there exists a positive finite constant $c = c(\eta_0,\eps_0)$, such that for all $\eta\in(0,\eta_0)$ and for all $n>|\log\eta|/\eps_0$,
\begin{align*}
&\mathcal M_{n,\alpha}(\P, \theta) \;=\; \inf_{Q\in\mathcal Q_\alpha} \mathcal M_n(Q,\P, \theta) \\
&\quad\quad\ge \;  l\left( \frac{1}{2} \left[ 
\omega_{TV}\left(\left[ \frac{1-\eta}{\sqrt{2n(e^\alpha-1)^2}}\right]^-\right) 
\lor
\omega_H\left(\left[ c\sqrt{\frac{|\log\eta|}{n}} \right]^- \right) \right]^-\right)\frac{\eta}{2},
\end{align*}
where $\mathcal Q_\alpha$ is the set of $\alpha$-sequentially interactive channels $Q$ as in \eqref{eq:SIset}.
\end{corollary}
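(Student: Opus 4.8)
The plan is to reduce the estimation lower bound to a two-point (or two-composite-hypotheses) testing argument of Le Cam type, and to bound the relevant testing affinity after passing through an arbitrary $\alpha$-sequentially interactive channel. Concretely, fix $Q\in\mathcal Q_\alpha$, let $t\in\R$ and $\Delta>0$, and consider the composite hypotheses $H_0: \theta(\Pr)\le t$ versus $H_1:\theta(\Pr)\ge t+\Delta$, i.e. the product-measure classes $Q\P_{\le t}^{(n)}$ and $Q\P_{\ge t+\Delta}^{(n)}$. By the standard reduction of estimation to testing, any estimator $\hat\theta_n$ induces a test, and one gets
$\mathcal M_n(Q,\P,\theta)\ge l(\Delta/2)\cdot\tfrac12\,\pi\bigl(Q\P_{\le t}^{(n)},Q\P_{\ge t+\Delta}^{(n)}\bigr)$,
and by the Kraft--Le Cam identity \eqref{eq:KraftLeCam} the affinity here equals $\pi\bigl(\conv(Q\P_{\le t}^{(n)}),\conv(Q\P_{\ge t+\Delta}^{(n)})\bigr)$. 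Taking the supremum over $t$ and recalling the definition \eqref{eq:etaA} of $\eta_A^{(n)}(Q,\Delta)$, we obtain $\mathcal M_n(Q,\P,\theta)\ge l(\Delta/2)\,\eta_A^{(n)}(Q,\Delta)/2$, hence, choosing $\Delta$ just below $\Delta_A^{(n)}(Q,\eta)$ and using monotonicity of $l$ together with the definition \eqref{eq:DeltaA} of the generalized inverse, $\mathcal M_n(Q,\P,\theta)\ge l\bigl(\tfrac12\Delta_A^{(n)}(Q,\eta)^-\bigr)\,\eta/2$. This is presumably exactly the content of the (deferred) Theorem~\ref{thm:lowerDeltaA}, so I would invoke it.

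The heart of the matter is then to lower bound $\Delta_A^{(n)}(Q,\eta)$ uniformly over all $Q\in\mathcal Q_\alpha$ by the two moduli appearing in the statement. For the total-variation term, the key privacy input is a data-processing/strong-data-processing inequality for KL or for total variation under $\alpha$-differentially private sequentially interactive channels: if $\Pr_0,\Pr_1\in\P$ with $\dtv(\Pr_0,\Pr_1)\le\eps$, then (a variant of) Duchi--Jordan--Wainwright gives $\mathrm{KL}(Q\Pr_0^{\otimes n}\Vert Q\Pr_1^{\otimes n})\lesssim n(e^\alpha-1)^2\,\dtv(\Pr_0,\Pr_1)^2$, so that with $\eps=(1-\eta)/\sqrt{2n(e^\alpha-1)^2}$ the two privatized product measures are still statistically close, forcing $\pi\gtrsim$ something like $1-\eta$ via Pinsker, and hence any pair of marginals with $\dtv\le\eps$ witnesses a value $\Delta = |\theta(\Pr_0)-\theta(\Pr_1)|$ admissible in the sup defining $\Delta_A^{(n)}(Q,\eta)$. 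Taking the supremum over all such pairs yields $\Delta_A^{(n)}(Q,\eta)\gtrsim\omega_{TV}(\eps^-)$. For the Hellinger term one does not even use the channel's privacy structure beyond the data-processing inequality for Hellinger affinity: $\rho_H(Q\Pr_0^{\otimes n},Q\Pr_1^{\otimes n})\ge\rho_H(\Pr_0^{\otimes n},\Pr_1^{\otimes n})=\rho_H(\Pr_0,\Pr_1)^n\ge(1-\tfrac12\dH(\Pr_0,\Pr_1)^2)^n$, so with $\dH(\Pr_0,\Pr_1)\le c\sqrt{|\log\eta|/n}$ the affinity stays above $\eta$, recovering the classical Donoho--Liu bound $\Delta_A^{(n)}(Q,\eta)\gtrsim\omega_H(\cdot^-)$; this is where the constraint $n>|\log\eta|/\eps_0$ and the constant $c=c(\eta_0,\eps_0)$ enter. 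Combining the two, $\Delta_A^{(n)}(Q,\eta)\ge\omega_{TV}(\eps^-)\lor\omega_H((c\sqrt{|\log\eta|/n})^-)$ for every $Q\in\mathcal Q_\alpha$, and plugging into the estimation-to-testing bound above and taking the infimum over $Q$ gives the corollary (the outer one-sided limit ``$[\cdot]^-$'' accommodates left-discontinuities of $l$ and of the moduli).

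The main obstacle is the strong-data-processing bound $\mathrm{KL}(Q\Pr_0^{\otimes n}\Vert Q\Pr_1^{\otimes n})\lesssim n(e^\alpha-1)^2\dtv(\Pr_0,\Pr_1)^2$ for \emph{sequentially interactive} channels: the tensorization over the $n$ coordinates is subtle because $Q_i$ may depend on $Z_1,\dots,Z_{i-1}$, so one must chain the one-step bound along the filtration generated by the released variables (this is the Duchi--Jordan--Wainwright argument and should be stated as a lemma in Section~\ref{sec:Prelim} or the supplement, and simply cited here). A secondary, more bookkeeping-level, obstacle is that $\omega_{TV}$ and $\omega_H$ need not be continuous, so one must be careful to only claim admissibility of $\Delta$ strictly below the relevant modulus value and then pass to left limits, which is why the statement is phrased with the one-sided-limit notation and with strict inequalities $\eta<\eta_0$, $n>|\log\eta|/\eps_0$ rather than with exact equalities. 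Modulo these two points, everything else is the routine estimation-to-testing machinery already encoded in \eqref{eq:KraftLeCam} and the (deferred) Theorem~\ref{thm:lowerDeltaA}.
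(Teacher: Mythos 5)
Your proposal follows essentially the same route as the paper's proof: reduction of estimation to testing via Theorem~\ref{thm:lowerDeltaA} and the Kraft--Le Cam identity \eqref{eq:KraftLeCam}, then lower bounding $\Delta_A^{(n)}(Q,\eta)$ through pairwise affinities, using the Duchi--Jordan--Wainwright contraction (the paper's \eqref{eq:dTVPrivateBound}, i.e.\ KL plus Pinsker) for the $\omega_{TV}$ term and the Hellinger data-processing/tensorization argument with Donoho--Liu's Lemma~3.3 for the $\omega_H$ term, with left limits handling possible discontinuities. One small correction: with $\dtv(\Pr_0,\Pr_1)$ below $(1-\eta)/\sqrt{2n(e^\alpha-1)^2}$ the privatized testing affinity is bounded below by $\eta$ (not by roughly $1-\eta$), which is precisely the bound needed for admissibility of $\Delta=|\theta(\Pr_0)-\theta(\Pr_1)|$ in the set defining $\Delta_A^{(n)}(Q,\eta)$.
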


Corollary~\ref{corr:lowerModulus} extends the lower bound of \citet{Donoho91} to privatized data. We point out that a similar lower bound with slightly worse constants can be easily derived from Proposition~1 of \citet{Duchi17}. In general, we have $\omega_H(\eps)\le \omega_{TV}(\eps)$, because $\dtv(\Pr_0,\Pr_1)\le \dH(\Pr_0,\Pr_1)$. Therefore, privatization leads to a larger lower bound compared to the direct case. This is hardly any surprise. Moreover, if $\alpha$ is sufficiently large, i.e., the privatization constraint is weak, then the lower bound of Corollary~\ref{corr:lowerModulus} reduces to the classical lower bound in the case of direct estimation derived by \citet{Donoho91}.

In our theory we consider the class $\mathcal Q_\alpha$ of $\alpha$-sequentially interactive channels, because those admit a reasonably simple \citep[cf.][]{Duchi17} and attainable lower bound and they comprise a relevant class of \emph{local} differential privatization mechanisms. In the next section, we show that for estimation of linear functionals $\theta$ over convex parameter spaces $\P$ (and also for more general, but sufficiently regular $\theta$ and $\P$), the rate of our lower bound is attained even within the much smaller class of non-interactive channels. So within the class of sequentially interactive channels, the non-interactive channels already lead to rate optimal private estimation of linear functionals over convex parameter spaces. 

\begin{remark}
Corollary~\ref{corr:lowerModulus} does not restrict the values of $\alpha\in(0,\infty)$ and is formulated for any sample size $n$. In particular, it continues to hold if $\alpha$ is replaced by an arbitrary sequence $\alpha_n\in(0,\infty)$. The choice of this sequence has a fundamental impact on the private minimax rate of convergence. For example, if we consider the highly privatized case where $\alpha_n\asymp n^{-1/2}$, then $n(e^{\alpha_n}-1)^2$ is bounded and the $\alpha_n$-privatized minimax risk no longer converges to zero as $n\to\infty$.
\end{remark}

\section{Attainability of lower bounds}
\label{sec:Attainability}

To establish upper bounds on the private minimax risk that match the rate of our lower bounds, some regularity conditions are needed. In the case where the channel $Q$ is non-interactive and fixed, the main ingredients for a characterization of $\mathcal M_n(Q,\P, \theta)$ are a certain minimax identity and a type of second degree homogeneity of the privatized Hellinger modulus
$$
\omega_H^{(Q_1)}(\eps) = \sup \left\{ |\theta(\Pr_0)-\theta(\Pr_1)| : \dH(Q_1 \Pr_0,Q_1 \Pr_1) \le \eps, \Pr_0,\Pr_1\in\P\right\},
$$
where here $Q_1:\mathcal B(\mathcal Z)\times \mathcal X\to[0,1]$ is a $1$-dimensional marginal channel (see the discussion in Section~\ref{sec:App:ThmUpperB} of the supplement for details). However, for the sake of readability, in the main article we only operate under the sufficient conditions that $\P$ is convex and dominated and that $\theta:\P\to\R$ is linear. Throughout this section we repeatedly make use of the following additional assumptions.
\begin{enumerate}[A)]
\item \label{cond:thetaBound} The functional $\theta:\P\to\R$ of interest is bounded, i.e., $\sup_{\Pr\in\P}|\theta(\Pr)|<\infty$. 
\item \label{cond:loss} The non-decreasing loss function $l:\R_+\to\R_+$ is such that $l(0)=0$ and $l(\frac{3}{2}t) \le a l(t)$, for some $a\in(1,\infty)$ and for every $t\in\R_+$. 
\end{enumerate}

The boundedness assumption~\ref{cond:thetaBound} is also maintained in \citet{Donoho91}. However, in their context, it is actually not necessary in some special cases such as the location model. On the other hand, the boundedness of $\theta$ appears to be much more fundamental in the case of private estimation. See, for example, Section~G in \citet{Duchi14}, who show that in the privatized location model under squared error loss, Assumption~\ref{cond:thetaBound} is necessary in order to obtain finite $\alpha$-private minimax risk $\mathcal M_{n,\alpha}(\P, \theta)$.
Assumption~\ref{cond:loss} is also taken from \citet{Donoho91}. It is satisfied for many common loss functions, such as $l_\gamma(t) = t^\gamma$, with $\gamma>0$, or the Huber loss $l_\gamma(t) = \mathds 1_{[0,\gamma)}(t) t^2/2 + \mathds 1_{[\gamma,\infty)}(t)\gamma(t-\gamma/2)$, which satisfies \ref{cond:loss} with $a=9/2$.

%

The following theorem (Theorem~\ref{THM:UPPERB}) provides sufficient conditions on the sequence of non-interactive channels $Q^{(n)}:\B(\mathcal Z^n)\times\X^n\to[0,1]$ with identical marginals $Q_1^{(n)}$, the model $\P$ and the functional $\theta$, so that the privatized minimax risk $\mathcal M_n(Q^{(n)},\P, \theta)$ is upper bounded by a constant multiple of 
$$
l\circ \omega_H^{(Q_1^{(n)})}(n^{-1/2}).
$$ 
A more general result is discussed and proved in Section~\ref{sec:App:ThmUpperB} of the supplement. This even extends, and improves, the attainability result of \citet{Donoho91} in the non-private case.
For the purpose of attainability under local differential privacy, the crucial point is the next one (cf. Theorem~\ref{THM:ATTAINABILITY} below). Namely, to establish the existence of sequences of non-interactive $\alpha$-private channels $Q^{(n)}$ that satisfy the imposed assumptions and are such that 
$$
\omega_H^{(Q_1^{(n)})}(n^{-1/2})\;\lesssim\;  \omega_{TV}(n^{-1/2}).
$$ 
At this point our theory deviates conceptually from the one developed by \citet{Donoho91}. We propose a class of $\alpha$-private channels $Q_1^{(\alpha,\ell)}$ indexed by a functional parameter $\ell\in L_\infty$ and minimize the resulting Hellinger modulus of continuity 
$$\omega_H^{(Q_1^{(\alpha,\ell)})}(n^{-1/2})$$ 
with respect to $\ell$. For this minimization to be successful we require another minimax identity to hold, which is given by the conclusion of Proposition~\ref{prop:Sion}. Combining Theorem~\ref{THM:UPPERB} and Theorem~\ref{THM:ATTAINABILITY}, which are stated below, then shows that the rate of the lower bound of the previous section can be attained.

%

\subsection{Upper bounds for given channel sequences}

An extended version of the following theorem (not assuming convexity, dominatedness and linearity), its proof and some further discussions are deferred to Section~\ref{sec:App:ThmUpperB} of the supplement.

\begin{theorem}\label{THM:UPPERB}
Fix $n\in\N$, suppose that Conditions~\ref{cond:thetaBound} and \ref{cond:loss} hold and that $Q$ is a non-interactive channel with identical marginals $Q_1$. Moreover, assume that $\P$ is dominated and convex and that $\theta:\P\to\R$ is linear. 
Fix $C \ge \sqrt{2\log 2a}+1$, $\Delta=C^2\omega_H^{(Q_1)}(n^{-1/2})$ and $C_1 = \left[ 1 + \frac{8a^2}{2a-1} \right] a^{\lceil 2\log(C)/\log(3/2)\rceil}$, where $a>1$ is the constant from Condition~\ref{cond:loss}. 
Then there exists a binary search estimator $\hat{\theta}_n^{(\Delta)}:\mathcal Z^n\to\R$ with tuning parameter $\Delta$ (cf. Proposition~\ref{prop:BinarySearch} for details), such that
\begin{align*}
\sup_{\Pr\in\P} \E_{Q \Pr^{\otimes n}}\left[ l\left(|\hat{\theta}_n^{(\Delta)} - \theta(\Pr)| \right)\right]
\;\le\;C_1\cdot l\left( \omega_H^{\left(Q_1\right)}\left( n^{-1/2}\right)\right).
\end{align*}
\end{theorem}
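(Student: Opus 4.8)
The plan is to reduce the estimation problem to a sequence of testing problems and then invoke the general testing-to-estimation machinery behind the modulus of continuity. Concretely, I would first recall the binary search estimator $\hat\theta_n^{(\Delta)}$ of Proposition~\ref{prop:BinarySearch}: given the tuning parameter $\Delta$, at each step it tests a hypothesis of the form $\theta(\Pr)\le t$ versus $\theta(\Pr)\ge t+\Delta$ using the privatized data $Z$, and updates a current interval estimate. The key performance guarantee one expects is that the final error $|\hat\theta_n^{(\Delta)}-\theta(\Pr)|$ is, up to a geometric series of factors controlled by the doubling condition~\ref{cond:loss}, bounded in terms of $\Delta$ times the probability that any single test errs, plus the range of $\theta$ (which is finite by Condition~\ref{cond:thetaBound}). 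The constant $C_1=\left[1+\frac{8a^2}{2a-1}\right]a^{\lceil 2\log(C)/\log(3/2)\rceil}$ is exactly what results from summing such a geometric series, where the number of halving steps needed to go from the diameter of $\theta(\P)$ down to scale $\Delta$ contributes the $a^{\lceil 2\log(C)/\log(3/2)\rceil}$ factor and the tail of the loss at larger scales contributes the $\frac{8a^2}{2a-1}$ term.

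The heart of the argument is then to bound the error probability of each individual test. Here I would use the minimax identity \eqref{eq:KraftLeCam}: for the composite testing problem $H_0:\Pr\in\P_{\le t}$ versus $H_1:\Pr\in\P_{\ge t+\Delta}$ based on $Z\sim Q\Pr^{\otimes n}$, the minimal sum of error probabilities equals $\pi\left(\conv\left(Q\P_{\le t}^{(n)}\right),\conv\left(Q\P_{\ge t+\Delta}^{(n)}\right)\right)$. Because $\P$ is convex and $\theta$ is linear, the sets $\P_{\le t}$ and $\P_{\ge t+\Delta}$ are themselves convex, which lets me relate their convex hulls under $Q$ back to single product measures. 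The testing affinity between two product measures $Q\Pr_0^{\otimes n}$ and $Q\Pr_1^{\otimes n}$ is controlled by the Hellinger affinity in the standard way, $\pi(Q\Pr_0^{\otimes n},Q\Pr_1^{\otimes n})\le \left(1-\tfrac12 \dH^2(Q_1\Pr_0,Q_1\Pr_1)\right)^n$, and the tensorization of Hellinger for the non-interactive product channel $Q=\bigotimes Q_1$. Now by the definition of the privatized modulus $\omega_H^{(Q_1)}$, if $\Delta>\omega_H^{(Q_1)}(\eps)$ then any $\Pr_0\in\P_{\le t}$, $\Pr_1\in\P_{\ge t+\Delta}$ must satisfy $\dH(Q_1\Pr_0,Q_1\Pr_1)>\eps$; taking $\eps=n^{-1/2}$ and $\Delta=C^2\omega_H^{(Q_1)}(n^{-1/2})$ forces the separation in Hellinger distance to exceed $n^{-1/2}$, hence the error probability of each test is at most $\left(1-\tfrac{1}{2n}\right)^n\le e^{-1/2}$, or more carefully some explicit constant strictly below $1$ determined by $C$. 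The choice $C\ge\sqrt{2\log 2a}+1$ is precisely calibrated so that this per-test error probability is small enough that, when fed through the geometric accumulation in the binary search, the total risk stays bounded by $C_1\, l(\omega_H^{(Q_1)}(n^{-1/2}))$.

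The step I expect to be the main obstacle is the passage from the composite null and alternative to a clean single-pair Hellinger bound via the convex-hull identity: one must be careful that $\conv(Q\P_{\le t}^{(n)})$ need not consist only of product measures, so the Hellinger tensorization does not apply directly to arbitrary elements of the convex hull. The resolution is that \eqref{eq:KraftLeCam} expresses the composite testing affinity as a supremum over the convex hulls, and then one bounds $\pi$ of any pair in the hull by first reducing via linearity of $\theta$ and convexity of $\P$ to mixtures whose marginals still lie in $\P_{\le t}$, respectively $\P_{\ge t+\Delta}$ — essentially one shows that it suffices to consider $i.i.d.$ mixtures, i.e. product measures built from a single $\Pr\in\P_{\le t}$ — and only then applies the Hellinger affinity bound. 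Handling the continuity issues at the jump points of $\omega_H^{(Q_1)}$ (the left-limits in the statement of Corollary~\ref{corr:lowerModulus}, which is why the theorem is stated with a strict inequality $\Delta=C^2\omega_H^{(Q_1)}(n^{-1/2})$ rather than a limit) is a further technical nuisance but is routine given the monotonicity of the modulus. The remaining bookkeeping — verifying the exact form of $C_1$, checking that the number of binary search steps is $\lceil 2\log(C)/\log(3/2)\rceil$, and assembling the final bound — is a direct computation using Condition~\ref{cond:loss} repeatedly.
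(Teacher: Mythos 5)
Your outline captures the right skeleton (binary search via minimax tests, the Kraft--Le Cam identity \eqref{eq:KraftLeCam}, convexity of $\P_{\le t}$ and $\P_{\ge t+\Delta}$ under linearity, and the tensorized Hellinger bound), but it has a genuine gap at the quantitative heart of the argument: a per-test error bound that is merely a fixed constant such as $\left(1-\tfrac{1}{2n}\right)^n\le e^{-1/2}$ is not enough. The estimator of Proposition~\ref{prop:BinarySearch} performs tests at \emph{all} separations $k\Delta$, $k=1,\dots,N-2$, and its tail bound is $4\sum_{k=l+1}^{N-2}\bigl[\eta_A^{(n)}(Q,k\Delta)\lor 0\bigr]$; with only a constant bound on each term this sum grows with $N\approx M/\Delta$ and, worse, the risk computation weights the tail at scale $\eta_j$ against $l(\eta_j)\le a^{j+1}l(\Delta)$ (via Condition~\ref{cond:loss}), so one needs the error of the test at separation $k\Delta$ to decay geometrically, like $(2a)^{-k}$. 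The paper obtains this from the quadratic scaling property $\omega_H^{(Q_1)}(r n^{-1/2})\le r^{2}\,\omega_H^{(Q_1)}(n^{-1/2})$ (condition \eqref{eq:condOmegaH:App}, proved in Lemma~\ref{lemma:Hom2} using convexity of $Q_1\P$, which follows from convexity of $\P$): a separation $k\Delta=kC^2\omega_H^{(Q_1)}(n^{-1/2})$ then forces a Hellinger separation of order $\sqrt{k}\,C/\sqrt{n}$, hence a test error of order $e^{-kC^2/2}\le(1/2a)^k$, and it is exactly here that the threshold $C\ge\sqrt{2\log 2a}+1$ is used. Your proposal never invokes (or proves) this homogeneity of the privatized modulus, and without it the geometric accumulation you appeal to does not close.

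Two smaller points. First, your proposed resolution of the convex-hull issue ("reduce to i.i.d.\ mixtures by linearity and convexity") is not a proof: elements of $\conv\bigl(Q\P_{\le t}^{(n)}\bigr)$ are mixtures of product measures, not product measures, and linearity of $\theta$ does not by itself collapse them to a single marginal. The paper handles this either through condition \eqref{eq:SufficientDonoho} or, in the convex/linear case, through the inequality of Le Cam bounding the affinity of convex hulls of $n$-fold products by the $n$-th power of the Hellinger affinity over the convex hulls of the one-dimensional marginals, combined with the fact that $Q_1\P_{\le t}$ and $Q_1\P_{\ge t+\Delta}$ are convex so the supremum over hulls equals the supremum over the sets themselves. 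Second, the factor $a^{\lceil 2\log(C)/\log(3/2)\rceil}$ in $C_1$ does not come from counting halving steps of the range of $\theta$; it comes from converting $l(\Delta)=l\bigl(C^2\omega_H^{(Q_1)}(n^{-1/2})\bigr)$ into $l\bigl(\omega_H^{(Q_1)}(n^{-1/2})\bigr)$ by iterating the doubling condition, a minor but indicative misreading of where the constants arise.
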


The general version of this theorem (see Section~\ref{sec:App:ThmUpperB} of the supplement) is a strict generalization of results of \citet{Donoho91} to cover also the case where $Q(A|x)$ is an arbitrary non-interactive channel with identical marginals and not necessarily equal to $\mathds 1_A(x)$. Concerning its proof, we introduce a binary search estimator different from the one used by \citet{Donoho91}, which, in particular, takes the privatized data as input data. Our new construction also has the advantage that it facilitates a detailed but highly non-trivial analysis in the case of the specific binary privatization scheme introduced in Section~\ref{sec:GenUpper} below. The crucial point is that in conjunction with this privatization scheme, our minimax optimal binary search estimator can even be shown to be nearly linear (see Section~\ref{sec:linEst}). The details of our construction and an in-depth analysis of the estimator when based on this specific privatization scheme is presented in Section~\ref{SEC:BINSEARCH}.

%

\subsection{A general attainability result}
\label{sec:GenUpper}

The challenge in deriving rate optimal upper bounds on the $\alpha$-private minimax risk $\mathcal M_{n,\alpha}(\P, \theta)$ is now to find $\alpha$-sequentially interactive channel distributions $Q$, such that the upper bound of the form $l\circ\omega_H^{(Q_1)}(n^{-1/2})$ on $\mathcal M_n(Q,\P, \theta)$, obtained in Theorem~\ref{THM:UPPERB}, matches the rate of the lower bound 
$$
l\left( \frac{1}{2}\, \omega_{TV}\left(\left[ \frac{1-\eta}{\sqrt{2n(e^\alpha-1)^2}}\right]^-\right)\right)
$$ 
of Corollary~\ref{corr:lowerModulus}. It turns out that non-interactive channels with identical binary marginals lead to rate optimal procedures for $\alpha$-private estimation of a large class of functionals. More precisely, we suggest to use a channel with binary marginals
\begin{align}\label{eq:binaryChannel}
Q_1^{(\alpha, \ell)}(\{\pm z_0\}| x) \;&=\; \frac{1}{2}\left(1 \pm \frac{\ell(x)}{z_0} \right),
\end{align}
where $z_0:=\|\ell\|_\infty\frac{e^\alpha+1}{e^\alpha-1}$ and where $\ell:\X\to \R$ is an appropriate measurable and bounded function. Note that
\begin{align*}
\sup_{S\in\mathcal B(\R)} \frac{Q_1^{(\alpha, \ell)}(S|x_1)}{Q_1^{(\alpha, \ell)}(S|x_2)}
=
\max\left( \frac{1+\frac{\ell(x_1)}{\|\ell\|_\infty}\frac{e^\alpha-1}{e^\alpha+1}}{1+\frac{\ell(x_2)}{\|\ell\|_\infty}\frac{e^\alpha-1}{e^\alpha+1}},
\frac{1-\frac{\ell(x_1)}{\|\ell\|_\infty}\frac{e^\alpha-1}{e^\alpha+1}}{1-\frac{\ell(x_2)}{\|\ell\|_\infty}\frac{e^\alpha-1}{e^\alpha+1}}  \right)
\le
\frac{1+\frac{e^\alpha-1}{e^\alpha+1}}{1-\frac{e^\alpha-1}{e^\alpha+1}}
= e^\alpha,
\end{align*}
so that a non-interactive channel distribution with identical marginals \eqref{eq:binaryChannel} is $\alpha$-private. 
Actually, the support $\mathcal Z= \{-z_0,z_0\}$ of $Q_1^{(\alpha,\ell)}$ has no effect on its privacy provisions. However, with this specific choice of its support, the channel 
$
Q_1^{(\alpha,\ell)}
$ 
has the property that the conditional expectation of $Z_i$ given $X_i=x$ under $Q_1^{(\alpha,\ell)}$ equals $\int_{\mathcal Z} z \,Q_1^{(\alpha,\ell)}(dz|x) = -z_0 Q_1^{(\alpha,\ell)}(\{-z_0\}|x) + z_0 Q_1^{(\alpha,\ell)}(\{z_0\}|x) =  \ell(x)$.

To motivate the choice in \eqref{eq:binaryChannel}, we make the following observation. The channel \eqref{eq:binaryChannel} has the nice feature that for $\Pr_0,\Pr_1\in\P$ with densities $p_0$ and $p_1$ with respect to $\mu= \Pr_0+\Pr_1$, we have
\begin{align}
\dtv&\left(Q_1^{(\alpha,\ell)} \Pr_0, Q_1^{(\alpha,\ell)} \Pr_1 \right) \label{eq:dTVQP}\\
&= \sup_{A\in\B(\R)} \left| \int_\X Q_1^{(\alpha,\ell)}(A|x) p_0(x)\,d\mu(x) - \int_\X Q_1^{(\alpha,\ell)}(A|x) p_1(x)\,d\mu(x)\right|\notag\\
&=
\max\left\{ \left|\int_\X \frac{1}{2}\left( 1 + \frac{\ell(x)}{z_0}\right)[p_0(x)-p_1(x)]\,d\mu(x)\right|, \right.\notag\\
&\hspace{2cm}\left. \left|\int_\X \frac{1}{2}\left( 1 - \frac{\ell(x)}{z_0}\right)[p_0(x)-p_1(x)]\,d\mu(x)\right| \right\}\notag\\
&=\left|\int_\X  \frac{\ell(x)}{2z_0}[p_0(x)-p_1(x)]\,d\mu(x)\right|
=
\frac{1}{2z_0}| \E_{\Pr_0}[\ell] - \E_{\Pr_1}[\ell]|.\notag
\end{align}
If the functional of interest is actually of the form $\theta(\Pr) = \E_\Pr[\ell]$, then we can use the fact that $\dtv\le \dH$ to see that
\begin{align*}
\omega_H^{(Q_1^{(\alpha, \ell)})}(\eps) 
&= 
\sup\{ |\theta(\Pr_0)-\theta(\Pr_1)| : \dH\left(Q_1^{(\alpha,\ell)} \Pr_0, Q_1^{(\alpha,\ell)} \Pr_1 \right)\le \eps, \Pr_0,\Pr_1\in\P\}\\
&\le
\sup\{ |\theta(\Pr_0)-\theta(\Pr_1)| : |\theta(\Pr_0)-\theta(\Pr_1)|\le 2z_0\eps, \Pr_0,\Pr_1\in\P\}\\
&\le
2\|\ell\|_\infty\frac{e^\alpha+1}{e^\alpha-1}\eps.
\end{align*}
But at least for convex $\P$ and non-constant and linear $\theta$, Lemma~\ref{lemma:bestL1Rate} in Section~\ref{sec:AppAux} of the supplement shows that $\omega_{TV}(\eps) \ge c_0\eps$, for some positive constant $c_0$ and every small $\eps>0$. Thus, 
$$
\omega_H^{(Q_1^{(\alpha, \ell)})}(n^{-1/2}) \;\le\; 
2\|\ell\|_\infty (e^\alpha+1) c_0^{-1}\cdot\omega_{TV}\left(\sqrt{\frac{1}{n(e^\alpha-1)^2}}\right),
$$ 
for all large $n$. In general, if the functional $\theta:\P\to\R$ is of a more complicated form, then we have to find a sequence $(\ell_n)$ in $L_\infty$ for which 
\begin{align}\label{eq:EllSeq}
\omega_H^{(Q_1^{(\alpha, \ell_n)})}(n^{-1/2}) \;\lesssim \; \omega_{TV}\left( \sqrt{\frac{1}{n(e^\alpha-1)^2}}\right).
\end{align}
The following result realizes the claim of the previous display using Proposition~\ref{prop:Sion}. An extended version of it is stated and proved in Section~\ref{thm:Attainability:ext} of the supplement. 

\begin{theorem}\label{THM:ATTAINABILITY}
For $\alpha\in(0,\infty)$ and $\ell\in L_\infty(\X)$, let $Q^{(\alpha,\ell)}$ be the non-interactive $\alpha$-private channel with identical marginals $Q_1^{(\alpha,\ell)}$ as in \eqref{eq:binaryChannel}. If $\P$ is convex and dominated and $\theta:\P\to\R$ is linear,
then
\begin{equation*}
\inf_{\ell:\|\ell\|_\infty\le1}\omega_H^{(Q_1^{(\alpha, \ell)})}(\eps) \;\le\; \omega_{TV}\left(\left[ \eps \frac{e^\alpha+1}{e^\alpha-1}\right]^+\right), \quad\quad \forall \eps>0.
\end{equation*}
\end{theorem}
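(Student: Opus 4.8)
The plan is to establish, for every fixed $\eps>0$ and every $\eps'>\eps$, the existence of some $\ell=\ell_{\eps'}\in L_\infty(\X)$ with $\|\ell\|_\infty\le 1$ satisfying
\[
\omega_H^{(Q_1^{(\alpha,\ell)})}(\eps)\;\le\;\omega_{TV}\!\left(\eps'\tfrac{e^\alpha+1}{e^\alpha-1}\right),
\]
and then to take the infimum over $\ell$ and let $\eps'\downarrow\eps$; since $\omega_{TV}$ is non-decreasing this yields the asserted bound with the right limit $\omega_{TV}([\eps\tfrac{e^\alpha+1}{e^\alpha-1}]^+)$. Write $\kappa:=\tfrac{e^\alpha+1}{e^\alpha-1}>1$. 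Using that $\P$ is dominated by a $\sigma$-finite measure $\mu$ and that $\theta$ is linear, the set $\mathbb S:=\{\Pr_0-\Pr_1:\Pr_0,\Pr_1\in\P\}$ of finite signed measures is convex and symmetric, each $\sigma\in\mathbb S$ has a $\mu$-density $f_\sigma$ with $\int_\X|f_\sigma|\,d\mu=2\dtv(\Pr_0,\Pr_1)$, and $\theta(\sigma):=\theta(\Pr_0)-\theta(\Pr_1)$ is a well-defined linear functional on $\mathbb S$. The first move is a reduction to a linear constraint: if $\|\ell\|_\infty\le 1$, then $z_0=\|\ell\|_\infty\kappa\le\kappa$, so by $\dtv\le\dH$ and the identity \eqref{eq:dTVQP} one has, for $\sigma=\Pr_0-\Pr_1$,
\[
\dH\!\left(Q_1^{(\alpha,\ell)}\Pr_0,\,Q_1^{(\alpha,\ell)}\Pr_1\right)\le\eps\quad\Longrightarrow\quad\Big|\int_\X\ell\,d\sigma\Big|\le 2z_0\eps\le 2\kappa\eps .
\]
Consequently it suffices to choose $\ell$ with $\|\ell\|_\infty\le 1$ such that $|\int_\X\ell\,d\sigma|>2\kappa\eps$ holds for every $\sigma\in\mathbb S$ with $|\theta(\sigma)|>\omega_{TV}(\eps'\kappa)$.

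To find such an $\ell$, I would introduce the convex set $A_+:=\{\sigma\in\mathbb S:\theta(\sigma)>\omega_{TV}(\eps'\kappa)\}$, an intersection of $\mathbb S$ with a half-space, which is again dominated by $\mu$. If $A_+=\varnothing$ then $\sup_{\sigma\in\mathbb S}|\theta(\sigma)|\le\omega_{TV}(\eps'\kappa)$ by symmetry of $\mathbb S$, and any $\ell\neq 0$ with $\|\ell\|_\infty\le 1$ does the job; so assume $A_+\neq\varnothing$. Applying Proposition~\ref{prop:Sion} with $A_+$ in place of $\mathbb S$ and with $\mathbb T=\{\ell\in L_\infty(\X,\mathcal F,\mu):\|\ell\|_\infty\le 1\}$ (the set obtained there by taking $a=-1$, $b=1$), together with the elementary duality $\sup_{\|\ell\|_\infty\le 1}\int_\X\ell\,d\sigma=\int_\X|f_\sigma|\,d\mu$ (attained at $\ell=\sign f_\sigma$), gives
\[
\sup_{\|\ell\|_\infty\le 1}\ \inf_{\sigma\in A_+}\ \int_\X\ell\,d\sigma\;=\;\inf_{\sigma\in A_+}\ \int_\X|f_\sigma|\,d\mu .
\]
By the very definition of $\omega_{TV}$, each $\sigma=\Pr_0-\Pr_1\in A_+$ satisfies $\int_\X|f_\sigma|\,d\mu=2\dtv(\Pr_0,\Pr_1)>2\eps'\kappa$ (otherwise $|\theta(\sigma)|\le\omega_{TV}(\eps'\kappa)$), so the right-hand side above is at least $2\eps'\kappa>2\kappa\eps$. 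Hence there is $\ell^*$ with $\|\ell^*\|_\infty\le 1$ and $\inf_{\sigma\in A_+}\int_\X\ell^*\,d\sigma>2\kappa\eps$.

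Finally I would conclude as follows: for this $\ell^*$ one has $\int_\X\ell^*\,d\sigma>2\kappa\eps$ for all $\sigma\in A_+$, and since $\mathbb S$ is symmetric and $\theta(-\sigma)=-\theta(\sigma)$, also $|\int_\X\ell^*\,d\sigma|>2\kappa\eps$ for all $\sigma$ in $A_+\cup(-A_+)=\{\sigma\in\mathbb S:|\theta(\sigma)|>\omega_{TV}(\eps'\kappa)\}$. This is precisely the property isolated in the first paragraph, so $\omega_H^{(Q_1^{(\alpha,\ell^*)})}(\eps)\le\omega_{TV}(\eps'\kappa)$, and passing to the infimum over $\ell$ and then $\eps'\downarrow\eps$ completes the argument. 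The step I expect to require the most care is the strict inequality $\inf_{\sigma\in A_+}\int_\X|f_\sigma|\,d\mu>2\kappa\eps$: for the original threshold $\eps$ this infimum can equal $2\kappa\eps$ without being attained, which is exactly why one must pass through $\eps'>\eps$ and recover only the right limit $\omega_{TV}([\eps\kappa]^+)$ in the end. Checking that Proposition~\ref{prop:Sion} genuinely applies to the half-space slice $A_+$ of $\mathbb S$ and to $\mathbb T=\{\|\ell\|_\infty\le 1\}$, and keeping track of the factors of two coming from \eqref{eq:dTVQP}, are the remaining routine parts; note also that the whole argument uses only $\dtv\le\dH$ and not the reverse bound, so no extra $\alpha$-dependent constant is lost.
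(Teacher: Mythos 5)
Your argument is correct and follows essentially the same route as the paper's proof: both reduce the problem via \eqref{eq:dTVQP} and $\dtv\le\dH$ to finding an $\ell^*$ with $\|\ell^*\|_\infty\le1$ that separates pairs whose $\theta$-gap exceeds a threshold just above $\omega_{TV}(\eps\frac{e^\alpha+1}{e^\alpha-1})$, and both obtain this $\ell^*$ by applying Proposition~\ref{prop:Sion} to the convex, dominated set of differences $\Pr_0-\Pr_1$ with large $\theta$-gap against the unit ball of $L_\infty$, using that such differences have total variation strictly larger than the argument of $\omega_{TV}$. Your single slack parameter $\eps'>\eps$ plays the role of the paper's pair $(\xi_0,\xi_1)$, and the paper's $\Phi_{\ell}/\Psi_{\ell}$ generalized-inverse bookkeeping is just another phrasing of your contrapositive, so the two proofs are the same in substance.
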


\begin{proof}
For $s\ge0$, define 
$$
\Phi_{\ell}(s) := \sup\{\theta(\Pr_0)-\theta(\Pr_1) : \Pr_0,\Pr_1\in\P, | \E_{\Pr_0}[\ell] - \E_{\Pr_1}[\ell]| \le  s \},
$$
and note that $d_{TV}\le \dH$, $\|\ell\|_\infty\le 1$ and \eqref{eq:dTVQP}, imply
$$
\omega_H^{(Q^{(\alpha, \ell)})}(\eps) \le \Phi_{\ell}\left(2\eps\frac{e^\alpha+1}{e^\alpha-1}\right).
$$
Clearly, the function $\Phi_{\ell}$ is non-decreasing.
For $t\ge0$, define 
$\Psi_{\ell}(t) := \inf \{ s\ge0 : \Phi_{\ell}(s)>t\}$. We claim that the functions $\Phi_{\ell}$ and $\Psi_{\ell}$ have the following properties.
\begin{align}
&\Psi_{\ell}(t) >s \;\Rightarrow\; \Phi_{\ell}(s)\le t, \quad 
\sup_{\ell:\|\ell\|_\infty\le 1}\Psi_{\ell}(t) >s \;\Rightarrow\; \inf_{\ell:\|\ell\|_\infty\le 1} \Phi_{\ell}(s)\le t \label{eq:PsiPhi}\\
&\Psi_{\ell}(t) \ge \inf\{ | \E_{\Pr_0}[\ell] - \E_{\Pr_1}[\ell]| : \theta(\Pr_0)-\theta(\Pr_1)\ge t, \Pr_0,\Pr_1\in\P\} \label{eq:PsiLower}
\end{align}
The first one is obvious. To establish \eqref{eq:PsiLower}, set $A_{\ell^*}(t) := \{ s\ge0 : \Phi_{\ell}(s)>t\}$ and $B_{\ell}(t) := \{ | \E_{\Pr_0}[\ell] - \E_{\Pr_1}[\ell]| : \theta(\Pr_0)-\theta(\Pr_1)\ge t, \Pr_0,\Pr_1\in\P\}$ and note that for $A_{\ell}(t)=\varnothing$ the claim is trivial. So take $s\in A_{\ell}(t)$. Then $\Phi_{\ell}(s)> t$, which implies that there are $\Pr_0,\Pr_1\in\P$ with $| \E_{\Pr_0}[\ell] - \E_{\Pr_1}[\ell]| \le s$ and $\theta(\Pr_0)-\theta(\Pr_1)>t$. Thus, $\nu := | \E_{\Pr_0}[\ell] - \E_{\Pr_1}[\ell]| \le s$ and $\nu \in B_{\ell}(t)$. We have just shown that for every $s\in A_{\ell}(t)$ there exists a $\nu\in B_{\ell}(t)$ with $\nu\le s$. But this clearly means that $\Psi_{\ell}(t) = \inf A_{\ell}(t) \ge \inf B_{\ell}(t)$, as required. 

Now, abbreviate $\eta:=\eps\frac{e^\alpha+1}{e^\alpha-1}$, $\delta := \omega_{TV}(\eta+\xi_0)+\xi_1$, for $\xi_0,\xi_1>0$, and note that 
$\mathbb T := \{\phi\in L_\infty(\X, \mathcal F, \mu) : -1 \le \int_{\X} \phi f\,d\mu \le 1, \forall f\in L_1(\X, \mathcal F, \mu): \|f\|_{L_1}\le1\} = \{\phi\in L_\infty(\X, \mathcal F, \mu) : \|\phi\|_\infty\le1\}$. Using convexity and dominatedness of $\P$ together with linearity of $\theta$, we see that $\mathbb S_\delta:= \{\Pr_0-\Pr_1 : \theta(\Pr_0)-\theta(\Pr_1) \ge \delta, \Pr_0,\Pr_1\in\P\}$ is a dominated convex set of finite signed measures. Hence, 
\begin{equation*}
\sup_{\ell:\|\ell\|_\infty\le 1} \inf_{\sigma\in\mathbb S_\delta} \int_\X \ell\,d\sigma \;=\; \inf_{\sigma\in\mathbb S_\delta} \sup_{\ell:\|\ell\|_\infty\le 1}\int_\X \ell\,d\sigma,
\end{equation*} 
follows from Proposition~\ref{prop:Sion} with $a=-1$ and $b=1$. Therefore, \eqref{eq:PsiLower} yields
\begin{align*}
\sup_{\ell:\|\ell\|_\infty\le1} \Psi_{\ell}(\delta) 
&\ge 
\sup_{\ell:\|\ell\|_\infty\le1} \inf_{\sigma\in\mathbb S_\delta} \left|\int_{\mathcal X} \ell\,d\sigma \right|
\ge
\sup_{\ell:\|\ell\|_\infty\le1} \inf_{\sigma\in\mathbb S_\delta} \int_{\mathcal X} \ell\,d\sigma \\
&=
\inf_{\sigma\in\mathbb S_\delta} \sup_{\ell:\|\ell\|_\infty\le1} \int_{\mathcal X} \ell\,d\sigma \\
&=
2 \inf_{\sigma\in\mathbb S_\delta} \|\sigma\|_{TV} \\
&=
2 \inf\left\{d_{TV}(\Pr_0,\Pr_1) : \theta(\Pr_0)-\theta(\Pr_1)\ge\omega_{TV}\left(\eta+\xi_0\right)+\xi_1 \right\} \\
&\ge
2 \inf\left\{d_{TV}(\Pr_0,\Pr_1) : \theta(\Pr_0)-\theta(\Pr_1)>\omega_{TV}\left(\eta+\xi_0\right)\right\} \\
&\ge 2 (\eta+\xi_0) > 2\eta = 2\eps\frac{e^\alpha+1}{e^\alpha-1}.
\end{align*}
An application of \eqref{eq:PsiPhi} and letting $\xi_0\to0$ now finishes the proof.
\end{proof}

The next corollary now puts together Theorem~\ref{THM:UPPERB} and Theorem~\ref{THM:ATTAINABILITY}. Its proof is deferred to Section~\ref{sec:App:COR:ATTAINABILITY} of the supplement. A somewhat more general version of this result that relaxes convexity of the model $\P$ and linearity of the functional $\theta$ is stated and proved in Section~\ref{sec:Attainability:App} of the supplement. We want to emphasize here that the assumptions of the more general result of Section~\ref{sec:Attainability:App} in the supplement can be verified, for instance, in the non-convex case of estimating the endpoint of a uniform distribution (cf. Section~\ref{sec:uniform} in the supplement).

\begin{corollary}\label{COR:ATTAINABILITY}
Fix $\alpha\in(0,\infty)$, $n\in\N$, suppose that Assumptions~\ref{cond:thetaBound} and \ref{cond:loss} hold, that $\P$ is convex and dominated and that $\theta:\P\to\R$ is linear.
Then, 
$$
\mathcal M_{n,\alpha}(\P, \theta) := \inf_{Q\in\mathcal Q_\alpha} \mathcal M_n(Q,\P, \theta)\;\le\; C_1\cdot l\left(\omega_{TV}\left( \frac{4}{\sqrt{n}} \frac{e^\alpha+1}{e^\alpha-1} \right) \right),
$$
where $\mathcal Q_\alpha$ is the collection of $\alpha$-sequentially interactive channels as in \eqref{eq:SIset}. The constant $C_1$ is given by 
$$
C_1 = \left[1+\frac{8a^2}{2a-1} \right] a^{\lceil 2\log(C)/\log(3/2)\rceil+1},
$$ 
where $C = \sqrt{2\log(2a)}+1$ and $a>1$ is the constant from Condition~\ref{cond:loss}.
\end{corollary}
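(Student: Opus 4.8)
The plan is to assemble the two principal results of this section: Theorem~\ref{THM:ATTAINABILITY} supplies a near-optimal binary privatization scheme, and Theorem~\ref{THM:UPPERB} equips that scheme with a matching estimator.

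First I would dispose of the degenerate case where $\theta$ is constant on $\P$: then $\omega_{TV}\equiv0$, the constant estimator has risk $l(0)=0$ by Condition~\ref{cond:loss}, and the claimed inequality reads $0\le C_1 l(0)=0$. So from here on assume $\theta$ is non-constant, which guarantees $\omega_{TV}(\eps)>0$ for every $\eps>0$ (Lemma~\ref{lemma:bestL1Rate} for small $\eps$ and monotonicity of $\omega_{TV}$ for the rest); this strict positivity is needed below.

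Now fix $n$ and write $y_n:=n^{-1/2}\,\tfrac{e^\alpha+1}{e^\alpha-1}$. Applying Theorem~\ref{THM:ATTAINABILITY} with $\eps=n^{-1/2}$ yields $\inf_{\ell:\|\ell\|_\infty\le1}\omega_H^{(Q_1^{(\alpha,\ell)})}(n^{-1/2})\le\omega_{TV}(y_n^+)$. Since $\omega_{TV}(y_n^+)\ge\omega_{TV}(y_n)>0$, I may approximate this infimum and pick $\ell_n$ with $\|\ell_n\|_\infty\le1$ such that $\omega_H^{(Q_1^{(\alpha,\ell_n)})}(n^{-1/2})\le\tfrac32\,\omega_{TV}(y_n^+)$. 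The associated non-interactive channel $Q^{(\alpha,\ell_n)}$ with identical binary marginals \eqref{eq:binaryChannel} is $\alpha$-private (checked right after \eqref{eq:binaryChannel}) and has output dimension $q=1$, hence lies in $\mathcal Q_\alpha$, so $\mathcal M_{n,\alpha}(\P,\theta)\le\mathcal M_n(Q^{(\alpha,\ell_n)},\P,\theta)$. Next I would invoke Theorem~\ref{THM:UPPERB} with $Q=Q^{(\alpha,\ell_n)}$ --- its hypotheses are exactly Conditions~\ref{cond:thetaBound}, \ref{cond:loss}, dominatedness and convexity of $\P$, and linearity of $\theta$, all assumed here --- to obtain a binary search estimator achieving $\mathcal M_n(Q^{(\alpha,\ell_n)},\P,\theta)\le C_1'\,l\bigl(\omega_H^{(Q_1^{(\alpha,\ell_n)})}(n^{-1/2})\bigr)\le C_1'\,l\bigl(\tfrac32\omega_{TV}(y_n^+)\bigr)$, where $C_1'=\bigl[1+\tfrac{8a^2}{2a-1}\bigr]a^{\lceil 2\log(C)/\log(3/2)\rceil}$ is the constant of Theorem~\ref{THM:UPPERB} for $C=\sqrt{2\log(2a)}+1$, so that $C_1=aC_1'$. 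It then remains to tidy up constants: by monotonicity of $\omega_{TV}$ one has $\omega_{TV}(y_n^+)\le\omega_{TV}(\tfrac83 y_n)\le\omega_{TV}(4y_n)$, and by Condition~\ref{cond:loss} $l(\tfrac32 t)\le a\,l(t)$, so that $l\bigl(\tfrac32\omega_{TV}(y_n^+)\bigr)\le a\,l\bigl(\omega_{TV}(4y_n)\bigr)$; chaining the inequalities gives $\mathcal M_{n,\alpha}(\P,\theta)\le aC_1'\,l\bigl(\omega_{TV}(4y_n)\bigr)=C_1\,l\bigl(\omega_{TV}(\tfrac{4}{\sqrt n}\tfrac{e^\alpha+1}{e^\alpha-1})\bigr)$, as claimed.

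As the corollary is essentially a splicing of the two theorems, I do not expect a serious obstacle. The one point that needs care is the bookkeeping around the infimum in Theorem~\ref{THM:ATTAINABILITY}, which need not be attained: one selects a near-minimizing $\ell_n$ and must arrange that the incurred slack is absorbed by the self-similarity Condition~\ref{cond:loss}, which is exactly why the reduction to non-constant $\theta$ --- forcing $\omega_{TV}(y_n^+)>0$, so that the slack $\tfrac12\omega_{TV}(y_n^+)$ is strictly positive --- is required. One should also note that the right-limit $[\cdot]^+$ produced by Theorem~\ref{THM:ATTAINABILITY} is harmless, since $\omega_{TV}$ is non-decreasing and hence bounded above by its value at any slightly larger argument.
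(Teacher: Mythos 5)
Your proposal is correct and follows essentially the same route as the paper: splice Theorem~\ref{THM:ATTAINABILITY} (existence of a near-optimal binary channel $Q^{(\alpha,\ell_n)}$) into Theorem~\ref{THM:UPPERB}, absorb the slack from the non-attained infimum via Condition~\ref{cond:loss} (hence the extra factor $a$ in $C_1$), and dispose of constant $\theta$ separately. The only difference is bookkeeping: the paper works through the extended supplementary versions (Theorem~\ref{thm:Attainability} with additive slacks $\xi_0,\xi_1$ and $c=1/2$, and Theorem~\ref{thm:upperB:App} with conditions \eqref{eq:condOmegaH:App} and \eqref{eq:Sufficient} verified explicitly), whereas you use a multiplicative $\tfrac32$ slack together with the main-text statements, which already package those verifications.
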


Summarizing, under the conditions of Corollary~\ref{COR:ATTAINABILITY} and invoking our results of Section~\ref{sec:lowerB} (see also Section~\ref{sec:ApplowerB} in the supplement), we obtain the characterization \eqref{eq:MainRes} announced in the introduction, i.e., for any fixed $\alpha\in(0,\infty)$,
$$
\mathcal M_{n,\alpha}(\P, \theta) \;\asymp\; l\circ \omega_{TV}\left(n^{-1/2}\right).
$$
More precisely, we even find that for all $n\in\N$ and $\alpha\in(0,\infty)$, 
\begin{align*}
\frac{1}{4}l\left( \frac{1}{2} \omega_{TV}\left( \sqrt{\frac{1}{8 n(e^\alpha-1)^2}}\right)\right) 
\;&\le\;\mathcal M_{n,\alpha}(\P, \theta) \\
\;&\le\; 
C_1 l\left( \omega_{TV}\left(\sqrt{ \frac{16}{n}\frac{(e^\alpha+1)^2}{(e^\alpha-1)^2}}\right)\right).
\end{align*}
This also shows that in the private case the effective sample size reduces from $n$ to $n\alpha^2$, for $\alpha$ small.

\begin{remark}
Although our analysis was tailored to $\alpha\le 1$ and the bounds in the previous display are tight (up to universal constants) in that regime, they are not tight for large $\alpha$, in the sense that $\frac{1}{e^\alpha-1}$ and $\frac{e^\alpha+1}{e^\alpha-1}$ are vastly different for $\alpha$ large. In view of the additive noise mechanism of \citet{Geng15}, an anonymous referee has pointed out the plausible conjecture that the correct scaling should actually be
$$
\omega_{TV}\left( \frac{1}{\sqrt{n(e^\alpha-1)^2}}\right) 
\lor \omega_{TV}\left(\frac{1}{\sqrt{ne^{2\alpha/3}}}\right)
\lor\omega_{H}\left( \frac{1}{\sqrt{n}}\right),
$$
for all $\alpha>0$. Inspired by this conjecture we improved the $\alpha$-dependence of our lower bounds for non-interactive channels to
$$
\omega_{TV}\left( \frac{1}{\sqrt{n(e^\alpha-1)^2}}\right) 
\lor \omega_{TV}\left(\frac{1}{\sqrt{ne^{\alpha}}}\right)
\lor\omega_{H}\left( \frac{1}{\sqrt{n}}\right)
$$
(see Section~\ref{sec:ApplargeAlpha} of the supplement).
The interesting question of whether one of these scalings is the correct one, or whether there exist even many more different regimes which describe the $\alpha$-dependence in large generality is left for future research.

\end{remark}

\subsection{Optimality of affine estimators}
\label{sec:linEst}

In the previous subsection we have seen that a simple non-interactive channel with binary marginals $Q_1^{(\alpha, \ell)}$ supported on $\mathcal Z=\{-z_0,z_0\}$ with $z_0=\|\ell\|_\infty\frac{e^\alpha+1}{e^\alpha-1}$ and such that $Q_1^{(\alpha, \ell)}(\{z_0\}|x) = \frac{1}{2}\left( 1 + \frac{\ell(x)}{z_0}\right)$, leads to rate optimal locally private estimation, if $\ell\in L_\infty(\X)$ is chosen appropriately. This means that the actual observations $Z_1,\dots, Z_n$ that are available for estimation of $\theta(\Pr)$, are iid according to a binary distribution on $\mathcal Z$ with probability of outcome $z_0$ equal to $[Q_1^{(\alpha,\ell)}\Pr](\{z_0\}) = \frac{1}{2}\left( 1 + \frac{\E_\Pr[\ell]}{z_0}\right)$. But therefore clearly, $\bar{Z}_n=\frac{1}{n}\sum_{i=1}^nZ_i$ is sufficient for $\Pr$. It is important to note that this only works because the chosen channel $Q_1^{(\alpha, \ell)}$ is binary. By Rao-Blackwellization
\begin{align*}
\E_{[Q_1^{(\alpha,\ell)}\Pr]^{(n)}}\left[ l\left( \left|\hat{\theta}_n(Z) - \theta(\Pr)\right|\right)\right] 
\;\ge\;
\E_{[Q_1^{(\alpha,\ell)}\Pr]^{(n)}}\left[ l\left( \left|\E\left[\hat{\theta}_n(Z)\Big|\bar{Z}_n\right] - \theta(\Pr)\right|\right)\right],
\end{align*}
we conclude that, at least for convex loss functions $l$, there must be a minimax optimal estimator that is a function of $\bar{Z}_n$ only.  
In fact, we will show more than that. Under the additional assumptions that $\P$ is convex and dominated and $\theta$ is linear, there is a choice of $\ell$ and a constant $b\in\R$ such that $\bar{Z}_n + b$ is minimax rate optimal (possibly after projection onto the range of $\theta$). 
The proof of the following result is deferred to Section~\ref{sec:App:COR:ATTAINABILITY} in the supplement. It crucially relies on Proposition~\ref{prop:BinarySearch}.\ref{lemma:BinarySearch:B} where we show that our binary search estimator in conjunction with the binary channel $Q_1^{(\alpha, \ell)}$ is approximately affine.

\begin{corollary}\label{COR:LINEST}
Fix $\alpha\in(0,\infty)$, $n\in\N$, suppose that Assumptions~\ref{cond:thetaBound} and \ref{cond:loss} hold, that $\P$ is dominated and convex and that $\theta:\P\to\R$ is linear. 
Then there exists a function $\ell^*\in L_\infty(\X)$ and a constant $b\in\R$, such that 
$$
\sup_{\Pr\in\P}\E_{[Q_1^{(\alpha,\ell^*)}\Pr]^{(n)}}\left[ l\left( \left| \Pi\left[\bar{Z}_n + b\right] - \theta(\Pr)\right| \right)\right]\;\le\; C_2\cdot l\left(\omega_{TV}\left( \frac{4}{\sqrt{n}} \frac{e^\alpha+1}{e^\alpha-1} \right) \right),
$$
where $\Pi:\R\to[M_-,M_+] := \cl\, [\theta(\P)]$ is the projection onto the closure of the range of $\theta$, which must be an interval, and $Q_1^{(\alpha,\ell^*)}$ is the binary channel of \eqref{eq:binaryChannel}. The constant $C_2$ is given by 
$$
C_2 = \left[2+a^2+\frac{8a^2}{2a-1} \right] a^{\lceil 2\log(C)/\log(3/2)\rceil+3},
$$ 
where $C = \sqrt{\frac{3}{2}}[\max\{8(e^\alpha+1), \sqrt{2\log 2a}+1\}+1]$ and $a>1$ is the constant from Condition~\ref{cond:loss}.
\end{corollary}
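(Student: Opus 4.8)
The plan is to combine the general attainability result (Corollary~\ref{COR:ATTAINABILITY}) with the structural fact, promised in Proposition~\ref{prop:BinarySearch}.\ref{lemma:BinarySearch:B}, that the binary search estimator $\hat\theta_n^{(\Delta)}$ is \emph{approximately affine} in $\bar Z_n$ when built on top of the binary channel $Q_1^{(\alpha,\ell)}$. First I would take the $\ell^*\in L_\infty(\X)$ whose existence (and near-optimality of the associated Hellinger modulus) is guaranteed by Theorem~\ref{THM:ATTAINABILITY}: that theorem yields $\omega_H^{(Q_1^{(\alpha,\ell^*)})}(n^{-1/2})\lesssim \omega_{TV}(n^{-1/2}\tfrac{e^\alpha+1}{e^\alpha-1})$ up to a constant absorbed later, and one should rescale so $\|\ell^*\|_\infty\le 1$. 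Plugging this $\ell^*$ into Theorem~\ref{THM:UPPERB} with $\Delta=C^2\omega_H^{(Q_1^{(\alpha,\ell^*)})}(n^{-1/2})$ produces the binary search estimator $\hat\theta_n^{(\Delta)}$ achieving the rate $C_1\, l(\omega_{TV}(\tfrac{4}{\sqrt n}\tfrac{e^\alpha+1}{e^\alpha-1}))$; this is exactly the content of Corollary~\ref{COR:ATTAINABILITY}.

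The second step is to replace $\hat\theta_n^{(\Delta)}$ by an honest affine function of $\bar Z_n$. By Proposition~\ref{prop:BinarySearch}.\ref{lemma:BinarySearch:B} (invoked with the enlarged constant $C = \sqrt{3/2}[\max\{8(e^\alpha+1),\sqrt{2\log 2a}+1\}+1]$, which is why the constant in the statement is larger than in Corollary~\ref{COR:ATTAINABILITY}), there are scalars $\beta,b$ with $\hat\theta_n^{(\Delta)}$ equal to $\beta\bar Z_n + b$ up to an error controlled by $\Delta$, hence by $\omega_{TV}(\tfrac4{\sqrt n}\tfrac{e^\alpha+1}{e^\alpha-1})$. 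Because the channel support $\{-z_0,z_0\}$ is chosen so that $\E[Z_i\mid X_i]=\ell^*(X_i)$, one can in fact take $\beta=1$ after absorbing $2z_0$ into the definition of $\ell^*$ — i.e.\ the affine map is literally $\bar Z_n + b$. Then a triangle-inequality split
$$
l\big(|\Pi[\bar Z_n+b]-\theta(\Pr)|\big)\;\le\; l\big(\tfrac32|\hat\theta_n^{(\Delta)}-\theta(\Pr)|\big)\;\lor\; l\big(\tfrac32|\Pi[\bar Z_n+b]-\hat\theta_n^{(\Delta)}|\big)
$$
together with Condition~\ref{cond:loss} (the doubling-type bound $l(\tfrac32 t)\le a\,l(t)$) converts the $O(\Delta)$ affine-approximation error into another constant multiple of $l(\omega_{TV}(\cdots))$. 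Here $\Pi$ is a nonexpansive projection onto $\cl[\theta(\P)]$, which is an interval by convexity of $\P$ and linearity of $\theta$; since $\theta(\Pr)\in[M_-,M_+]$, projecting can only decrease the error, so the bound survives.

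The final step is bookkeeping of constants: tracking the factor $a^{\lceil 2\log C/\log(3/2)\rceil}$ through the iterated applications of the doubling inequality, and the extra powers of $a$ picked up by the two triangle-inequality splits (affine approximation, then projection), yields the stated $C_2 = [2+a^2+\tfrac{8a^2}{2a-1}]a^{\lceil 2\log C/\log(3/2)\rceil+3}$. I expect the main obstacle to be the affine-approximation step: one must show the specific binary search estimator really is close to $\bar Z_n+b$, which is not automatic and relies on the detailed construction in Section~\ref{SEC:BINSEARCH} — the binary channel makes $\bar Z_n$ sufficient, but quantifying how well a \emph{given} estimator is approximated by a linear statistic requires the explicit recursive structure of $\hat\theta_n^{(\Delta)}$ and a careful choice of the inflated constant $C$ so that the search resolves the parameter at scale $\Delta$. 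Everything else is a routine combination of already-established inequalities.
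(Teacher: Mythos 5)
Your route is the same as the paper's: construct $\ell^*$ via the saddle-point/minimax machinery, run the binary search estimator of Theorem~\ref{THM:UPPERB} on the binary channel, invoke Proposition~\ref{prop:BinarySearch}\ref{lemma:BinarySearch:B} to replace it by $\Pi[\bar Z_n+b]$ up to an error $2\Delta$ (absorbing the slope by rescaling $\ell^*$, exactly as the paper does), and pay for the extra $2\Delta$ through Condition~\ref{cond:loss}. But there is a genuine gap at the step you yourself flag as the obstacle: Proposition~\ref{prop:BinarySearch}\ref{lemma:BinarySearch:B} is not unconditional — it requires $\|\ell^*\|_\infty\le 1$ \emph{and} $\inf_{\sigma\in\mathbb S_\Delta}\int_\X \ell^*\,d\sigma>0$ for the \emph{specific} $\Delta=C^2\omega_H^{(Q_1^{(\alpha,\ell^*)})}(n^{-1/2})$ used by the estimator, and you never verify this. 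Two things are needed and are missing from your sketch. First, $\ell^*$ must be taken as a near-maximizer of the dual problem $\sup_{\|\ell\|_\infty\le1}\inf_{\sigma\in\mathbb S_\delta}\int\ell\,d\sigma$ (the extended Theorem~\ref{thm:Attainability}, parts \ref{thm:Attainability:A0}--\ref{thm:Attainability:C}), not merely any $\ell$ that makes $\omega_H^{(Q_1^{(\alpha,\ell)})}$ small as in the main-text statement of Theorem~\ref{THM:ATTAINABILITY}; only the former yields the strict positivity $\inf_{\sigma\in\mathbb S_\delta}\int\ell^*\,d\sigma>0$ at the threshold $\delta=\omega_{TV}\bigl(\tfrac{4}{\sqrt n}\tfrac{e^\alpha+1}{e^\alpha-1}\bigr)$. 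Second, this positivity at level $\delta$ must be transferred to level $\Delta$, i.e.\ one must prove $\Delta\ge\delta$; this is precisely why $C$ is inflated to contain $8(e^\alpha+1)$, and the proof uses the chain $\Delta\ge\tfrac32\,\omega_H^{(Q_1^{(\alpha,\ell^*)})}(\lfloor\bar C\rfloor n^{-1/2})\ge\tfrac32\,\omega_{TV}\bigl(\tfrac{\lfloor\bar C\rfloor}{2\sqrt n(e^\alpha-1)}\bigr)\ge\delta$, which rests on the bound $\dH(Q_1^{(\alpha,\ell^*)}\Pr_0,Q_1^{(\alpha,\ell^*)}\Pr_1)\le 2(e^\alpha-1)\dtv(\Pr_0,\Pr_1)$ and the quadratic homogeneity \eqref{eq:condOmegaH:App} of the privatized Hellinger modulus (Lemma~\ref{lemma:Hom2}). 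Your explanation that $C$ is enlarged ``so that the search resolves the parameter at scale $\Delta$'' does not supply this argument, and without it the application of Proposition~\ref{prop:BinarySearch}\ref{lemma:BinarySearch:B} is unjustified.

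Two smaller points. The displayed inequality $l(|A|+|B|)\le l(\tfrac32|A|)\lor l(\tfrac32|B|)$ is false (take $|A|=|B|$); you need the factor $2$, e.g.\ $l(x+y)\le l\bigl((3/2)^2\max(x,y)\bigr)\le a^2\,l(\max(x,y))$, or, as the paper does, use the second conclusion of Theorem~\ref{thm:upperB:App}, which directly controls $\E\bigl[l(|\hat\theta_n^{(\Delta)}-\theta(\Pr)|+2\Delta)\bigr]$ and already accounts for the extra powers of $a$ in $C_2$. Also, before invoking Theorem~\ref{thm:upperB:App} you should record, as the paper does, that convexity of $\P$ and linearity of $\theta$ make $Q_1^{(\alpha,\ell^*)}\P_{\le s}$, $Q_1^{(\alpha,\ell^*)}\P_{\ge t}$ convex so that \eqref{eq:Sufficient} and \eqref{eq:condOmegaH:App} hold; this is routine but part of the hypothesis checking.
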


It is remarkable, and perhaps somewhat surprising, that a private estimation procedure as simple as the one described above, can be rate optimal in such a broad class of different estimation problems. In particular, the sample mean $\bar{Z}_n = \frac{1}{n}\sum_{i=1}^n Z_{i}$ can never achieve a faster rate than $l(n^{-1/2})$. Correspondingly, Lemma~\ref{lemma:bestL1Rate} in Section~\ref{sec:AppAux} of the supplementary material, in conjunction with the lower bound of Corollary~\ref{corr:lowerModulus}, reveals (at least for convex $\P$) that $l(n^{-1/2})$ is the best possible rate of convergence in locally differentially private estimation problems.

A prominent example of a (non-private) estimation problem with faster optimal convergence rate than $l(n^{-1/2})$ is estimation of the endpoint of a uniform distribution. In that case, the non-private optimal rate is $l(n^{-1})$, which can not be attained by a sample mean estimator. Even though the set of uniform distributions is not convex, we show in Section~\ref{sec:uniform} of the supplement, that $\bar{Z}_n = \frac{1}{n}\sum_{i=1}^n Z_{i}$ with $\ell(x) = 2x$ is rate optimal in the corresponding private estimation problem.

Detailed inspection of the proof of Corollary~\ref{COR:LINEST} reveals that the function $\ell^*\in L_\infty(\X)$ is a solution of a certain saddle-point problem.
Solving this explicitly is not always straight-forward, even though the assumptions of the corollary guarantee existence. Therefore, in the next section, we explore another direction for finding an appropriate candidate $\ell^*$ in the construction of the locally private estimation procedure of Corollary~\eqref{COR:LINEST}.

\section{Constructing rate optimal privatization mechanisms and estimators}
\label{sec:Constructive}

We have seen so far (Corollary~\ref{COR:LINEST}), that the non-interactive mechanism generating $\alpha$-private observations as
\begin{align*}
Z_i|X_i = \begin{cases}
z_0, &\text{ with probability } \frac{1}{2}\left(1 + \frac{\ell(X_i)}{z_0} \right),\\
-z_0, &\text{ with probability } \frac{1}{2}\left(1 - \frac{\ell(X_i)}{z_0} \right),
\end{cases}
\end{align*}
with $z_0 = \|\ell\|_\infty \frac{e^\alpha+1}{e^\alpha-1}$ and an appropriate choice of the function $\ell:\X\to\R$ leads to minimax rate optimality of the sample mean $\bar{Z}_n = \frac{1}{n}\sum_{i=1}^n Z_i$ for estimating $\theta(\Pr)$ (possibly after an appropriate shift and projection). Since this result of the previous Subsection~\ref{sec:linEst} was non-constructive, it remains to determine $\ell$ for practical use. 
By construction of the above privacy mechanism, we have
$$
\E[\bar{Z}_n|X_1,\dots, X_n] = \frac{1}{n} \sum_{i=1}^n \E[Z_i|X_i] =  \frac{1}{n} \sum_{i=1}^n \ell(X_i) =: \tilde{\theta}_n^{(\ell)}.
$$
This means, in particular, that the bias of $\bar{Z}_n$ is the same as that of the linear estimator $\tilde{\theta}_n^{(\ell)}$ in the estimation problem with direct observations $X_1,\dots, X_n$, with worst case absolute bias denoted by
$$
B_{\P,\theta}(\ell) := \sup_{\Pr\in\P}|\E_\Pr[\ell] - \theta(\Pr)|.
$$ 
We are thus lead to ask: What is the optimal choice of $\ell$ in the direct estimation problem using $\tilde{\theta}_n^{(\ell)}$? 

Clearly, there is no universal answer, but the solution must depend on the estimand $\theta$. For instance, if $\theta(\Pr) = p(x_0)$, where $p = d \Pr/d \lambda$ is a Lebesgue density of $\Pr$, then one might take $\ell(x) = \frac{1}{h}K\left( \frac{x-x_0}{h}\right)$ for some kernel function $K$ and an appropriate bandwidth $h=h_n>0$. Or, if $\theta(\Pr) = \E_\Pr[f]$, for some measurable $f:\X\to\R$, then $\ell = f$ is natural. Note however, that $\ell$ has to be bounded in order for the privacy mechanism to be well defined. Thus, if $f$ is unbounded, $\ell$ has to be taken as a truncated version of $f$, e.g., $\ell(x) = f(x) \mathds 1_{|f(x)|\le \frac{1}{h}}$.

Now classically we would trade-off the bias $B_{\P,\theta}(\ell)$ of the estimator $\tilde{\theta}_n^{(\ell)}$ against its variance (or standard deviation)
$$
\Var[\tilde{\theta}_n^{(\ell)}] = \frac{1}{n}\left( \E_{\Pr}[\ell^2] - \E_\Pr[\ell]^2\right).
$$
Instead, we must now trade-off the bias $B_{\P,\theta}(\ell)$ of the private estimator $\bar{Z}_n$ against its variance, which is easily seen to equal
$$
\Var[\bar{Z}_n] = \frac{1}{n}\left( z_0^2 - \E_\Pr[\ell]^2\right)
=
\frac{1}{n}\left( \|\ell\|_\infty^2\left(\frac{e^\alpha+1}{e^\alpha-1}\right)^2 - \E_\Pr[\ell]^2\right).
$$
This trade-off is still hard to do over general $\ell\in L_\infty(\X)$. Therefore, guided by our examples above, we here restrict to parametric classes $\{\ell_h : h\in\R^k\}$. As mentioned before, the choice of the class is problem specific, but in what follows we isolate a high-level sufficient condition (Condition~\ref{cond:ellh} below) for the class $\{\ell_h : h\in\R^k\}$ that allows for solving the bias-variance trade-off such that the solution $\ell_{h^*}$ leads to rate optimal private estimation. Furthermore, we then show that Condition~\ref{cond:ellh} can be checked in many classical examples. In particular, Condition~\ref{cond:ellh} also holds in cases where the model $\P$ is not convex (cf. Section~\ref{sec:uniform} in the supplement).

In the case $k=1$, $h\in\R$, the mentioned regularity condition~\ref{cond:ellh} below states that the collection of measurable functions $\ell_h : \mathcal X\to\R$, $h>0$, satisfies $\|\ell_h\|_\infty\lesssim h^{-s}$, for some $s\ge 0$, and is such that the worst case absolute bias 
$$
B_{\P,\theta}(\ell_h) := \sup_{\Pr\in\P}|\E_\Pr[\ell_h] - \theta(\Pr)|
$$ 
of the private estimator $\bar{Z}_n$ is bounded by an expression of the order $h^t$, as $h\to0$, for some $t>0$.
We then show that for the choice of tuning parameter
$$
h=h_n= \left( \frac{1}{\sqrt{n}}\frac{e^\alpha+1}{e^\alpha-1}\right)^{\frac{1}{s+t}},
$$ 
the above privatization and estimation protocol based on $\ell_{h_n}$ is $\alpha$-private minimax rate optimal if $\eps^r \lesssim \omega_{TV}(\eps)$ for $r=t/(s+t)$. This consideration misleadingly suggests that the estimator $\frac{1}{n}\sum_{i=1}^n \ell_h(X_i)$ is minimax optimal in the non-private case for a possibly different  choice of $h = \tilde{h}_n$. Although this appears to be correct in Examples~\ref{SEC:EXMomentFuncts}, \ref{SEC:EXPointFuncts} and \ref{SEC:EXMultDens} (see Section~\ref{SEC:EX} below and the supplementary material for details), it is not true in general (see Example~\ref{sec:uniform} where the minimax rate optimal estimator in the direct problem is not even of linear form).

For some estimation problems, such as estimating a multivariate anisotropic density at a point (cf. Section~\ref{SEC:EXMultDens}), the case $k=1$ is not sufficient and we need the full flexibility of Condition~\ref{cond:ellh}.

\begin{enumerate}[A)]
\stepcounter{enumi}
\stepcounter{enumi}
\item \label{cond:ellh}Suppose that $\P$ and $\theta$ are such that there exists $k\in\N$, $t\in(0,\infty)^k$, $s\in[0,\infty)^k$, $D_0\in(0,\infty)$ and ${h}_0\in(0,1]$ and a class of measurable functions $\ell_h : \mathcal X\to \R$ indexed by $h\in\R^k$, such that for all $h\in(0,{h}_0]^k$,
\begin{align}\label{eq:BiasCondition}
&\|\ell_h\|_\infty \quad\le \quad D_0 \prod_{j=1}^k h_j^{-s_j} \quad\quad\text{and} \quad\quad
B_{\P,\theta}(\ell_h) \quad\le\quad D_0 \frac{1}{k} \sum_{j=1}^k h_j^{t_j}.
\end{align}
\end{enumerate}

\begin{remark}
Note that Condition~\ref{cond:ellh} implies Condition~\ref{cond:thetaBound}, because the converse of \ref{cond:thetaBound} implies that $B_{\P,\theta}(\ell)$ is infinite whenever $\ell$ is bounded.
\end{remark}

The proof of the following theorem is deferred to Section~\ref{sec:App:THM:CONSTRUCTIVE} in the supplement.

\begin{theorem}\label{THM:CONSTRUCTIVE}
Suppose that Conditions~\ref{cond:loss} and \ref{cond:ellh} hold and set $\bar{r} = \sum_{j=1}^k \frac{s_j}{t_j}$.
For $\alpha\in(0,\infty)$, let $Q^{(\alpha,\ell)}$ be the $\alpha$-private channel with identical marginals \eqref{eq:binaryChannel} and set $h_n = (h_{n,1},\dots, h_{n,k})^T$ and
$$h_{n,j} = \left( \frac{1}{\sqrt{n}} \frac{e^\alpha+1}{e^\alpha-1}\right)^{\frac{1}{t_j(1+\bar{r})}}.$$ 
Then the arithmetic mean $\bar{Z}_n(z) := \frac{1}{n}\sum_{i=1}^n z_i$, $z=(z_1,\dots, z_n)'\in\R^n$, satisfies
\begin{align}\label{eq:MinimaxEst}
\sup_{\Pr\in\P} \E_{Q^{(\alpha,\ell_{h_n})}\Pr^{\otimes n}}\left[ l(|\bar{Z}_n-\theta(\Pr)|)\right]
\;\le\;
C_0\cdot l\left( \left(\frac{1}{\sqrt{n}}\frac{e^\alpha+1}{e^\alpha-1}\right)^{\frac{1}{1+\bar{r}}} \right),
\end{align}
for all $n\in\N$ and a positive finite constant $C_0$ that depends only on $a$ and $D_0$.
\end{theorem}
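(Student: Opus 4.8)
The plan is a textbook bias–variance analysis of the sample mean $\bar Z_n$, turned into a loss bound via Hoeffding's inequality and the doubling property of $l$ from Condition~\ref{cond:loss}. Write $\rho_n := \frac{1}{\sqrt n}\frac{e^\alpha+1}{e^\alpha-1}$ and $\varepsilon_n := \rho_n^{1/(1+\bar r)}$, so that the target bound in \eqref{eq:MinimaxEst} is $C_0\, l(\varepsilon_n)$. Since $Q^{(\alpha,\ell_{h_n})}$ is non-interactive with identical marginals $Q_1^{(\alpha,\ell_{h_n})}$ and the $X_i$ are iid $\Pr$, the coordinates $Z_1,\dots,Z_n$ are, under $Q^{(\alpha,\ell_{h_n})}\Pr^{\otimes n}$, iid two-point variables on $\{-z_0,z_0\}$ with $z_0=\|\ell_{h_n}\|_\infty\frac{e^\alpha+1}{e^\alpha-1}$; as recorded after \eqref{eq:binaryChannel} and in \eqref{eq:dTVQP} one has $\E_\Pr[Z_i]=\E_\Pr[\ell_{h_n}]$ and $\E_\Pr[Z_i^2]=z_0^2$.

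First I would bound the bias and variance of $\bar Z_n$ using Condition~\ref{cond:ellh}. The absolute bias equals $|\E_\Pr[\ell_{h_n}]-\theta(\Pr)|\le B_{\P,\theta}(\ell_{h_n})\le D_0\frac1k\sum_{j=1}^k h_{n,j}^{t_j}$, and substituting $h_{n,j}=\rho_n^{1/(t_j(1+\bar r))}$ makes $h_{n,j}^{t_j}=\rho_n^{1/(1+\bar r)}=\varepsilon_n$ for every $j$, so the bias is at most $D_0\varepsilon_n$. For the variance, $\Var_\Pr[\bar Z_n]=\frac1n(z_0^2-\E_\Pr[\ell_{h_n}]^2)\le \frac{D_0^2}{n}\bigl(\prod_j h_{n,j}^{-s_j}\bigr)^2\bigl(\frac{e^\alpha+1}{e^\alpha-1}\bigr)^2$; using $\prod_j h_{n,j}^{-2s_j}=\rho_n^{-2\sum_j s_j/(t_j(1+\bar r))}=\rho_n^{-2\bar r/(1+\bar r)}$ together with $\frac1n\bigl(\frac{e^\alpha+1}{e^\alpha-1}\bigr)^2=\rho_n^2$, this collapses to $\Var_\Pr[\bar Z_n]\le D_0^2\rho_n^{2-2\bar r/(1+\bar r)}=D_0^2\rho_n^{2/(1+\bar r)}=D_0^2\varepsilon_n^2$. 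Thus the choice of $h_{n,j}$ is precisely the one equalizing absolute bias and standard deviation, both at $D_0\varepsilon_n$. Here one must check that $h_{n,j}\in(0,h_0]$ so that Condition~\ref{cond:ellh} applies; this holds as soon as $\rho_n\le h_0^{(\min_j t_j)(1+\bar r)}$, and the complementary regime, where the effective sample size is $O(1)$, is handled separately and more crudely (capping each bandwidth at $h_0$ and invoking boundedness of $\theta$ from Condition~\ref{cond:thetaBound} together with the growth of $l$).

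Next I would convert the moment bounds into the loss bound. Deterministically $|\bar Z_n-\theta(\Pr)|\le G+D_0\varepsilon_n$ with $G:=|\bar Z_n-\E_\Pr[\bar Z_n]|$, and since $l$ is non-decreasing, $l(|\bar Z_n-\theta(\Pr)|)\le l(2D_0\varepsilon_n)+l(2G)\mathbf 1\{G>D_0\varepsilon_n\}$. Iterating Condition~\ref{cond:loss} gives $l(ct)\le a^{\lceil \log c/\log(3/2)\rceil}l(t)$ for $c\ge1$, hence there are constants $A_0=A_0(a,D_0)\ge1$ and $\beta=\beta(a)>1$ with $l(2^{j+1}D_0 t)\le A_0\beta^j l(t)$ for all $j\ge0$; in particular the first term is at most $A_0\, l(\varepsilon_n)$. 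For the second term, the $Z_i$ are iid and bounded by $z_0$ with $z_0^2/n\le D_0^2\varepsilon_n^2$, so Hoeffding yields $\Pr_{Q\Pr^{\otimes n}}(G\ge u)\le 2\exp\bigl(-u^2/(2D_0^2\varepsilon_n^2)\bigr)$. Decomposing dyadically over the events $\{2^jD_0\varepsilon_n<G\le 2^{j+1}D_0\varepsilon_n\}$, $j\ge0$, on which $l(2G)\le l(2^{j+2}D_0\varepsilon_n)\le A_0\beta^{j+1}l(\varepsilon_n)$ while $\Pr(G>2^jD_0\varepsilon_n)\le 2e^{-4^j/2}$, gives $\E[l(2G)\mathbf 1\{G>D_0\varepsilon_n\}]\le 2A_0\beta\, l(\varepsilon_n)\sum_{j\ge0}\beta^j e^{-4^j/2}$. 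The series converges, by the super-exponential decay, to a finite value $\Sigma(a)$ depending only on $a$. Summing both contributions and taking the supremum over $\Pr\in\P$ gives the claim with $C_0=A_0\bigl(1+2\beta\Sigma(a)\bigr)$, which depends only on $a$ and $D_0$.

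I do not expect a genuinely hard step here: the algebraic substitution and the Hoeffding estimate are routine, and the content is the exact cancellation of the $\rho_n$-exponents produced by the prescribed $h_{n,j}$. The only real bookkeeping is (i) ensuring $C_0$ stays free of $\alpha$ and $n$, which works because the exponent cancellation and the dyadic series are both $\alpha$-free, and (ii) the boundary regime $h_{n,j}>h_0$, which needs the short, separate and more elementary argument alluded to above; this is the place where I would be most careful.
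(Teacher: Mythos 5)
Your proof is correct and its overall architecture matches the paper's: the same exponent bookkeeping shows that the prescribed $h_{n,j}$ equalizes the worst-case bias and the noise level of $\bar Z_n$ at the scale $\varepsilon_n=\bigl(\tfrac{1}{\sqrt n}\tfrac{e^\alpha+1}{e^\alpha-1}\bigr)^{1/(1+\bar r)}$ (in particular $z_0/\sqrt n\le D_0\varepsilon_n$), and the loss bound is then obtained by peeling over geometric deviation levels against the growth allowed by Condition~\ref{cond:loss}. Where you differ is in the concentration tool: the paper does not use Hoeffding, but a Marcinkiewicz--Zygmund bound on the $q$-th centered moment of $\bar Z_n$, with $q=q(a)$ chosen so that $a(2/3)^q<1$, and then Markov's inequality on levels $(3/2)^{k-1}\Delta_n$; your route exploits the boundedness $|Z_i|\le z_0$ through a sub-Gaussian tail and a dyadic peeling, so the series $\sum_j\beta^je^{-4^j/2}$ converges for every $a>1$ without tuning a moment order to the loss. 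Both yield a constant depending only on $a$ and $D_0$ and free of $\alpha$ and $n$; your tail bound is stronger than needed, the paper's is the minimal polynomial one, but neither gives a materially better constant or weaker hypothesis here, so this is a cosmetic rather than substantive difference.

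One small remark on the boundary regime you flag: the paper's proof simply applies the bounds of Condition~\ref{cond:ellh} to $h_n$ without discussing whether $h_{n,j}\le h_0$, so you are not missing anything the paper supplies. Note, however, that your proposed remedy (capping the bandwidths at $h_0$) changes the channel away from $Q^{(\alpha,\ell_{h_n})}$ as prescribed in the statement, so it would prove a slightly modified claim rather than the literal one; since Condition~\ref{cond:ellh} gives no control on $\|\ell_h\|_\infty$ or the bias outside $(0,h_0]^k$, the literal statement in that regime is only meaningful under the implicit reading that the Condition's bounds apply to $h_n$ (as the paper tacitly assumes, e.g.\ $h_0=1$ and $\rho_n\le1$ in the examples). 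This does not affect the correctness of your argument in the main regime.
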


The private estimator $\bar{Z}_n$ of Theorem~\ref{THM:CONSTRUCTIVE} is $\alpha$-private minimax rate optimal if the derived upper bound \eqref{eq:MinimaxEst} on the worst case risk is of the same order as the lower bound of Corollary~\ref{corr:lowerModulus}. The latter is true if $ \eps^{\frac{1}{1+\bar{r}}} \lesssim \omega_{TV}(\eps)$, for all small $\eps>0$. That this is often satisfied simultaneously with the conditions of Theorem~\ref{THM:CONSTRUCTIVE} is demonstrated in the following example section~\ref{SEC:EX}. 
Lemma~\ref{lemma:CondC} in the supplement shows that under Condition~\ref{cond:ellh} the corresponding upper bound $\omega_{TV}(\eps) \lesssim \eps^{\frac{1}{1+\bar{r}}}$ holds.

\section{Examples}
\label{SEC:EX}

In this section, we discuss several concrete estimation problems for which we derive bounds on the total variation modulus 
$$
\omega_{TV}(\eps) \;=\; \sup\{ |\theta(\Pr_0)- \theta(\Pr_1)| : \dtv(\Pr_0,\Pr_1)\le \eps, \Pr_j\in\P\}
$$
which characterizes the rate of local differentially private estimation, and compare them to the Hellinger modulus $\omega_{H}(\eps)$ that determines the estimation rate under direct observations (see Table~\ref{table:Moduli} below). Furthermore, we exhibit families of functions $\ell_h$ which, in conjunction with the binary construction in \eqref{eq:binaryChannel} and an appropriate choice of tuning parameters, lead to minimax rate optimal locally private estimation procedures (see Table~\ref{table:ellFun}). 

Even in cases where the moduli of continuity are hard to evaluate explicitly, the following relationship is always true,
$$
\omega_H(\eps) \le \omega_{TV}(\eps) \le \omega_H(\sqrt{2\eps})\quad\forall \eps>0,
$$
because $\dtv \le \dH \le \sqrt{2\dtv}$ \citep[cf. for instance][Lemma~2.3]{Tsybakov09}. This shows that in the worst case, the private minimax rate of estimation is the square root of the non-private minimax rate, whereas the private rate can never be better than the non-private one. Both extremal cases can occur; see examples below.

The details and proofs of all claims made in Tables~\ref{table:Moduli} and \ref{table:ellFun} are deferred to Section~\ref{sec:AppExamples} of the supplementary material, except for the well known facts about the Hellinger modulus. Our list of examples is far from being exhaustive, but due to space constraints we present only a few classical cases for which the non-private rates are well known. The first column in each of the two following tables describes the statistical model $\P$, that is, the set of (marginal) data generating distributions, and the second column displays the functional $\theta:\P\to\R$ that is to be estimated. In the first row, we consider moment estimation. Here, $|f|(\X) := \{y\in\R : \exists x\in\X: |f(x)|=y\}$ denotes the range of $x\mapsto |f(x)|$. In the second row, we consider estimation of the $m$-th derivative of a density at a fixed point $x_0\in\X=\R$ over the class $\mathcal H_{\beta,L}^{\ll\lambda}(\R)$ of Lebesgue densities on $\R$ that are H\"{o}lder continuous with exponent $\beta>m$. That is, $p\in\mathcal H_{\beta,L}^{\ll\lambda}(\R)$ is $b := \lfloor \beta \rfloor$ times differentiable with $b$-th derivative $p^{(b)}$ satisfying
$$
|p^{(b)}(x) - p^{(b)}(y)| \le L |x-y|^{\beta-b},\quad\quad \forall x,y\in\R.
$$
In the third row of our tables, we consider density estimation at a point $x_0\in\X=\R^d$ over the anisotropic  class $\mathcal H_{\beta,L}^{\ll\lambda}(\R^d)$ of Lebesgue densities on $\R^d$ such that for every $j\in\{1,\dots, d\}$ and every $x,x'\in\R^d$,
$$
|p(x_1,\dots, x_{j-1},x_j', x_{j+1},\dots, x_d) - p(x)| \;\le \; L_j|x_j'-x_j|^{\beta_j}.
$$
Finally, in the last row we consider estimating the endpoint of a uniform distribution. The representations of the moduli of continuity in the last two columns of Table~\ref{table:Moduli} are to be understood as upper and lower bounds up to constants and for small values of $\eps>0$.

We see that already for moment estimation, both extreme cases mentioned above can occur. If $f$ is bounded, then $\theta(\Pr) = \E_\Pr[f]$ can be estimated at $n^{-1/2}$ rate in both the locally private as well as in the direct case. If, however, the range of $|f|$ contains the whole positive real line, and we have no more than a second moment being bounded (i.e., $\kappa\in(1,2]$), then the locally private rate is the square root of the rate under direct observations.\footnote{Note that this private minimax rate of convergence was already discovered by \citet{Duchi17} but with a rate optimal channel sequence and estimator different from ours.}
The density estimation problems are intermediate cases. There is some price to be paid for local differential privacy in terms of convergence rate, but it is not as bad as the square root of the direct rate. Finally, estimating the endpoint of a uniform distribution is another instance of a worst case situation under local differential privacy.

\begin{table}[h]
\begin{center}
\bgroup
\def\arraystretch{2}
\begin{tabular}{c|c|c|c|c}
$\P$ 		& 	$\theta: \P\to\R$	&		& $\omega_{H}(\eps)$		& 	$\omega_{TV}(\eps)$ \\ \hline\hline 
$\{\Pr:\E_\Pr[|f|^\kappa]\le L\}$ &  \multirow{2}{*}{$\Pr\mapsto \E_P[f]$} & $\|f\|_\infty<\infty$	&  $\eps$ 	& $\eps$	\\ \cline{3-5}
$L>0$, $\kappa>1$	& 	& $|f|(\X)\supseteq (0,\infty)$	&  $\eps^{(2\frac{\kappa-1}{\kappa})\land1}$	& $\eps^{\frac{\kappa-1}{\kappa}}$	\\ \hline
$\mathcal H_{\beta,L}^{\ll\lambda}(\R)$ 	&  \multirow{2}{*}{$\Pr\mapsto p^{(m)}(x_0)$} 	& 	&  \multirow{2}{*}{$\eps^{\frac{\beta-m}{\beta+1/2}}$}	&	\multirow{2}{*}{$\eps^{\frac{\beta-m}{\beta+1}}$}	\\
$L>0$, $\beta>0$ 		&    	& 	&  	&		\\ \hline
$\mathcal H_{\beta,L}^{\ll\lambda}(\R^d)$ 	&  \multirow{2}{*}{$\Pr\mapsto p(x_0)$} 	& 	\multirow{2}{*}{$\bar{r}=\sum_{j=1}^d\frac{1}{\beta_j}$} &  \multirow{2}{*}{$\eps^{\frac{1}{1+\bar{r}/2}}$}	&	\multirow{2}{*}{$\eps^{\frac{1}{1+\bar{r}}}$}	\\
$L\in\R_+^d$, $\beta\in(0,1]^d$ 		&    	& 	&  	&		\\ \hline
$\text{Unif}[0,\vartheta]  $ &  \multirow{2}{*}{$\Pr\mapsto \vartheta$} 	& 	&  \multirow{2}{*}{$\eps^2$} & \multirow{2}{*}{$\eps$}	\\
$\vartheta\in(0,M]$ 		&    	& 	&  	&		\\ \hline
%
\end{tabular}%
\egroup
\end{center}
\caption{Comparison of Hellinger (non-private) and total variation (private) moduli of continuity for several estimation problems. The minimax rate of convergence (for fixed $\alpha$) in each problem is given by $l\circ\omega(n^{-1/2})$, where $l$ is the loss function.}
\label{table:Moduli}
\end{table}

\begin{table}[h]
\begin{center}
\bgroup
\def\arraystretch{2}
\begin{tabular}{c|c|c|c}
$\P$ 		& 	$\theta: \P\to\R$	&		& $\ell_h(x) $		\\ \hline\hline 
$\{\Pr:\E_\Pr[|f|^\kappa]\le L\}$ &  \multirow{2}{*}{$\Pr\mapsto \E_\Pr[f]$} & $\|f\|_\infty<\infty$	&  $f(x)$		\\ \cline{3-4}
$L>0$, $\kappa>1$	& 	& $|f|(\X) \supseteq (0,\infty)$	&  $f(x)\mathds 1_{|f(x)|\le \frac{1}{h}}$	\\ \hline
$\mathcal H_{\beta,L}^{\ll\lambda}(\R)$ 	&  \multirow{2}{*}{$\Pr\mapsto p^{(m)}(x_0)$} 	& 	&  \multirow{2}{*}{$\mathds 1_{[-1,1]}\left(\frac{x-x_0}{h}\right)\cdot\frac{d^m}{dx^m} \frac{1}{h}K\left( \frac{x-x_0}{h}\right)$}	\\
$L>0$, $\beta>0$ 		&    	& 	&  		\\ \hline
$\mathcal H_{\beta,L}^{\ll\lambda}(\R^d)$ 	&  \multirow{2}{*}{$\Pr\mapsto p(x_0)$} 	& 	\multirow{2}{*}{$\bar{r}=\sum_{j=1}^d\frac{1}{\beta_j}$} &  \multirow{2}{*}{$\prod_{j=1}^d\frac{1}{h_j}K\left(\frac{x_j-x_{0,j}}{h_j} \right)$}\\
$L\in\R_+^d$, $\beta\in(0,1]^d$ 		&    	& 	& 		\\ \hline
$\text{Unif}[0,\vartheta]  $ &  \multirow{2}{*}{$\Pr\mapsto \vartheta$} 	& 	&  \multirow{2}{*}{$2x$} \\
$\vartheta\in(0,M]$ 		&    	& 	& 		\\ \hline
%
\end{tabular}%
\egroup
\end{center}
\caption{Examples for the choice of class $\{\ell_h:h\in\R^k\}$ leading to rate optimal estimation in all the problems of Table~\ref{table:Moduli}.}
\label{table:ellFun}
\end{table}


\section{Studying the binary search estimator}
\label{SEC:BINSEARCH}
Within this section, let $M_-:=\inf_{\Pr\in\P}\theta(\Pr)$, $M_+ :=\sup_{\Pr\in\P}\theta(\Pr)$, $M := M_+-M_-$ and $\mathbb S_\Delta := \{\Pr_0-\Pr_1: \theta(\Pr_0)-\theta(\Pr_1)\ge \Delta, \Pr_0,\Pr_1\in\P\}$, for $\Delta\ge 0$.

\begin{proposition}\label{prop:BinarySearch}
Fix a finite constant $\Delta>0$ and suppose that $-\infty<M_- < M_+<\infty$. Let $Q: \mathcal B(\mathcal Z^n)\times \mathcal X^n\to [0,1]$ be a non-interactive channel distribution with identical marginals $Q_1$. Moreover, let $N= N(M,\Delta)$ be the smallest integer such that $N\Delta>M>0$. For $l\in\N_0$, set $\eta_l = (l+1)\Delta$. 
\begin{enumerate}
	\setlength\leftmargin{-20pt}
	\renewcommand{\theenumi}{(\roman{enumi})}
	\renewcommand{\labelenumi}{{\theenumi}} 
\item \label{lemma:BinarySearch:A}
If $Q_1 \P$ is dominated (by a $\sigma$-finite measure), then there exists an estimator $\hat{\theta}_n^{(\Delta)} : \mathcal Z^n\to \R$ with tuning parameter $\Delta$, such that for every $l\in\N_0$,
$$
\sup_{\Pr\in\P} Q \Pr^{\otimes n}\left(z\in\mathcal Z^n: \left| \hat{\theta}_n^{(\Delta)}(z) - \theta(\Pr)\right| > \eta_l\right) \;\le\;  4\sum_{k=l+1}^{N-2}\left[\eta_A^{(n)}(Q, k\Delta)\lor 0\right],
$$
and an empty sum is interpreted as equal to zero. Moreover, $\hat{\theta}_n^{(\Delta)}$ takes values in the set $\{M_- +j\Delta:j\in\{1,\dots, N-1\}\}$ if $N\ge 3$, and $\hat{\theta}_n^{(\Delta)}\equiv(M_-+M_+)/2$ else. We set $\hat{\theta}_n^{(0)}(z) = (M_- +M_+)/2$.
\item \label{lemma:BinarySearch:B}
Fix $\alpha\in(0,\infty)$. Suppose that $\P$ is convex and $\theta:\P\to\R$ is linear and there exists $\ell^*\in L_\infty(\X)$ such that $\|\ell^*\|_\infty\le 1$ and 
$$
\inf_{\sigma\in\mathbb S_\Delta} \int_\X \ell^*\,d\sigma >0.
$$
If $Q_1=Q_1^{(\alpha, \ell^*)}$ is the binary channel of \eqref{eq:binaryChannel} with $\mathcal Z = \{-z_0,z_0\}$, then there exists an affine function $g^{\text{(aff)}}:\R\to\R$ such that 
$$\left|\Pi_{M_-,M_+}[g^{\text{(aff)}}(\bar{z}_n)]-\hat{\theta}_n^{(\Delta)}(z)\right| \le 2\Delta,
\quad\quad\forall z\in\mathcal Z^n,
$$ 
for $\hat{\theta}_n^{(\Delta)}$ as in part~\ref{lemma:BinarySearch:A} and where $\Pi_{M_-,M_+}:\R\to[M_-,M_+]$ is the projection onto $[M_-,M_+]$.

\end{enumerate}
\end{proposition}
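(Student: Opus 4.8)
The plan is to construct the estimator $\hat\theta_n^{(\Delta)}$ explicitly as a binary search over the grid points $t_j = M_- + j\Delta$, $j=1,\dots,N-1$, where at each stage we test a null hypothesis $\theta(\Pr)\le t_j$ against $\theta(\Pr)\ge t_j+\Delta$ (on the privatized data, i.e.\ under $Q\P_{\le t_j}^{(n)}$ versus $Q\P_{\ge t_j+\Delta}^{(n)}$), using a likelihood-ratio-type test attaining the testing affinity. Concretely, for part~\ref{lemma:BinarySearch:A} I would first invoke \eqref{eq:KraftLeCam} (valid because $Q_1\P$, hence $Q\P^{(n)}$, is dominated) to identify, for each split point, a test $\phi_j$ whose combined error probability equals $\pi(\conv(Q\P_{\le t_j}^{(n)}),\conv(Q\P_{\ge t_j+\Delta}^{(n)}))\le \eta_A^{(n)}(Q,\Delta)$. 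Running $\lceil\log_2 N\rceil$ such tests in a bisection on the index set $\{1,\dots,N-1\}$ produces an output $\hat\theta_n^{(\Delta)}\in\{t_1,\dots,t_{N-1}\}$ (with the degenerate convention $\hat\theta_n^{(\Delta)}\equiv(M_-+M_+)/2$ when $N\le 2$). The error analysis is the routine ``binary search with imperfect comparisons'' argument: if $|\hat\theta_n^{(\Delta)}-\theta(\Pr)|>\eta_l=(l+1)\Delta$, then at some stage of the search a test was decided incorrectly while the true parameter was at distance at least $\Delta$ on the correct side, and more precisely at distance $\ge k\Delta$ for some $k\ge l+1$; a union bound over the stages and over the (at most two per level, hence the factor $4$ after also accounting for both directions of error) relevant split points gives the stated bound $4\sum_{k=l+1}^{N-2}[\eta_A^{(n)}(Q,k\Delta)\lor 0]$. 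The monotonicity of $\Delta\mapsto\eta_A^{(n)}(Q,\Delta)$ noted after \eqref{eq:DeltaA} is what lets us bound the affinity at a split point that is ``$k$ steps away'' by $\eta_A^{(n)}(Q,k\Delta)$.

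For part~\ref{lemma:BinarySearch:B}, the key structural observation is that when $Q_1=Q_1^{(\alpha,\ell^*)}$ is the binary channel, the privatized data $Z_1,\dots,Z_n$ are i.i.d.\ two-point random variables and $\bar z_n$ is a sufficient statistic; moreover, by \eqref{eq:dTVQP}, $\dtv(Q_1^{(\alpha,\ell^*)}\Pr_0,Q_1^{(\alpha,\ell^*)}\Pr_1)=\frac{1}{2z_0}|\E_{\Pr_0}[\ell^*]-\E_{\Pr_1}[\ell^*]|$, so the whole testing problem at each split point $t_j$ is a one-dimensional test on the Bernoulli parameter $p_\Pr := \frac12(1+\E_\Pr[\ell^*]/z_0)$, which is \emph{monotone} in $\theta(\Pr)$ along the segment realizing the infimum $\inf_{\sigma\in\mathbb S_\Delta}\int_\X\ell^*\,d\sigma>0$. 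The hypothesis $\theta\le t_j$ vs.\ $\theta\ge t_j+\Delta$ therefore translates into $p_\Pr\le p^{(0)}_j$ vs.\ $p_\Pr\ge p^{(1)}_j$ with $p^{(1)}_j-p^{(0)}_j\ge \frac{1}{2z_0}\inf_{\sigma\in\mathbb S_\Delta}\int\ell^*\,d\sigma=:c_*>0$, and the optimal test is a threshold on $\bar z_n$ (equivalently on the count $\sum_i \mathbf 1\{Z_i=z_0\}$). Crucially, these thresholds are \emph{monotone in $j$}, so the outcome of the entire binary search is simply: return the grid point $t_{J(\bar z_n)}$ where $J(\bar z_n)=\#\{j : \bar z_n \ge \tau_j\}$ for an increasing sequence of thresholds $\tau_j$. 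I would then show that this step function of $\bar z_n$ lies within $2\Delta$ of the affine function $g^{\mathrm{(aff)}}(\bar z_n)$ obtained by linear interpolation through the grid (after projecting onto $[M_-,M_+]$): since consecutive grid points differ by exactly $\Delta$ and the thresholds $\tau_j$ are (approximately) equally spaced in $\bar z_n$ by the affine relation $p_\Pr = \frac12(1+\E_\Pr[\ell^*]/z_0)$ between the Bernoulli parameter and $\E_\Pr[\ell^*]$, the staircase and its interpolant agree up to one grid spacing $\Delta$, and the projection step contributes at most one more $\Delta$; choosing $g^{\mathrm{(aff)}}$ to be exactly this interpolating line gives the claimed bound $|\Pi_{M_-,M_+}[g^{\mathrm{(aff)}}(\bar z_n)]-\hat\theta_n^{(\Delta)}(z)|\le 2\Delta$.

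The main obstacle I anticipate is in part~\ref{lemma:BinarySearch:B}: one must verify that the \emph{same} threshold structure survives the $\conv$ operations appearing in $\eta_A^{(n)}$ and that the optimal tests at the various split points can be taken to be \emph{nested} threshold rules on $\bar z_n$ with the correct monotone ordering of thresholds — this requires controlling the (possibly randomized) boundary behavior of the likelihood-ratio tests for the Bernoulli problem, and arguing that the arbitrary element of $\P$ achieving a given value of $\theta$ does not disturb the induced Bernoulli parameter beyond what the bias quantity $\inf_{\sigma\in\mathbb S_\Delta}\int\ell^*d\sigma$ controls. A secondary technical point is bookkeeping the constant in part~\ref{lemma:BinarySearch:A}: carefully tracking that at most two grid points are ``queried'' at each level of the bisection and that each contributes an error of at most $\eta_A^{(n)}(Q,k\Delta)$ in each of two directions, so the total is at most $4\sum_{k\ge l+1}[\eta_A^{(n)}(Q,k\Delta)\lor 0]$ rather than a larger multiple.
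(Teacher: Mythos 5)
The main gap is in part~\ref{lemma:BinarySearch:A}. Your bisection has depth $\lceil\log_2 N\rceil$ and at every split point $t_j$ uses a test of $H_0:\theta(\Pr)\le t_j$ against $H_1:\theta(\Pr)\ge t_j+\Delta$, i.e.\ a minimax test for separation exactly $\Delta$; you then bound the probability that such a test errs when the truth is $k$ steps away by $\eta_A^{(n)}(Q,k\Delta)$, invoking monotonicity of $\Delta\mapsto\eta_A^{(n)}(Q,\Delta)$. That step is not valid: monotonicity controls the minimax affinity of the \emph{more separated testing problem}, but the test you actually run was constructed for the $\Delta$-separated problem (it cannot be tailored to $k$, which is unknown), and its error under an alternative in $\P_{\ge t_j+k\Delta}\subseteq\P_{\ge t_j+\Delta}$ is only controlled by its worst-case risk for the original problem, i.e.\ by a multiple of $\eta_A^{(n)}(Q,\Delta)$, not of $\eta_A^{(n)}(Q,k\Delta)$. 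A union bound over your $\lceil\log_2 N\rceil$ fixed-separation tests therefore gives at best something of order $\log N\cdot\eta_A^{(n)}(Q,\Delta)$, which is not the claimed bound and, in particular, destroys the geometric summability in $k$ that Theorem~\ref{THM:UPPERB} later needs. The paper's construction is genuinely different and is designed to avoid exactly this: it runs $N-2$ stages, at stage $k$ trimming one extreme length-$\Delta$ subinterval of the current interval, and the test at stage $k$ is a derandomized minimax test of $\P_{\le a_{k,j}}$ versus $\P_{\ge b_{k,j}}$ whose separation is $b_{k,j}-a_{k,j}=(N-k-1)\Delta$; thus the stages whose mistakes could produce a large final error are, by design, tests of widely separated hypotheses (error $\le 2\eta_A^{(n)}(Q,(N-k-1)\Delta)$ via \eqref{eq:KraftLeCam} and derandomization), while a mistake at a late stage only moves the final value by $O(\Delta)$. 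The factor $4$ then comes from the observation that at each level of the interval tree at most two tests (those whose removed piece contains $\theta(\Pr)$) can decide incorrectly — not from "two directions of error" at a single split.

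Part~\ref{lemma:BinarySearch:B} also has a gap at its decisive step. You correctly note that $\bar z_n$ is sufficient, that the tests become thresholds on $\bar z_n$, and hence that $\hat\theta_n^{(\Delta)}$ is a monotone staircase in $\bar z_n$; but the claim that this staircase is within $2\Delta$ of an affine function is justified only by asserting that the thresholds are "approximately equally spaced". That does not follow from the affine relation between $p(\Pr)=\tfrac12\bigl(1+\E_\Pr[\ell^*]/z_0\bigr)$ and $\E_\Pr[\ell^*]$: the map from $\E_\Pr[\ell^*]$ to $\theta(\Pr)$ is not affine in general, and the critical values depend on suprema and infima of $p(\Pr)$ over $\{\theta\le a\}$ and $\{\theta\ge b\}$, so equal spacing fails. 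What the paper actually proves is that, because $\inf_{\sigma\in\mathbb S_\Delta}\int_\X\ell^*\,d\sigma>0$, any two distributions with equal $p$-value have $\theta$-values within $\Delta$ of each other, and then — using convexity of $\P$ and linearity of $\theta$ — that the correspondence $t\mapsto\varphi(t)=\sup\{\theta(\Pr):p(\Pr)=t\}$ is within $\Delta$ of being affine; an interpolating line through two points of $\varphi$, followed by the projection $\Pi_{M_-,M_+}$ to handle the tails, then yields the $2\Delta$ bound. This convexity–linearity argument is the substance of part~\ref{lemma:BinarySearch:B} and is missing from your plan; the "main obstacle" you flag (nestedness of the threshold tests) is indeed needed, but it is the easier half of the work.
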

\begin{proof}
Without loss of generality, we may assume that $0=\inf_{\Pr\in\P}\theta(\Pr)<\sup_{\Pr\in\P}\theta(\Pr) = M$, by estimating $\theta(\Pr) - M_-$ instead of $\theta(\Pr)$ and, in case of part~\ref{lemma:BinarySearch:B}, by noting that $\Pi_{M_-,M_+}[x] - M_- = \Pi_{0,M}[x-M_-]$.

To rigorously introduce the binary search estimator, consider first the case where $\Delta>0$ is such that $N=N(\Delta,M)\le 2$. In that case, we set $\hat{\theta}_n^{(\Delta)} \equiv M/2$, which satisfies the desired inequality trivially, because in this case $\Delta>M/2$ which implies $|\hat{\theta}_n^{(\Delta)}(z) -\theta(\Pr)| = |M/2-\theta(\Pr)| \le M/2 < \Delta = \eta_0\le \eta_l$. If $N\ge 3$, we shall construct the estimator $\hat{\theta}_n^{(\Delta)}$ such that it takes values in the set $\{j\Delta:j=1,\dots, N-1\}$. 

\begin{figure}[htbp]
\includegraphics[width=10cm]{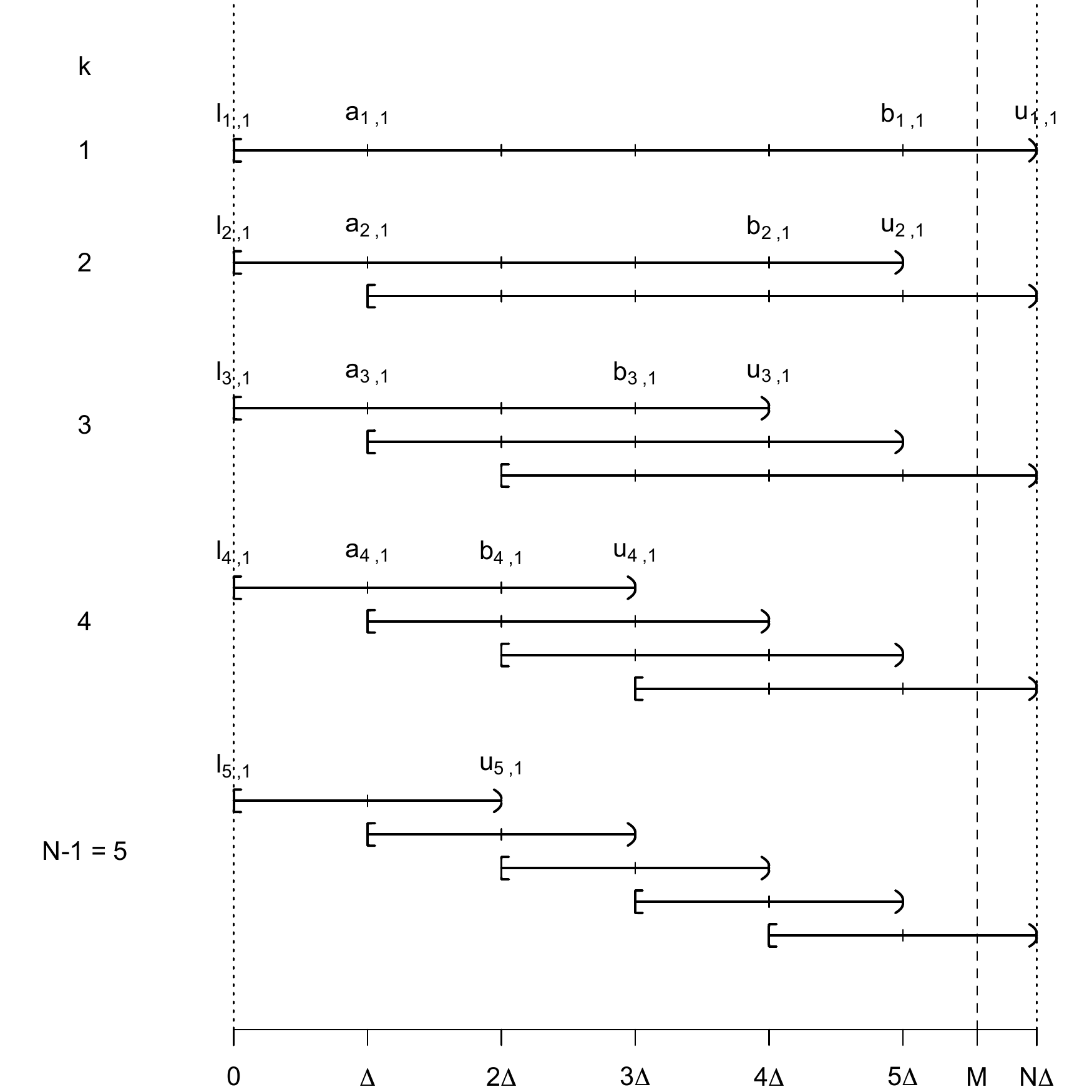} 
\caption{An example of the interval construction for the binary search estimator.}
\label{fig:Intervals}
\end{figure}

\begin{figure}[htbp]
\includegraphics[width=8cm]{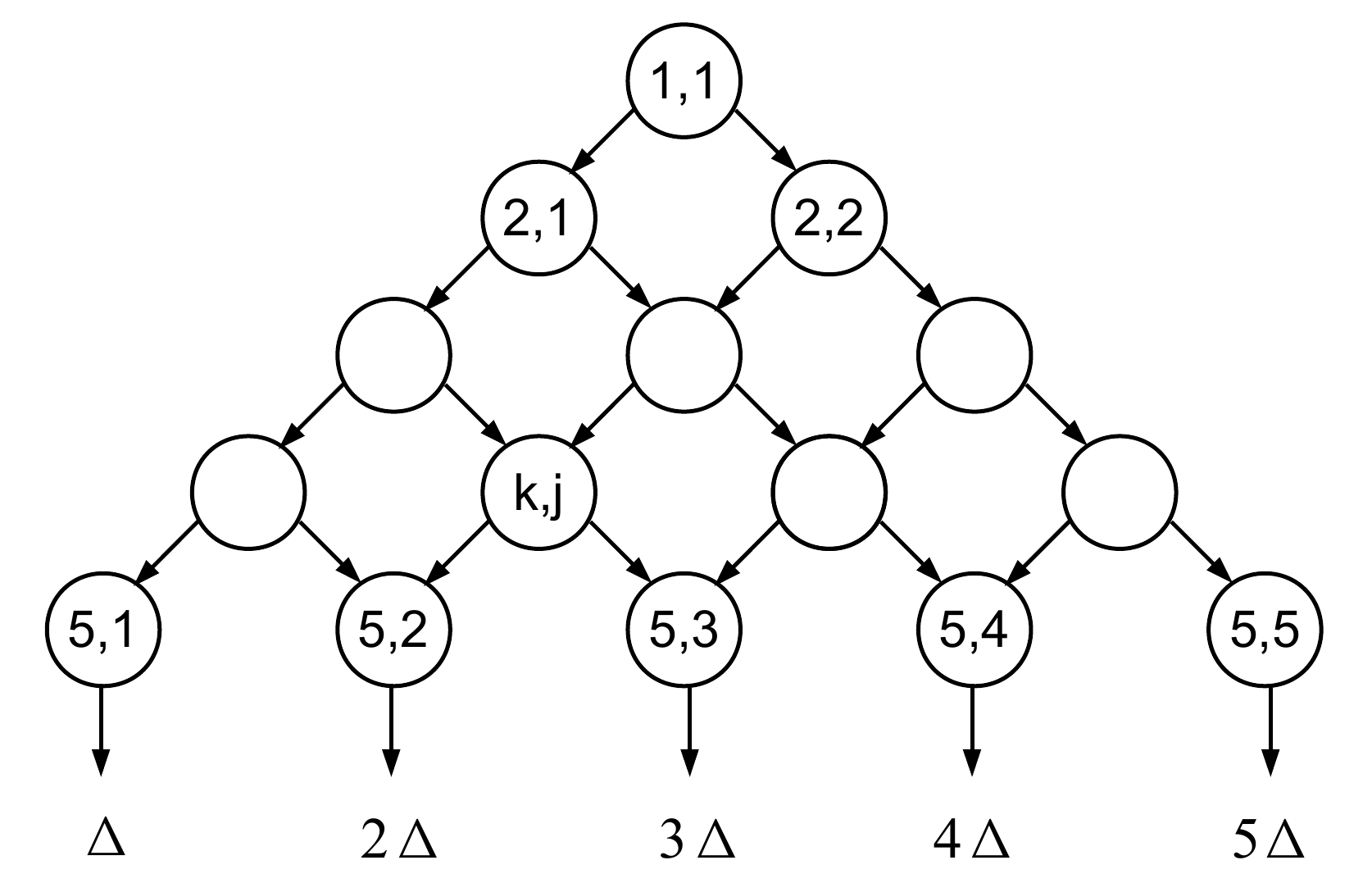} 
\caption{A graphical representation of the binary search estimator}
\label{fig:Graph}
\end{figure}

The construction is as follows: To select one of the values in the set $\{j\Delta:j=1,\dots, N-1\}$ we first introduce a scheme to partition $[0,M)$ (cf. Figure~\ref{fig:Intervals}). Start with the interval $[l_{1,1},u_{1,1})=[0, N\Delta)$, which contains $[0,M)$ by definition of $N$, and remove either the leftmost or the rightmost subinterval of length $\Delta$, i.e., $[0,\Delta)$ or $[(N-1)\Delta,N\Delta)$, to produce two new intervals $[l_{2,1}, u_{2,1}) = [0,(N-1)\Delta)$ and $[l_{2,2}, u_{2,2}) = [\Delta,N\Delta)$, each of length $(N-1)\Delta$. Then proceed in the same way again to produce three (note that removing the leftmost subinterval of length $\Delta$ in the first step and then removing the rightmost in the second step results in the same interval as if we had removed them in the opposite order) new intervals $[l_{3,1},u_{3,1})$, $[l_{3,2},u_{3,2})$, $[l_{3,3},u_{3,3})$, each of length $(N-2)\Delta$. Continue this process for $N-2$ steps to arrive at the intervals $[l_{N-1,j}, u_{N-1,j})$, $j=1,\dots, N-1$, of length $2\Delta$ whose midpoints are exactly the values $j \Delta$. 

Formally, for $k\in\{1,\dots, N-1\}$ and $j\in\{1,\dots, k\}$, we set $l_{k,j} = (j-1)\Delta$, $u_{k,j} = l_{k,j} + (N-k+1)\Delta$, and we also define $a_{k,j} = l_{k,j}+\Delta$ and $b_{k,j} = u_{k,j} - \Delta$, so that $b_{k,j}-a_{k,j}=(N-k-1)\Delta =: d_k$. With each pair $(k,j)$ as before, we associate a (randomized) minimax test $\xi_{k,j}:\mathcal Z^n\to[0,1]$ for $H_0:Q_1 \P_{\le a_{k,j}} = \{Q_1\Pr: \theta(\Pr)\le a_{k,j}, \Pr\in\P\}$ against $H_1:Q_1 \P_{\ge b_{k,j}}= \{Q_1\Pr: \theta(\Pr)\ge b_{k,j}, \Pr\in\P\}$. Recall that such a minimax test has the property that
\begin{align*}
\sup_{\substack{\Pr_0\in [Q_1 \P_{\le a_{k,j}}]^{(n)} \\ \Pr_1\in [Q_1 \P_{\ge b_{k,j}}]^{(n)}}} \E_{\Pr_0}(\xi_{k,j}) + \E_{\Pr_1}(1-\xi_{k,j})
&=
\inf_{\text{tests }\phi}\sup_{\substack{\Pr_0\in [Q_1 \P_{\le a_{k,j}}]^{(n)} \\ \Pr_1\in [Q_1 \P_{\ge b_{k,j}}]^{(n)}}} \E_{\Pr_0}(\phi) + \E_{\Pr_1}(1-\phi).
\end{align*}
Existence is well known (see Lemma~\ref{lemma:minimaxTest} in Section~\ref{sec:AppAux} of the supplement, which is a minor modification of a result by \citet{Krafft67}, see also \citet[][Problem~8.1 and Theorem~A.5.1]{Lehmann05}). To obtain a non-randomized test from $\xi_{k,j}$, we set $\xi_{k,j}^* = \mathds 1_{(1/2,1]}(\xi_{k,j})$. Since $\E_\Pr[\xi_{k,j}^*] = \Pr(\xi_{k,j}>1/2) \le 2\E_\Pr[\xi_{k,j}]$ and $\E_\Pr[1-\xi_{k,j}^*] = \Pr(\xi_{k,j}\le 1/2) = \Pr(1-\xi_{k,j}\ge 1/2) \le 2\E_\Pr[1-\xi_{k,j}]$, we get in view of \eqref{eq:KraftLeCam}, that
\begin{align}\label{eq:proof:lemma:BinarySearch}
\sup_{\substack{\Pr_0\in [Q_1 \P_{\le a_{k,j}}]^{(n)} \\ \Pr_1\in [Q_1 \P_{\ge b_{k,j}}]^{(n)}}} \E_{\Pr_0}(\xi_{k,j}^*) + \E_{\Pr_1}(1-\xi_{k,j}^*)
&\le
2\pi\left( \conv\left(Q\P_{\le a_{k,j}}^{(n)}\right), \conv\left(Q \P_{\ge b_{k,j}}^{(n)}\right) \right)\notag\\
&\le
2\eta_A^{(n)}(Q,b_{k,j}-a_{k,j})\\
&=
2\eta_A^{(n)}(Q,d_k).\notag
\end{align}

By definition of $M_-$ and $M_+$, all the sets $\P_{\le a_{k,j}}$ and $\P_{\ge b_{k,j}}$ are non-empty, with the only exception of $\P_{\ge b_{1,1}}$, which is empty if, and only if, $(N-1)\Delta=M$ and $\theta$ does not attain its supremum $M = \sup_{\Pr\in\P}\theta(P)$. In that case, we take $\xi_{1,1} \equiv 0$.

To define the value of the binary search estimator $\hat{\theta}_n^{(\Delta)}(z)$ for a given observation $z\in\mathcal Z^n$, we perform a stepwise testing procedure (cf. Figure~\ref{fig:Graph}). Starting at the full interval $[0,N\Delta)$ ($k=1$), we remove the outer-most subinterval of length $\Delta$ that was rejected by the test $\xi_{1,1}^*$. Depending on the outcome of $\xi_{1,1}^*$ and the corresponding sub-interval of length $(N-1)\Delta$ we are left with ($k=2$), we either perform the test $\xi_{2,1}^*$ or $\xi_{2,2}^*$ and, again, remove the rejected interval of length $\Delta$. We proceed in this way until $k=N-2$. Finally, we set $\hat{\theta}_n^{(\Delta)}(z)$ equal to the midpoint of the remaining interval of length $2\Delta$ that was selected by the test performed at level $k=N-2$. This procedure leads to the formal definition: Set $j_1(z)=1$ and for $k\in\{2,\dots, N-1\}$, set $j_k(z) = j_{k-1}(z) + \xi_{k-1,j_{k-1}(z)}^*(z) \in \{1,\dots, k\}$, i.e., $j_k(z)$ is the index of the test to be performed on level $k\le N-2$ and $j_{N-1}(z)\Delta$ is the value of the estimator $\hat{\theta}_n^{(\Delta)}(z) = (u_{N-1,j_{N-1}(z)}+l_{N-1,j_{N-1}(z)})/2 = j_{N-1}(z)\Delta$.

We now analyze the estimation error of $\hat{\theta}_n^{(\Delta)}$. Fix $\Pr\in\P$ and $z\in\mathcal Z^n$. We say that the test $\xi_{k,j_k(z)}^*(z)$ decided incorrectly, if its decision lead to the removal of a length-$\Delta$ subinterval that actually contained $\theta(\Pr)$. Formally, $\xi_{k,j_k(z)}^*(z)$ decided incorrectly if $\xi_{k,j_k(z)}^*(z)=0$ and $\theta(\Pr)\in[b_{k,j_k(z)},u_{k,j_k(z)})$, or $\xi_{k,j_k(z)}^*(z)=1$ and $\theta(\Pr)\in[l_{k,j_k(z)},a_{k,j_k(z)})$. Note that the test $\xi_{k,j_k(z)}^*(z)$ can not decide incorrectly if $\theta(\Pr) \notin [l_{k,j_k(z)}, u_{k,j_k(z)})$. If, for some $l\in\{0,\dots, N-3\}$, all the tests $\xi^*_{k,j_k(z)}(z)$, for $k=1,\dots, N-2-l$, decide correctly, then $\theta(\Pr)\in [l_{N-1-l,j_{N-1-l}(z)}, u_{N-1-l,j_{N-1-l}(z)})]$. Since, by construction, we have $\hat{\theta}_n(z)\in [a_{N-1-l,j_{N-1-l}(z)}, b_{N-1-l,j_{N-1-l}(z)})]$, and the latter interval has length $d_{N-1-l} = l\Delta$, this means that $|\hat{\theta}_n^{(\Delta)}(z)-\theta(\Pr)|\le (l+1)\Delta=\eta_l$. Therefore, if $|\hat{\theta}_n^{(\Delta)}(z)-\theta(\Pr)|>\eta_l$, then there exists $k\in\{1,\dots, N-2-l\}$, so that $\xi^*_{k,j_k(z)}(z)$ decided incorrectly. If $\xi^*_{k,j_k(z)}(z)$ incorrectly decided for $H_0$, then $\theta(\Pr)\in [b_{k,j_k(z)},u_{k,j_k(z)})$. By disjointness of $[b_{k,j},u_{k,j})$, $j=1,\dots, k$, there is at most one index $j_k^*= j_k^*(\Pr)\in\{1,\dots, k\}$, so that $\theta(\Pr)\in [b_{k,j_k^*},u_{k,j_k^*})$. Thus, $j_k(z)=j_k^*$, $\xi^*_{k,j_k^*}(z)=0$ and $\theta(\Pr)\in [b_{k,j_k^*},u_{k,j_k^*})$. If, on the other hand,  $\xi^*_{k,j_k(z)}(z)$ incorrectly decided for $H_1$, then $\theta(\Pr)\in [l_{k,j_k(z)},a_{k,j_k(z)})$. But analogously there is at most one index $j_k^{**}=j_k^{**}(\Pr)\in\{1,\dots, k\}$, such that $\theta(\Pr)\in [l_{k,j_k^{**}},a_{k,j_k^{**}})$. Thus, $\xi^*_{k,j_k^{**}}(z)=1$ and $\theta(\Pr)\in [l_{k,j_k^{**}},a_{k,j_k^{**}})$. This fact, that at any level $k$ there are at most two tests that can decide incorrectly, is the crucial point of our construction. Consequently, for $l=0,\dots, N-3$,
\begin{align*}
\quad&Q\Pr^{\otimes n} \left(z\in\mathcal Z^n: |\hat{\theta}_n^{(\Delta)}(z)-\theta(\Pr)|>\eta_l \right) \\
&\le
\sum_{k=1}^{N-2-l} Q\Pr^{\otimes n} \left( \xi^*_{k,j_k(z)}(z) \text{ decides incorrectly}\right) \\
&\le
\sum_{k=1}^{N-2-l} \left[ Q\Pr^{\otimes n} \left(\xi^*_{k,j_k^*}(z) = 0, \theta(\Pr)\in [b_{k,j_k^*},u_{k,j_k^*})\right)\right.\\
&\hspace{2cm}
\left.+Q\Pr^{\otimes n} \left( \xi^*_{k,j_k^{**}}(z) =1, \theta(\Pr)\in [l_{k,j_k^{**}},a_{k,j_k^{**}})\right) \right].
\end{align*}
But both 
$$
Q\Pr^{\otimes n} \left(\xi^*_{k,j_k^*}(z) = 0, \theta(\Pr)\in [b_{k,j_k^*},u_{k,j_k^*})\right)
$$ 
and 
$$
Q\Pr^{\otimes n} \left( \xi^*_{k,j_k^{**}}(z) =1, \theta(\Pr)\in [l_{k,j_k^{**}},a_{k,j_k^{**}})\right)
$$
are bounded by the worst case risk of the respective test and thus, in view of \eqref{eq:proof:lemma:BinarySearch}, they are both bounded by $2\eta_A^{(n)}(Q,d_k)\lor 0$. We conclude that
$$
Q\Pr^{\otimes n} \left( |\hat{\theta}_n^{(\Delta)}-\theta(\Pr)|>\eta_l \right) 
\le
4\sum_{k=1}^{N-2-l}[\eta_A^{(n)}(Q,d_k)\lor0] = 4\sum_{k=l+1}^{N-2}[\eta_A^{(n)}(Q, k\Delta)\lor0].
$$
If $l>N-3$, then the event $|\hat{\theta}_n^{(\Delta)}-\theta(\Pr)|>\eta_l$ is impossible, because the range of $\hat{\theta}_n^{(\Delta)}$ is $\{j\Delta:j=1,\dots, N-1\}$.

For part~\ref{lemma:BinarySearch:B}, we further investigate the binary search estimator in the case $Q_1=Q_1^{(\alpha,\ell^*)}$, as in \eqref{eq:binaryChannel}. In particular, we have $\mathcal Z=\{-z_0,z_0\}$. If $N\le 3$, then any estimator taking values in $[0,M]$ is at most $2\Delta$ away from $\hat{\theta}_n^{(\Delta)}$. We continue with $N\ge 4$. The marginal data generating distribution $Q_1^{(\alpha, \ell^*)}\Pr$ is actually a binary distribution on $\{-z_0,z_0\}$, taking the value $z_0 = \|\ell^*\|_\infty\frac{e^\alpha+1}{e^\alpha-1}$ with probability $p(\Pr) := \frac{1}{2}\left( 1 + \frac{\E_\Pr[\ell^*]}{z_0}\right)\in\left[\frac{1}{2}\left(1-\frac{e^\alpha-1}{e^\alpha+1}\right), \frac{1}{2}\left(1+\frac{e^\alpha-1}{e^\alpha+1}\right)\right]$. The corresponding likelihood is given by
$$
q(z_1,\dots, z_n; p) :=\prod_{i=1}^n p^{\frac{1}{2}\left(1+\frac{z_i}{z_0} \right)}(1-p)^{\frac{1}{2}\left(1-\frac{z_i}{z_0} \right)}
=
p^{nT(z)}(1-p)^{n(1-T(z)},
$$
where $T(z) = \frac{1}{2}\left(1+\frac{\bar{z}_n}{z_0} \right)$, with $\bar{z}_n = \frac{1}{n}\sum_{i=1}^n z_i$.
For $0<a\le b< M$, define $\bar{\P}_{\le a} := \{p(\Pr): \theta(\Pr)\le a\}$, $\bar{\P}_{\ge b}:=\{p(\Pr): \theta(\Pr)\ge b\}$, $\bar{a} := \sup \bar{\P}_{\le a}$ and $\bar{b}:= \inf \bar{\P}_{\ge b}$. Clearly, $p_-:= \inf_{\Pr\in\P} p(\Pr)\le \bar{a}$ and $\bar{b}\le \sup_{\Pr\in\P} p(\Pr) =: p_+$. Note that for any $\Pr_0,\Pr_1\in\P$, with $\theta(\Pr_0)-\theta(\Pr_1)\ge \Delta$, we have 
\begin{equation}
p(\Pr_0)-p(\Pr_1)= (\E_{\Pr_0}[\ell^*] - \E_{\Pr_1}[\ell^*])/(2z_0) \ge \inf_{\sigma\in\mathbb S_\Delta} \int_\X \ell^*\,d\sigma/(2z_0) >0 \label{eq:P0-P1}
\end{equation} 
by assumption. In particular, if $b-a\ge \Delta$, then $\bar{a} < \bar{b}$. Hence, a minimax test $\xi$ for testing $H_0 : [Q_1 \P_{\le a}]^{(n)} \cong \bar{\P}_{\le a}^{(n)}$ against $H_1: Q_1 \P_{\ge b} \cong  \bar{\P}_{\ge b}$ based on an iid sample of size $n$ is given by
\begin{align*}
\xi_{a,b}(z_1,\dots, z_n) = 
\begin{cases}
1, &\text{if } \frac{q(z_1,\dots, z_n; \bar{b})}{q(z_1,\dots, z_n; \bar{a})}\ge 1,\\
0, &\text{else.}
\end{cases}
\end{align*}
The test $\xi_{a,b}(z)$ decides for $H_1$ (i.e., $\xi_{a,b}(z)=1$) iff $T(z) \ge G(\bar{a},\bar{b})$, where 
$$
G(s,t) := \frac{\log\left( \frac{1-s}{1-t} \right)}{\log\left( \frac{t}{s} \frac{1-s}{1-t}\right)}, \quad\text{if } 0<s<t<1,
$$
and $G(s,s):=s$. In the following, we make repeated use of the facts that $G$ is strictly increasing in both arguments and that $s < G(s,t) < t$, for $0<s<t<1$ (see Lemma~\ref{lemma:CriticalVals}).

Abbreviate $a_j := a_{N-2,j}$ and $b_j := b_{N-2,j}$ and set $a_{N-1} = (N-1)\Delta$ and $b_0 = \Delta$. Next, define the critical values for the tests $\xi_{k,j} := \xi_{a_{k,j},b_{k,j}}$ by $c_{k,j} := G(\bar{a}_{k,j},\bar{b}_{k,j})$ and abbreviate $c_j := c_{N-2,j}$. Since $\bar{a}_{k,j}\le \bar{a}_{k,j+1}$ and $\bar{b}_{k,j}\le \bar{b}_{k,j+1}$, this defines a partition of $[0,1]$ (note that $\text{range}(T)\subseteq[0,1]$), i.e., $C_1 := [0,c_1)$, $C_{N-1} := [c_{N-2},1]$ and $C_j := [c_{j-1},c_j)$, for $j=2,\dots, N-2$, where we interpret $[c,c)=\varnothing$. We now show that $T(z)\in C_j$ implies that $\hat{\theta}_n^{(\Delta)}(z) = j\Delta$. If $T(z)\in C_1$, then all the tests along the binary search path must have decided for $H_0$, because $c_1 = G(\bar{a}_1,\bar{b}_1)$ is the smallest critical value among all critical values $c_{k,j}$. Thus, $\hat{\theta}_n^{(\Delta)}(z)=\Delta$. For $j\in\{2,\dots, N-2\}$, suppose that $T(z)\in C_j$. At some level $k_0$ along the binary search path, either $a_{k_0,j_{k_0}(z)} = j\Delta = a_j$ or $b_{k_0,j_{k_0}(z)}=j\Delta = b_{j-1}$ occurs first. In the former case, all tests at levels $k\ge k_0$ must decide for $H_0$, because $T(z)<c_j$ and $c_j$ is the smallest critical value of all tests with $k\ge k_0$ and $a_{k,j} = j\Delta$. In the latter case, all tests at levels $k\ge k_0$ must decide for $H_1$, because $T(z)\ge c_{j-1}$ and $c_{j-1}$ is the largest critical value among all tests for which $k\ge k_0$ and $b_{k,l}=j\Delta$. Thus, in either case, $\hat{\theta}_n^{(\Delta)}(z)=j\Delta$. Finally, if $T(z)\in C_{N-1}$, all tests must decide for $H_1$ and $\hat{\theta}_n^{(\Delta)}(z)=(N-1)\Delta$.

Next, by convexity of $\P$ and linearity of $p:\P\to[0,1]$, we see that the range $p(\P)\subseteq[p_-,p_+]\subseteq [0,1]$ of $p$ is an interval. 
Moreover, we have $p_-<c_1\le c_{N-2}<p_+$, because $\bar{a}_1<\bar{b}_1$, $\bar{a}_{N-2}<\bar{b}_{N-2}$ and the properties of $G$. 
In this paragraph we investigate the correspondence between the two functionals $\theta:\P\to[0,M]$ and $p:\P\to[p_-,p_+]$.
For $t\in (p_-,p_+)$, define $\varphi(t) := \sup\{\theta(\Pr) : p(\Pr)=t, \Pr\in\P\}$. If $t\le p_-$, set $\varphi(t) := 0$, and if $t\ge p_+$, set $\varphi(t) :=M$. For $p_-<t<c_1$, we see that $0\le \varphi(t) \le 2\Delta$, because
$$
\varphi(t) 
\le \sup\{\theta(\Pr):p(\Pr)\le c_1, \Pr\in\P \} 
\le \sup\{\theta(\Pr):p(\Pr)< \bar{b}_1, \Pr\in\P \} \le b_1 = 2\Delta.
$$
If $t\in C_j$, for some $j\in\{2, \dots, N-2\}$, then
\begin{align*}
\varphi(t) 
&\le \sup\{\theta(\Pr):p(\Pr)\le c_j, \Pr\in\P \} 
\le \sup\{\theta(\Pr):p(\Pr)< \bar{b}_j, \Pr\in\P \} \\
&\le b_j = (j+1)\Delta, 
\end{align*}
and 
\begin{align*}
\varphi(t) 
&\ge \inf\{\theta(\Pr):p(\Pr)\ge c_{j-1}, \Pr\in\P \} 
\ge \inf\{\theta(\Pr):p(\Pr)> \bar{a}_{j-1}, \Pr\in\P \} \\
&\ge a_{j-1} = (j-1)\Delta.
\end{align*}
Finally, for $c_{N-1}\le t<p_+$, one obtains $(N-2)\Delta\le \varphi(t) \le M$. Thus, we have defined $\varphi$ on all of $[0,1]$ in such a way that $|\varphi(T(z))-\hat{\theta}_n^{(\Delta)}(z)| \le \Delta$.

Next, we show that $\varphi$ can be approximated on $(p_-,p_+)$ by an affine function. First, note that for $t\in(p_-,p_+)$,
\begin{align*}
E(t)&:=\sup\{\theta(\Pr) : p(\Pr)=t, \Pr\in\P\} - \inf\{\theta(\Pr) : p(\Pr)=t, \Pr\in\P\} \\
&\le
\sup\{\theta(\Pr_0) - \theta(\Pr_1) : p(\Pr_0)-p(\Pr_1)=0, \Pr_0,\Pr_1\in\P\}.
\end{align*}
But if $\theta(\Pr_0) - \theta(\Pr_1)\ge \Delta$, then $p(\Pr_0)-p(\Pr_1)>0$ (cf. \eqref{eq:P0-P1}). Thus, $E(t)\le \Delta$. Now, for $\lambda\in[0,1]$ and $s,t\in(p_-,p_+)$, choose $\Pr_0, \Pr_1\in\P$ such that $p(\Pr_0)=s$ and $p(\Pr_1)=t$, set $\bar{\Pr}= \lambda\Pr_0 + (1-\lambda)\Pr_1\in\P$, by convexity, and note that
\begin{align*}
\varphi(\lambda s + (1-\lambda) t) 
&\le \inf\{ \theta(\Pr) : p(\Pr) = \lambda s + (1-\lambda)t, \Pr\in\P\} + \Delta\\
&\le \theta(\bar{\Pr})+\Delta = \lambda \theta(\Pr_0) + (1-\lambda)\theta(\Pr_1) + \Delta\\
&\le \lambda\varphi(s) + (1-\lambda)\varphi(t) + \Delta,
\end{align*}
where we have used linearity of $\theta$ and the previously derived bound on $E$.
Similarly, we obtain
\begin{align*}
\varphi(\lambda s + (1-\lambda) t) 
&\ge \theta(\bar{\Pr}) = \lambda \theta(\Pr_0) + (1-\lambda)\theta(\Pr_1)\\
&\ge \lambda\varphi(s) + (1-\lambda)\varphi(t) - \Delta.
\end{align*}
We conclude that $|\varphi(\lambda s + (1-\lambda) t) - [\lambda\varphi(s) + (1-\lambda)\varphi(t)]| \le \Delta$.
Now, fix $s_0\in(p_-,c_1)$ and $t_0\in(c_{N-1},p_+)$ and, for $t\in\R$, define 
$$
\psi(t) := \frac{t_0 - t}{t_0-s_0} \varphi(s_0) + \frac{t - s_0}{t_0-s_0}\varphi(t_0).
$$
Thus, for $t\in[s_0,t_0]$ and $\lambda_t := (t_0-t)/(t_0-s_0)\in[0,1]$, we have
\begin{align*}
|\psi(t)-\varphi(t)| &= |\psi(\lambda_t s_0 + (1-\lambda_t)t_0) - \varphi(\lambda_t s_0 + (1-\lambda_t)t_0)| \\
&\le
|\lambda_t \psi(s_0) + (1-\lambda_t)\psi(t_0) - [\lambda_t \varphi(s_0) + (1-\lambda_t)\varphi(t_0)]| + \Delta = \Delta.
\end{align*}
We have therefore found an affine function $\psi$, such that for $T(z)\in[c_1,c_{N-1}]$, we have $|\psi(T(z))-\hat{\theta}_n^{(\Delta)}(z)| \le 2\Delta$. 
Recall that by construction of $\varphi$, we have $\varphi(s_0) \le 2\Delta \le (N-2)\Delta\le \varphi(t_0)$, because $N\ge 4$. Thus, $\psi$ is non-decreasing.
If $T(z)<c_1$, then $\hat{\theta}_n^{(\Delta)}(z)=\Delta$ and $0\le \Pi[\psi(T(z))] \le \Pi[ \varphi(T(z)) + \Delta] \le \Pi[3\Delta] =3\Delta$, where $\Pi:\R\to[0,M]$ is the projection onto $[0,M]$. Thus, $|\Pi[\psi(T(z))] - \hat{\theta}_n^{(\Delta)}(z)|\le 2\Delta$. An analogous argument shows that the same bound holds if $T(z)>c_{N-1}$. Since $T(z)$ is affine in $\bar{z}_n$, the claim follows.
\end{proof}

\section*{Acknowledgements}
We want to thank an anonymous referee for raising the interesting conjecture that in the private case all linear functionals can be estimated by simple sample averages of appropriately privatized observations. We were able to confirm this conjecture after a substantial revision of an earlier version of the paper.


\appendix


\section{Proofs of lower bounds}
\label{sec:ApplowerB}

As a first step to lower bound $\mathcal M_{n,\alpha}(\P, \theta)$, we extend Theorem~2.1 of \citet{Donoho91} \citep[see also][Theorem 2.14]{Tsybakov09} to deduce a lower bound on $\mathcal M_n(Q,\P,\theta)$ for a fixed Markov probability kernel $Q$. In the case where $Q\in\mathcal Q_\alpha$, $\alpha\in(0,\infty)$, it holds without any assumptions on $\theta$ and $\P$, because the dominatedness condition on $Q\P^{(n)}$ is always satisfied if $Q$ is an $\alpha$-private channel. 

\begin{theorem}\label{thm:lowerDeltaA}
Let $\eta\in(0,1)$ be fixed and let $l:\R_+\to\R_+$ be a non-decreasing loss function. If $Q \P^{(n)}$ is dominated, then
$$
\mathcal M_n(Q,\P, \theta) \;=\; \inf_{\hat{\theta}_n}\sup_{\Pr\in\P} \E_{Q \Pr^{\otimes n}}\left[ l\left(|\hat{\theta}_n - \theta(\Pr)| \right)\right] \;\ge\; l\left( \left[\frac{1}{2}\Delta_A^{(n)}(Q,\eta)\right]^-\right)\frac{\eta}{2},
$$
where $l(0^-):= l(0)$.
\end{theorem}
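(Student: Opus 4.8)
The plan is to run the classical reduction from estimation to testing (cf.\ Theorem~2.1 of \citet{Donoho91} and Theorem~2.14 of \citet{Tsybakov09}), using the Kraft--Le Cam identity \eqref{eq:KraftLeCam} to pass between the upper affinity $\eta_A^{(n)}$ and statements valid for arbitrary tests.

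First I would fix an arbitrary estimator $\hat\theta_n:\mathcal Z^n\to\R$ and an arbitrary $\Delta\ge0$ with $\eta_A^{(n)}(Q,\Delta)>\eta$. By definition of $\eta_A^{(n)}$ as a supremum over $t\in\R$, there is a $t$ with $\pi\big(\conv(Q\P_{\le t}^{(n)}),\conv(Q\P_{\ge t+\Delta}^{(n)})\big)>\eta$, and since $\eta>0$ this forces both $\P_{\le t}$ and $\P_{\ge t+\Delta}$ to be nonempty. Because $Q\P^{(n)}$ is dominated, the identity \eqref{eq:KraftLeCam} applies with $\P_0=\P_{\le t}$, $\P_1=\P_{\ge t+\Delta}$ and shows that \emph{every} test $\phi$ (in particular every $\{0,1\}$-valued one) obeys
$$
\sup_{\Pr_0\in\P_{\le t},\ \Pr_1\in\P_{\ge t+\Delta}}\Big(\E_{Q\Pr_0^{\otimes n}}[\phi]+\E_{Q\Pr_1^{\otimes n}}[1-\phi]\Big)\;>\;\eta .
$$

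Next I would specialize to the plug-in test $\phi:=\mathds 1\{\hat\theta_n\ge t+\Delta/2\}$. Since $\theta(\Pr_0)\le t$ for $\Pr_0\in\P_{\le t}$, the event $\{\phi=1\}$ is contained in $\{|\hat\theta_n-\theta(\Pr_0)|\ge\Delta/2\}$; similarly, $\theta(\Pr_1)\ge t+\Delta$ for $\Pr_1\in\P_{\ge t+\Delta}$ forces $\{\phi=0\}\subseteq\{|\hat\theta_n-\theta(\Pr_1)|\ge\Delta/2\}$. Writing $r(\Pr):=(Q\Pr^{\otimes n})\big(|\hat\theta_n-\theta(\Pr)|\ge\Delta/2\big)$ for $\Pr\in\P$, the displayed inequality becomes $\eta<\sup_{\Pr_0\in\P_{\le t}}r(\Pr_0)+\sup_{\Pr_1\in\P_{\ge t+\Delta}}r(\Pr_1)\le 2\sup_{\Pr\in\P}r(\Pr)$, so $\sup_{\Pr\in\P}r(\Pr)>\eta/2$. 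Since $l$ is non-decreasing and nonnegative, $l(|\hat\theta_n-\theta(\Pr)|)\ge l(\Delta/2)\,\mathds 1\{|\hat\theta_n-\theta(\Pr)|\ge\Delta/2\}$ pointwise; taking $Q\Pr^{\otimes n}$-expectations and then the supremum over $\Pr\in\P$ gives $\sup_{\Pr\in\P}\E_{Q\Pr^{\otimes n}}[l(|\hat\theta_n-\theta(\Pr)|)]\ge l(\Delta/2)\sup_{\Pr\in\P}r(\Pr)\ge l(\Delta/2)\,\eta/2$. As $\hat\theta_n$ was arbitrary, $\mathcal M_n(Q,\P,\theta)\ge l(\Delta/2)\,\eta/2$ for every admissible $\Delta$.

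Finally I would take the supremum over all $\Delta$ with $\eta_A^{(n)}(Q,\Delta)>\eta$. Since $\Delta\mapsto\eta_A^{(n)}(Q,\Delta)$ is non-increasing and $\eta_A^{(n)}(Q,0)=1>\eta$, this set is an interval containing $[0,\Delta_A^{(n)}(Q,\eta))$, so by monotonicity of $l$ the supremum of $l(\Delta/2)$ over it equals the left limit $l\big([\tfrac12\Delta_A^{(n)}(Q,\eta)]^-\big)$ — the convention $l(0^-):=l(0)$ covering the degenerate case $\Delta_A^{(n)}(Q,\eta)=0$. This yields $\mathcal M_n(Q,\P,\theta)\ge l\big([\tfrac12\Delta_A^{(n)}(Q,\eta)]^-\big)\,\tfrac\eta2$, as claimed. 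The argument is essentially routine; the only points needing care are the appeal to \eqref{eq:KraftLeCam} to turn the affinity $\pi$ into a bound valid for every test, and the bookkeeping with the generalized inverse $\Delta_A^{(n)}$ and the left-limit notation (the empty and degenerate cases being absorbed by the conventions $\sup\varnothing=-\infty$ and $l(0^-)=l(0)$).
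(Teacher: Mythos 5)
Your proposal is correct and follows essentially the same route as the paper's proof: a reduction from estimation to testing via a plug-in threshold test, the Kraft--Le Cam identity \eqref{eq:KraftLeCam} to bound the resulting testing risk by the upper affinity, a Markov-type bound using monotonicity of $l$, and the bookkeeping with the generalized inverse $\Delta_A^{(n)}(Q,\eta)$. The only differences are cosmetic (you use separation $\Delta$ with threshold $t+\Delta/2$ and take the supremum over admissible $\Delta$ at the end, whereas the paper proves the intermediate bound $\sup_{\Pr}Q\Pr^{\otimes n}(|\hat{\theta}_n-\theta(\Pr)|\ge\Delta)\ge\tfrac{1}{2}\eta_A^{(n)}(Q,2\Delta)$ for all $\Delta$ and then treats the cases $\Delta_A^{(n)}=0$, finite, and infinite separately).
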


\begin{proof}
If $\Delta_A^{(n)}(Q,\eta) = 0$, then the bound holds because $l\left( |\hat{\theta}_n - \theta(\Pr)|\right) \ge l(0)\frac{\eta}{2} =l\left( \frac{1}{2}\Delta_A^{(n)}(Q,\eta)\right)\frac{\eta}{2}$, in view of the monotonicity of $l$. Now, for arbitrary $\Delta\in[0,\infty)$, define the sets $S := \{ z\in\mathcal Z^n : | \hat{\theta}_n(z) - \theta(\Pr)| \ge \Delta\}$, $S_1 := \{ z\in\mathcal Z^n : \hat{\theta}_n(z) \ge t+ \Delta, \theta(\Pr)\le t\}$ and $S_2  := \{ z\in\mathcal Z^n : \hat{\theta}_n(z) < t+ \Delta, \theta(\Pr)\ge t+2\Delta\}$, which obey the inclusions $S_j\subseteq S$, for $j=1,2$. Therefore, we obtain the lower bound
\begin{align*}
&\sup_{\Pr\in\P} Q \Pr^{\otimes n}( S ) \ge \sup_{\Pr\in\P} \max\left\{ Q \Pr^{\otimes n}(S_1), Q  \Pr^{\otimes n}(S_2)\right\}\\
&\quad=
\max\left\{ \sup_{\Pr\in\P_{\le t}} Q \Pr^{\otimes n}\left( \hat{\theta}_n \ge t+\Delta\right), 
\sup_{\Pr\in\P_{\ge t + 2\Delta}} Q \Pr^{\otimes n}\left( \hat{\theta}_n < t+\Delta\right)
\right\}\\
&\quad\ge
\frac{1}{2}\sup_{\substack{\Pr_0\in\P_{\le t}\\ \Pr_1 \in \P_{\ge t+2\Delta}}} 
Q \Pr_0^{\otimes n}\left( \hat{\theta}_n \ge t+\Delta\right) 
+
Q \Pr_1^{\otimes n}\left( \hat{\theta}_n < t+\Delta\right)\\
&\quad\ge
\frac{1}{2} 
\inf_{\text{tests }\phi} \sup_{\substack{\Pr_0\in\P_{\le t}\\ \Pr_1 \in \P_{\ge t+2\Delta}}} 
\E_{Q \Pr_0^{\otimes n}}[\phi]
+
\E_{Q \Pr_1^{\otimes n}}[1-\phi]\\
&\quad=\frac{1}{2} 
\inf_{\text{tests }\phi} \sup_{\substack{\Pr_0\in Q \P_{\le t}^{(n)}\\ \Pr_1 \in Q \P_{\ge t+2\Delta}^{(n)}}} 
\E_{\Pr_0}[\phi]
+
\E_{\Pr_1}[1-\phi],
\end{align*}
which holds for any $t\in\R$. Since $Q \P^{(n)}$ is dominated by assumption, we can use \eqref{eq:KraftLeCam} to obtain
\begin{equation}\label{eq:supQP}
\sup_{\Pr\in\P} Q \Pr^{\otimes n}\left(
|\hat{\theta}_n - \theta(\Pr)| \ge \Delta
\right) \ge \frac{1}{2}\eta_A^{(n)}(Q,2\Delta).
\end{equation}
If $\Delta_A^{(n)}(Q,\eta) \in(0,\infty)$, then take $\eps>0$ such that $\Delta_0 := \frac{1}{2}[\Delta_A^{(n)}(Q,\eta) - \eps] > 0$. Since $\Delta\mapsto \eta_A^{(n)}(Q,\Delta)$ is non-increasing, the set $D:=\{ \Delta\ge 0 : \eta_A^{(n)}(Q,\Delta) > \eta\}$ is of interval form $D \supseteq [0,\Delta_A^{(n)}(Q,\eta))$ and thus $2\Delta_0\in D$, so that $\eta_A^{(n)}(Q,2\Delta_0) > \eta$. Thus, from \eqref{eq:supQP}, we obtain
$$
\sup_{\Pr\in\P} Q \Pr^{\otimes n}\left(
|\hat{\theta}_n - \theta(\Pr)| \ge \frac{1}{2}[\Delta_A^{(n)}(Q,\eta) - \eps]
\right) \ge \frac{\eta}{2}.
$$
If $l\left(\left[\frac{1}{2}\Delta_A^{(n)}(Q,\eta)\right]^-\right) = 0$, then the claimed lower bound is trivial. Otherwise, the result follows from  Markov's inequality, since $\eps>0$ was arbitrarily small.

If $\Delta_A^{(n)}(Q,\eta) = \infty$, then $\eta_A^{(n)}(Q,\Delta) > \eta > 0$ for all $\Delta> 0$, and the inequality \eqref{eq:supQP} together with Markov's inequality, yields
$$
\mathcal M_n(Q,\P, \theta) \ge \inf_{\hat{\theta}_n}\sup_{\Pr\in\P} Q \Pr^{\otimes n}\left(|\hat{\theta}_n - \theta(\Pr)| \ge \Delta\right) l(\Delta) \ge \frac{\eta}{2} l(\Delta).
$$
Since $\Delta> 0$ was arbitrary, the claim follows.
\end{proof}

As pointed out by \citet{Donoho91} in the non-private case, the quantity $\Delta_A^{(n)}(Q,\eta)$ is not easy to calculate in general. However, in this case, where there is no privatization and $Q(\cdot|x)$ is the Dirac measure at $x$, these authors derive a general lower bound on $\Delta_A^{(n)}(Q,\eta)$ in terms of the Hellinger modulus of continuity of $\theta$, i.e.,
$$
\omega_H(\eps) \quad=\quad \sup\left\{ |\theta(\Pr_0)- \theta(\Pr_1)| : \dH(\Pr_0,\Pr_1)\le \eps, \Pr_j\in\P, j=0,1 \right\}.
$$
The Hellinger distance $\dH$ turns out to be exactly the right metric to characterize the minimax rate in the non-private case, because of its relation to the testing affinity \eqref{eq:pi} and its convenient behavior under product measures. In particular, we have the well-known identities 
\begin{align}\label{eq:HellingerIds}
\dH^2 = 2(1-\rho) \quad\text{and} \quad\rho(\Pr_0^{\otimes n},\Pr_1^{\otimes n})=\rho(\Pr_0,\Pr_1)^n,
\end{align}
where $\rho(\Pr_0,\Pr_1) = \int\sqrt{p_0p_1}$ is the Hellinger affinity, and
\begin{align}\label{eq:rhoPi}
\pi \le \rho \le \sqrt{\pi(2-\pi)} = \sqrt{(1-\dtv)(1+\dtv)} = \sqrt{1-\dtv^2};
\end{align}
cf. Equation~(3.7) of \citet{Donoho91}.
In that reference, the authors show that
$$
\Delta_A^{(n)}(Q,\eta) \quad\ge\quad  \omega_H\left(c\sqrt{\frac{|\log \eta|}{n}}\right),
$$
for all small $\eta>0$, all large $n\in\N$, and in the special case where $Q(A|x) = \mathds 1_A(x)$ is the channel that returns the original observations without privatization.
In the privatized case, and if $Q(A_1\times\dots\times A_n|x_1,\dots, x_n) = \prod_{i=1}^n Q_1(A_i|x_i)$ is a non-interactive channel with identical marginals $Q_1$, one can follow the same strategy to obtain a bound of the form
\begin{equation}\label{eq:DonohChar}
\Delta_A^{(n)}(Q,\eta) \quad\ge\quad \omega_H^{(Q_1)}\left(c\sqrt{\frac{|\log \eta|}{n}}\right),
\end{equation}
where $\omega_H^{(Q_1)}(\eps):= \sup\left\{ |\theta(\Pr_0)- \theta(\Pr_1)| : \dH(Q_1\Pr_0,Q_1\Pr_1)\le \eps, \Pr_j\in\P, j=0,1 \right\}$.
Moreover, if $Q_1$ is $\alpha$-private, one can use Theorem~1 of \citet{Duchi17} to show that
\begin{equation}\label{eq:lowerH1}
\omega_H^{(Q_1)}(n^{-1/2}) \;\ge \; \omega_{TV}\left( \frac{1}{2\sqrt{n}(e^\alpha-1)}\right),
\end{equation}
where 
$
\omega_{TV}(\eps) = \sup\left\{ |\theta(\Pr_0)- \theta(\Pr_1)| : d_{\text{TV}}(\Pr_0,\Pr_1)\le \eps, \Pr_j\in\P, j=0,1 \right\}
$
is the total variation (or $L_1$) modulus of continuity of $\theta$. This strategy, however, applies only to non-interactive channels with identical marginals. Of course, for interactive channels, an inequality as in \eqref{eq:DonohChar} can not be obtained. A more general approach can be based on the remarkable inequality 
\begin{align}\label{eq:dTVPrivateBound}
\dtv\left(Q \Pr_0^{\otimes n},Q \Pr_1^{\otimes n}\right) \;\le \; \sqrt{2n(e^\alpha-1)^2} \dtv(\Pr_0,\Pr_1),
\end{align}
which was first observed in \citet[][their Corollary~1 combined with Pinsker's inequality]{Duchi14}, and which holds for all $\alpha$-sequentially interactive channels $Q$. It is instructive to compare inequality \eqref{eq:dTVPrivateBound} to 
$$
\dtv\left(\Pr_0^{\otimes n},\Pr_1^{\otimes n}\right) \;\le \; n\cdot \dtv(\Pr_0,\Pr_1).
$$ 
The $n$-dependence of the latter is of best possible order, as can be easily verified in the example of $n$-fold products of uniform distributions.
The different factors $n$ and $\sqrt{n}$ in the bounds already hint at the fact that private minimax rates of convergence differ from their non-private counterparts.

To establish our lower bound on $\Delta_A^{(n)}(Q,\eta)$, we also need the quantities
\begin{align*}
\eta_2^{(n)}(Q,\Delta) \quad=\quad \sup_{t\in\R} \pi\left( Q \P_{\le t}^{(n)}, Q \P_{\ge t+\Delta}^{(n)}\right)
\end{align*}
and
\begin{align}\label{eq:Delta2}
\Delta_2^{(n)}(Q,\eta) \quad=\quad \sup\{ \Delta\ge 0 : \eta_2^{(n)}(Q,\Delta) > \eta\}.
\end{align}
Note that, compared to $\eta_A^{(n)}$, there is no convex hull in the definition of $\eta_2^{(n)}$. Hence, it is clear that we have $\eta_2^{(n)}(Q,\Delta) \le \eta_A^{(n)}(Q,\Delta)$, for all $n\in\N$, for all $\Delta\ge 0$ and for all channel distributions $Q$. This entails that  also $\Delta_2^{(n)}(Q,\eta)\le \Delta_A^{(n)}(Q,\eta)$, for all $n$, $\eta$ and $Q$. Thus, a lower bound on the minimax risk can be based on $\Delta_2^{(n)}(Q,\eta)$. The next step is, still for fixed $Q$, to pass over from $\Delta_2^{(n)}(Q,\eta)$ to the moduli of continuity $\omega_{TV}$ and $\omega_H$ of $\theta$.

\begin{lemma}\label{lemma:g1H}
Fix $\eta\in(0,1)$ and a channel distribution $Q$. Then
\begin{align*}
\omega_{TV}\left( g_{TV}(Q,\eta)^-\right) &\le \Delta_2^{(n)}(Q,\eta),\quad\text{and}\\
\omega_H\left( g_H(Q,\eta)^-\right) &\le \Delta_2^{(n)}(Q,\eta),
\end{align*}
where
\begin{align*}
g_{TV}(Q,\eta) &:= \inf\{ \dtv(\Pr_0,\Pr_1) : \pi(Q \Pr_0^{\otimes n}, Q \Pr_1^{\otimes n}) \le \eta, \Pr_j\in\P, j=0,1\}, \quad\text{and}\\
g_H(Q,\eta) &:= \inf\{ \dH(\Pr_0,\Pr_1) : \pi(Q \Pr_0^{\otimes n}, Q \Pr_1^{\otimes n}) \le \eta, \Pr_j\in\P, j=0,1\}.
\end{align*}
\end{lemma}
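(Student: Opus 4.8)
The plan is to prove the first inequality in full; the second follows from the \emph{verbatim} same argument with $\dtv$ replaced by $\dH$ throughout, since nothing below uses any property of $\dtv$ beyond its appearance in the definitions of $g_{TV}$ and $\omega_{TV}$. As $\omega_{TV}$ is non-decreasing, it suffices to prove that $\omega_{TV}(\eps)\le\Delta_2^{(n)}(Q,\eta)$ for every $\eps\ge0$ with $\eps<g_{TV}(Q,\eta)$ and then let $\eps\uparrow g_{TV}(Q,\eta)$. (If $g_{TV}(Q,\eta)=0$ there is nothing to do, because $\omega_{TV}(0^-)=\omega_{TV}(0)=0\le\Delta_2^{(n)}(Q,\eta)$; and if the set defining $g_{TV}(Q,\eta)$ is empty, so that $g_{TV}(Q,\eta)=+\infty$, then $\eps$ below simply ranges over all of $[0,\infty)$.)

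So fix such an $\eps$ and take an arbitrary pair $\Pr_0,\Pr_1\in\P$ with $\dtv(\Pr_0,\Pr_1)\le\eps$. First I would argue that this pair must remain hard to distinguish after privatization: if we had $\pi\big(Q\Pr_0^{\otimes n},Q\Pr_1^{\otimes n}\big)\le\eta$, then $\dtv(\Pr_0,\Pr_1)$ would be one of the numbers over which $g_{TV}(Q,\eta)$ is an infimum, forcing $\dtv(\Pr_0,\Pr_1)\ge g_{TV}(Q,\eta)>\eps$, contradicting $\dtv(\Pr_0,\Pr_1)\le\eps$. Hence $\pi\big(Q\Pr_0^{\otimes n},Q\Pr_1^{\otimes n}\big)>\eta$.

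Next I would feed this into the definition of $\eta_2^{(n)}$ at the $\theta$-gap $\Delta:=|\theta(\Pr_0)-\theta(\Pr_1)|$. Assuming without loss of generality $\theta(\Pr_0)\le\theta(\Pr_1)$ and setting $t:=\theta(\Pr_0)$, we have $\Pr_0\in\P_{\le t}$ and $\Pr_1\in\P_{\ge t+\Delta}$, so $Q\Pr_0^{\otimes n}$ and $Q\Pr_1^{\otimes n}$ are admissible competitors in the supremum defining $\pi\big(Q\P_{\le t}^{(n)},Q\P_{\ge t+\Delta}^{(n)}\big)$; therefore
$$
\eta_2^{(n)}(Q,\Delta)\;\ge\;\pi\big(Q\P_{\le t}^{(n)},Q\P_{\ge t+\Delta}^{(n)}\big)\;\ge\;\pi\big(Q\Pr_0^{\otimes n},Q\Pr_1^{\otimes n}\big)\;>\;\eta .
$$
Thus $\Delta$ lies in $\{\Delta'\ge0:\eta_2^{(n)}(Q,\Delta')>\eta\}$, so $|\theta(\Pr_0)-\theta(\Pr_1)|=\Delta\le\Delta_2^{(n)}(Q,\eta)$. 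Taking the supremum over all pairs $\Pr_0,\Pr_1\in\P$ with $\dtv(\Pr_0,\Pr_1)\le\eps$ gives $\omega_{TV}(\eps)\le\Delta_2^{(n)}(Q,\eta)$, and letting $\eps\uparrow g_{TV}(Q,\eta)$ completes the proof; the Hellinger statement follows in the same way.

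I do not expect a genuine obstacle here — the proof is a direct unwinding of the definitions of $g_{TV}$, $\eta_2^{(n)}$ and the generalized inverse $\Delta_2^{(n)}$, together with monotonicity of $\omega_{TV}$. The only points needing a little care are the one-sided limit $\omega_{TV}(\cdot^-)$ and the degenerate values $g_{TV}(Q,\eta)\in\{0,+\infty\}$ dealt with above; note in particular that, because everything is phrased through the testing affinity $\pi$, no dominatedness assumption on $Q\P^{(n)}$ is needed.
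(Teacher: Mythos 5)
Your proposal is correct and follows essentially the same route as the paper: both arguments use the infimum defining $g_{TV}$ to conclude $\pi(Q\Pr_0^{\otimes n},Q\Pr_1^{\otimes n})>\eta$ for any pair at total-variation distance strictly below $g_{TV}(Q,\eta)$, feed the resulting $\theta$-gap into $\eta_2^{(n)}$ at $t=\theta(\Pr_0)$, invoke the generalized inverse $\Delta_2^{(n)}$, and pass to the one-sided limit (the paper writes $\eps=g_{TV}(Q,\eta)-\delta$ and lets $\delta\to0$, which is the same device as your $\eps\uparrow g_{TV}(Q,\eta)$). The Hellinger case is handled identically in both.
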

\begin{proof}
For $\delta>0$, set $C := C(Q,\eta,\delta) := \{|\theta(\Pr_0)-\theta(\Pr_1)| : \dtv(\Pr_0,\Pr_1) \le g_{TV}(Q,\eta)-\delta, \Pr_j\in\P \}$, so that $\sup C = \omega_{TV}\left( g_{TV}(Q,\eta) - \delta\right)$. If $C=\varnothing$, then the desired inequality is trivial. So let $\Delta\in C$. Then there exist $\Pr_0, \Pr_1\in\P$, such that $\Delta=|\theta(\Pr_0)-\theta(\Pr_1)|$ and $\dtv(\Pr_0,\Pr_1)\le g_{TV}(Q,\eta) - \delta$. But this entails that $\pi\left( Q \Pr_0^{\otimes n}, Q \Pr_1^{\otimes n}\right) >\eta$, or otherwise $g_{TV}$ could not be the infimum. Now, without loss of generality, let $t_0:= \theta(\Pr_0) \le \theta(\Pr_1)$, so that $\Pr_0\in\P_{\le t_0}$ and $\Pr_1\in\P_{\ge t_0+\Delta}$. Thus, $\eta_2^{(n)}(Q,\Delta) = \sup_{t\in\R} \pi\left( Q \P_{\le t}^{(n)}, Q \P_{\ge t+\Delta}^{(n)}\right) >\eta$. Hence, we have established that $C = C(Q,\eta,\delta) \subseteq \{\Delta\ge0: \eta_2^{(n)}(Q,\Delta)>\eta\}$ and therefore, $\sup C \le \Delta_2^{(n)}(Q,\eta)$. Now let $\delta\to0$. The result for $\omega_H$ is established in an analogous way.
\end{proof}

It remains to derive lower bounds on $g_{TV}$ and $g_H$. 

\begin{theorem}\label{thm:g1}
Fix $n\in\N$, $\eta\in(0,1)$, $\alpha\in(0,\infty)$ and let $Q$ be an $\alpha$-sequentially interactive channel as in \eqref{eq:Seq} and \eqref{eq:alphaSeq}. Then
$$
g_{TV}(Q,\eta) \;\ge \; \frac{1-\eta}{\sqrt{2n(e^\alpha-1)^2}},
$$
where $g_{TV}$ is as in Lemma~\ref{lemma:g1H}. Consequently, we have
$$
\Delta_A^{(n)}(Q,\eta) \;\ge\; 
\omega_{TV}\left(\left[ \frac{1-\eta}{\sqrt{2n(e^\alpha-1)^2}}\right]^- \right).
$$
Moreover, for all $\eta_0\in(0,1)$ and every $\eps_0>0$, there exists a finite positive constant $c>0$, such that 
$$
g_H(R,\eta) \;\ge \; c\sqrt{\frac{|\log\eta|}{n}},
$$
for all $\eta\in(0,\eta_0)$, for all $n>|\log\eta|/\eps_0$ and for \textbf{all} channels $R$, where $g_H$ is as in Lemma~\ref{lemma:g1H}. Consequently, for such $\eta$, $n$ and $\alpha$-private channel $Q$, we have
$$
\Delta_A^{(n)}(Q,\eta) \;\ge\; 
\omega_{TV}\left(\left[ \frac{1-\eta}{\sqrt{2n(e^\alpha-1)^2}}\right]^- \right)
\lor \omega_H\left(\left[ c\sqrt{\frac{|\log\eta|}{n}} \right]^- \right).
$$
\end{theorem}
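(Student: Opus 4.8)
The two ``$g$'' bounds are the substance; the bounds on $\Delta_A^{(n)}(Q,\eta)$ then follow mechanically. I first record the elementary identity $\pi(\mu,\nu)=1-\dtv(\mu,\nu)$ (the infimum over tests of $\E_\mu[\phi]+\E_\nu[1-\phi]$ is attained at a Neyman--Pearson test and equals $\int\min(d\mu,d\nu)$), so the constraint $\pi(Q\Pr_0^{\otimes n},Q\Pr_1^{\otimes n})\le\eta$ in the definitions of $g_{TV}$ and $g_H$ is the same as $\dtv(Q\Pr_0^{\otimes n},Q\Pr_1^{\otimes n})\ge1-\eta$. For the conclusions I will feed the $g$-bounds into Lemma~\ref{lemma:g1H}, which gives $\omega_{TV}\big(g_{TV}(Q,\eta)^-\big)\le\Delta_2^{(n)}(Q,\eta)\le\Delta_A^{(n)}(Q,\eta)$ and the analogous inequality for $\omega_H$, and use that $\eps\mapsto\omega_{TV}(\eps^-)$ and $\eps\mapsto\omega_H(\eps^-)$ are non-decreasing.

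\textbf{The bound on $g_{TV}$.} Let $\Pr_0,\Pr_1\in\P$ satisfy $\pi(Q\Pr_0^{\otimes n},Q\Pr_1^{\otimes n})\le\eta$, i.e.\ $\dtv(Q\Pr_0^{\otimes n},Q\Pr_1^{\otimes n})\ge1-\eta$. Since $Q$ is $\alpha$-sequentially interactive, inequality~\eqref{eq:dTVPrivateBound} applies and gives $1-\eta\le\sqrt{2n(e^\alpha-1)^2}\,\dtv(\Pr_0,\Pr_1)$, hence $\dtv(\Pr_0,\Pr_1)\ge(1-\eta)/\sqrt{2n(e^\alpha-1)^2}$. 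Taking the infimum over all admissible pairs (the bound being vacuous if no such pair exists) yields $g_{TV}(Q,\eta)\ge(1-\eta)/\sqrt{2n(e^\alpha-1)^2}$, and the displayed lower bound on $\Delta_A^{(n)}(Q,\eta)$ follows from Lemma~\ref{lemma:g1H} and monotonicity of $\omega_{TV}(\cdot^-)$.

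\textbf{The bound on $g_H$.} Fix \emph{any} channel $R$ and $\Pr_0,\Pr_1\in\P$ with $\pi(R\Pr_0^{\otimes n},R\Pr_1^{\otimes n})\le\eta$. Chain three facts: by \eqref{eq:rhoPi} and monotonicity of $s\mapsto\sqrt{s(2-s)}$ on $[0,1]$, $\rho(R\Pr_0^{\otimes n},R\Pr_1^{\otimes n})\le\sqrt{\eta(2-\eta)}$; by the data-processing inequality for the Hellinger affinity (equivalently, squared Hellinger distance is an $f$-divergence and cannot increase under a Markov kernel), $\rho(\Pr_0^{\otimes n},\Pr_1^{\otimes n})\le\rho(R\Pr_0^{\otimes n},R\Pr_1^{\otimes n})$; by the tensorization identity in \eqref{eq:HellingerIds}, $\rho(\Pr_0,\Pr_1)^n=\rho(\Pr_0^{\otimes n},\Pr_1^{\otimes n})$. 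Hence $\rho(\Pr_0,\Pr_1)\le(\eta(2-\eta))^{1/(2n)}=e^{-u}$ with $u:=\tfrac1{2n}\log\tfrac1{\eta(2-\eta)}>0$, so by $\dH^2=2(1-\rho)$, $\dH(\Pr_0,\Pr_1)^2\ge2(1-e^{-u})$. It remains to bound $2(1-e^{-u})$ below by $c^2|\log\eta|/n$ uniformly over $\eta\in(0,\eta_0)$ and $n>|\log\eta|/\eps_0$. Since $\eta(2-\eta)\ge\eta$ we get $u\le|\log\eta|/(2n)<\eps_0/2$, and because $t\mapsto(1-e^{-t})/t$ is decreasing, $1-e^{-u}\ge\frac{1-e^{-\eps_0/2}}{\eps_0/2}\,u=\frac{1-e^{-\eps_0/2}}{\eps_0/2}\cdot\frac1{2n}\log\frac1{\eta(2-\eta)}$. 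Finally, $\eta\mapsto\log\frac1{\eta(2-\eta)}\big/|\log\eta|$ is continuous and strictly positive on $(0,\eta_0]$ with limit $1$ as $\eta\downarrow0$, hence bounded below by a constant $\delta_1=\delta_1(\eta_0)>0$ on $(0,\eta_0)$; this gives $\dH(\Pr_0,\Pr_1)^2\ge\frac{\delta_1(1-e^{-\eps_0/2})}{\eps_0/2}\cdot\frac{|\log\eta|}{2n}$, i.e.\ $g_H(R,\eta)\ge c\sqrt{|\log\eta|/n}$ with $c=c(\eta_0,\eps_0)=(\delta_1(1-e^{-\eps_0/2})/\eps_0)^{1/2}$, after taking the infimum over admissible pairs.

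\textbf{Conclusion.} Applying Lemma~\ref{lemma:g1H} with $\omega_H$, together with $g_H(Q,\eta)\ge c\sqrt{|\log\eta|/n}$ and monotonicity of $\omega_H(\cdot^-)$, gives $\Delta_A^{(n)}(Q,\eta)\ge\omega_H\big((c\sqrt{|\log\eta|/n})^-\big)$ for every channel $Q$, in particular for $\alpha$-private $Q$; since the $g_{TV}$-based bound holds simultaneously, we may take the maximum, which is the stated inequality. The only genuinely delicate point is the uniform control of constants in the $g_H$ estimate: one must check that $\eta<\eta_0$ keeps $\log\frac1{\eta(2-\eta)}$ comparable to $|\log\eta|$ and that $n>|\log\eta|/\eps_0$ keeps $u$ bounded, so that the linearization of $1-e^{-u}$ is valid with a constant depending only on $(\eta_0,\eps_0)$; everything else is a direct appeal to \eqref{eq:dTVPrivateBound}, \eqref{eq:HellingerIds}, \eqref{eq:rhoPi}, Lemma~\ref{lemma:g1H}, and standard data processing.
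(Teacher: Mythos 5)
Your proof is correct and follows essentially the same route as the paper: the identity $\pi=1-\dtv$ combined with inequality~\eqref{eq:dTVPrivateBound} for the $g_{TV}$ bound, and the chain \eqref{eq:rhoPi}, Hellinger data processing (the paper's Lemma~\ref{lemma:Hdistance}) and tensorization \eqref{eq:HellingerIds} for the $g_H$ bound, finishing via Lemma~\ref{lemma:g1H} and $\Delta_2^{(n)}\le\Delta_A^{(n)}$. The only difference is that where the paper invokes Lemma~3.3 of \citet{Donoho91} to convert $2\bigl(1-(\eta(2-\eta))^{1/(2n)}\bigr)$ into $c^2|\log\eta|/n$, you carry out that elementary estimate explicitly, and your uniform control of the constant in terms of $(\eta_0,\eps_0)$ is sound.
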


\begin{proof}
Recall \eqref{eq:dTVPrivateBound}, i.e.,
\begin{align*}
\dtv(Q \Pr_0^{\otimes n}, Q \Pr_1^{\otimes n}) \le 
\sqrt{2n}(e^\alpha -1) \dtv(\Pr_0,\Pr_1),
\end{align*}
which holds for all $\alpha$-sequentially interactive channels $Q$ and every $\Pr_0,\Pr_1\in\P$. On the other hand, we have the well known identity
\begin{align*}
\pi(Q \Pr_0^{\otimes n},Q \Pr_1^{\otimes n}) &= 1 -  \sup_{\text{tests }\phi} \left(\E_{Q \Pr_0^{\otimes n}}[\phi] - \E_{Q \Pr_1^{\otimes n}}[\phi]\right) \\
&= 1 - \dtv(Q \Pr_0^{\otimes n},Q \Pr_1^{\otimes n}),
\end{align*}
so that
$$
\dtv(\Pr_0,\Pr_1) \ge \frac{1-\pi(Q \Pr_0^{\otimes n},Q \Pr_1^{\otimes n})}{\sqrt{2n}(e^\alpha-1)}.
$$
The first result now follows from the definition of $g_{TV}$.
For the lower bound on $g_H$, we use the Hellinger identities in \eqref{eq:HellingerIds} and \eqref{eq:rhoPi}, as well as Lemma~\ref{lemma:Hdistance} in the supplement, to obtain
\begin{align*}
\dH^2(\Pr_0,\Pr_1) &= 2\left(1-\rho\left(\Pr_0^{\otimes n},\Pr_1^{\otimes n}\right)^{1/n}\right) 
\ge 
2\left(1-\rho\left(R \Pr_0^{\otimes n},R \Pr_1^{\otimes n}\right)^{1/n}\right) \\
&\ge
2\left(1-\left(\pi\left(R \Pr_0^{\otimes n}, R \Pr_1^{\otimes n} \right)\left[2- \pi\left(R \Pr_0^{\otimes n}, R \Pr_1^{\otimes n} \right)\right]\right)^{\frac{1}{2n}}\right).
\end{align*}
Thus, by definition of $g_H$, we arrive at the lower bound
$$
g_H(R,\eta) \ge \sqrt{2\left(1-\left(\eta\left[2- \eta\right]\right)^{\frac{1}{2n}}\right)}.
$$
The result now follows from Lemma~3.3 of \citet{Donoho91}.
\end{proof}

We conclude with the following corollary also presented in the main article.

\begin{corollary}\label{corr:lowerModulusApp}
Fix $\eta_0,\eps_0\in(0,1)$, $\alpha\in(0,\infty)$ and let $l:\R_+\to\R_+$ be a non-decreasing loss function. Then there exists a positive finite constant $c = c(\eta_0,\eps_0)$, such that for all $\eta\in(0,\eta_0)$ and for all $n>|\log\eta|/\eps_0$,
\begin{align*}
&\mathcal M_{n,\alpha}(\P, \theta) \;=\; \inf_{Q\in\mathcal Q_\alpha} \mathcal M_n(Q,\P, \theta) \\
&\quad\ge \;  l\left( \frac{1}{2} \left[ 
\omega_{TV}\left(\left[ \frac{1-\eta}{\sqrt{2n(e^\alpha-1)^2}}\right]^-\right) 
\lor
\omega_H\left(\left[ c\sqrt{\frac{|\log\eta|}{n}} \right]^- \right) \right]^-\right)\frac{\eta}{2},
\end{align*}
where $\mathcal Q_\alpha$ is the set of $\alpha$-sequentially interactive channels $Q$ as in \eqref{eq:SIset}.
\end{corollary}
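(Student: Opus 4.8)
The plan is to assemble the pieces already developed in this section: Theorem~\ref{thm:lowerDeltaA} reduces $\mathcal M_n(Q,\P,\theta)$ to the generalized inverse upper affinity $\Delta_A^{(n)}(Q,\eta)$ whenever $Q\P^{(n)}$ is dominated, Theorem~\ref{thm:g1} lower bounds that affinity for $\alpha$-private channels by a quantity depending only on $\alpha,n,\eta$ and the universal constant $c$, and the domination hypothesis needed for both is automatic on $\mathcal Q_\alpha$. Combining these for each fixed $Q$ and then taking the infimum over $Q$ yields the corollary.

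Concretely, I would fix $\eta\in(0,\eta_0)$, $n>|\log\eta|/\eps_0$, and let $c=c(\eta_0,\eps_0)$ be the constant supplied by Theorem~\ref{thm:g1}. Take an arbitrary $Q\in\mathcal Q_\alpha$, say $\alpha$-sequentially interactive from $\X^n$ to $\R^{n\times q}$ for some $q\in\N$. Then $Q\P^{(n)}$ is dominated: by \eqref{eq:alphaSeq} each conditional $Q_i(\cdot\,|\,x_i,z_{1:i-1})$ is mutually absolutely continuous as $x_i$ ranges over $\X$, so for any fixed reference $\Pr_*\in\P$ every member $Q\Pr^{\otimes n}$ of $Q\P^{(n)}$ is absolutely continuous with respect to the $\sigma$-finite (indeed, probability) measure $Q\Pr_*^{\otimes n}$. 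Hence Theorem~\ref{thm:lowerDeltaA} applies and gives
$$
\mathcal M_n(Q,\P,\theta)\;\ge\; l\!\left(\left[\tfrac{1}{2}\Delta_A^{(n)}(Q,\eta)\right]^-\right)\frac{\eta}{2}.
$$

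Next I would insert the bound from Theorem~\ref{thm:g1},
$$
\Delta_A^{(n)}(Q,\eta)\;\ge\;\omega_{TV}\!\left(\left[\tfrac{1-\eta}{\sqrt{2n(e^\alpha-1)^2}}\right]^-\right)\lor\omega_H\!\left(\left[c\sqrt{\tfrac{|\log\eta|}{n}}\right]^-\right),
$$
whose right-hand side does not depend on the particular channel $Q$ (it uses only the contraction inequality \eqref{eq:dTVPrivateBound} and the channel-free lower bound on $g_H$). Since $l$ is non-decreasing and hence $x\mapsto l(x^-)$ is also non-decreasing (with the convention $l(0^-):=l(0)$), substituting this into the previous display gives, for every $Q\in\mathcal Q_\alpha$,
$$
\mathcal M_n(Q,\P,\theta)\;\ge\; l\!\left(\tfrac{1}{2}\!\left[\omega_{TV}\!\left(\left[\tfrac{1-\eta}{\sqrt{2n(e^\alpha-1)^2}}\right]^-\right)\lor\omega_H\!\left(\left[c\sqrt{\tfrac{|\log\eta|}{n}}\right]^-\right)\right]^-\right)\frac{\eta}{2}.
$$
The right-hand side is independent of $Q$, so passing to the infimum over all $Q\in\mathcal Q_\alpha$ — which in particular ranges over all output dimensions $q$ — leaves it unchanged, and this is precisely the asserted lower bound on $\mathcal M_{n,\alpha}(\P,\theta)=\inf_{Q\in\mathcal Q_\alpha}\mathcal M_n(Q,\P,\theta)$.

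The only point demanding genuine care — and essentially where the content of the corollary lies — is that both ingredients hold \emph{uniformly} over $\mathcal Q_\alpha$: the domination enabling the Kraft--Le Cam identity \eqref{eq:KraftLeCam} inside Theorem~\ref{thm:lowerDeltaA} must be available for every $\alpha$-private $Q$ (it is), and the constant $c$ in the $g_H$-bound of Theorem~\ref{thm:g1} must depend only on $\eta_0,\eps_0$ and not on $n$ or $Q$ (it does, that bound being stated for \emph{all} channels). Granting these uniformities, the passage to the infimum and the monotone composition with $l$ are routine, so I do not anticipate any obstacle beyond carefully matching up the one-sided limits $[\cdot]^-$ between the two theorems.
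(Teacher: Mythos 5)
Your proposal is correct and follows essentially the same route as the paper: the corollary is obtained exactly by applying Theorem~\ref{thm:lowerDeltaA} (noting, as you do, that $Q\P^{(n)}$ is automatically dominated for $\alpha$-private $Q$), inserting the channel-independent bound on $\Delta_A^{(n)}(Q,\eta)$ from Theorem~\ref{thm:g1}, and taking the infimum over $Q\in\mathcal Q_\alpha$ using monotonicity of $l$ and of left limits. Your handling of the one-sided limits and of the uniformity of the constant $c$ matches the paper's argument.
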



\section{Proof and discussion of an extension of Theorem~\ref{THM:UPPERB}}
\label{sec:App:ThmUpperB}

The following is an extended version of Theorem~\ref{THM:UPPERB}.

\begin{theorem}\label{thm:upperB:App}
Fix $n\in\N$, suppose that \ref{cond:thetaBound} and \ref{cond:loss} hold and that $Q$ is a non-interactive channel with identical marginals $Q_1$, such that $Q_1 \P$ is dominated (by a $\sigma$-finite measure), and such that for every $r\in\N$, we have
\begin{align}\label{eq:condOmegaH:App}
\omega_H^{(Q_1)}(rn^{-1/2}) \;\le\; r^2 \omega_H^{(Q_1)}(n^{-1/2}).
\end{align}
Fix $C \ge \sqrt{2\log 2a}+1$, $\Delta=C^2\omega_H^{(Q_1)}(n^{-1/2})$, $C_1 = \left[ 1 + \frac{8a^2}{2a-1} \right] a^{\lceil 2\log(C)/\log(3/2)\rceil}$ and $C_2 = \left[ 2+a^2 + \frac{8a^2}{2a-1} \right] a^{\lceil 2\log(C)/\log(3/2)\rceil}$, where $a>1$ is the constant from Condition~\ref{cond:loss}. If one of the following conditions
\begin{align}
\sup_{\substack{\Pr_0\in\conv(Q_1\P_{\le s})\\ \Pr_1\in\conv(Q_1\P_{\ge t})}} \rho(\Pr_0,\Pr_1) 
\quad&=\quad
\sup_{\substack{\Pr_0\in Q_1\P_{\le s}\\ \Pr_1\in Q_1\P_{\ge t}}} \rho(\Pr_0,\Pr_1), 
\quad\forall s,t: t-s\ge \Delta, 
\quad \text{\textbf{or}} \label{eq:Sufficient}\\
\sup_{
	\substack{\Pr_0\in \conv(Q \P_{\le s}^{(n)})\\
			\Pr_1\in \conv(Q \P_{\ge t}^{(n)}) 
			} 
	}  \pi(\Pr_0,\Pr_1) 
&=
\sup_{
	\substack{\Pr_0\in Q \P_{\le s}^{(n)}\\
			\Pr_1\in Q \P_{\ge t}^{(n)} 
			} 
	}  \pi(\Pr_0,\Pr_1), \quad\forall s,t:t-s\ge \Delta, \label{eq:SufficientDonoho}
\end{align}
holds, then there exists a binary search estimator $\hat{\theta}_n^{(\Delta)}:\mathcal Z^n\to\R$ with tuning parameter $\Delta$ (cf. Proposition~\ref{prop:BinarySearch} for details), such that
\begin{align*}
\sup_{\Pr\in\P} \E_{Q \Pr^{\otimes n}}\left[ l\left(|\hat{\theta}_n^{(\Delta)} - \theta(\Pr)| \right)\right]
\;\le\;C_1\cdot l\left( \omega_H^{\left(Q_1\right)}\left( n^{-1/2}\right)\right),
\end{align*}
and
\begin{align*}
\sup_{\Pr\in\P} \E_{Q \Pr^{\otimes n}}\left[ l\left(|\hat{\theta}_n^{(\Delta)} - \theta(\Pr)| + 2\Delta \right)\right]
\;\le\;a^2C_2\cdot l\left( \omega_H^{\left(Q_1\right)}\left( n^{-1/2}\right)\right).
\end{align*}
\end{theorem}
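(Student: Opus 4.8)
The plan is to run the binary search estimator $\hat\theta_n^{(\Delta)}$ of Proposition~\ref{prop:BinarySearch}\ref{lemma:BinarySearch:A} with tuning parameter $\Delta=C^2\omega$, where I abbreviate $\omega:=\omega_H^{(Q_1)}(n^{-1/2})$, and to show that its tail probabilities decay geometrically by establishing $\eta_A^{(n)}(Q,k\Delta)\lor 0\le (2a)^{-k}$ for every $k\ge1$. First I would dispose of the degenerate case: if $M_-=M_+$ then $\theta$ is constant and the zero-risk estimator settles both claims; and if $M_-<M_+$ then necessarily $\omega>0$, since otherwise \eqref{eq:condOmegaH:App}, applied with any integer $r\ge\sqrt2\,n^{1/2}$, together with monotonicity of $\omega_H^{(Q_1)}$ would give $\sup_{\Pr_0,\Pr_1\in\P}|\theta(\Pr_0)-\theta(\Pr_1)|=\omega_H^{(Q_1)}(\sqrt2)=0$, contradicting $M_-<M_+$. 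Since $Q_1\P$ is dominated, Proposition~\ref{prop:BinarySearch}\ref{lemma:BinarySearch:A} applies and yields an estimator with $\sup_{\Pr\in\P}Q\Pr^{\otimes n}(|\hat\theta_n^{(\Delta)}-\theta(\Pr)|>(l+1)\Delta)\le 4\sum_{k=l+1}^{N-2}[\eta_A^{(n)}(Q,k\Delta)\lor 0]$ for every $l\in\N_0$.

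The heart of the argument is the bound on $\eta_A^{(n)}(Q,k\Delta)$. This is where one of \eqref{eq:Sufficient}, \eqref{eq:SufficientDonoho} enters. Under \eqref{eq:SufficientDonoho} the convex hulls in the definition \eqref{eq:etaA} of $\eta_A^{(n)}$ may be dropped; since $Q$ is non-interactive with identical marginals (so $Q\Pr^{\otimes n}=(Q_1\Pr)^{\otimes n}$), this gives $\eta_A^{(n)}(Q,k\Delta)=\sup\{\pi((Q_1\Pr_0)^{\otimes n},(Q_1\Pr_1)^{\otimes n}):\Pr_0,\Pr_1\in\P,\ \theta(\Pr_1)-\theta(\Pr_0)\ge k\Delta\}$, and then $\pi\le\rho$ from \eqref{eq:rhoPi} together with multiplicativity $\rho((Q_1\Pr_0)^{\otimes n},(Q_1\Pr_1)^{\otimes n})=\rho(Q_1\Pr_0,Q_1\Pr_1)^n$ from \eqref{eq:HellingerIds} bounds this by $\sup\{\rho(Q_1\Pr_0,Q_1\Pr_1)^n:|\theta(\Pr_0)-\theta(\Pr_1)|\ge k\Delta\}$; under the single-observation hypothesis \eqref{eq:Sufficient} one reaches the same bound by first passing to the convexified single-observation Hellinger affinity via the tensorization lemma for convex hulls of product measures proved in the supplement and then invoking \eqref{eq:Sufficient}. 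Now fix $\Pr_0,\Pr_1$ with $|\theta(\Pr_0)-\theta(\Pr_1)|\ge k\Delta$ and set $r_k:=\max\{r\in\N:r<C\sqrt k\}$ (nonempty since $C>1$, with $r_k\ge C\sqrt k-1\ge (C-1)\sqrt k$ because $k\ge1$). By \eqref{eq:condOmegaH:App}, $\omega_H^{(Q_1)}(r_k n^{-1/2})\le r_k^2\omega<C^2k\omega=k\Delta\le|\theta(\Pr_0)-\theta(\Pr_1)|$, so by definition of $\omega_H^{(Q_1)}$ we must have $\dH(Q_1\Pr_0,Q_1\Pr_1)>r_k n^{-1/2}$, whence $\rho(Q_1\Pr_0,Q_1\Pr_1)=1-\tfrac12\dH^2(Q_1\Pr_0,Q_1\Pr_1)<1-r_k^2/(2n)$. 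If $r_k^2>2n$ this is impossible, so for such $k$ no admissible pair exists and $\eta_A^{(n)}(Q,k\Delta)\lor0=0$; otherwise $\rho(Q_1\Pr_0,Q_1\Pr_1)^n\le(1-r_k^2/(2n))^n\le e^{-r_k^2/2}\le e^{-(C-1)^2k/2}\le(2a)^{-k}$, the last step by $C\ge\sqrt{2\log(2a)}+1$. Summing the geometric series, $\sup_{\Pr\in\P}Q\Pr^{\otimes n}(|\hat\theta_n^{(\Delta)}-\theta(\Pr)|>(l+1)\Delta)\le 4\sum_{k\ge l+1}(2a)^{-k}=\tfrac{4(2a)^{-l}}{2a-1}$.

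It remains to integrate. Set $R:=|\hat\theta_n^{(\Delta)}-\theta(\Pr)|$ and $\gamma_0:=\lceil 2\log(C)/\log(3/2)\rceil$, so $C^2\le(3/2)^{\gamma_0}$. Using Condition~\ref{cond:loss} ($l$ non-decreasing, $l(0)=0$) and a layer-cake decomposition along the levels $(l+1)\Delta$, $\E_{Q\Pr^{\otimes n}}[l(R)]\le l(\Delta)+\sum_{l\ge0}l((l+2)\Delta)\,Q\Pr^{\otimes n}(R>(l+1)\Delta)$. An elementary induction gives $l+2\le 2(3/2)^l$ and $l+4\le 4(3/2)^l$ for all $l\ge0$, and together with $2<(3/2)^2$, $4<(3/2)^4$ this yields $(l+2)\Delta=(l+2)C^2\omega<(3/2)^{\,l+2+\gamma_0}\omega$; iterating $l(\tfrac32 t)\le a\,l(t)$ then gives $l((l+2)\Delta)\le a^{\,l+2+\gamma_0}l(\omega)$ and $l(\Delta)\le a^{\gamma_0}l(\omega)$. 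Substituting and using $\sum_{l\ge0}a^l(2a)^{-l}=2$ produces $\E_{Q\Pr^{\otimes n}}[l(R)]\le a^{\gamma_0}\bigl(1+\tfrac{8a^2}{2a-1}\bigr)l(\omega)=C_1\,l(\omega_H^{(Q_1)}(n^{-1/2}))$, uniformly in $\Pr$. The second inequality follows from the identical computation applied to $R+2\Delta$: on $\{R\le\Delta\}$ one has $R+2\Delta\le3\Delta<(3/2)^{\,3+\gamma_0}\omega$, and on the $l$-th layer $R+2\Delta\le(l+4)\Delta<(3/2)^{\,l+4+\gamma_0}\omega$, giving $\E_{Q\Pr^{\otimes n}}[l(R+2\Delta)]\le a^{\gamma_0}\bigl(a^3+\tfrac{8a^4}{2a-1}\bigr)l(\omega)\le a^2C_2\,l(\omega_H^{(Q_1)}(n^{-1/2}))$, where the final step is just the inequality $a\le2+a^2$.

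The step I expect to be the main obstacle is the reduction of $\eta_A^{(n)}(Q,k\Delta)$ — which a priori involves convex hulls of $n$-fold product measures — to the single-observation Hellinger affinity of $Q_1$: under \eqref{eq:SufficientDonoho} this is immediate from the Kraft--Le Cam identity \eqref{eq:KraftLeCam} and the multiplicativity of $\rho$ over products, but under the weaker hypothesis \eqref{eq:Sufficient} it genuinely requires the tensorization lemma for convexified Hellinger affinities. A secondary, purely bookkeeping difficulty is tracking the $(3/2)$-powers and geometric sums carefully enough that the constants come out exactly as $C_1$ and $a^2C_2$.
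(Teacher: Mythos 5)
Your proposal is correct and follows essentially the same route as the paper's proof: the binary search estimator of Proposition~\ref{prop:BinarySearch}, the reduction of $\eta_A^{(n)}(Q,k\Delta)$ to the marginal Hellinger affinity via \eqref{eq:SufficientDonoho} or, under \eqref{eq:Sufficient}, tensorization of convexified affinities, the geometric bound $(2a)^{-k}$ obtained from \eqref{eq:condOmegaH:App}, and the layer-cake integration of the loss; your direct choice of $r_k$ and the direct treatment of $R+2\Delta$ merely streamline the paper's $\delta\to1$ limit and its halving trick $l(t)\le a^2 l(t/2)$, and yield the same (indeed slightly smaller) constants. One small correction: the tensorization lemma you invoke under \eqref{eq:Sufficient} is not proved in the supplement but is quoted from Le Cam (1986, Lemma~2, p.~477).
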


\begin{remark}\quad
\label{rem:L1HQmoduli}
\begin{itemize}
\item Condition~\eqref{eq:condOmegaH:App} replaces and relaxes the assumption of a H\"olderian behavior of the Hellinger modulus (i.e., $\omega_H(\eps) \asymp \eps^\beta$) that was maintained by \citet{Donoho91} in the case of direct observations ($Q(A|x)=\mathds 1_A(x)$). In that case the H\"older condition on $\omega_H$ with exponent $\beta\le 2$ implies \eqref{eq:condOmegaH:App} (up to constants). The requirement that $\beta\le2$ is natural (cf. Lemma~\ref{lemma:bestL1Rate}). 

\item In Lemma~\ref{lemma:Hom2} we show that Condition~\eqref{eq:condOmegaH:App} is satisfied in particular if $Q_1\P$ is convex, which holds for any channel $Q_1$ if $\P$ itself is convex. However, if $Q_1(\cdot|x)$ is supported on a binary set $\mathcal Z= \{-z_0,z_0\}$ that is independent of $x\in\X$ (as, for instance, in \eqref{eq:binaryChannel}), then it suffices that $(\P, \dtv)$ is connected in order for $Q_1\P$ to be convex. To see this, simply note that then $Q_1\P$ is also connected, by $\dtv$-continuity of $\Pr\mapsto Q_1\Pr$, and any set of binary distributions with a common support is homeomorphic to a subset of $[0,1]$, which is connected if, and only if, it is convex.
\end{itemize}
\end{remark}

\begin{remark}
Condition~\eqref{eq:SufficientDonoho} is the privatized version of Condition~(4.2) in \citet{Donoho91}. It is instructive to study Section~4 of that reference to gain some intuition on the mechanism that leads to the attainability result. This discussion, extended to the privatized case, applies also in the present paper, but we do not repeat it here. We only point out that under \eqref{eq:SufficientDonoho}, the quantities $\Delta_A^{(n)}(Q,\eta)$ and $\Delta_2^{(n)}(Q,\eta)$ defined in \eqref{eq:DeltaA} and \eqref{eq:Delta2} coincide, so that the bound $\Delta_2^{(n)}(Q,\eta) \le \Delta_A^{(n)}(Q,\eta)$, which was instrumental in the previous section, is actually tight.
\end{remark}

\begin{remark}\label{rem:Sufficient}
Condition~\eqref{eq:Sufficient} is an alternative to Condition~\eqref{eq:SufficientDonoho} and a version of it in the non-private case also appears in \citet[][Lemma~4.3]{Donoho91}. Note that, unlike \eqref{eq:SufficientDonoho}, the suprema in \eqref{eq:Sufficient} are only over sets of one-dimensional marginal distributions, which may be convex even if the sets of corresponding product distributions are not. In particular, convexity of $Q_1\P_{\le s}$ and $Q_1\P_{\ge t}$ follows, if $\theta:\P\to\R$ is linear and $\P$ is convex, because then $\P_{\le s}$ and $\P_{\ge t}$ are both convex. However, because of the privatization effect, $Q_1\P_{\le s}$ and $Q_1\P_{\ge t}$ may be convex even though $\P_{\le s}$ and $\P_{\ge t}$ are not (cf. the discussion of Remark~\ref{rem:L1HQmoduli}). Of course, Condition~\eqref{eq:Sufficient} may also be satisfied for non-convex $Q_1\P_{\le s}$ and $Q_1\P_{\ge t}$.
\end{remark}

\begin{proof}

The proof is inspired by the proofs of Lemma~2.2, Theorem~2.3 and Theorem~2.4 of \citet{Donoho91}, but we consider the private case, we directly focus on the modulus $\omega_H^{(Q_1)}$ rather than on $\Delta_A^{(n)}(Q,\eta)$ and we relax an assumption. First, we propose an alternative version of the binary search estimator of \citet{Donoho91}, which is particularly designed for the privatized setting (see Proposition~\ref{prop:BinarySearch} in the main article).

For the proof of Theorem~\ref{thm:upperB:App}, we apply Proposition~\ref{prop:BinarySearch} with $\Delta := C^2\omega_H^{(Q_1)}(n^{-1/2})$, as in the theorem. If $\omega_{H}^{(Q_1)}(n^{-1/2})=0$, then, by Lemma~\ref{lemma:Hdistance}, monotonicity and \eqref{eq:condOmegaH:App}, we have for $s>0$ and $r\ge s\sqrt{n}$, that $\omega_H(s) \le \omega_{H}^{(Q_1)}(s)\le \omega_{H}^{(Q_1)}(rn^{-1/2}) \le r^2\omega_{H}^{(Q_1)}(n^{-1/2}) =0$. Thus $\theta:\P\to\R$ is constant and $\hat{\theta}_n^{(0)}$ as in Proposition~\ref{prop:BinarySearch}\ref{lemma:BinarySearch:A} chooses that value. We proceed in the case $\omega_{H}^{(Q_1)}(n^{-1/2})>0$ and $\theta$ not constant. As in \citet{Donoho91} we establish bounds on the sum of upper affinities $\sum_{k=l+1}^{N-2}[\eta_A^{(n)}(Q,k\Delta)\lor0]$. Without loss of generality, we assume $N\ge 3$, since otherwise any non-negative bound is trivial.
For $k\in\{1,\dots, N-2\}$ we first show that
\begin{align}\label{eq:etaBoundH2}
\eta_A^{(n)}(Q,k\Delta) 
\quad\le\quad 
\sup_t 
\sup_{
	\substack{\Pr_0\in \P_{\le t}\\
			\Pr_1\in \P_{\ge t+k\Delta}
			} 
	}  \left(1-\frac{1}{2}\dH^2(Q_1  \Pr_0,Q_1  \Pr_1)\right)^n.
\end{align}
If \eqref{eq:SufficientDonoho} holds, then, using \eqref{eq:rhoPi} and \eqref{eq:HellingerIds}, we obtain
\begin{align*}
\eta_A^{(n)}(Q,k\Delta) \quad&= \quad
\sup_t 
\sup_{
	\substack{\Pr_0\in \conv(Q \P_{\le t}^{(n)})\\
			\Pr_1\in \conv(Q \P_{\ge t+k\Delta}^{(n)}) 
			} 
	}  \pi(\Pr_0,\Pr_1)\\
\quad&=\quad
\sup_t 
\sup_{
	\substack{\Pr_0\in Q \P_{\le t}^{(n)}\\
			\Pr_1\in Q \P_{\ge t+k\Delta}^{(n)} 
			} 
	}  \pi(\Pr_0,\Pr_1)\\
\quad&\le \quad
\sup_t 
\sup_{
	\substack{\Pr_0\in [Q_1 \P_{\le t}]^{(n)}\\
			\Pr_1\in [Q_1 \P_{\ge t+k\Delta}]^{(n)} 
			} 
	}  \rho(\Pr_0,\Pr_1)\\
\quad&=\quad
\sup_t 
\sup_{
	\substack{\Pr_0\in Q_1\P_{\le t}\\
			\Pr_1\in Q_1\P_{\ge t+k\Delta} 
			} 
	}  \rho(\Pr_0,\Pr_1)^n.
\end{align*}
If, on the other hand, \eqref{eq:Sufficient} holds, then, using \eqref{eq:rhoPi} and Lemma~2 of \citet[][page~477]{LeCam86}\footnote{Notice the typo in the formulation of that Lemma.}, we have
\begin{align*}
\eta_A^{(n)}(Q,k\Delta) \quad&= \quad
\sup_t 
\sup_{
	\substack{\Pr_0\in \conv(Q \P_{\le t}^{(n)})\\
			\Pr_1\in \conv(Q \P_{\ge t+k\Delta}^{(n)}) 
			} 
	}  \pi(\Pr_0,\Pr_1)\\
\quad&\le\quad
\sup_t 
\sup_{
	\substack{\Pr_0\in \conv([Q_1\P_{\le t}]^{(n)})\\
			\Pr_1\in \conv([Q_1\P_{\ge t+k\Delta}]^{(n)}) 
			} 
	}  \rho(\Pr_0,\Pr_1)\\
\quad&\le \quad
\sup_t 
\sup_{
	\substack{\Pr_0\in \conv(Q_1\P_{\le t})\\
			\Pr_1\in \conv(Q_1\P_{\ge t+k\Delta}) 
			} 
	}  \rho(\Pr_0,\Pr_1)^n\\
\quad&=\quad
\sup_t 
\sup_{
	\substack{\Pr_0\in Q_1\P_{\le t}\\
			\Pr_1\in Q_1\P_{\ge t+k\Delta} 
			} 
	}  \rho(\Pr_0,\Pr_1)^n.
\end{align*}
In view of \eqref{eq:HellingerIds}, the expression on the last line of the two previous displays is equal to
$$
\sup_t 
\sup_{
	\substack{\Pr_0\in \P_{\le t}\\
			\Pr_1\in \P_{\ge t+k\Delta}
			} 
	}  \left(1-\frac{1}{2}\dH^2(Q_1  \Pr_0,Q_1  \Pr_1)\right)^n.
$$
This establishes \eqref{eq:etaBoundH2}.
Now, for $k\in\{1,\dots, N-2\}$ and for any $\delta\in(0,1)$ such that $C\sqrt{\delta}\ge 1$, set $r:= \lfloor \sqrt{kC^2\delta}\rfloor\ge1$ and note that because of \eqref{eq:condOmegaH:App}
\begin{align*}
k\Delta  &= kC^2\omega_{H}^{(Q_1)}(n^{-1/2}) > kC^2\delta\omega_{H}^{(Q_1)}(n^{-1/2}) \ge r^2 \omega_{H}^{(Q_1)}(n^{-1/2}) \ge \omega_{H}^{(Q_1)}(rn^{-1/2}).
\end{align*}
Since $\dH^2 = 2(1-\rho) \le 2$, we see that the set $\bar{\P} := \{(P_0,P_1)\in\P^2 : \theta(P_1)-\theta(P_0) \ge k\Delta\}$ over which the supremum in \eqref{eq:etaBoundH2} is taken, is empty if $rn^{-1/2}\ge \sqrt{2}$. If, however, $rn^{-1/2}<\sqrt{2}$, then for every pair $(P_0,P_1)\in\bar{\P}$, we have $|\theta(P_0) - \theta(P_1)|>\omega_{H}^{(Q_1)}(rn^{-1/2})$, which implies $\dH(Q_1P_0,Q_1P_1) > rn^{-1/2}$. Hence, in view of $\log(1+x)\le x$ for all $x>-1$, we obtain
\begin{align*}
&\left(1-\frac{1}{2}\dH^2(Q_1P_0,Q_1P_1) \right)^n \le \left(1 - \frac{1}{2}\frac{r^2}{n} \right)^n\\
&\quad\le
\exp\left(-n \frac{1}{2}\frac{r^2}{n}\right)
= e^{-r^2/2}\\
&\quad\le
\exp\left( -\frac{1}{2}\left[ \sqrt{kC^2\delta} - 1\right]^2\right)
=
\exp\left( -k\frac{1}{2}\left[ \sqrt{C^2\delta} - 1/\sqrt{k}\right]^2\right)\\
&\quad\le
\exp\left( -k\frac{1}{2}\left[ C\sqrt{\delta} - 1\right]^2\right) \\
&\quad\le
\exp\left( -k\frac{1}{2}\left[ (\sqrt{2\log(2a)}+1)\sqrt{\delta} - 1\right]^2\right) 
\xrightarrow[]{\delta\to1}
\exp\left( -k\log(2a)\right)
=
\left( \frac{1}{2a}\right)^k.
\end{align*}

Using these considerations, we can now derive an upper bound on the sum of $\eta_A^{(n)}(Q,k\Delta)\lor0$ in Proposition~\ref{prop:BinarySearch}, namely, for $j\in\{1,\dots, N-2\}$,
$$
\sum_{k=j}^{N-2} \left[\eta_A^{(n)}(Q,k\Delta) \lor0\right]
\le
\sum_{k=j}^{N-2} \exp\left( -k\frac{1}{2}\left[ (\sqrt{2\log(2a)}+1)\sqrt{\delta} - 1\right]^2\right). 
$$
Taking the limes superior $\delta\to1$ on both sides we obtain
$$
\sum_{k=j}^{N-2} \left[\eta_A^{(n)}(Q,k\Delta) \lor0\right] \le
\sum_{k=j}^{N-2} \left( \frac{1}{2a}\right)^{k}
\le
\sum_{k=j}^\infty \left( \frac{1}{2a}\right)^{k}
=
\left( \frac{1}{2a}\right)^{j}\frac{2a}{2a-1}.
$$
Consequently, from Proposition~\ref{prop:BinarySearch}, using the monotone convergence theorem, Condition~\ref{cond:loss} and Lemma~\ref{lemma:32x} and setting $\eta_{-1}=0$, we get    
\begin{align*}
&\sup_{\Pr\in\P} \E_{Q  \Pr^{\otimes n}} \left[ l\left( \left| \hat{\theta}_n^{(\Delta)} - \theta(\Pr)\right|\right)\right]
\le
\sup_{\Pr\in\P} \E_{Q  \Pr^{\otimes n}} \left[ \sum_{j=0}^\infty l(\eta_j) \mathds 1_{\left\{ \eta_{j-1} < |\hat{\theta}_n^{(\Delta)} - \theta(\Pr)| \le \eta_j\right\}}\right]\\
&\quad\le
\sum_{j=0}^\infty l(\eta_j) \sup_{\Pr\in\P}Q  \Pr^{\otimes n}\left( |\hat{\theta}_n^{(\Delta)} - \theta(\Pr)| > \eta_{j-1}\right) \\
&\quad\le
l(\Delta) + 4\sum_{j=1}^{\infty} l((j+1)\Delta) \sum_{k=j}^{N-2} \left[\eta_A^{(n)}(Q,k\Delta)\lor 0\right] \\
&\quad\le
l(\Delta) + 4\sum_{j=1}^{\infty} l\left((3/2)^{j+1}\Delta\right)\left( \frac{1}{2a}\right)^{j}\frac{2a}{2a-1}\\
&\quad\le
l(\Delta) + 4\sum_{j=1}^{\infty} a^{j+1} l(\Delta) \left( \frac{1}{2a}\right)^{j}\frac{2a}{2a-1}\\
&\quad=
l(\Delta)\left[ 1 + \frac{8a^2}{2a-1} \right].
\end{align*}
Now
\begin{align*}
l(\Delta)\left[ 1 + \frac{8a^2}{2a-1} \right]&=
l\left(C^2\omega_H^{(Q_1)}(n^{-1/2})\right)
\left[ 1 + \frac{8a^2}{2a-1} \right]\\
&\le
\left[ 1 + \frac{8a^2}{2a-1} \right] a^{\lceil 2\log(C)/\log(3/2)\rceil} l\left(\omega_H^{(Q_1)}(n^{-1/2})\right),
\end{align*}
where we have used the regularity condition \ref{cond:loss} of the loss function $l$ again. 

The second claim is established in almost the same way, using that $l(t) \le l((3/2)^2 t/2) \le a^2 l(t/2)$ and
\begin{align*}
&\sup_{\Pr\in\P} \E_{Q  \Pr^{\otimes n}} \left[ l\left( \left| \hat{\theta}_n^{(\Delta)} - \theta(\Pr)\right| + 2\Delta\right)\right]\\
&\quad\le
a^2\sup_{\Pr\in\P} \E_{Q  \Pr^{\otimes n}} \left[ \sum_{j=0}^\infty l(\eta_j) \mathds 1_{\left\{ \eta_{j-1} < \frac{1}{2}\left(|\hat{\theta}_n^{(\Delta)} - \theta(\Pr)|+2\Delta\right) \le \eta_j\right\}}\right]\\
&\quad\le
a^2 \sum_{j=0}^\infty l(\eta_j) \sup_{\Pr\in\P}Q  \Pr^{\otimes n}\left( |\hat{\theta}_n^{(\Delta)} - \theta(\Pr)| +2\Delta > 2\eta_{j-1}\right)\\
&\quad\le
a^2 \sum_{j=0}^\infty l(\eta_j) \sup_{\Pr\in\P}Q  \Pr^{\otimes n}\left( |\hat{\theta}_n^{(\Delta)} - \theta(\Pr)| > \eta_{j-1}\right)\\
&\quad\quad
+a^2 \sum_{j=0}^\infty l(\eta_j) \sup_{\Pr\in\P}Q  \Pr^{\otimes n}\left( 2\Delta > \eta_{j-1}\right)\\
&\quad\le
a^2 \sum_{j=0}^\infty l(\eta_j) \sup_{\Pr\in\P}Q  \Pr^{\otimes n}\left( |\hat{\theta}_n^{(\Delta)} - \theta(\Pr)| > \eta_{j-1}\right) + a^2[l(\Delta) + l(2\Delta)]\\
&\quad\le
a^2l(\Delta)\left[ 1 + \frac{8a^2}{2a-1} \right] + a^2[l(\Delta) + a^2l(\Delta)]\\
&\quad=
l(\Delta) a^2 \left[ 2 + a^2 + \frac{8a^2}{2a-1} \right].
\end{align*}

\end{proof}


\section{An extended version of Theorem~\ref{THM:ATTAINABILITY}}
\label{thm:Attainability:ext}

\begin{theorem}\label{thm:Attainability}
For $\alpha\in(0,\infty)$ and $\ell\in L_\infty(\X)$ let $Q^{(\alpha,\ell)}$ be the non-interactive $\alpha$-private channel with identical marginals $Q_1^{(\alpha,\ell)}$ as in \eqref{eq:binaryChannel}. Let $\eps,\xi_0,\xi_1>0$, $c\in(0,1)$, $\delta = \delta(\eps, c,\xi_0,\xi_1) = \omega_{TV}\left(\frac{\eps}{c}\frac{e^\alpha+1}{e^\alpha-1} + \xi_0\right)+\xi_1$ and $\mathbb S_\delta = \{\Pr_0-\Pr_1 : \theta(\Pr_0)-\theta(\Pr_1) \ge \delta, \Pr_0,\Pr_1\in\P\}$. 
\begin{enumerate}
	\setlength\leftmargin{-20pt}
	\renewcommand{\theenumi}{(\roman{enumi})}
	\renewcommand{\labelenumi}{{\theenumi}} 
\item \label{thm:Attainability:A0}
We have
$$
\inf_{\sigma\in\mathbb S_\delta} \sup_{\ell:\|\ell\|_\infty\le 1}\int_\X \ell\,d\sigma \;\ge\;  2\frac{\eps}{c} \frac{e^\alpha+1}{e^\alpha-1}>0.
$$

\item \label{thm:Attainability:A}
If there exists $\ell^*\in L_\infty(\X)$, such that $\|\ell^*\|_\infty\le1$ and
\begin{equation}\label{eq:saddlepoint:App}
\inf_{\sigma\in\mathbb S_\delta} \int_\X \ell^*\,d\sigma \;\ge\; c \inf_{\sigma\in\mathbb S_\delta} \sup_{\ell:\|\ell\|_\infty\le 1}\int_\X \ell\,d\sigma,
\end{equation} 
then
$$
\omega_H^{(Q_1^{(\alpha, \ell^*)})}(\eps) \;\le\; \delta = \omega_{TV}\left( \frac{\eps}{c} \frac{e^\alpha+1}{e^\alpha-1} + \xi_0\right)+\xi_1.
$$

\item \label{thm:Attainability:B}
Suppose that
\begin{equation}\label{eq:saddlepoint2:App}
\sup_{\ell:\|\ell\|_\infty\le 1} \inf_{\sigma\in\mathbb S_\delta} \int_\X \ell\,d\sigma \;=\; \inf_{\sigma\in\mathbb S_\delta} \sup_{\ell:\|\ell\|_\infty\le 1}\int_\X \ell\,d\sigma,
\end{equation} 
where $\delta=\delta(\eps, c,\xi_0,\xi_1)$ is defined as above. Then there exists $\ell^*\in L_\infty(\X)$, such that $\|\ell^*\|_\infty\le1$ and \eqref{eq:saddlepoint:App} holds.

\item \label{thm:Attainability:C}
If $\P$ is convex and dominated by a $\sigma$-finite measure $\mu$ and $\theta:\P\to\R$ is linear, then \eqref{eq:saddlepoint2:App} holds for every $\delta\ge0$. In particular, we have
\begin{equation*}
\inf_{\ell:\|\ell\|_\infty\le1}\omega_H^{(Q_1^{(\alpha, \ell)})}(\eps) \;\le\; \omega_{TV}\left(\left[ \eps \frac{e^\alpha+1}{e^\alpha-1}\right]^+\right), \quad\quad \forall \eps>0.
\end{equation*}

\end{enumerate}
\end{theorem}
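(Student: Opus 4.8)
The plan is to prove the four parts in the stated order, handling \ref{thm:Attainability:A0}--\ref{thm:Attainability:B} by elementary arguments and obtaining \ref{thm:Attainability:C} as a direct application of Proposition~\ref{prop:Sion}. Throughout I would write $\Phi_\ell(s):=\sup\{\theta(\Pr_0)-\theta(\Pr_1):\Pr_0,\Pr_1\in\P,\ |\E_{\Pr_0}[\ell]-\E_{\Pr_1}[\ell]|\le s\}$, which is non-decreasing, and reuse two facts established in the proof of Theorem~\ref{THM:ATTAINABILITY}: first, $\dtv\le\dH$, $\|\ell\|_\infty\le1$, \eqref{eq:dTVQP} and monotonicity of $\Phi_\ell$ give $\omega_H^{(Q_1^{(\alpha,\ell)})}(\eps)\le\Phi_\ell(2\eps\frac{e^\alpha+1}{e^\alpha-1})$ for every $\ell$ with $\|\ell\|_\infty\le1$; second, for a signed measure $\sigma=\Pr_0-\Pr_1$ one has $\sup_{\ell:\|\ell\|_\infty\le1}\int_\X\ell\,d\sigma = 2\,\dtv(\Pr_0,\Pr_1)$.

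For \ref{thm:Attainability:A0} I would use the duality identity to write $\inf_{\sigma\in\mathbb S_\delta}\sup_{\ell:\|\ell\|_\infty\le1}\int_\X\ell\,d\sigma = 2\inf\{\dtv(\Pr_0,\Pr_1):\theta(\Pr_0)-\theta(\Pr_1)\ge\delta,\ \Pr_0,\Pr_1\in\P\}$, and note that $\delta>\omega_{TV}(\frac{\eps}{c}\frac{e^\alpha+1}{e^\alpha-1}+\xi_0)$ forces every admissible pair to satisfy $\dtv(\Pr_0,\Pr_1)>\frac{\eps}{c}\frac{e^\alpha+1}{e^\alpha-1}+\xi_0$ (otherwise the $\theta$-gap would be $\le\omega_{TV}(\frac{\eps}{c}\frac{e^\alpha+1}{e^\alpha-1}+\xi_0)$), so the infimum is $\ge 2(\frac{\eps}{c}\frac{e^\alpha+1}{e^\alpha-1}+\xi_0)>2\frac{\eps}{c}\frac{e^\alpha+1}{e^\alpha-1}>0$; I would keep the sharper bound carrying the $+\xi_0$ term, since that slack is needed below. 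For \ref{thm:Attainability:A} I would argue by contradiction: if $\Phi_{\ell^*}(2\eps\frac{e^\alpha+1}{e^\alpha-1})>\delta$ there are $\Pr_0,\Pr_1\in\P$ with $|\E_{\Pr_0}[\ell^*]-\E_{\Pr_1}[\ell^*]|\le2\eps\frac{e^\alpha+1}{e^\alpha-1}$ and $\theta(\Pr_0)-\theta(\Pr_1)>\delta$, hence $\sigma:=\Pr_0-\Pr_1\in\mathbb S_\delta$; but \eqref{eq:saddlepoint:App} together with \ref{thm:Attainability:A0} gives $\int_\X\ell^*\,d\sigma\ge c\cdot 2(\frac{\eps}{c}\frac{e^\alpha+1}{e^\alpha-1}+\xi_0)=2\eps\frac{e^\alpha+1}{e^\alpha-1}+2c\xi_0>2\eps\frac{e^\alpha+1}{e^\alpha-1}$, contradicting $|\int_\X\ell^*\,d\sigma|\le2\eps\frac{e^\alpha+1}{e^\alpha-1}$; therefore $\omega_H^{(Q_1^{(\alpha,\ell^*)})}(\eps)\le\Phi_{\ell^*}(2\eps\frac{e^\alpha+1}{e^\alpha-1})\le\delta$. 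For \ref{thm:Attainability:B}: if $\mathbb S_\delta=\varnothing$ both sides of \eqref{eq:saddlepoint2:App} equal $+\infty$ and $\ell^*\equiv0$ trivially satisfies \eqref{eq:saddlepoint:App}; otherwise the common value $V$ of \eqref{eq:saddlepoint2:App} is finite and nonnegative, and since $\sup_{\ell:\|\ell\|_\infty\le1}\inf_{\sigma\in\mathbb S_\delta}\int_\X\ell\,d\sigma$ equals $V$, one may pick $\ell^*$ with $\|\ell^*\|_\infty\le1$ and $\inf_{\sigma\in\mathbb S_\delta}\int_\X\ell^*\,d\sigma\ge cV$ (namely $\ell^*\equiv0$ when $V=0$, and any near-maximizer within $(1-c)V$ of the supremum when $V>0$, using $c<1$).

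For \ref{thm:Attainability:C} I would invoke Proposition~\ref{prop:Sion} with $\mathbb S=\mathbb S_\delta$ and $a=-1$, $b=1$, so that $\mathbb T$ is exactly the unit ball $\{\phi\in L_\infty(\X,\mathcal F,\mu):\|\phi\|_\infty\le1\}$ by $L_1$--$L_\infty$ duality ($\mu$ being $\sigma$-finite). I must verify the hypotheses: $\mathbb S_\delta$ is a set of finite signed measures, all absolutely continuous with respect to $\mu$ because $\P$ is dominated by $\mu$; and it is convex, since for $\Pr_0^{(i)}-\Pr_1^{(i)}\in\mathbb S_\delta$ and $\lambda\in[0,1]$ convexity of $\P$ keeps $\lambda\Pr_0^{(1)}+(1-\lambda)\Pr_0^{(2)}$ and $\lambda\Pr_1^{(1)}+(1-\lambda)\Pr_1^{(2)}$ in $\P$, while linearity of $\theta$ makes the difference of their $\theta$-values equal to $\lambda$ times the first $\theta$-gap plus $(1-\lambda)$ times the second, hence $\ge\delta$. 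This yields \eqref{eq:saddlepoint2:App} for every $\delta\ge0$ (the case $\mathbb S_\delta=\varnothing$ being trivial). Finally, fixing $\eps>0$ and arbitrary $c\in(0,1)$, $\xi_0,\xi_1>0$ and setting $\delta=\omega_{TV}(\frac{\eps}{c}\frac{e^\alpha+1}{e^\alpha-1}+\xi_0)+\xi_1$, I would feed this into \ref{thm:Attainability:B} and then \ref{thm:Attainability:A} to get $\inf_{\ell:\|\ell\|_\infty\le1}\omega_H^{(Q_1^{(\alpha,\ell)})}(\eps)\le\omega_{TV}(\frac{\eps}{c}\frac{e^\alpha+1}{e^\alpha-1}+\xi_0)+\xi_1$; letting $\xi_1\downarrow0$, then $\xi_0\downarrow0$ and $c\uparrow1$, the argument of $\omega_{TV}$ decreases to $\eps\frac{e^\alpha+1}{e^\alpha-1}$ while staying strictly above it, so by monotonicity of $\omega_{TV}$ the bound passes to the right limit $\omega_{TV}([\eps\frac{e^\alpha+1}{e^\alpha-1}]^+)$, as claimed.

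The main obstacle is part \ref{thm:Attainability:C}: one must recognize that $\mathbb S_\delta$ is convex \emph{precisely} because $\theta$ is linear and $\P$ convex — this is exactly what unlocks Proposition~\ref{prop:Sion} — and that the Sion test class collapses to the $L_\infty$ unit ball. The two places that need genuine care are keeping the strict slack $\xi_0>0$ alive through \ref{thm:Attainability:A0}--\ref{thm:Attainability:A} (so the contradiction in \ref{thm:Attainability:A} is real and not a borderline equality) and the right-continuity bookkeeping in the final limit, where only monotonicity of $\omega_{TV}$, not continuity, is available; everything else is a faithful repackaging of the proof of Theorem~\ref{THM:ATTAINABILITY}.
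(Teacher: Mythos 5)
Your proposal is correct and follows essentially the same route as the paper's own proof: the same reduction $\omega_H^{(Q_1^{(\alpha,\ell)})}(\eps)\le\Phi_\ell\bigl(2\eps\tfrac{e^\alpha+1}{e^\alpha-1}\bigr)$ via $\dtv\le\dH$ and \eqref{eq:dTVQP}, the same total-variation duality and definition-of-modulus step for parts \ref{thm:Attainability:A0}--\ref{thm:Attainability:A}, a near-maximizer for \ref{thm:Attainability:B}, and Proposition~\ref{prop:Sion} applied to the convex dominated set $\mathbb S_\delta$ followed by the limit $c\uparrow1$, $\xi_0,\xi_1\downarrow0$ for \ref{thm:Attainability:C}. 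The only difference is cosmetic: you phrase part \ref{thm:Attainability:A} as a direct contradiction on $\Phi_{\ell^*}$ instead of introducing the generalized inverse $\Psi_{\ell^*}$ as the paper does, which is a faithful repackaging of the same argument.
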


\begin{proof}
Part~\ref{thm:Attainability:A0} will be a by-product of the proof of part~\ref{thm:Attainability:A}. Thus, for $s\ge0$, define 
$$
\Phi_{\ell^*}(s) := \sup\{\theta(\Pr_0)-\theta(\Pr_1) : \Pr_0,\Pr_1\in\P, | \E_{\Pr_0}[\ell^*] - \E_{\Pr_1}[\ell^*]| \le  s \},
$$
and note that $d_{TV}\le \dH$, $\|\ell^*\|_\infty\le 1$ and \eqref{eq:dTVQP}, imply
$$
\omega_H^{(Q^{(\alpha, \ell^*)})}(\eps) \le \Phi_{\ell^*}\left(2\eps\frac{e^\alpha+1}{e^\alpha-1}\right).
$$
Clearly, the function $\Phi_{\ell^*}$ is non-decreasing.
For $t\ge0$, define 
$\Psi_{\ell^*}(t) := \inf \{ s\ge0 : \Phi_{\ell^*}(s)>t\}$. We claim that the functions $\Phi_{\ell^*}$ and $\Psi_{\ell^*}$ have the following properties.
\begin{align}
&\Psi_{\ell^*}(t) >s \;\Rightarrow\; \Phi_{\ell^*}(s)\le t, \label{eq:PsiPhi}\\
&\Psi_{\ell^*}(t) \ge \inf\{ | \E_{\Pr_0}[\ell^*] - \E_{\Pr_1}[\ell^*]| : \theta(\Pr_0)-\theta(\Pr_1)\ge t, \Pr_0,\Pr_1\in\P\} \label{eq:PsiLower}
\end{align}
The first one is obvious. To establish \eqref{eq:PsiLower}, set $A_{\ell^*}(t) := \{ s\ge0 : \Phi_{\ell^*}(s)>t\}$ and $B_{\ell^*}(t) := \{ | \E_{\Pr_0}[\ell^*] - \E_{\Pr_1}[\ell^*]| : \theta(\Pr_0)-\theta(\Pr_1)\ge t, \Pr_0,\Pr_1\in\P\}$ and note that for $A_{\ell^*}(t)=\varnothing$ the claim is trivial. So take $s\in A_{\ell^*}(t)$. Then $\Phi_{\ell^*}(s)> t$, which implies that there are $\Pr_0,\Pr_1\in\P$ with $| \E_{\Pr_0}[\ell^*] - \E_{\Pr_1}[\ell^*]| \le s$ and $\theta(\Pr_0)-\theta(\Pr_1)>t$. Thus, $\nu := | \E_{\Pr_0}[\ell^*] - \E_{\Pr_1}[\ell^*]| \le s$ and $\nu \in B_{\ell^*}(t)$. We have just shown that for every $s\in A_{\ell^*}(t)$ there exists a $\nu\in B_{\ell^*}(t)$ with $\nu\le s$. But this clearly means that $\Psi_{\ell^*}(t) = \inf A_{\ell^*}(t) \ge \inf B_{\ell^*}(t)$, as required. 

Now, abbreviate $\eta:=\frac{\eps}{c}\frac{e^\alpha+1}{e^\alpha-1}$ and note that \eqref{eq:PsiLower} together with \eqref{eq:saddlepoint} yields
\begin{align*}
\Psi_{\ell^*}(\delta) 
&\ge 
\inf_{\sigma\in\mathbb S_\delta} \left|\int_{\mathcal X} \ell^*\,d\sigma \right|
\ge
\inf_{\sigma\in\mathbb S_\delta} \int_{\mathcal X} \ell^*\,d\sigma 
\ge
c \inf_{\sigma\in\mathbb S_\delta} \sup_{\ell:\|\ell\|_\infty\le1} \int_{\mathcal X} \ell\,d\sigma \\
&=
2c \inf_{\sigma\in\mathbb S_\delta} \|\sigma\|_{TV} \\
&=
2c \inf\left\{d_{TV}(\Pr_0,\Pr_1) : \theta(\Pr_0)-\theta(\Pr_1)\ge\omega_{TV}\left(\eta+\xi_0\right)+\xi_1 \right\} \\
&\ge
2c \inf\left\{d_{TV}(\Pr_0,\Pr_1) : \theta(\Pr_0)-\theta(\Pr_1)>\omega_{TV}\left(\eta+\xi_0\right)\right\} \\
&\ge 2c (\eta+\xi_0) > 2c\eta = 2\eps\frac{e^\alpha+1}{e^\alpha-1}.
\end{align*}
An application of \eqref{eq:PsiPhi} now finishes the proof of part~\ref{thm:Attainability:A}.

Part~\ref{thm:Attainability:B} follows, because the supremum on the left-hand-side of \eqref{eq:saddlepoint2:App} can be arbitrarily well approximated and the right-hand-side is strictly positive, by \ref{thm:Attainability:A0}.
For part~\ref{thm:Attainability:C}, first note that $\mathbb T := \{\phi\in L_\infty(\X, \mathcal F, \mu) : -1 \le \int_{\X} \phi f\,d\mu \le 1, \forall f\in L_1(\X, \mathcal F, \mu): \|f\|_{L_1}\le1\} = \{\phi\in L_\infty(\X, \mathcal F, \mu) : \|\phi\|_\infty\le1\}$. Now, using convexity and dominatedness of $\P$ together with linearity of $\theta$, we see that $\mathbb S_\delta$, defined in the theorem, is a dominated convex set of finite signed measures. Hence, \eqref{eq:saddlepoint2:App} follows from Proposition~\ref{prop:Sion} with $a=-1$ and $b=1$, irrespective of the value of $\delta$. By parts~\ref{thm:Attainability:A} and \ref{thm:Attainability:B}, 
$$
\inf_{\|\ell^*\|_\infty\le1}\omega_H^{(Q_1^{(\alpha, \ell^*)})}(\eps) \;\le\; \omega_{TV}\left( \frac{\eps}{c} \frac{e^\alpha+1}{e^\alpha-1} + \xi_0\right)+\xi_1,
$$
holds for every $c\in(0,1)$ and $\eps,\xi_0,\xi_1>0$. Hence, the second claim follows upon letting $c\to1$ and $\xi_0,\xi_1\to0$.
\end{proof}


\section{Proofs of Corollaries~\ref{COR:ATTAINABILITY} and \ref{COR:LINEST}}
\label{sec:App:COR:ATTAINABILITY}

We begin with the proof of Corollary~\ref{COR:ATTAINABILITY}. For arbitrary $\eps,\xi_0,\xi_1>0$ and $c\in(0,1)$ to be chosen later, define $\delta:= \omega_{TV}\left( \frac{\eps}{c}\frac{e^\alpha+1}{e^\alpha-1} + \xi_0\right)+\xi_1$ as in Theorem~\ref{thm:Attainability} in the supplement. By  \ref{thm:Attainability:C}, \ref{thm:Attainability:B}, \ref{thm:Attainability:A} and \ref{thm:Attainability:A0} of that same theorem there exists $\ell^*\in L_\infty(\X)$, such that $\|\ell^*||_\infty\le 1$,
\begin{equation}\label{eq:SaddleCorr}
\inf_{\sigma\in\mathbb S_\delta} \int_\X \ell^*\,d\sigma \;\ge\; c \inf_{\sigma\in\mathbb S_\delta} \sup_{\ell:\|\ell\|_\infty\le 1}\int_\X \ell\,d\sigma >0,
\end{equation}
and
$$
\omega_H^{(Q_1^{(\alpha, \ell^*)})}(\eps) \;\le\; \delta = \omega_{TV}\left( \frac{\eps}{c} \frac{e^\alpha+1}{e^\alpha-1} + \xi_0\right)+\xi_1,
$$
where $\mathbb S_\delta = \{\Pr_0-\Pr_1: \theta(\Pr_0)-\theta(\Pr_1) \ge \delta, \Pr_0,\Pr_1\in\P\}$.
Let $Q_1^{(\alpha,\ell^*)}$ be the binary channel of \eqref{eq:binaryChannel}. By convexity of $\P$, also $Q_1^{(\alpha,\ell^*)}\P$ is convex and dominated by a finite two point measure on $\{-z_0,z_0\}$. Moreover, \eqref{eq:condOmegaH:App} of Theorem~\ref{thm:upperB:App} holds, in view of Lemma~\ref{lemma:Hom2}. Since $\theta$ is linear, also $Q_1^{(\alpha,\ell^*)}\P_{\le s}$ and $Q_1^{(\alpha,\ell^*)}\P_{\ge t}$ are convex, and \eqref{eq:Sufficient} holds.
Therefore, Theorem~\ref{thm:upperB:App} establishes existence of an estimator $\hat{\theta}_n:\mathcal Z^n\to\R$, such that for $Q=\bigotimes_{i=1}^n Q_1^{(\alpha,\ell^*)}$,
$$
\sup_{\Pr\in\P} \E_{Q \Pr^{\otimes n}}\left[ l\left(|\hat{\theta}_n - \theta(\Pr)| \right)\right]
\;\le\;C_1\cdot l\left( \omega_H^{\left(Q_1^{(\alpha,\ell^*)}\right)}\left( n^{-1/2}\right)\right),
$$
where $C_1 = \left[1+\frac{8a^2}{2a-1} \right] a^{\lceil 2\log(C)/\log(3/2)\rceil}$, $C=\sqrt{2\log 2a}+1$ and $a>1$ is the constant from Condition~\ref{cond:loss}. But the upper bound is further bounded by 
$$
C_1\cdot l\left(\omega_{TV}\left( \frac{\eps}{c} \frac{e^\alpha+1}{e^\alpha-1} + \xi_0\right)+\xi_1 \right)\le
aC_1\cdot l\left(\omega_{TV}\left( \frac{4}{\sqrt{n}} \frac{e^\alpha+1}{e^\alpha-1} \right) \right),
$$
upon choosing $\eps=n^{-1/2}$, $\xi_0=\frac{\eps}{c}\frac{e^\alpha+1}{e^\alpha-1}$, $c=1/2$ and $\xi_1= \omega_{TV}\left( \frac{\eps}{c} \frac{e^\alpha+1}{e^\alpha-1} + \xi_0\right)/2$. Actually, we should make sure that $\xi_1>0$. However, if this is not the case, then Lemma~\ref{lemma:bestL1Rate} shows that $\theta$ must be constant, in which case the conclusion of Corollary~\ref{COR:ATTAINABILITY} is trivially true. Therefore, its proof is finished.

To establish Corollary~\ref{COR:LINEST}, we first note that we are operating under the same assumptions as above. Fix $\delta$, $\eps$, $c$, $\xi_0$, $\xi_1$ and $\ell^*$ as above, let $C>0$ be as in the statement of the corollary and set 
$$
\Delta := C^2 \omega_H^{(Q_1^{(\alpha,\ell^*)})}(n^{-1/2}).
$$ 
Using the fact that the Hellinger distance is upper bounded by the square root of the Kullback-Leibler divergence \citep[Lemma~2.4]{Tsybakov09} together with Theorem~1 of \citet{Duchi17}, we obtain $\dH(Q_1^{(\alpha,\ell^*)}\Pr_0,Q_1^{(\alpha,\ell^*)}\Pr_1) \le 2(e^\alpha-1)\dtv(\Pr_0,\Pr_1)$. Therefore, using \eqref{eq:condOmegaH:App}, which we have already verified, we see that 
\begin{align*}
\Delta &\ge \frac{3}{2} \lfloor \bar{C}\rfloor^2\omega_H^{(Q_1^{(\alpha,\ell^*)})}(n^{-1/2}) 
\ge \frac{3}{2}\omega_H^{(Q_1^{(\alpha,\ell^*)})}\left(\lfloor \bar{C}\rfloor n^{-1/2} \right) \\
&\ge \frac{3}{2}\omega_{TV}\left(\frac{\lfloor \bar{C}\rfloor}{2\sqrt{n}(e^\alpha-1)} \right)
\ge \frac{3}{2}\omega_{TV}\left( \frac{4}{\sqrt{n}}\frac{e^\alpha+1}{e^\alpha-1} \right) = \delta \ge \xi_1,
\end{align*} 
where $\bar{C} = \max\{8(e^\alpha+1), \sqrt{2\log 2a}+1\}+1$. Again, for constant $\theta$ the corollary is trivial, such that we can assume $\xi_1>0$. Thus,
$$
\inf_{\sigma\in\mathbb S_\Delta} \int_\X \ell^*\,d\sigma
\ge \inf_{\sigma\in\mathbb S_\delta} \int_\X \ell^*\,d\sigma > 0.
$$
Therefore, Proposition~\ref{prop:BinarySearch}\ref{lemma:BinarySearch:B} yields an affine function $g^{\text{(aff)}}:\R\to\R$ (depending on $\ell^*$), such that
$$\left|\Pi[g^{\text{(aff)}}(\bar{z}_n)]-\hat{\theta}_n^{(\Delta)}(z)\right| \le 2\Delta,
\quad\quad\forall z\in\mathcal Z^n,
$$ 
for $\hat{\theta}_n^{(\Delta)}$ the binary search estimator from part~\ref{lemma:BinarySearch:A} of that lemma, and where $\Pi:\R\to[M_-,M_+]$ is the projection described in the statement of Corollary~\ref{COR:LINEST}.
Now, the second conclusion of Theorem~\ref{thm:upperB:App}, the assumptions of which have already been verified, and the fact that scaling $\ell^*$ is equivalent to scaling $\bar{Z}_n$, finish the proof. \hfill\qed


\section{Attainability for non-convex $\P$}
\label{sec:Attainability:App}

\begin{corollary}\label{COR:ATTAINABILITY:App}
Fix $\alpha\in(0,\infty)$, $n\in\N$, suppose that Assumptions~\ref{cond:thetaBound} and \ref{cond:loss} hold, that $(\P,\dtv)$ is connected and that there exists $c'\in(0,1]$ such that for every $\delta\in[0,\infty)$, we have
\begin{equation}\label{eq:corr:saddlepoint2:App}
\sup_{\ell:\|\ell\|_\infty\le 1} \inf_{\sigma\in\mathbb S_\delta} \int_\X \ell\,d\sigma \;\ge\; c'\inf_{\sigma\in\mathbb S_\delta} \sup_{\ell:\|\ell\|_\infty\le 1}\int_\X \ell\,d\sigma,
\end{equation} 
where $\mathbb S_\delta = \{\Pr_0-\Pr_1 : \theta(\Pr_0)-\theta(\Pr_1)\ge \delta, \Pr_0,\Pr_1\in\P\}$. Moreover, assume that $\omega_{TV}(\nu)>0$ for all $\nu> 0$.
Then, 
$$
\mathcal M_{n,\alpha}(\P, \theta) := \inf_{Q\in\mathcal Q_\alpha} \mathcal M_n(Q,\P, \theta)\;\le\; C_1\cdot l\left(\omega_{TV}\left( \frac{4}{c'\sqrt{n}} \frac{e^\alpha+1}{e^\alpha-1} \right) \right),
$$
where $\mathcal Q_\alpha$ is the collection of $\alpha$-sequentially interactive channels as in \eqref{eq:SIset}. The constant $C_1$ is given by 
$$
C_1 = \left[1+\frac{8a^2}{2a-1} \right] a^{\lceil \log(C)/\log(3/2)\rceil+1},
$$ 
where $C = \sqrt{\frac{3}{2}}[\max\{8(e^\alpha+1), \sqrt{2\log 2a}+1\}+1]$ and $a>1$ is the constant from Condition~\ref{cond:loss}.
\end{corollary}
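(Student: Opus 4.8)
The plan is to run the proof of Corollary~\ref{COR:ATTAINABILITY} essentially verbatim, substituting for its two uses of convexity of $\P$ and linearity of $\theta$ the hypotheses available here: the appeal to Proposition~\ref{prop:Sion} for the minimax identity is replaced by the assumed inequality \eqref{eq:corr:saddlepoint2:App}, and the verification of convexity of $Q_1^{(\alpha,\ell^*)}\P$ and of its $\theta$-level sets, needed to invoke Theorem~\ref{thm:upperB:App}, is replaced by connectedness of $(\P,\dtv)$ together with the binary structure of \eqref{eq:binaryChannel}. First I would fix the functional parameter: set $c:=c'/2\in(0,1)$, $\eps:=n^{-1/2}$, $\xi_0:=\frac{\eps}{c}\frac{e^\alpha+1}{e^\alpha-1}$ and $\xi_1:=\frac{1}{2}\omega_{TV}\bigl(\frac{4}{c'\sqrt{n}}\frac{e^\alpha+1}{e^\alpha-1}\bigr)$, which is strictly positive since $\omega_{TV}(\nu)>0$ for all $\nu>0$ by hypothesis, so that the quantity $\delta=\omega_{TV}(\frac{\eps}{c}\frac{e^\alpha+1}{e^\alpha-1}+\xi_0)+\xi_1$ of Theorem~\ref{thm:Attainability} equals $\frac{3}{2}\omega_{TV}\bigl(\frac{4}{c'\sqrt{n}}\frac{e^\alpha+1}{e^\alpha-1}\bigr)$. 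By Theorem~\ref{thm:Attainability}\ref{thm:Attainability:A0} we have $\inf_{\sigma\in\mathbb S_\delta}\sup_{\|\ell\|_\infty\le1}\int_{\X}\ell\,d\sigma>0$; combining this with \eqref{eq:corr:saddlepoint2:App} for this $\delta$ and with $c<c'$ shows $\sup_{\|\ell\|_\infty\le1}\inf_{\sigma\in\mathbb S_\delta}\int_{\X}\ell\,d\sigma>c\inf_{\sigma\in\mathbb S_\delta}\sup_{\|\ell\|_\infty\le1}\int_{\X}\ell\,d\sigma$, so some $\ell^*\in L_\infty(\X)$ with $\|\ell^*\|_\infty\le1$ satisfies \eqref{eq:saddlepoint:App}; Theorem~\ref{thm:Attainability}\ref{thm:Attainability:A} then gives $\omega_H^{(Q_1^{(\alpha,\ell^*)})}(n^{-1/2})\le\delta$ and, in particular, $\inf_{\sigma\in\mathbb S_\delta}\int_{\X}\ell^*\,d\sigma>0$.

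Next I would verify that the non-interactive channel $Q=\bigotimes_{i=1}^n Q_1^{(\alpha,\ell^*)}$ meets the hypotheses of Theorem~\ref{thm:upperB:App} with $\Delta:=C^2\omega_H^{(Q_1^{(\alpha,\ell^*)})}(n^{-1/2})$ for $C$ as in the statement. Domination of $Q_1^{(\alpha,\ell^*)}\P$ is trivial (a two-point measure on $\{-z_0,z_0\}$). Since $\Pr\mapsto Q_1^{(\alpha,\ell^*)}\Pr$ is $\dtv$-continuous by \eqref{eq:dTVQP} and $(\P,\dtv)$ is connected, $Q_1^{(\alpha,\ell^*)}\P$ is a connected, hence convex, subset of the binary laws on $\{-z_0,z_0\}$ (which is homeomorphic to a subinterval of $[0,1]$), so \eqref{eq:condOmegaH:App} holds by Lemma~\ref{lemma:Hom2}. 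For \eqref{eq:Sufficient} I would first enlarge $C$ — inserting the factor $\sqrt{3/2}$ and using $\dH(Q_1^{(\alpha,\ell^*)}\Pr_0,Q_1^{(\alpha,\ell^*)}\Pr_1)\le 2(e^\alpha-1)\dtv(\Pr_0,\Pr_1)$, which follows from the bound of the Hellinger distance by the square root of the Kullback-Leibler divergence and Theorem~1 of \citet{Duchi17}, together with \eqref{eq:condOmegaH:App}, exactly as in the proof of Corollary~\ref{COR:LINEST} — so that $\Delta\ge\delta$.

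Then $\mathbb S_\Delta\subseteq\mathbb S_\delta$ gives $\inf_{\sigma\in\mathbb S_\Delta}\int_{\X}\ell^*\,d\sigma>0$, which via \eqref{eq:dTVQP} forces the parameter $p(\Pr):=[Q_1^{(\alpha,\ell^*)}\Pr](\{z_0\})=\frac{1}{2}(1+\E_\Pr[\ell^*]/z_0)$ to be strictly comonotone with $\theta$ at scale $\Delta$; hence for $s,t$ with $t-s\ge\Delta$ one has $\bar a:=\sup_{\Pr\in\P_{\le s}}p(\Pr)<\bar b:=\inf_{\Pr\in\P_{\ge t}}p(\Pr)$. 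On the region $\{p_0<p_1\}$ the binary Hellinger affinity $\rho(p_0,p_1)=\sqrt{p_0p_1}+\sqrt{(1-p_0)(1-p_1)}$ is continuous, strictly increasing in $p_0$ and strictly decreasing in $p_1$; since a convex combination of binary laws on $\{-z_0,z_0\}$ is again such a law, the sets $\conv(Q_1^{(\alpha,\ell^*)}\P_{\le s})$ and $\conv(Q_1^{(\alpha,\ell^*)}\P_{\ge t})$ consist of binary laws with the same supremal, respectively infimal, parameter $\bar a$, $\bar b$, and the $\rho$-supremum over such a product — which lies entirely in $\{p_0<p_1\}$ — equals $\rho(\bar a,\bar b)$ regardless of whether the hulls are taken. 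This is precisely \eqref{eq:Sufficient}; note that $Q_1^{(\alpha,\ell^*)}\P_{\le s}$ and $Q_1^{(\alpha,\ell^*)}\P_{\ge t}$ themselves need not be convex.

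Finally, I would invoke Theorem~\ref{thm:upperB:App}, which produces the binary search estimator $\hat\theta_n^{(\Delta)}$ of Proposition~\ref{prop:BinarySearch}\ref{lemma:BinarySearch:A} with $\sup_{\Pr\in\P}\E_{Q\Pr^{\otimes n}}[l(|\hat\theta_n^{(\Delta)}-\theta(\Pr)|)]\le[1+\frac{8a^2}{2a-1}]a^{\lceil\log C/\log(3/2)\rceil}\,l\bigl(\omega_H^{(Q_1^{(\alpha,\ell^*)})}(n^{-1/2})\bigr)$; bounding $\omega_H^{(Q_1^{(\alpha,\ell^*)})}(n^{-1/2})\le\delta=\frac{3}{2}\omega_{TV}\bigl(\frac{4}{c'\sqrt{n}}\frac{e^\alpha+1}{e^\alpha-1}\bigr)$ and using monotonicity of $l$ together with $l(\frac{3}{2}t)\le a\,l(t)$ from Condition~\ref{cond:loss} to absorb the residual factor yields the asserted bound on $\mathcal M_{n,\alpha}(\P,\theta)\le\mathcal M_n(Q,\P,\theta)$ with $C_1$ as stated (the case of constant $\theta$ being trivial, and $\omega_{TV}$ not identically zero by hypothesis). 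The main obstacle is the verification of \eqref{eq:Sufficient} in the absence of convexity of the level sets $\P_{\le s},\P_{\ge t}$: it rests on the observation that for a binary channel the relevant Hellinger affinity depends only on the extreme parameters $\sup p(\P_{\le s})$ and $\inf p(\P_{\ge t})$ — quantities insensitive to convex hulls — combined with the comonotonicity $\theta(\Pr_0)-\theta(\Pr_1)\ge\Delta\Rightarrow p(\Pr_0)>p(\Pr_1)$ that \eqref{eq:corr:saddlepoint2:App} supplies once $\Delta$ has been taken large enough via the Duchi total-variation contraction; managing the interplay between the scale $\delta$ at which $\ell^*$ is good and the scale $\Delta$ at which the binary search operates is what inflates the constant $C$.
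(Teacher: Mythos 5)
Your proposal is correct and follows essentially the same route as the paper's own proof: the same choice of $\eps,c=c'/2,\xi_0,\xi_1$ and $\ell^*$ via Theorem~\ref{thm:Attainability}, the same use of connectedness plus the binary channel to get \eqref{eq:condOmegaH:App}, the same inflation of $C$ via the Duchi contraction to force $\Delta\ge\delta$, and the same reduction of \eqref{eq:Sufficient} (equivalently \eqref{eq:equivSufficent}) to separation of the binary success probabilities $\sup p(\P_{\le s})<\inf p(\P_{\ge t})$, before invoking Theorem~\ref{thm:upperB:App} and absorbing the factor $3/2$ with Condition~\ref{cond:loss}. No substantive difference from the paper's argument.
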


\begin{remark*}
Condition~\ref{eq:corr:saddlepoint2:App} can be verified, for instance, in the problem of estimating the endpoint of a uniform distribution (cf. Section~\ref{sec:uniform}). In this case the model $\P$ is non-convex. 
\end{remark*}

\begin{proof}
For arbitrary $\eps,\xi_0,\xi_1>0$ and $c\in(0,c')$ to be chosen later, define $\delta:= \omega_{TV}\left( \frac{\eps}{c}\frac{e^\alpha+1}{e^\alpha-1} + \xi_0\right) + \xi_1$ as in Theorem~\ref{thm:Attainability}. By \eqref{eq:corr:saddlepoint2:App} and parts~\ref{thm:Attainability:A0} and \ref{thm:Attainability:A} of that theorem there exists $\ell^*\in L_\infty(\X)$, such that $\|\ell^*||_\infty\le 1$,
\begin{equation}\label{eq:corr:saddlepoint3:App}
\inf_{\sigma\in\mathbb S_\delta} \int_\X \ell^*\,d\sigma \;\ge\; c \inf_{\sigma\in\mathbb S_\delta} \sup_{\ell:\|\ell\|_\infty\le 1}\int_\X \ell\,d\sigma > 0,
\end{equation}
and
$$
\omega_H^{(Q_1^{(\alpha, \ell^*)})}(\eps) \;\le\; \delta = \omega_{TV}\left( \frac{\eps}{c} \frac{e^\alpha+1}{e^\alpha-1} + \xi_0\right) + \xi_1,
$$
where $Q_1^{(\alpha,\ell^*)}$ is the binary channel of \eqref{eq:binaryChannel}. Then, clearly, $Q_1^{(\alpha,\ell^*)}\P$ is dominated by a finite two point measure on $\{-z_0,z_0\}$. Moreover, \eqref{eq:condOmegaH:App} holds, in view of connectedness of $\P$ and the discussion of Remark~\ref{rem:L1HQmoduli}. In case $\omega_{H}^{(Q_1^{(\alpha,\ell^*)})}(n^{-1/2})=0$, then, by Lemma~\ref{lemma:Hdistance}, monotonicity and \eqref{eq:condOmegaH:App}, we have for $s>0$ and $r\ge s\sqrt{n}$, that $\omega_H(s) \le \omega_{H}^{(Q_1^{(\alpha,\ell^*)})}(s)\le \omega_{H}^{(Q_1^{(\alpha,\ell^*)})}(rn^{-1/2}) \le r^2\omega_{H}^{(Q_1^{(\alpha,\ell^*)})}(n^{-1/2}) =0$. Thus $\theta:\P\to\R$ is constant and the desired result is trivial. We continue in the case $\omega_{H}^{(Q_1^{(\alpha,\ell^*)})}(n^{-1/2})>0$ and $\theta$ not constant. Instead of \eqref{eq:Sufficient} we verify the equivalent condition
\begin{equation}\label{eq:equivSufficent}
\inf_{\substack{\Pr_1\in\conv(Q_1^{(\alpha,\ell^*)}\P_{\le s})\\ \Pr_0\in\conv(Q_1^{(\alpha,\ell^*)}\P_{\ge t})}} \dH(\Pr_0,\Pr_1) 
\quad=\quad
\inf_{\substack{\Pr_1\in Q_1^{(\alpha,\ell^*)}\P_{\le s}\\ \Pr_0\in Q_1^{(\alpha,\ell^*)}\P_{\ge t}}} \dH(\Pr_0,\Pr_1), 
\quad\forall s,t: t-s\ge \Delta,
\end{equation}
where $\Delta=C^2\omega_H^{(Q_1^{(\alpha,\ell^*)})}(n^{-1/2})>0$ and $C> 0$ is as in the statement of the corollary. We identify a set of binary distributions supported on $\mathcal Z = \{-z_0,z_0\}$ with the set of corresponding success probabilities (probability of outcome $z_0$), i.e., by slight abuse of notation we write $Q_1^{(\alpha,\ell^*)}\P_{\le s}\subseteq [0,1]$. Therefore, to verify \eqref{eq:equivSufficent}, it remains to show that $\sup Q_1^{(\alpha,\ell^*)}\P_{\le s}< \inf Q_1^{(\alpha,\ell^*)}\P_{\ge t}$, for $t-s\ge \Delta$. General elements of $Q_1^{(\alpha,\ell^*)}\P_{\ge t}$ and $Q_1^{(\alpha,\ell^*)}\P_{\le s}$ are given by $\frac{1}{2}\left( 1 + \frac{\E_{\Pr_j}[\ell^*]}{z_0}\right)$, $j=0,1$, for $\theta(\Pr_1)\le s$ and $\theta(\Pr_0)\ge t$. Thus, for \eqref{eq:equivSufficent} it remains to show that the right-hand-side of
$$
\E_{\Pr_0}[\ell^*] - \E_{\Pr_1}[\ell^*] \;\ge\; \inf_{\sigma\in\mathbb S_\Delta} \int_\X\ell^*\,d\sigma,
$$
is strictly positive. But this follows from \eqref{eq:corr:saddlepoint3:App} if we can show that $\Delta\ge \delta$. To this end, we choose $\eps=n^{-1/2}$, $c=c'/2$, $\xi_0=\frac{\eps}{c}\frac{e^\alpha+1}{e^\alpha-1}$ and $\xi_1 = \omega_{TV}\left( \frac{\eps}{c}\frac{e^\alpha+1}{e^\alpha-1}+\xi_0\right)/2$, and note that these are as required above. In particular, $\xi_1>0$ by assumption. Using the fact that the Hellinger distance is upper bounded by the square root of the Kullback-Leibler divergence \citep[Lemma~2.4]{Tsybakov09} together with Theorem~1 of \citet{Duchi17}, we obtain $\dH(Q_1^{(\alpha,\ell^*)}\Pr_0,Q_1^{(\alpha,\ell^*)}\Pr_1) \le 2(e^\alpha-1)\dtv(\Pr_0,\Pr_1)$. Therefore, using \eqref{eq:condOmegaH:App}, which we have already verified, we see that 
\begin{align*}
\Delta &\ge \frac{3}{2} \lfloor \bar{C}\rfloor^2\omega_H^{(Q_1^{(\alpha,\ell^*)})}(n^{-1/2}) 
\ge \frac{3}{2}\omega_H^{(Q_1^{(\alpha,\ell^*)})}\left(\lfloor \bar{C}\rfloor n^{-1/2} \right) \\
&\ge \frac{3}{2}\omega_{TV}\left(\frac{\lfloor \bar{C}\rfloor}{2\sqrt{n}(e^\alpha-1)} \right)
\ge \frac{3}{2}\omega_{TV}\left( \frac{4}{\sqrt{n}}\frac{e^\alpha+1}{e^\alpha-1} \right) = \delta > 0,
\end{align*} 
where $\bar{C} = \max\{8(e^\alpha+1), \sqrt{2\log 2a}+1\}+1$. 
We have thus verified the assumptions of Theorem~\ref{thm:upperB:App}. The rest of the proof now follows in the same way as in the proof of Corollary~\ref{COR:ATTAINABILITY} in Section~\ref{sec:App:COR:ATTAINABILITY}.
\end{proof}

\section{Proof of Theorem~\ref{THM:CONSTRUCTIVE}}
\label{sec:App:THM:CONSTRUCTIVE}

Throughout this proof, we abbreviate $\Pr_n = Q^{(\alpha,\ell_{h_n})}\Pr^{\otimes n}$, $\E_n = \E_{\Pr_n}$, $Z_i(z) := z_i$ and 
$$
\Delta_n = \left( \frac{e^\alpha+1}{\sqrt{n}(e^\alpha-1)}\right)^{\frac{1}{1+\bar{r}}}.
$$ 
As a preliminary consideration, note that by definition of $Z_i$, $|Z_i| = z_0 = \|\ell_{h_n}\|_\infty\frac{e^\alpha+1}{e^\alpha-1}$, and for $p\ge 1$, $V_i := Z_i-\E_n[Z_i]$ satisfies
\begin{align*}
|V_i|^p 
&\le 
2^{p-1}(|Z_i|^p+\E_n[|Z_i|]^p)
\le
\left(2\|\ell_{h_n}\|_\infty\frac{e^\alpha+1}{e^\alpha-1}\right)^p\\
&\le
n^{p/2}\left(2D_0 \frac{e^\alpha+1}{\sqrt{n}(e^\alpha-1)} \left(  \frac{e^\alpha+1}{\sqrt{n}(e^\alpha-1)}\right)^{-\frac{\bar{r}}{1+\bar{r}}}\right)^p \\
&=
n^{p/2}(2D_0)^p \left(  \frac{e^\alpha+1}{\sqrt{n}(e^\alpha-1)}\right)^{\frac{p}{1+\bar{r}}},
\end{align*}
in view of Condition~\ref{cond:ellh}. Therefore, if $p\ge 2$, by the Marcinkiewicz-Zygmund inequality \citep[cf. Theorem~2 in Section~10.3 of][]{Chow97} and using Jensen's inequality (for the sample mean), we have
\begin{align*}
\E_n\left[ |\bar{Z}_n - \E_n[Z_1]|^p\right] 
&= 
n^{-p} \E_n\left[\left|\sum_{i=1}^n V_i\right|^p\right] 
\le
B_p n^{-p} \E_n\left[\left|\sum_{i=1}^n V_i^2\right|^{p/2} \right] \\
&=
B_p n^{-p/2} \E_n\left[\left|\frac{1}{n}\sum_{i=1}^n V_i^2\right|^{p/2} \right] \\
&\le
B_p n^{-p/2} \E_n\left[|V_1|^p \right] \\
&\le
B_p (2D_0)^p \left(  \frac{e^\alpha+1}{\sqrt{n}(e^\alpha-1)}\right)^{\frac{p}{1+\bar{r}}}\\
&=
B_p 2^pD_0^p \Delta_n^p,
\end{align*}
where $B_p$ is a constant that depends only on $p$.
For $a>1$ from Condition~\ref{cond:loss}, let $q = q(a)\ge 2$ be so that $\left(\frac{2}{3}\right)^q a<1$. 
For $\Pr\in\P$ and $n\in\N$, write $\nabla_n = |\bar{Z}_n-\theta(\Pr)|$ and set $\eta_0=0$ and $\eta_k = \left( \frac{3}{2}\right)^{k-1}\Delta_n$, for $k\in\N$. Recall that $\E_n[Z_1] = \E_\Pr[\ell_{h_n}]$ and 
$$
B_{\P,\theta}(\ell_{h_n}) = \sup_{\Pr\in\P}|\E_{\Pr}[\ell_{h_n}] - \theta(\Pr)|] \le D_0 \left( \frac{e^\alpha+1}{\sqrt{n}(e^\alpha-1)}\right)^{\frac{1}{1+\bar{r}}}
=
D_0\Delta_n,
$$
by \ref{cond:ellh}. Then, by the monotone convergence theorem,
\begin{align*}
\E_n[l(\nabla_n)] 
&\le 
\E_n\left[ \sum_{k=0}^\infty l(\eta_{k+1}) \mathds 1_{[\eta_k,\eta_{k+1})}(\nabla_n)\right]
\le
\sum_{k=0}^\infty l(\eta_{k+1}) \Pr_n(\nabla_n\ge \eta_k)\\
&\le
l(\eta_1) + \sum_{k=1}^\infty l(\eta_{k+1}) \frac{\E_n[\nabla_n^q]}{\eta_k^q}\\
&\le
l(\Delta_n) + l(\Delta_n)  \frac{\E_n[\nabla_n^q]}{\Delta_n^q}\frac{3}{2} \sum_{k=0}^\infty a^k\left( \frac{2}{3}\right)^{qk}  \\
&\le
l(\Delta_n) \left( 1+ \frac{2^{q-1}}{1-a(2/3)^q} \frac{\E_n[|\bar{Z}_n-\E_n[Z_1]|^q] + |\E_n[Z_1] - \theta(\Pr)|^q}{\Delta_n^q}   \right)\\
&\le
l(\Delta_n) \left( 1+ \frac{2^{q-1}}{1-a(2/3)^q} (B_q2^qD_0^q + D_0^q)   \right).
\end{align*}
 \hfill\qed

\section{Proofs of example section~\ref{SEC:EX}}
\label{sec:AppExamples}

To exclude trivialities, throughout this section we assume that $\theta$ is not constant on $\P$. We also point out that Condition~\ref{cond:ellh} together with a lower bound of the form $ \eps^{\frac{1}{1+\bar{r}}} \lesssim \omega_{TV}(\eps)$ implies the corresponding upper bound on $\omega_{TV}$, in view of Lemma~\ref{lemma:CondC}. 

\subsection{Estimating moment functionals}
\label{SEC:EXMomentFuncts}

Let $\X\subseteq\R$, $\mathcal F=\B(\X)$ and consider estimation of an integral functional $\theta(\Pr) = \E_\Pr[f]$ for some measurable $f:\X\to\R$, such that either $(a)$ $\text{Im}(|f|)\supseteq(0,\infty)$, or $(b)$ $\|f\|_\infty<\infty$. For instance, $f(x) = x^m$, for moment estimation, or $f(x) = e^{sx}$, for estimation of the moment generating function at the point $s\in\R$. 
For $\kappa\in(1,\infty)$ and $L>0$, consider the class $\P=\P_\kappa(L)$ of all probability measures $\Pr$ on $\B(\X)$ such that $\E_\Pr[|f|^\kappa] \le L$. Clearly, the model $\P_\kappa(L)$ is convex and $\theta$ is bounded on $\P_\kappa(L)$, because $\sup_{\Pr\in\P_\kappa(L)}|\theta(\Pr)| \le \sup_{\Pr\in\P_\kappa(L)} \E_\Pr[|f|^\kappa]^{\frac{1}{\kappa}}\le L^{\frac{1}{\kappa}}<\infty$. 

We first show that in case $(a)$ the total variation modulus $\omega_{TV}$ of $\theta$ over $\P_\kappa(L)$ satisfies, for all $\eps\in(0,1)$,
\begin{align*}
\omega_{TV}(\eps)\;\ge\; (L/2)^{\frac{1}{\kappa}} \eps^{\frac{\kappa-1}{\kappa}}.
\end{align*}
\begin{proof}
Fix $\eps\in(0,1)$ and $\delta\in(0,(L/2)^{1/\kappa})$. By our assumption on $|f|$, there exist $x_\eps, x_\delta\in\X$, such that $|f(x_\delta)| = \delta$ and $|f(x_\eps)| = (L/(2\eps))^{1/\kappa}$. Now let $\Pr_0$ be dirac measure at the point $\{x_\delta\}$ and let $\Pr_1(\{x_\delta\}) = 1-\eps$ and $\Pr_1(\{x_\eps\}) = \eps$. Then $\E_{\Pr_0}[|f|^\kappa] = \delta^\kappa \le L/2 \le L$ and $\E_{\Pr_1}[|f|^\kappa] = \delta^{\kappa}(1-\eps) + (L/(2\eps))\eps \le  L/2 + L/2 = L$. So both $\Pr_0$ and $\Pr_1$ belong to $\P_\kappa(L)$. Furthermore, $\dtv(\Pr_0,\Pr_1) = \eps$ and
\begin{align*}
\left| \E_{\Pr_0}[f] - \E_{\Pr_1}[f] \right|
=
\left| f(x_\delta)(1-\eps) + f(x_\eps)\eps - f(x_\delta) \right| 
\xrightarrow[\delta\to 0]{} &|f(x_\eps)|\eps \\
&= (L/2)^{\frac{1}{\kappa}} \eps^{\frac{\kappa-1}{\kappa}}.
\end{align*}
Thus, we have exhibited a sequence in the set $\{ |\theta(\Pr_0)- \theta(\Pr_1)| : \dtv(\Pr_0,\Pr_1)\le \eps, \Pr_j\in\P_\kappa(L)\}$ that converges to $(L/2)^{\frac{1}{\kappa}} \eps^{\frac{\kappa-1}{\kappa}}$, and the supremum can not be less than that quantity.
\end{proof}

Furthermore, $\ell_{h}(x) := f(x)\mathds 1_{|f(x)|\le \frac{1}{h}}$ satisfies Condition~\ref{cond:ellh} with $k=1$, $s_1=1$, $t_1=\kappa-1>0$, $D_0=C\lor 1$ and ${h}_0=1$. 
\begin{proof}
Clearly, $\|\ell_h\|_\infty \le h^{-1}$. To verify \eqref{eq:BiasCondition}, note that
\begin{align*}
B(\ell_h) &= \sup_{\Pr\in\P_\kappa(L)}|\theta(\Pr) - \E_\Pr[\ell_h]|
=
\sup_{\Pr\in\P_\kappa(L)}\left|\E_\Pr[f] - \E_\Pr\left[f\mathds 1_{|f|\le \frac{1}{h}}\right]\right|\\
&\le
\sup_{\Pr\in\P_\kappa(L)}
\E_\Pr\left[ |f| \mathds 1_{|f|>\frac{1}{h}}\right]
\le
\sup_{\Pr\in\P_\kappa(L)}
\left(\E_\Pr\left[ |f|^\kappa\right]\right)^{\frac{1}{\kappa}} \Pr(|f|>1/h)^{1-\frac{1}{\kappa}}\\
&\le
L^\frac{1}{\kappa}\sup_{\Pr\in\P_\kappa(L)} \left(h^\kappa \E_\Pr[|f|^\kappa]\right)^{\frac{\kappa-1}{\kappa}}
\le
L  h^{\kappa-1}.
\end{align*}

\end{proof}

Next, we show that in case $(b)$ there exist positive finite constants $A_0$ and $\eps_0$, so that the total variation modulus $\omega_{TV}$ of $\theta$ over $\P_\kappa(L)$ satisfies
$$
\omega_{TV}(\eps) \ge A_0\eps, \quad\quad\forall \eps\in[0,\eps_0].
$$
\begin{proof}
We would like to take $\Pr_0=\delta_{x_0}$ as point-mass at $x_0$, and $\Pr_1 = (1-\eps)\delta_{x_0} +\eps\delta_{x_1}$, for $x_0\ne x_1$ such that $\Pr_0,\Pr_1\in\P_\kappa(L)$. We first need to make sure that such a choice is possible. Note that if $|f(x)|^\kappa>L$, for all $x\in\mathcal X$, then $\P_\kappa(L)=\varnothing$. Suppose now that $|f(x_0)|^\kappa = L$ for some $x_0\in\mathcal X$ and that $|f(x)|^\kappa>L$ for all $x\neq x_0$. Then the only probability distribution $\Pr$ on $(\mathcal X,\mathcal B(\mathcal X))$ for which $\E_\Pr[|f|^\kappa] \le L$ holds, is the Dirac point mass at $x_0$. But this contradicts that $\theta$ is not constant on $\P_\kappa(L)$. Thus, either $(i)$ $|f(x_0)|^\kappa < L$ for some $x_0\in\X$, or $(ii)$ there exist $x_0, x_1\in\X$, $x_0\neq x_1$, such that $|f(x_j)|^\kappa = L$, for $j=0,1$. In case $(ii)$, suppose that $|f(x)|^\kappa\ge L$, for all $x\in\mathcal X$. Otherwise, we are in case $(i)$. Let $A = \{x\in\mathcal X : |f(x)|^\kappa=L\}$ and note that all elements of $\P_\kappa(L)$ must be supported on $A$. If $f$ is constant on $A$, then $\Pr\mapsto\theta(\Pr) = \E_\Pr[f]$ is constant on $\P_\kappa(L)$. But we have ruled out this trivial case. Thus, in case $(ii)$, there exist $x_0,x_1\in A$, such that $f(x_0)\neq f(x_1)$. 

Now, in case $(i)$, let $x_0$ be such that $|f(x_0)|^\kappa < L$ and take $x_1\in\mathcal X$, so that $f(x_1)\neq f(x_0)$. In case $(ii)$, let $x_0,x_1\in A$, such that $f(x_0)\neq f(x_1)$. In either case, for $\eps\in(0,1)$, set $\Pr_0 = \delta_{x_0}$, the Dirac point mass at $x_0$ and set $\Pr_1 = (1-\eps)\delta_{x_0} + \eps\delta_{x_1}$. Then, in case $(ii)$, we have $\E_{\Pr_0}|f|^\kappa = |f(x_0)|^\kappa\le L$ and $\E_{\Pr_1}|f|^\kappa = (1-\eps)|f(x_0)|^\kappa + \eps |f(x_1)|^\kappa\le L$. In case $(i)$, we have $\E_{\Pr_0}|f|^\kappa < L$, and there exists $\eps_0\in(0,1)$, so that for all $\eps\in[0,\eps_0]$, we have $\E_{\Pr_1}|f|^\kappa = (1-\eps)|f(x_0)|^\kappa + \eps |f(x_1)|^\kappa< L$. Hence, in both cases we have $\Pr_j\in\P_\kappa(L)$, for $j=0,1$, at least for $\eps\in[0,\eps_0]$. Now, it is easy to compute $|\theta(\Pr_0)-\theta(\Pr_1)| = \eps|f(x_0)-f(x_1)|$ which is non-zero in both cases. Since $\dtv(\Pr_0,\Pr_1) = \eps$, we arrive at the lower bound $\omega_{TV}(\eps) \ge |f(x_0)-f(x_1)| \eps$, for all $\eps\in[0,\eps_0]$.
\end{proof}

Furthermore, in case $(b)$, Condition~\ref{cond:ellh} holds with $k=1$, $s_1=0$, $t_1=\kappa-1$, $D_0=L\lor \|f\|_\infty$ and $h_0=1$. This is verified as in case $(a)$.

\subsection{Estimating the derivative of the density at a point}
\label{SEC:EXPointFuncts}

Let $\X=\R$, $\mathcal F=\B(\R)$ and, for $\beta,L>0$, consider the H\"older class $\mathcal H_{\beta,L}^{\ll\lambda} = \mathcal H_{\beta,L}^{\ll\lambda}(\R)$ of all Lebesgue densities $p$ on $\R$ that are $b := \lfloor \beta \rfloor$ times differentiable and whose $b$-th derivative $p^{(b)}$ satisfies
$$
|p^{(b)}(x) - p^{(b)}(y)| \le L |x-y|^{\beta-b},\quad\quad \forall x,y\in\R.
$$
We consider estimation of the $m$-th derivative of the density at a point $x_0\in\R$, i.e., for $p\in\mathcal H_{\beta,L}^{\ll\lambda}$, we consider the linear functional $\theta(p) = p^{(m)}(x_0)$, where $0\le m <\beta$. It follows from Condition~\ref{cond:ellh}, which we verify below \citep[see also][Equation~(1.9)]{Tsybakov09}, that this functional is uniformly bounded on $\mathcal H_{\beta,L}^{\ll\lambda}$. Clearly, $\mathcal H_{\beta,L}^{\ll\lambda}$ is convex.

We first show that there exist positive finite constants $A_0$ and $\eps_0$, depending only on $L$, $\beta$ and $m$, so that the total variation modulus $\omega_{TV}$ of $\theta$ over $\mathcal H_{\beta,L}^{\ll\lambda}$ satisfies, 
\begin{align}\label{eq:ModulusPoint}
\omega_{TV}(\eps) \ge A_0\eps^{\frac{\beta-m}{\beta+1}}, \quad\quad\forall \eps\in[0,\eps_0].
\end{align}
\begin{proof}
Let 
$$
\kappa_0(u) = \exp\left(-\frac{1}{1-4u^2}\right)\mathds 1_{(-1/2,1/2)}(u)
$$ and let $\kappa(u) = a_0\kappa_0(u)$, for an $a_0 = a_0(\beta)>0$, so that the $b$-th derivative of $\kappa$ is H\"older continuous with exponent $\beta-b$ and constant $1/2$. Similarly, for appropriate $a_1,a_2>0$, we obtain $p_0(x) := a_1\kappa_0( \frac{x-x_0}{a_2})$ such that $p_0\in\mathcal H_{\beta,L/2}^{\ll\lambda}(\R)$, and such that for constants $\delta_0,\delta_1>0$, depending only on $\beta$ and $L$, we have $p_0(x)\ge \delta_0$ for all $x\in(x_0-\delta_1,x_0+\delta_1)$. Now, for $x,y\in\R$ and $h>0$, set $g(y) := \kappa(y+1)-\kappa(y)$ and
$$
p_1(x) := p_0(x) + \frac{L}{2} h^\beta g\left( \frac{x-x_0}{h}\right).
$$
It follows that
\begin{align*}
|p_1^{(b)}(x) - p_1^{(b)}(y)| 
&\le 
|p_0^{(b)}(x) - p_0^{(b)}(y)|
\\
&\quad\quad+
\frac{L}{2} h^{\beta-b} \left|g^{(b)}\left(\frac{x-x_0}{h}\right) - g^{(b)}\left(\frac{y-x_0}{h}\right)\right|\\
&\le\frac{L}{2}|x-y|^{\beta-b} + \frac{L}{2} h^{\beta-b} \left| \frac{x-x_0}{h} - \frac{y-x_0}{h} \right|^{\beta-b}\\
&=
L|x-y|^{\beta-b}.
\end{align*}
Furthermore, since $g((x-x_0)/h)<0$ if, and only if, $x\in(x_0-h/2, x_0+h/2)$, we see that $p_1(x)\ge 0$, for all $x\in\R$, if $h<2\delta_1$ and $h^\beta < \delta_0/(L\|\kappa\|_\infty)$. Now
\begin{align}\label{eq:DensFunctDiff}
|\theta(p_0)-\theta(p_1)| = \frac{L}{2} h^{\beta-m} |g^{(m)}(0)|,
\end{align}
and
\begin{align*}
\dtv(\Pr_0,\Pr_1) = \frac{1}{2}\|p_0-p_1\|_{L_1} = \frac{L}{4}h^\beta \int_\R \left|g\left( \frac{x-x_0}{h}\right)\right|\,dx 
=
\frac{L}{2}h^{\beta+1} \|\kappa\|_{L_1}.
\end{align*}
Thus, if $\dtv(\Pr_0,\Pr_1)\le \eps$, the maximum value of $h$ we can choose is $h=(2\eps/(L\|\kappa\|_{L_1}))^{\frac{1}{\beta+1}}$, which obeys our restrictions on $h$, if $\eps\le \eps_0$, for an appropriate choice of $\eps_0$. Plugging this back into \eqref{eq:DensFunctDiff} yields the claimed lower bound.
\end{proof}

Let $K:[-1,1]\to \R$ be a kernel of order $b-m$ that is $m$-times continuously differentiable and satisfies $K^{(j)}(1) = K^{(j)}(-1) = 0$, for $j=0,1\dots, m-1$, $C_1:=\int_{-1}^1 |u|^{\beta-m}|K(u)|\,du < \infty$ and $\int_{-1}^1K(x)\,dx =(-1)^m$ \citep[for a construction see][]{Mueller84, Hansen05}. Then $\ell_h := \kappa_{h}^{(m)} \cdot \mathds 1_{[x_0-h,x_0+h]}$, where 
$$
\kappa_{h}(x) := \frac{1}{h}K\left( \frac{x-x_0}{h}\right),
$$ 
satisfies Condition~\ref{cond:ellh} with $k=1$, $s_1=m+1$, $t_1=\beta-m$, $D_0=\|K^{(m)}\|_\infty\lor \frac{C_1L}{(b-m)!}$ and $h_0=1$.
\begin{proof}
Since $\kappa_h^{(m)}(x) = h^{-(m+1)}K^{(m)}\left(\frac{x-x_0}{h}\right)$, we have $\|\ell_h\|_\infty\le\|K^{(m)}\|_\infty h^{-(m+1)}$.
Furthermore, $p^{(m)}$ is $(b-m)$-times continuously differentiable, so that 
$$
p^{(m)}(x_0+hu) = p^{(m)}(x_0) + hu\cdot p^{(m+1)}(x_0) + \dots + \frac{(hu)^{b-m}}{(b-m)!}p^{(b)}(x_0+\tau hu),
$$
for some $\tau\in[0,1]$. Thus, from the properties of the kernel $K$ and integration by parts, we get
\begin{align*}
|\E_p[\ell_h] - \theta(p)|
&= 
\left|\int_{x_0-h}^{x_0+h} \kappa_h^{(m)}(x) p(x)\,dx - p^{(m)}(x_0)\right|\\
&=
\left|(-1)^m\int_{x_0-h}^{x_0+h} \kappa_h(x) p^{(m)}(x)\,dx - p^{(m)}(x_0)\right|\\
&=
\left|\int_{-1}^{1} K(u) \left[p^{(m)}(x_0+hu)- p^{(m)}(x_0)\right]\,du\right|\\
&=
\left|\int_{-1}^{1} K(u) \frac{(hu)^{b-m}}{(b-m)!}p^{(b)}(x_0+\tau hu) \,du\right|\\
&=
\left|\int_{-1}^{1} K(u)  \frac{(hu)^{b-m}}{(b-m)!} \left[p^{(b)}(x_0+\tau hu)- p^{(b)}(x_0)\right]\,du \right|\\
&\le
\int_{-1}^{1} |K(u)|  \frac{|hu|^{b-m}}{(b-m)!} L |\tau h u|^{\beta-b}\,du
\le \frac{C_1L}{(b-m)!} h^{\beta-m}.
\end{align*}
\end{proof}

\subsection{Multivariate density estimation at a point}
\label{SEC:EXMultDens}

In this example, let $\X=\R^d$ and $\mathcal F = \B(\R^d)$. For $\beta\in(0,1]^d$ and $L\in(0,\infty)^d$, we consider the anisotropic H\"older-class $\mathcal H_{\beta, L}^{\ll\lambda}(\R^d)$ of Lebesgue densities $p$ on $\R^d$, such that for every $j\in\{1,\dots, d\}$ and every $x,x'\in\R^d$,
$$
|p(x_1,\dots, x_{j-1},x_j', x_{j+1},\dots, x_d) - p(x)| \;\le \; L_j|x_j'-x_j|^{\beta_j}.
$$
For $x_0\in\R^d$, the functional of interest is $\theta(p) = p(x_0)$, which is linear. Clearly, the anisotropic H\"older-class is convex. 

We first show that there exist positive finite constants $A_0$ and $\eps_0$, so that for $\bar{r} = \sum_{j=1}^d\frac{1}{\beta_j}$,
$$
\omega_{TV}(\eps) \;\ge\; A_0\eps^{\frac{1}{1+\bar{r}}}, \quad\quad\forall\eps\in(0,\eps_0].
$$
\begin{proof}
For $j=1,\dots, d$, we use the same construction as in Section~\ref{SEC:EXPointFuncts} to obtain kernels $\kappa_j\in\mathcal H_{\beta_j,1/2}^{\ll\lambda}(\R)$ and set $g_j(y) = \kappa_j(y+1) -\kappa_j(y)$. Moreover, we take  
$$
p_0(x) = \prod_{j=1}^d (2\pi\sigma^2)^{-1/2}\exp\left(-\frac{(x_j-x_{0,j})^2}{2\sigma^2}\right),
$$ 
which satisfies $p_0\in \mathcal H_{\beta,L/2}^{\ll\lambda}(\R^d)$ for some sufficiently large $\sigma>0$ depending only on $L=(L_1,\dots, L_d)'$. Furthermore, for $h, h_1, \dots, h_d>0$, define
$$
p_1(x) = p_0(x) + h \prod_{j=1}^d \frac{L_j}{2} g_j\left(\frac{x_j-x_{0,j}}{h_j}\right).
$$ 
Since the mappings $x_j \mapsto g_j((x_j-x_{0,j})/h_j)$ take only non-zero (and possibly negative) values if $x_{0,j}-3h_j/2<x_j<x_{0,j}+h_j/2$, we see that $p_1(x)$ is certainly non-negative for all $x\in\R^d$, provided that 
$$
h \prod_{j=1}^d  \frac{L_j}{2} \|g_j\|_\infty \;\le \; q_d(3(h_1,\dots, h_d)/2),
$$
where $q_d$ is the density of the $\mathcal N_d(0,\sigma^2 I_d)$ distribution. Thus, $p_1$ is a probability density, if $\max_j h_j\le 2/3$ and 
$$
h \;\le \; \frac{q_d(1,\dots, 1)}{\prod_{j=1}^d  \frac{L_j}{2} \|g_j\|_\infty}.
$$
Next, observe that for $x\in\R^d$ and $x_j'\in\R$,
\begin{align*}
&|p_1(x_1,\dots, x_{j-1},x_j',x_{j+1},\dots, x_d) - p_1(x)| \\
&\quad\le
|p_0(x_1,\dots, x_{j-1},x_j',x_{j+1},\dots, x_d) - p_0(x)| \\
&\quad\quad+
h  \left[\prod_{\substack{k=1\\k\ne j}}^d  \frac{L_k}{2} \left|g_k\left(\frac{x_k-x_k^{(0)}}{h_k}\right)\right|\right]
 \frac{L_j}{2}\left| g_j\left(\frac{x_j'-x_{0,j}}{h_j}\right) - g_j\left(\frac{x_j-x_{0,j}}{h_j}\right)\right| \\
&\quad\le
\frac{L_j}{2} |x_j' - x_j|^{\beta_j} 
+
h  \left[\prod_{\substack{k=1\\k\ne j}}^d  \frac{L_k}{2}\|g_k\|_\infty\right]
\frac{L_j}{2}\left|\frac{x_j'-x_{0,j}}{h_j} - \frac{x_j-x_{0,j}}{h_j} \right|^{\beta_j}\\
&\quad\le
L_j |x_j' - x_j|^{\beta_j}, 
\end{align*}
provided that 
$$
h_j^{-\beta_j} h\le \left[ \prod_{\substack{k=1\\k\ne j}}^d  \frac{L_k}{2}\|g_k\|_\infty \right]^{-1} =: \bar{c}_j.
$$
Consequently, we see that $p_1\in\mathcal H_{\beta,L}^{\ll\lambda}(\R^d)$, if $h_j^{-\beta_j} h \le c_0:= \min_j \bar{c}_j$, for all $j=1,\dots, d$. Now, $\theta(p_0) - \theta(p_1) = h\prod_{j=1}^d \frac{L_j g_j(0)}{2}=:h c_1$ and $\dtv(\Pr_0,\Pr_1) = h \prod_{j=1}^d\frac{h_jL_j\|g_j\|_1}{2} =: h[\prod_{j=1}^d h_j] c_2$. Thus, we solve the system
$$
h_j^{-\beta_j} h = c_0, \quad\quad c_2 h \prod_{j=1}^dh_j = \eps,
$$
which yields $h_j = (h/c_0)^{1/\beta_j}$ and 
$$
h=\left( \frac{\eps c_0^{\bar{r}}}{c_2} \right)^{\frac{1}{1+\bar{r}}},
$$
where, $\bar{r}=\sum_{j=1}^d\frac{1}{\beta_j}$, which establishes the claimed lower bound on $\omega_{TV}(\eps)$ for all small $\eps>0$.
\end{proof}

Furthermore, let $K:\R\to\R$ be a bounded kernel that satisfies
$$
\int_\R K(u)\,du = 1, \quad\quad \bar{c}_j:= \int_\R |K(u)| |u|^{\beta_j}\,du < \infty, \quad\forall j=1,\dots, d.
$$
Next, we show that with $h\in(0,\infty)^d$ and $x\in\R^d$, the function 
$$
\ell_h(x) = \prod_{j=1}^d \frac{1}{h_j} K\left( \frac{x_j-x_{0,j}}{h_j}\right)
$$ 
satisfies Condition~\ref{cond:ellh} with $k=d$, $s_j=1$, $t_j=\beta_j$, $D_0 = \|K\|_\infty\lor \max_j (dL_j\bar{c}_j)$ and $h_0=1$. 
\begin{proof}
The first part of \eqref{eq:BiasCondition} is trivial. To verify the bias condition, we note that for $p\in\mathcal H_{\beta,L}^{\ll\lambda}(\R^d)$, $H=\diag(h_1,\dots, h_d)$ and with the substitution $T(x) = H^{-1}(x-x^{(0)})$, $|DT^{-1}(u)| = |H| = \prod_{j=1}^d h_j$, we have
\begin{align*}
\left|\int_{\R^d} \ell_h(x) p(x)\,dx - p(x^{(0)})\right|
&\le
\int_{\R^d} \left| p(Hu+x^{(0)}) - p(x^{(0)})\right|\prod_{j=1}^d |K(u_j)|\,du\\
&\le
\sum_{j=1}^d \int_{\R^d} |K(u_j)| L_j\left| h_ju_j\right|^{\beta_j}\,du
=
\sum_{j=1}^d L_j \bar{c}_j h_j^{\beta_j}.
\end{align*}
Thus, \eqref{eq:BiasCondition} holds with $k=d$, $s_j=1$, $t_j=\beta_j$, $\bar{C}_0 = \|K\|_\infty\lor \max_j (dL_j\bar{c}_j)$ and $\bar{h}_0=1$.
\end{proof}

\subsection{Estimating the endpoint of a uniform distribution}
\label{sec:uniform}
Fix $M\ge1$ and consider the class of distributions $\P = \P_M = \{ \Pr_\vartheta : \vartheta\in(0,M]\}$, where $\Pr_\vartheta$ denotes the uniform distribution on $[0,\vartheta]$. We may take $\X = [0,M]$. Clearly, the functional $\theta(\Pr_\vartheta) = 2\int_\R x\,d\Pr_\vartheta(x) = \vartheta$ is linear (when extended to the convex hull of $\P$), but $\P$ is not convex. Nevertheless, the lower bound of Section~\ref{sec:lowerB} applies and, since for $\eps\in(0,1)$, $\dtv(\Pr_1,\Pr_{1-\eps/2}) = \eps$, we have $\omega_{TV}(\eps) \ge \eps/2$. Furthermore, we can verify the conditions of Corollary~\ref{COR:ATTAINABILITY:App}.
\begin{proof}
Boundedness of $\theta$ is clear and connectedness of $(\P,\dtv)$ is also immediate, as it is even a path-connected space. To verify \eqref{eq:corr:saddlepoint2:App}, we first compute
\begin{align*}
&\inf_{\sigma\in\mathbb S_\delta} \sup_{\ell:\|\ell\|_\infty\le 1}\int_\X \ell\,d\sigma
=
\inf_{\substack{\vartheta_0-\vartheta_1\ge \delta\\\vartheta_0,\vartheta_1\in(0,M]}} \int \left|\frac{1}{\vartheta_0}\mathds 1_{[0,\vartheta_0]} - \frac{1}{\vartheta_1}\mathds 1_{[0,\vartheta_1]} \right|\\
&\quad=
\inf_{\substack{\vartheta_0-\vartheta_1\ge \delta\\\vartheta_0,\vartheta_1\in(0,M]}}
\left|\frac{1}{\vartheta_0}-\frac{1}{\vartheta_1}\right| \vartheta_1 + \frac{\vartheta_0-\vartheta_1}{\vartheta_0}
=
2\frac{\delta\land M}{M}.
\end{align*}
Now take $\ell^*(x) := \frac{2x}{M} - 1$ and note that $\|\ell^*\|_\infty \le 1$ and
\begin{align*}
&\inf_{\sigma\in\mathbb S_\delta} \int_\X \ell^*\,d\sigma 
=
\inf_{\substack{\vartheta_0-\vartheta_1\ge \delta\\\vartheta_0,\vartheta_1\in(0,M]}}
\frac{2}{M}\left[\left(\frac{1}{\vartheta_0}-\frac{1}{\vartheta_1}\right) \int_0^{\vartheta_1} x \,dx + \frac{1}{\vartheta_0}\int_{\vartheta_1}^{\vartheta_0} x\,dx\right]\\
&\quad=
\inf_{\substack{\vartheta_0-\vartheta_1\ge \delta\\\vartheta_0,\vartheta_1\in(0,M]}}
\frac{2}{M}\left[\left(\frac{1}{\vartheta_0}-\frac{1}{\vartheta_1}\right) \frac{\vartheta_1^2}{2} + \frac{1}{\vartheta_0}\frac{\vartheta_0^2 - \vartheta_1^2}{2}\right]
=
\frac{\delta}{M}.
\end{align*}
Therefore,
$$
\sup_{\ell:\|\ell\|_\infty\le 1}\inf_{\sigma\in\mathbb S_\delta} \int_\X \ell\,d\sigma
\ge \frac{\delta}{M} \ge \frac{\delta\land M}{M} = \frac{1}{2} \inf_{\sigma\in\mathbb S_\delta} \sup_{\ell:\|\ell\|_\infty\le 1}\int_\X \ell\,d\sigma,
$$
and \eqref{eq:corr:saddlepoint2:App} holds with $c'=1/2$. Positivity of $\omega_{TV}(\nu)$, for $\nu>0$, has been established above.
\end{proof}
 
Alternatively, Condition~\ref{cond:ellh} clearly holds for $\ell_h(x) = 2x$ and with $k=1$, $s_1=0$, $D_0=2M$ and any $t_1>0$. Both of these arguments lead to a convergence rate of $l(n^{-1/2})$.
This rate should be compared with the well known rate of $l(n^{-1})$ from the case of direct observations. Even though $\P_M$ is not convex, the rate of direct estimation, in this case, is still characterized by the Hellinger modulus, as one easily shows that for $\eps\in(0,1)$, $\eps^2(1-\eps^2/4)\le \omega_H(\eps) \le M\eps^2$. 

\begin{proof}
First, to establish the identity 
\begin{equation}\label{eq:H^2}
\dH^2(\Pr_{\vartheta_0},\Pr_{\vartheta_1}) = 2\left(1-\frac{\vartheta_0\land\vartheta_1}{\sqrt{\vartheta_0\vartheta_1}}\right),
\end{equation}
simply note that 
$$
\rho(\Pr_{\vartheta_0},\Pr_{\vartheta_1}) = \int_\R\sqrt{\frac{1}{\vartheta_0\vartheta_1} \mathds 1_{[0,\vartheta_0\land \vartheta_1]}(x)}\,dx = \frac{\vartheta_0\land\vartheta_1}{\sqrt{\vartheta_0\vartheta_1}}.
$$
For the upper bound on $\omega_H$, note that if $0\le\vartheta_0< \vartheta_1\le M$,
\begin{align*}
|\theta(\Pr_{\vartheta_0})-\theta(\Pr_{\vartheta_1})| &= |\vartheta_0-\vartheta_1| 
= \vartheta_1\left| 1-\frac{\vartheta_0}{\vartheta_1}\right|
= \vartheta_1\left( 1-\sqrt{\frac{\vartheta_0}{\vartheta_1}}\right)\left( 1+\sqrt{\frac{\vartheta_0}{\vartheta_1}}\right)\\
&\le 2M\left( 1-\sqrt{\frac{\vartheta_0}{\vartheta_1}}\right).
\end{align*}
Moreover, $\dH(\Pr_{\vartheta_0},\Pr_{\vartheta_1})\le \eps$ if, and only if, $2\left( 1-\sqrt{\frac{\vartheta_0}{\vartheta_1}}\right)\le \eps^2$, by \eqref{eq:H^2}. Thus,
\begin{align*}
\omega_H(\eps) 
&= 
\sup\left\{|\theta(\Pr_{\vartheta_0})-\theta(\Pr_{\vartheta_1})|  : \dH(\Pr_{\vartheta_0},\Pr_{\vartheta_1})\le \eps, \vartheta_0,\vartheta_1\in[0,M]\right\}\\
&\le 
\sup\left\{2M\left( 1-\sqrt{\frac{\vartheta_0}{\vartheta_1}}\right)  : 2\left( 1-\sqrt{\frac{\vartheta_0}{\vartheta_1}}\right)\le \eps^2, 0\le\vartheta_0<\vartheta_1\le M\right\}\\
&\le M\eps^2.
\end{align*}
For the lower bound on $\omega_H$, take for $\eps\in(0,1)$, $\vartheta_0:=(1-\frac{\eps^2}{2})^2\le 1\le M$ and $\vartheta_1:=1\le M$, so that  $\omega_H(\eps)\ge |\vartheta_0-\vartheta_1| = 1-(1-\eps^2+\frac{\eps^4}{4}) = \eps^2(1-\frac{\eps^2}{4})$.
\end{proof}


\section{Auxiliary results and proofs}
\label{sec:AppAux}

\subsection{Proof of Proposition~\ref{prop:Sion}}
\label{sec:AppSion}

Set $M= \max(|a|,|b|)$. We check the conditions of Corollary~3.3 in \citet{Sion58}. Clearly, $\mathbb S$ and $\mathbb T$ are convex sets and the function $F(\phi,\sigma) := \int_{\Omega} \phi \;d\sigma$ on $\mathbb T \times \mathbb S$ is quasi-concave-convex, because it is linear in both arguments. We equip $\mathbb S$ with the topology induced by $\|\cdot\|_{TV}$ and note that this makes $\sigma\mapsto F(\phi,\sigma)$ continuous, for every $\phi\in\mathbb T$, because 
\begin{align*}
|F(\phi,\sigma_1) - F(\phi,\sigma_2)| &\le \sup_{\|\phi\|_\infty\le M} \left|\int_{\Omega} \phi\,d(\sigma_1-\sigma_2) \right| \le 2M\|\sigma_1-\sigma_2\|_{TV}.
\end{align*}
The proof is finished if we can find a topology $\tau$ for $L_\infty = L_\infty(\Omega, \mathcal A,\mu)$ in which $\mathbb T$ is compact and $\phi\mapsto F(\phi,\sigma)$ is continuous, for every $\sigma\in\mathbb S$. Abbreviate the space of equivalence classes of $\mu$-integrable functions by $L_1 = L_1(\Omega, \mathcal A, \mu)$ and its topological dual by $L_1^*=L_1^*(\Omega, \mathcal A, \mu)$. For $f\in L_1$, let $E_f: L_1^* \to \R$ denote the evaluation functional on the dual space $L_1^*$, i.e., for $\psi\in L_1^*$, set $E_f (\psi) = \psi(f)$. 
Set $V_M=\{f\in L_1 : \|f\|_{L_1}\le 1/M\}$ and $K = \{\psi\in L_1^* : |\psi(f)|\le M, \forall f\in V_1\}= \{\psi\in L_1^* : |\psi(f)|\le 1, \forall f\in V_M\}$. By the Banach-Alaoglu Theorem \citep[][Section~3.15]{RudinFunA}, the set $K$ is compact in the weak$^*$-topology on $L_1^*$, i.e., the weakest topology $\tau^*$ on $L_1^*$ for which all the evaluation functionals $E_f$, $f\in L_1$, are continuous. Next, we use the fact that $L_\infty$ is the dual of $L_1$ \citep[][Theorem~IV.8.5]{DunfSchw57}. Let $\Psi:(L_\infty, \|\cdot\|_\infty) \to (L_1^*, \|\cdot\|_{L_1^*})$ denote the isometric isomorphism that associates each $\phi\in L_\infty$ with the linear functional $f\mapsto \int_{\Omega} \phi f\,d\mu$. Therefore, we can map the weak$^*$-topology $\tau^*$ to a topology $\tau := \{\Psi^{-1}(O) : O\in\tau^*\}$ on $L_\infty$, so that all the functions $E_f\circ\Psi: (L_\infty, \tau) \to \R$, $f\in L_1$, are continuous and 
\begin{align*}
\Psi^{-1}(K) = \left\{\phi\in L_\infty : \left|\int_{\Omega} \phi f\,d\mu\right| \le M, \forall f\in V_1\right\} 
= \{\phi\in L_\infty : \|\phi\|_\infty \le M\}
\end{align*}
is $\tau$-compact, because $\Psi$ is an isomorphism and therefore $\Psi^{-1}$ is continuous. If $f$ is a $\mu$-density of the (finite) measure $\sigma\in\mathbb S$, then $f\in L_1$ and $E_f\circ\Psi(\phi) = \int_{\Omega} \phi f\,d\mu = F(\phi,\sigma)$. Thus, we see that $\phi\mapsto F(\phi, \sigma)$ is $\tau$-continuous for every $\sigma\in\mathbb S$. It remains to show that $\mathbb T\subseteq \Psi^{-1}(K)$ is $\tau$-closed. But clearly
\begin{align*}
\mathbb T^c &= \left\{ \phi\in L_\infty : a\le \int_{\Omega} \phi f\,d\mu \le b, \forall f\in V_1\right\}^c \\
&=\bigcup_{f\in V_1} \left([E_f\circ\Psi]^{-1}((-\infty,a)) \cup [E_f\circ\Psi]^{-1}((b,\infty))\right) \quad\in\quad \tau.
\end{align*}
\hfill\qed

\subsection{Auxiliary results}

\begin{lemma}\label{lemma:Hdistance}
Consider two measurable spaces $(X,\mathcal X)$ and $(Z,\mathcal Z)$, a Markov kernel $Q:\mathcal Z\times X\to [0,1]$ and two finite measures $\Pr_0$, $\Pr_1$ on $(X,\mathcal X)$. Then $Q \Pr_0$ and $Q \Pr_1$ are finite measures and 
$$
\rho(\Pr_0,\Pr_1) \;\le \; \rho(Q \Pr_0,Q \Pr_1) \quad\text{and}\quad \dH(Q \Pr_0,Q \Pr_1) \;\le\; \dH(\Pr_0,\Pr_1).
$$
\end{lemma}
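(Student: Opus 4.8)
The plan is to prove the data-processing inequality for the Hellinger affinity under a Markov kernel, and then read off the two stated conclusions. First I would fix a common dominating $\sigma$-finite measure $\mu$ for $\Pr_0$ and $\Pr_1$ on $(X,\mathcal X)$, say $\mu = \Pr_0 + \Pr_1$, and write $p_0 = d\Pr_0/d\mu$, $p_1 = d\Pr_1/d\mu$. The pushforward measures $Q\Pr_j$ are finite (with total mass $\Pr_j(X)$), since $Q(\cdot|x)$ is a probability kernel; choose a $\sigma$-finite dominating measure $\nu$ for $Q\Pr_0$ and $Q\Pr_1$ on $(Z,\mathcal Z)$ (for instance $\nu = Q\Pr_0 + Q\Pr_1$), and let $g_j = d(Q\Pr_j)/d\nu$.

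The key step is the pointwise inequality $\sqrt{g_0(z)g_1(z)} \ge \int_X \sqrt{q(z|x)^2 p_0(x) p_1(x)}\,d\mu(x)$ for $\nu$-a.e.\ $z$, where I disintegrate $Q\Pr_j(dz) = \bigl(\int_X q(z|x) p_j(x)\,d\mu(x)\bigr)\nu(dz)$ after choosing $\nu$ so that $Q(\cdot|x)\ll\nu$ with density $q(\cdot|x)$; the existence of such a jointly measurable density is a standard measure-theoretic fact (e.g.\ take $\nu$ equivalent to a suitable mixture). Then for each fixed $z$ the Cauchy--Schwarz inequality in $L_2(\mu)$ gives
$$
\int_X \sqrt{q(z|x)p_0(x)}\,\sqrt{q(z|x)p_1(x)}\,d\mu(x) \;\le\; \sqrt{\int_X q(z|x)p_0(x)\,d\mu(x)}\;\sqrt{\int_X q(z|x)p_1(x)\,d\mu(x)} \;=\; \sqrt{g_0(z)g_1(z)}.
$$
Integrating the left-hand side over $z$ against $\nu$ and applying Tonelli's theorem yields
$$
\int_Z \sqrt{g_0 g_1}\,d\nu \;\ge\; \int_X \Bigl(\int_Z q(z|x)\,d\nu(z)\Bigr)\sqrt{p_0(x)p_1(x)}\,d\mu(x) \;=\; \int_X \sqrt{p_0(x)p_1(x)}\,d\mu(x),
$$
using $\int_Z q(z|x)\,d\nu(z) = Q(Z|x) = 1$. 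This is exactly $\rho(Q\Pr_0, Q\Pr_1) \ge \rho(\Pr_0,\Pr_1)$. The Hellinger distance statement then follows from the identity $\dH^2(\cdot,\cdot) = 2\bigl(\Pr(X) - \rho(\cdot,\cdot)\bigr)$ for measures of equal mass (here $\Pr_0(X)=\Pr_1(X)$ since $\Pr_0,\Pr_1$ are probability measures, or more generally one notes both $\Pr_j$ and $Q\Pr_j$ have the same total masses), which is monotone decreasing in $\rho$, so $\rho(Q\Pr_0,Q\Pr_1)\ge\rho(\Pr_0,\Pr_1)$ gives $\dH(Q\Pr_0,Q\Pr_1)\le\dH(\Pr_0,\Pr_1)$.

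The main obstacle is purely measure-theoretic bookkeeping: justifying the existence of a jointly measurable version $q(z|x)$ of the Radon--Nikodym derivatives $d Q(\cdot|x)/d\nu$ against a single dominating measure $\nu$, and checking the applicability of Tonelli's theorem to the non-negative integrand $(z,x)\mapsto \sqrt{q(z|x)p_0(x)p_1(x)}$. One clean way around the first point is to take $\nu := Q\Pr_0 + Q\Pr_1$ directly and verify $Q(\cdot|x)\ll\nu$ for $\mu$-a.e.\ $x$ (which holds because $\Pr_0+\Pr_1$ assigns positive mass wherever $p_0+p_1>0$), then invoke a measurable selection of the derivative; alternatively, reduce to the case $\Pr_0\sim\Pr_1$ by a standard truncation/absolute-continuity decomposition argument. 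Everything else is the two-line Cauchy--Schwarz computation above, and no results beyond elementary integration theory (and the already-stated relation between $\dH$ and $\rho$) are needed.
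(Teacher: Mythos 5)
Your Cauchy--Schwarz/Tonelli computation is the standard data-processing argument and it is fine \emph{whenever} the disintegration you posit exists; but the existence of a $\sigma$-finite measure $\nu$ on $(Z,\mathcal Z)$ with $Q(\cdot|x)\ll\nu$ for $\mu$-a.e.\ $x$ and a jointly measurable density $q(z|x)$ is not a ``standard measure-theoretic fact'' at this level of generality --- it is false for general kernels, and your proposed verification is incorrect. Take $X=Z=[0,1]$ with Borel sets, $Q(\cdot|x)=\delta_x$, and $\Pr_0,\Pr_1$ nonatomic (say both Lebesgue): then $Q\Pr_0+Q\Pr_1$ is nonatomic while $Q(\cdot|x)=\delta_x$, so $Q(\cdot|x)\ll Q\Pr_0+Q\Pr_1$ fails for \emph{every} $x$ (your parenthetical ``which holds because $\Pr_0+\Pr_1$ assigns positive mass wherever $p_0+p_1>0$'' conflates mass on $X$ with domination on $Z$), and no $\sigma$-finite $\nu$ can dominate the whole family $\{\delta_x\}$. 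This is not a corner case you can wave away: the paper invokes the lemma precisely for undominated kernels, e.g.\ the identity channel $Q(A|x)=\mathds 1_A(x)$ and ``all channels $R$'' in the lower-bound argument, so the kernel-density route cannot cover the stated generality. Your fallback (``reduce to $\Pr_0\sim\Pr_1$ by truncation'') does not help either, since the obstruction concerns domination of the kernel on $Z$, not the relation between $\Pr_0$ and $\Pr_1$ on $X$.

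The paper's proof is built exactly to avoid this: it performs a Lebesgue decomposition $\Pr_0=\Pr_0^A+\Pr_0^\perp$ with respect to $\Pr_1$ on the $X$-side, reduces to comparing $\rho(\Pr_0^A,\Pr_1)$ with $\rho(Q\Pr_0^A,Q\Pr_1)$, and then approximates the density $d\Pr_0^A/d\Pr_1$ from below by simple functions $\sum_i\alpha_i\mathds 1_{A_i}$. For such simple functions the only Radon--Nikodym derivatives needed are those of the measures $D\mapsto\int_{A_i}Q(D|x)\,d\Pr_1(x)$ with respect to $Q\Pr_1$ on $Z$, which exist unconditionally (they are dominated by $Q\Pr_1$ by construction); concavity of the square root (Jensen) on the $Z$-side and monotone convergence then finish the argument. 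So either restrict your argument to dominated kernels (where it is correct and shorter), or replace the disintegration step by an argument of this type that only ever takes densities of $Z$-marginal measures with respect to $Q\Pr_1$. The final step of your proposal (passing from $\rho$ to $\dH$ via $\dH^2=\Pr_0(X)+\Pr_1(X)-2\rho$ and equality of total masses under $Q$) is fine.
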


\begin{proof}
The result is an immediate consequence of Proposition~1.1 in \citet{Del03}, because $\Phi(x,y):=(\sqrt{x}-\sqrt{y})^2$ is convex on $\R_+^2$. For the convenience of the reader, we include a direct proof below.

Finiteness of $Q  \Pr_0$ and $Q  \Pr_1$ is obvious. 
Clearly, the two remaining conclusions are equivalent, because $\dH^2 = 2(1-\rho)$, and we focus on the Hellinger affinities.
Set $\mathbb Q_0 = Q \Pr_0$, $\mathbb Q_1 = Q \Pr_1$, $\mu=\Pr_0+\Pr_1$ and $\nu = \mathbb Q_0 + \mathbb Q_1$, and let $p_0$, $p_1$ and $q_0$, $q_1$ denote the corresponding densities. Consider the Lebesgue decomposition \citep[cf.][Theorem~7.33]{Klenke08} of $\Pr_0$ with respect to $\Pr_1$, i.e.,
$$
\Pr_0 \;=\; \Pr_0^A + \Pr_0^\perp,
$$
where $\Pr_0^A \ll \Pr_1$ and $\Pr_0^\perp \perp \Pr_1$. Clearly, $\Pr_0^A$ and $\Pr_0^\perp$ are absolutely continuous with respect to $\mu$ and we write $p_0^A$ and $p_0^\perp$ for corresponding $\mu$-densities, which satisfy $p_0 = p_0^A + p_0^\perp$. For $D\in\mathcal Z$, define the (finite) measures $\mathbb Q_0^A(D) := \int_X Q(D|x)\,d\Pr_0^A(x)$ and $\mathbb Q_0^\perp(D):= \int_X Q(D|x)\,d\Pr_0^\perp(x)$ and note that $\mathbb Q_0 = \mathbb Q_0^A + \mathbb Q_0^\perp$, so that $\mathbb Q_0^A$ and $\mathbb Q_0^\perp$ are absolutely continuous with respect to $\nu$ and we write $q_0^A$ and $q_0^\perp$ for corresponding $\nu$-densities, which satisfy $q_0 = q_0^A + q_0^\perp$. Now, by singularity of $\Pr_0^\perp$ and $\Pr_1$, there exists a set $S\in\mathcal X$, such that $p_0^\perp$ is $\mu$-a.e. equal to zero on $S$ and $p_1$ is $\mu$-a.e. equal to zero on $S^c$. Therefore,
\begin{align*}
\rho(\Pr_0,\Pr_1) &= \int_X\sqrt{p_0p_1}\,d\mu 
= \int_S \sqrt{p_0^Ap_1 + p_0^\perp p_1}\,d\mu 
+
\int_{S^c} \sqrt{p_0^Ap_1 + p_0^\perp p_1}\,d\mu \\
&= \int_S \sqrt{p_0^Ap_1}\,d\mu \le \int_X \sqrt{p_0^Ap_1}\,d\mu =\rho(\Pr_0^A,\Pr_1).
\end{align*}
On the other hand, we have
$$
\rho(\mathbb Q_0,\mathbb Q_1) = \int_Z\sqrt{q_0q_1}\,d\nu \ge \int_Z\sqrt{q_0^Aq_1}\,d\nu = \rho(\mathbb Q_0^A,\mathbb Q_1).
$$
Thus, it remains to show that $\rho(\Pr_0^A,\Pr_1)\le \rho(\mathbb Q_0^A,\mathbb Q_1)$. To this end, consider a $\Pr_1$-density $\tilde{p}_0$ of $\Pr_0^A$. Clearly, the function $\tilde{p}_1 \equiv 1$ is a $\Pr_1$-density of $\Pr_1$. Thus, we have $\rho(\Pr_0^A,\Pr_1) = \int_X\sqrt{\tilde{p}_0}\,d\Pr_1$ and $\mathbb Q_0^A(D) = \int_X Q(D|x)\tilde{p}_0(x)\,d\Pr_1(x)$, $\mathbb Q_1(D) = \int_X Q(D|x)\,d\Pr_1(x)$, so that $\mathbb Q_0^A \ll \mathbb Q_1$, and we let $\tilde{q}_0$ denote a corresponding $\mathbb Q_1$ density.
Therefore, it remains to show that
\begin{align*}
\int_X\sqrt{\tilde{p}_0}\,d\Pr_1 \;\le\; \int_Z\sqrt{\tilde{q}_0}\,d\mathbb Q_1.
\end{align*}

In fact, we will show slightly more than that. For a Markov kernel $Q:\mathcal Z\times X\to[0,1]$, a finite measure $\Pr$ on $(X,\X)$ and a non-negative function $p\in\mathcal L_1(X,\mathcal X, \Pr)$, we show that
\begin{equation}\label{eq:ShowMore}
\int_X\sqrt{p}\,d\Pr \;\le\; \int_Z\sqrt{q}\,d\mathbb Q,
\end{equation}
where $\mathbb Q := Q\Pr$ dominates the finite measure $\mathbb Q^A(dz):=\int_X Q(dz|x)p(x)d\Pr(x)$ on $(Z,\mathcal Z)$ and $q:Z\to[0,\infty)$ is a corresponding $\mathbb Q$-density.
We establish this fact first for simple functions $p = \sum_{i=1}^n \alpha_i\mathds 1_{A_i}$, where $\alpha_i\in(0,\infty)$ and the $A_1,\dots, A_n \in\X$ are pairwise disjoint.
By disjointness, we easily see that
\begin{align}\label{eq:Simple}
\int_X\sqrt{p}\,d\Pr
=
\int_X\sqrt{\sum_{i=1}^n \alpha_i\mathds 1_{A_i} }\,d\Pr
=
\sum_{i=1}^n \sqrt{\alpha_i} \Pr(A_{i}).
\end{align}
Next, define the measures $\mathbb Q_i^A(dz):= \int_{A_i} Q(dz|x)\,d\Pr(x)$ and note that for any $D\in\mathcal Z$,
\begin{align*}
\int_D q\,d\mathbb Q = \mathbb Q^A(D) = \int_Z Q(D|x)p(x)\,d\Pr(x) = \sum_{i=1}^n \alpha_i \mathbb Q_i^A(D).
\end{align*}
Since $\alpha_i>0$, we have $\mathbb Q_i^A\ll \mathbb Q^A\ll \mathbb Q$, and we write $q_i$ for corresponding finite $\mathbb Q$-densities, so that 
$\int_D q\,d\mathbb Q = \int_D \sum_{i=1}^n \alpha_iq_i\,d\mathbb Q$, for every $D\in\mathcal Z$, which implies that $q = \sum_{i=1}^n \alpha_i q_i$, $\mathbb Q$-almost everywhere.
Now, set $r(z) := \sum_{i=1}^n q_i(z)$ and $R := \{z\in Z: r(z)\in(0,\infty)\}$, and note that for every $D\in\mathcal Z$,
\begin{align*}
\int_D 1 \,d\mathbb Q &= \mathbb Q(D) = \int_X Q(D|x)\,d\Pr \ge \sum_{i=1}^n \int_{A_i} Q(D|x)\,d\Pr 
= \sum_{i=1}^n \int_D q_i\,d\mathbb Q\\
&= \int_D r\,d\mathbb Q, 
\end{align*}
by disjointness of the $A_i$, so that $r\le 1$, $\mathbb Q$-almost everywhere.
Thus, using Jensen's inequality, we obtain the lower bound
\begin{align*}
\int_Z \sqrt{q}\,d\mathbb Q
&\ge
\int_R \sqrt{\sum_{i=1}^{n} \alpha_{i} q_i }\,d\mathbb Q
=
\int_R \sqrt{r} \sqrt{\sum_{i=1}^{n} \alpha_{i} \frac{q_i}{r} }\,d\mathbb Q\\
&\ge
\int_R \sqrt{r} \sum_{i=1}^{n}
\sqrt{\alpha_{i}} \frac{q_i}{r} \,d\mathbb Q
\ge
\int_R \sum_{i=1}^{n}
\sqrt{\alpha_{i}} q_i \,d\mathbb Q
=
\sum_{i=1}^n\sqrt{\alpha_i}\mathbb Q_i^A(R).
\end{align*}
But clearly, $\mathbb Q_i^A(R) = \mathbb Q_i^A(Z) - \mathbb Q_i^A(R^c) = \mathbb Q_i^A(Z) = \Pr(A_i)$. 
So, in view of \eqref{eq:Simple}, we have established \eqref{eq:ShowMore} for simple $p$.
Now, for general $p$, let $(p^{(n)})_{n\in\N}$ be a sequence of simple functions such that $p^{(n)}(z) \uparrow p(z)$. If $q^{(n)}$ denotes a $\mathbb Q$-density of the finite measure $\int_X Q(dz|x)p^{(n)}(x)\,d\Pr(x)$, then it is easy to see that $q^{(n)}\le q$, $\mathbb Q$-almost everywhere. Thus, from \eqref{eq:ShowMore} for simple functions, we conclude that
$$
\int_X \sqrt{p^{(n)}}\,d\Pr \le \int_Z \sqrt{q^{(n)}}\,d\mathbb Q \le \int_Z \sqrt{q}\,d\mathbb Q.
$$
Relation~\eqref{eq:ShowMore} now follows from the monotone convergence theorem.
\end{proof}

\begin{lemma}
\label{lemma:bestL1Rate}
Let $\P$ be a non-empty set of probability measures on some measurable space $(\Omega,\mathcal A)$ and let $\theta:\P\to\R$ be a functional.
If $\P$ is convex and $\theta$ is linear and non-constant, then there exist constants $c_0, c_1>0$, such that $\omega_{TV}(\eps)\ge c_0\eps$ and $\omega_H(\eps)\ge \frac{c_0}{2}\eps^2$, for all $\eps\in[0,c_1]$.
\end{lemma}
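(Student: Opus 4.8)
The plan is to exploit convexity of $\P$ together with linearity of $\theta$ to construct an explicit one–parameter family of distributions along which both the total variation distance from a fixed base point and the value of $\theta$ vary linearly in the parameter, and then to read off the two lower bounds by choosing the parameter appropriately.

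First, since $\theta$ is non-constant there exist $\Pr_0,\Pr_1\in\P$ with, say, $\theta(\Pr_1)>\theta(\Pr_0)$; set $\delta_0:=\theta(\Pr_1)-\theta(\Pr_0)>0$ and $d_0:=\dtv(\Pr_0,\Pr_1)$. Note that $d_0>0$, since $d_0=0$ would mean $\Pr_0=\Pr_1$ as measures and hence $\theta(\Pr_0)=\theta(\Pr_1)$, a contradiction. For $\lambda\in[0,1]$ put $\Pr_\lambda:=(1-\lambda)\Pr_0+\lambda\Pr_1\in\P$ by convexity. Linearity of $\theta$ gives $\theta(\Pr_\lambda)-\theta(\Pr_0)=\lambda\delta_0$, and since $\Pr_\lambda-\Pr_0=\lambda(\Pr_1-\Pr_0)$ as signed measures, the definition $\dtv(\cdot,\cdot)=\sup_A|\cdot(A)-\cdot(A)|$ yields $\dtv(\Pr_0,\Pr_\lambda)=\lambda\,\dtv(\Pr_0,\Pr_1)=\lambda d_0$.

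Now set $c_0:=\delta_0/d_0$ and $c_1:=d_0\,(\le 1)$. For the total variation modulus, fix $\eps\in(0,c_1]$ and take $\lambda=\eps/d_0\in(0,1]$; then $\dtv(\Pr_0,\Pr_\lambda)=\eps$ and $|\theta(\Pr_0)-\theta(\Pr_\lambda)|=c_0\eps$, so $\omega_{TV}(\eps)\ge c_0\eps$ (and the case $\eps=0$ is trivial since the relevant set contains $0$). For the Hellinger modulus, use the standard bound $\dH\le\sqrt{2\dtv}$ quoted in the paper: for $\eps\in(0,c_1]$ take $\lambda=\eps^2/(2d_0)$, which is admissible because $\lambda\le \eps d_0/(2d_0)=\eps/2\le 1/2$; then $\dH(\Pr_0,\Pr_\lambda)\le\sqrt{2\dtv(\Pr_0,\Pr_\lambda)}=\sqrt{2\lambda d_0}=\eps$ while $|\theta(\Pr_0)-\theta(\Pr_\lambda)|=\lambda\delta_0=\frac{c_0}{2}\eps^2$, whence $\omega_H(\eps)\ge\frac{c_0}{2}\eps^2$.

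There is no genuine difficulty in this argument; the only points demanding a little care are the strict positivity $d_0>0$, the scaling identity $\dtv(\Pr_0,\Pr_\lambda)=\lambda\,\dtv(\Pr_0,\Pr_1)$ (immediate once one observes $\Pr_\lambda-\Pr_0$ is a scalar multiple of $\Pr_1-\Pr_0$), and checking that the admissible parameter ranges make both inequalities valid on a genuine interval $[0,c_1]$ rather than just for small enough $\eps$.
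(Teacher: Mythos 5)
Your proof is correct and follows essentially the same route as the paper's: you move along the convex mixture path $\Pr_\lambda$, use linearity of $\theta$ and the linear scaling of $\dtv$ in $\lambda$ to get $\omega_{TV}(\eps)\ge c_0\eps$, and then obtain the Hellinger bound via $\dH\le\sqrt{2\dtv}$ (the paper phrases this as $\omega_H(\eps)\ge\omega_{TV}(\eps^2/2)$, which is the same idea applied at the modulus level). The only cosmetic differences are your explicit checks that $d_0>0$ and that the mixture parameter stays in $[0,1]$, which the paper leaves implicit.
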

\begin{proof}
Choose $\Pr_0,\Pr_1\in\P$, such that $\tilde{c}_0:= |\theta(\Pr_0) - \theta(\Pr_1)|>0$ and $c_1:=\dtv(\Pr_0,\Pr_1)\in(0,1]$. Now, for $\eps\in[0,c_1]$ set $\lambda = \eps/c_1\in[0,1]$ and $\Pr_\lambda = \lambda\Pr_0 + (1-\lambda)\Pr_1\in\P$. Thus, $\dtv(\Pr_\lambda, \Pr_1) = \lambda \dtv(\Pr_0,\Pr_1) = \eps$ and
$$
\frac{\omega_{TV}(\eps)}{\eps} = \frac{\omega_{TV}(\dtv(\Pr_\lambda, \Pr_1))}{\dtv(\Pr_\lambda, \Pr_1)} 
\ge
\frac{|\theta(\Pr_\lambda) - \theta(\Pr_1)|}{\dtv(\Pr_\lambda, \Pr_1)} =\frac{\lambda \tilde{c}_0}{\lambda c_1} = \frac{\tilde{c}_0}{ c_1} =:c_0.
$$
For the second claim, note that $\eps^2/2<\eps$ and
$$
\frac{\omega_{H}(\eps)}{\eps^2/2} \ge \frac{\omega_{TV}(\eps^2/2)}{\eps^2/2} \ge c_0.
$$
\end{proof}

\begin{lemma}\label{lemma:32x}
$$
\forall x\in\R: \quad\left(\frac{3}{2} \right)^x \;\ge\; x.
$$
\end{lemma}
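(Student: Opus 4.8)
The plan is to split on the sign of $x$. For $x \le 0$ the inequality is immediate, since $(3/2)^x > 0 \ge x$. The content is therefore in the range $x > 0$, where both sides are positive and one may take logarithms: $(3/2)^x \ge x$ is equivalent to $x \log(3/2) \ge \log x$, that is, to
\[
\log\!\left(\tfrac{3}{2}\right) \;\ge\; \frac{\log x}{x}, \qquad \forall\, x > 0 .
\]
So it suffices to bound $g(x) := (\log x)/x$ on $(0,\infty)$ from above by $\log(3/2)$.

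Next I would compute $\sup_{x>0} g(x)$. The function $g$ is differentiable with $g'(x) = (1 - \log x)/x^2$, hence increasing on $(0,e)$ and decreasing on $(e,\infty)$, so it attains its global maximum at $x = e$, with value $g(e) = 1/e$. Thus the whole statement reduces to the single numerical inequality $1/e \le \log(3/2)$.

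Finally I would verify $1/e \le \log(3/2)$ by elementary estimates. Writing $\log(3/2) = \log(1 + \tfrac12)$ and using that the alternating series $\log(1+u) = u - u^2/2 + u^3/3 - \cdots$ at $u = \tfrac12$ has strictly decreasing terms, truncation after four terms gives the lower bound $\log(3/2) > \tfrac12 - \tfrac18 + \tfrac{1}{24} - \tfrac{1}{64} = \tfrac{77}{192} > 0.40$, while $e > 2.7$ gives $1/e < 0.371$; hence $1/e < \log(3/2)$, which completes the proof. (Alternatively one can dispense with logarithms altogether: $f(x) := (3/2)^x - x$ has $f''(x) = (3/2)^x (\log(3/2))^2 > 0$, so $f$ is convex, its unique stationary point solves $(3/2)^x = 1/\log(3/2)$, and there $f = \tfrac{1}{\log(3/2)}\bigl(1 - \log \tfrac{1}{\log(3/2)}\bigr) > 0$ precisely because $\log(3/2) > 1/e$.) The only non-formal ingredient is this last elementary numerical comparison, and that is the single, entirely routine, obstacle.
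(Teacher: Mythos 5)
Your proof is correct. The reduction for $x>0$ to $\log(3/2)\ge \sup_{x>0}(\log x)/x = 1/e$, via the maximum of $g(x)=(\log x)/x$ at $x=e$, is sound, and your alternating-series lower bound $\log(3/2)>\tfrac{77}{192}>0.40$ together with $1/e<0.371$ settles the required numerical comparison rigorously.

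Your main argument takes a genuinely different route from the paper's. The paper works directly with $f(x)=(3/2)^x-x$ (exactly your parenthetical alternative): it notes $f''>0$, locates the unique minimizer $x_0=-\log\log(3/2)/\log(3/2)$, and shows $f(x_0)=\bigl(1+\log\log(3/2)\bigr)/\log(3/2)>0$ by invoking $0<\log(3/2)\le1$ and $\log\log(3/2)>-1$. Observe that this last inequality is precisely your $\log(3/2)>1/e$ in disguise, so both proofs pivot on the same numerical fact; the difference is that the paper simply asserts it, while you actually verify it by the truncated alternating series, and your reformulation via $\sup_{x>0}(\log x)/x=1/e$ makes it transparent why this particular constant is the crux (indeed it shows the lemma holds with $3/2$ replaced by any base $b$ with $\log b\ge 1/e$, i.e.\ $b\ge e^{1/e}$). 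The paper's version avoids the case split at $x=0$ and stays closer to the form in which the lemma is used, but is otherwise no shorter; your version is more self-contained on the numerical point.
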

\begin{proof}
For $x\in\R$, we set $f(x) := (3/2)^x-x$ and show that $f(x)\ge0$. Since $f(x) =\exp(x\log(3/2)) - x$, we have $f'(x) = (3/2)^x\log(3/2) - 1$ and $f''(x) = (3/2)^x(\log(3/2))^2>0$. Thus, $f$ is strictly convex and has its unique minimum at 
$$
x_0 = \frac{-\log\log(3/2)}{\log(3/2)},
$$
because $f'(x_0) = \exp(x_0\log(3/2)) \log(3/2) - 1 = 0$. But $1< 3/2\le e$, such that $0< \log(3/2) \le 1$. Since, furthermore, $\log\log(3/2) >-1$, 
$$
f(x_0) = \frac{1}{\log(3/2)} + \frac{\log\log(3/2)}{\log(3/2)} > 0.
$$
\end{proof}

\begin{lemma}[\citet{Krafft67}]\label{lemma:minimaxTest}
Let $\P_0$ and $\P_1$ be two sets of probability measures on a measurable space $(\Omega, \mathcal A)$ and $\mu$ a $\sigma$-finite measure on $(\Omega, \mathcal A)$ that dominates $\P_0\cup\P_1$. Let $\mathbb T$ be the collection of all randomized test functions, i.e., all measurable functions $\phi:\Omega\to[0,1]$. Then there exists a minimax test, i.e., an element $\phi^*\in\mathbb T$, so that
$$
\sup_{\substack{\Pr_0\in\P_0\\\Pr_1\in\P_1}} \E_{\Pr_0}[\phi^*] + \E_{\Pr_1}[1-\phi^*] 
=
\inf_{\phi\in\mathbb T}\sup_{\substack{\Pr_0\in\P_0\\\Pr_1\in\P_1}} \E_{\Pr_0}[\phi] + \E_{\Pr_1}[1-\phi]. 
$$
\end{lemma}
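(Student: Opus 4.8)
The plan is to realize $\mathbb T$ as a weak$^*$-compact subset of $L_\infty(\Omega,\mathcal A,\mu)$ and to exhibit the worst-case error as a weak$^*$-lower semicontinuous function on it, so that a minimizer exists purely by compactness; no minimax theorem is needed for this existence statement. If $\P_0=\varnothing$ or $\P_1=\varnothing$, the supremum over the (empty) index set equals $-\infty$ by convention and any $\phi^*\in\mathbb T$ works trivially, so I may assume both sets non-empty.

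First I would pass to equivalence classes modulo $\mu$-null sets. Since $\P_0\cup\P_1\ll\mu$, any two $\mu$-a.e.\ equal tests have the same expectation under every $\Pr\in\P_0\cup\P_1$, so the functional
$$
R(\phi):=\sup_{\substack{\Pr_0\in\P_0\\\Pr_1\in\P_1}}\E_{\Pr_0}[\phi]+\E_{\Pr_1}[1-\phi]
$$
descends to a well-defined map on $\{\phi\in L_\infty(\Omega,\mathcal A,\mu):0\le\phi\le1\ \mu\text{-a.e.}\}$, which I identify with $\mathbb T$. This set can be written as $\{\phi\in L_\infty: 0\le\int_\Omega\phi f\,d\mu\le\int_\Omega f\,d\mu\ \text{ for all }f\in L_1,\ f\ge0\}$, and is therefore weak$^*$-closed, exactly as in the proof of Proposition~\ref{prop:Sion}; being contained in the ball $\{\|\phi\|_\infty\le1\}$, which is weak$^*$-compact by Banach--Alaoglu, $\mathbb T$ is itself weak$^*$-compact.

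Next I would verify lower semicontinuity of $R$. For fixed $\Pr_0\in\P_0$, $\Pr_1\in\P_1$ with $\mu$-densities $p_0,p_1$ (which lie in $L_1$ since $\Pr_0,\Pr_1$ are finite and $\ll\mu$), one has $\E_{\Pr_0}[\phi]+\E_{\Pr_1}[1-\phi]=\int_\Omega\phi(p_0-p_1)\,d\mu+1$, and $\phi\mapsto\int_\Omega\phi(p_0-p_1)\,d\mu$ is weak$^*$-continuous by the very definition of the weak$^*$-topology. Hence $R$ is a pointwise supremum of weak$^*$-continuous affine functions, so it is weak$^*$-lower semicontinuous, and it is bounded ($0\le R\le 2$). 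A lower semicontinuous function on a non-empty compact space attains its infimum, so there is $\phi^*\in\mathbb T$ with $R(\phi^*)=\inf_{\phi\in\mathbb T}R(\phi)$; replacing $\phi^*$ by the pointwise truncation $\max(0,\min(1,\phi^*))$ yields a genuine $[0,1]$-valued representative without changing $R$, and this is the required minimax test. The only delicate point is the descent to $L_\infty$ — that is, checking that $R$ is invariant under $\mu$-a.e.\ modification and that $\mathbb T$ is weak$^*$-closed — but this is handled by precisely the topological setup already deployed for Proposition~\ref{prop:Sion}, and it is more bookkeeping than substance.
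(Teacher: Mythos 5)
Your proof is correct and rests on the same mechanism as the paper's own argument: weak$^*$ compactness of the set of tests viewed inside $L_\infty(\mu)=(L_1(\mu))^*$ together with weak$^*$ continuity of each pairwise risk $\phi\mapsto\E_{\Pr_0}[\phi]+\E_{\Pr_1}[1-\phi]$, the supremum of which is then minimized on a compact set. The only difference is packaging: the paper runs a minimizing sequence and extracts a convergent subsequence via the selection theorem of \citet{Nole67}, whereas you obtain compactness directly from Banach--Alaoglu (exactly the topological setup already used for Proposition~\ref{prop:Sion}) and conclude by lower semicontinuity, which makes the argument self-contained but not substantively different.
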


\begin{proof}
Without loss of generality, we may assume that $\P_0$ and $\P_1$ are non-empty, because otherwise any test $\phi\in\mathbb T$ is minimax. For $\phi\in\mathbb T$, $\Pr_0\in\P_0$ and $\Pr_1\in\P_1$, set 
\begin{align*}
\pi(\Pr_0,\Pr_1,\phi) &:=  \E_{\Pr_0}[\phi] + \E_{\Pr_1}[1-\phi], \quad\text{and}\\
R(\phi) &:= \sup_{\substack{\Pr_0\in\P_0\\\Pr_1\in\P_1}} \pi(\Pr_0,\Pr_1,\phi),
\end{align*}
and let $\phi_n\in\mathbb T$ be a sequence of tests, such that $R(\phi_n)\to\inf_{\phi\in\mathbb T} R(\phi) =:\rho$, as $n\to\infty$. From \citet{Nole67}, it follows that there exists a subsequence $(\phi_{n_m})_{m\in\N}$ of $(\phi_n)_{n\in\N}$ and a test $\phi^*\in\mathbb T$, such that
$$
\int_\Omega \phi_{n_m} f \,d\mu \xrightarrow[m\to\infty]{} \int_\Omega \phi^* f\,d\mu,
$$
for every $f\in \mathcal L_1(\Omega, \mathcal A, \mu)$. This entails, in particular, that $\E_{\Pr}[\phi_{n_m}] \to \E_\Pr[\phi^*]$, for every $\Pr\in\P_0\cup\P_1$. Consequently, for every $\Pr_0\in\P_0$ and $\Pr_1\in\P_1$,
$$
\pi(\Pr_0, \Pr_1, \phi^*) = \lim_{m\to\infty} \pi(\Pr_0,\Pr_1,\phi_{n_m}) \le \lim_{m\to\infty}R(\phi_{n_m}) = \lim_{n\to\infty}R(\phi_{n}) =\rho.
$$
But this entails that $R(\phi^*) \le \rho$, whereas $R(\phi^*) \ge \rho$ holds trivially.
\end{proof}

\begin{lemma}\label{lemma:CriticalVals}
For $s,t\in(0,1)$ and $s\ne t$, define
$$
G(s,t) := \frac{\log\left( \frac{1-s}{1-t} \right)}{\log\left( \frac{t}{s} \frac{1-s}{1-t}\right)},
$$
and set $G(s,s)=s$. Then $G:(0,1)^2\to(0,1)$ is strictly increasing in both arguments and $s<G(s,t)<t$, if $s<t$.
\end{lemma}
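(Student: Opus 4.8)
The plan is to identify $G(s,t)$ with the unique zero of an affine function and then read off all three assertions from Gibbs' inequality and the implicit function theorem.

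Fix $s\neq t$ in $(0,1)$. First I would note that $\frac{t(1-s)}{s(1-t)}=1$ holds precisely when $s=t$, so for $s\neq t$ the denominator in the definition of $G$ is the logarithm of a number different from $1$, and $G(s,t)$ is well defined. Introduce the affine map $F(x):=x\log\frac{t}{s}+(1-x)\log\frac{1-t}{1-s}$, whose slope in $x$ equals $\log\frac{t(1-s)}{s(1-t)}$ and is therefore nonzero. Solving $F(x)=0$ for $x$ returns exactly the expression defining $G(s,t)$, so $G(s,t)$ is the unique zero of the strictly monotone affine function $F$. I would also record that $G$ is symmetric, $G(s,t)=G(t,s)$, since replacing $(s,t)$ by $(t,s)$ negates both the numerator and the denominator of the defining fraction; this will let me deduce monotonicity in the second argument and then transfer it to the first.

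For the sandwich $s<G(s,t)<t$ when $s<t$ (and hence $G(s,t)\in(0,1)$), I would evaluate $F$ at the endpoints. A short computation gives $F(s)=-D(s\Vert t)$ and $F(t)=D(t\Vert s)$, where $D(p\Vert q):=p\log\frac{p}{q}+(1-p)\log\frac{1-p}{1-q}$ is the Bernoulli relative entropy, which is strictly positive whenever $p\neq q$ by Gibbs' inequality (equivalently, by strict convexity of $u\mapsto u\log u$). When $s<t$ the slope of $F$ is positive, so the unique zero of $F$ lies strictly between the point $x=s$ where $F<0$ and the point $x=t$ where $F>0$; this yields $s<G(s,t)<t$. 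The same argument in the case $t<s$ gives $t<G(s,t)<s$. In either case $G(s,t)$ lies strictly between $s$ and $t$, hence in $(0,1)$.

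For strict monotonicity, fix $s$ and set $g(t):=G(s,t)$ on $(0,1)\setminus\{s\}$, where $g$ is $C^1$. Differentiating the identity $F(g(t),t)=0$ implicitly, with $F(x,t)=x\log\frac{t}{s}+(1-x)\log\frac{1-t}{1-s}$, $\partial_x F=\log\frac{t(1-s)}{s(1-t)}$ and $\partial_t F=\frac{x-t}{t(1-t)}$, gives
\[
g'(t)=\frac{t-g(t)}{t(1-t)\,\log\frac{t(1-s)}{s(1-t)}}.
\]
If $t>s$, then $t-g(t)>0$ and $\log\frac{t(1-s)}{s(1-t)}>0$; if $t<s$, then both of these quantities are negative; so $g'(t)>0$ in both cases. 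Hence $g$ is strictly increasing on $(0,s)$ and on $(s,1)$, and since $g(t)<s=g(s)<g(t')$ whenever $t<s<t'$ by the sandwich bound, $g$ is strictly increasing on all of $(0,1)$ (check the five cases according to where $t_1<t_2$ sit relative to $s$). Invoking $G(s,t)=G(t,s)$ then yields strict monotonicity in the first argument as well. The only delicate point in the whole argument is the sign bookkeeping in the two regimes $t>s$ and $t<s$ together with the stitching of the two monotone pieces across $t=s$; the single external input is the strict positivity of the Bernoulli relative entropy, i.e. the classical Gibbs inequality.
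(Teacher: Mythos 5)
Your proof is correct, and it follows the same broad skeleton as the paper's (symmetry $G(s,t)=G(t,s)$, a derivative computation that reduces strict monotonicity to the sandwich $s<G(s,t)<t$, and an entropy-type inequality for the sandwich), but the packaging is genuinely different and worth comparing. The paper differentiates the quotient $\log(t/s)/\log\bigl(\tfrac{1-s}{1-t}\bigr)$ directly in $s$ and observes that the required sign condition is \emph{equivalent} to $s<G(s,t)$ (resp.\ $G(s,t)<s$ when $s>t$); it then proves only the upper bound $G(s,t)<t$ by introducing $H(t)=(1-t)\log\tfrac{1-s}{1-t}+t\log\tfrac{s}{t}$, showing $H(s^+)=0$ and $H'<0$ — which is in effect a from-scratch proof that the Bernoulli Kullback--Leibler divergence $D(t\Vert s)$ is strictly positive — and declares the lower bound analogous. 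You instead realize $G(s,t)$ as the unique zero of the affine log-likelihood-ratio function $F(x)=x\log\tfrac{t}{s}+(1-x)\log\tfrac{1-t}{1-s}$, so that $F(s)=-D(s\Vert t)<0<D(t\Vert s)=F(t)$ delivers both sandwich bounds simultaneously from Gibbs' inequality, and you get monotonicity by implicit differentiation of $F(g(t),t)=0$, with the sign of $g'$ read off from the already-proved sandwich and an explicit stitching of the two monotone pieces across $t=s$ (a point the paper leaves implicit). What each approach buys: the paper's argument is entirely self-contained, re-deriving the needed KL positivity by elementary calculus; yours is shorter and more transparent — it exposes $G(\bar a,\bar b)$ as the balance point of the binomial likelihood ratio, treats the two bounds symmetrically rather than ``analogous and omitted,'' and is more careful about monotonicity across the diagonal — at the modest cost of invoking the classical Gibbs inequality (equivalently, strict convexity of $u\mapsto u\log u$) as an external fact.
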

\begin{proof}
For any $s,t\in(0,1)$, we have $G(s,t)=G(t,s)$. In particular, we see that if $G$ is strictly increasing in its first argument, then it must also be strictly increasing in its second argument. 
Now, for $0<s<t<1$, clearly $(1-s)/(1-t)>1$ and
$$
G(s,t) = 
\frac{\log\left( \frac{1-s}{1-t} \right)}{\log\left( \frac{t}{s}\right) + \log\left(\frac{1-s}{1-t}\right)}
\in(0,1).
$$
For $s\ne t$, write
$$
G(s,t) = \left(1 + \frac{\log\left( \frac{t}{s} \right)}{\log\left( \frac{1-s}{1-t}\right)}\right)^{-1},
$$
and note that $G$ is strictly increasing in its first argument if, and only if,
$$
\frac{\partial}{\partial s} \frac{\log\left( \frac{t}{s} \right)}{\log\left( \frac{1-s}{1-t}\right)}
=
\frac{-1}{s\log\left( \frac{1-s}{1-t}\right)} + \frac{\log \left(\frac{t}{s}\right)}{\left( \log \frac{1-s}{1-t}\right)^2}\frac{1}{1-s} <0.
$$
But this is equivalent to 
$$
s\log \left(\frac{t}{s}\right) < (1-s)\log\left( \frac{1-s}{1-t}\right),
$$
and, in turn, to
\begin{equation}\label{eq:s<G}
s < \frac{\log\left( \frac{1-s}{1-t}\right)}{\log\left( \frac{t}{s}\frac{1-s}{1-t}\right)} = G(s,t),
\end{equation}
provided that $s<t$, and the required monotonicity is equivalent to $G(t,s)=G(s,t)<s$, if $s>t$. Therefore, it remains to show that $s<G(s,t)<t$, for $s<t$.
To that end, write
$$
G(s,t) = 
1 + \frac{\log\left( \frac{s}{t} \right)}{\log\left( \frac{1-s}{1-t}\right) - \log\left(\frac{s}{t}\right)},
$$
and note that $G(s,t)<t$ if, and only if, 
$$
H(t):=\log\left( \frac{s}{t} \right) + (1-t)\left[\log\left( \frac{1-s}{1-t}\right) - \log\left(\frac{s}{t}\right) \right]<0.
$$
Now re-write
$$
H(t) = (1-t)\log\left( \frac{1-s}{1-t}\right) + t\log\left( \frac{s}{t} \right),
$$
and note that $H(t)\to 0$, if $t\downarrow s$. Therefore, $H(t)$ is seen to be strictly negative if $H'(t)<0$. But the derivative evaluates to
\begin{align*}
H'(t) = -\log\left( \frac{1-s}{1-t}\right) + (1-t) \frac{1}{1-t} + \log\left( \frac{s}{t} \right) - t \frac{1}{t}
= \log\left( \frac{s}{t}\frac{1-t}{1-s}\right) <0.
\end{align*}
The argument for $s<G(s,t)$ is analogous and omitted.
\end{proof}

Recall that $\omega_{H}^{(Q_1)}(\eps) = \sup \left\{ |\theta(\Pr_0)-\theta(\Pr_1)| : \dH(Q_1 \Pr_0,Q_1 \Pr_1) \le \eps, \Pr_0,\Pr_1\in\P\right\}$ and note that the following lemma does not require boundedness and linearity of $\theta:\P\to\R$.

\begin{lemma}\label{lemma:Hom2}
Let $k\in\N$, $\eps\in[0,\infty)$ and $Q_1:\mathcal Z\times\X\to[0,1]$ a channel with $\omega_{H}^{(Q_1)}(k\eps) <\infty$. If $Q_1\P$ is convex, then
$$
\omega_{H}^{(Q_1)}(k\eps) \le k^2\omega_{H}^{(Q_1)}(\eps).
$$
\end{lemma}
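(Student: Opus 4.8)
The plan is to exploit the convexity of $Q_1\P$ together with the well-known fact that the Hellinger affinity of product-like convex mixtures can only increase, which is precisely the content of the chain inequalities \eqref{eq:HellingerIds}, \eqref{eq:rhoPi} and the behavior of $\rho$ under taking convex combinations. Concretely, fix $\Pr_0,\Pr_1\in\P$ with $\dH(Q_1\Pr_0,Q_1\Pr_1)\le k\eps$; I want to produce intermediate distributions in $\P$ witnessing a jump of size at least $|\theta(\Pr_0)-\theta(\Pr_1)|/k^2$ across a $\dH$-step of size $\eps$, or rather, to chain $k$ such steps. The natural candidates are the convex combinations $\mathbb Q_j := (1-j/k)\,Q_1\Pr_0 + (j/k)\,Q_1\Pr_1$ for $j=0,\dots,k$, which all lie in $Q_1\P$ by convexity, so each $\mathbb Q_j$ equals $Q_1\Pr^{(j)}$ for some $\Pr^{(j)}\in\P$ (with $\Pr^{(0)}=\Pr_0$, $\Pr^{(k)}=\Pr_1$).

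The key computation is then to bound $\dH(\mathbb Q_{j-1},\mathbb Q_j)$ in terms of $\dH(\mathbb Q_0,\mathbb Q_k)$. Writing densities $q_0,q_1$ of $Q_1\Pr_0,Q_1\Pr_1$ with respect to a dominating measure, one has $\mathbb Q_{j-1}$ and $\mathbb Q_j$ with densities $q_0 + \frac{j-1}{k}(q_1-q_0)$ and $q_0 + \frac{j}{k}(q_1-q_0)$, differing by $\frac1k(q_1-q_0)$. A direct estimate of $\rho(\mathbb Q_{j-1},\mathbb Q_j) = \int\sqrt{(q_0+\frac{j-1}{k}(q_1-q_0))(q_0+\frac{j}{k}(q_1-q_0))}$ should give $1-\rho(\mathbb Q_{j-1},\mathbb Q_j)\le \frac{1}{k^2}\bigl(1-\rho(Q_1\Pr_0,Q_1\Pr_1)\bigr)$, i.e. $\dH^2(\mathbb Q_{j-1},\mathbb Q_j)\le \frac{1}{k^2}\dH^2(Q_1\Pr_0,Q_1\Pr_1)\le \frac{1}{k^2}(k\eps)^2=\eps^2$. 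The cleanest route to this is probably the identity $\dH^2(P,P') = \int(\sqrt{p}-\sqrt{p'})^2$ combined with the elementary inequality $(\sqrt{a+t\delta}-\sqrt{a+s\delta})^2 \le (t-s)^2(\sqrt{a+\delta}-\sqrt{a})^2$ valid for $0\le s\le t\le 1$, $a\ge0$, $a+\delta\ge0$ (applied pointwise with $a=q_0\wedge q_1$ after matching up which of $q_0,q_1$ is larger), summed/integrated; alternatively one can cite convexity of $(x,y)\mapsto(\sqrt x-\sqrt y)^2$ as in Lemma~\ref{lemma:Hdistance}. Either way, this pointwise-convexity estimate is the technical heart.

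Having established $\dH(Q_1\Pr^{(j-1)},Q_1\Pr^{(j)})\le\eps$ for each $j$, the definition of $\omega_H^{(Q_1)}$ gives $|\theta(\Pr^{(j-1)})-\theta(\Pr^{(j)})|\le\omega_H^{(Q_1)}(\eps)$, and summing the telescoping bound via the triangle inequality yields $|\theta(\Pr_0)-\theta(\Pr_1)| = |\theta(\Pr^{(0)})-\theta(\Pr^{(k)})| \le \sum_{j=1}^k|\theta(\Pr^{(j-1)})-\theta(\Pr^{(j)})| \le k\,\omega_H^{(Q_1)}(\eps)$. Taking the supremum over admissible $\Pr_0,\Pr_1$ gives $\omega_H^{(Q_1)}(k\eps)\le k\,\omega_H^{(Q_1)}(\eps)$, which is even stronger than the claimed $k^2$ bound, so the statement follows a fortiori. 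I should handle the degenerate case $\omega_H^{(Q_1)}(\eps)=\infty$ (trivial) and the case where the witnessing set for $\omega_H^{(Q_1)}(k\eps)$ is empty (also trivial, $\sup\varnothing=-\infty$) up front. The main obstacle I anticipate is being careful with the pointwise convexity estimate when $q_0$ and $q_1$ swap order across the space — this is handled by splitting the domain into $\{q_0\le q_1\}$ and $\{q_0>q_1\}$ and applying the inequality separately on each piece, with the roles of "base point" and "increment" chosen so that the interpolation parameters stay in $[0,1]$; and, as a secondary point, one must remember that finiteness of $\omega_H^{(Q_1)}(k\eps)$ is exactly what makes $|\theta(\Pr_0)-\theta(\Pr_1)|$ finite so that the triangle inequality manipulation is legitimate.
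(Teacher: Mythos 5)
Your overall architecture (chain of convex mixtures inside $Q_1\P$, realize each mixture as $Q_1\Pr^{(j)}$ for some $\Pr^{(j)}\in\P$, then telescope via the definition of $\omega_H^{(Q_1)}$) is exactly the paper's, but the technical heart of your argument is false, and the failure is precisely where the exponent $2$ in the lemma comes from. The pointwise inequality $(\sqrt{a+t\delta}-\sqrt{a+s\delta})^2\le(t-s)^2(\sqrt{a+\delta}-\sqrt a)^2$ fails already at $a=0$, $s=0$: the left side equals $t\delta$ while the right side equals $t^2\delta<t\delta$ for $t\in(0,1)$. Accordingly, the integrated claim $\dH^2(\mathbb Q_{j-1},\mathbb Q_j)\le k^{-2}\dH^2(Q_1\Pr_0,Q_1\Pr_1)$ is false: take $Q_1\Pr_0$ and $Q_1\Pr_1$ mutually singular (e.g.\ $\P$ the laws on $\{0,1\}$, $Q_1$ the identity kernel, endpoints the two point masses) and $k=2$; then $\dH^2(Q_1\Pr_0,Q_1\Pr_1)=2$, while the step from $Q_1\Pr_0$ to the midpoint mixture has $\dH^2=2(1-1/\sqrt2)\approx0.586>0.5$, i.e.\ Hellinger distance $\approx0.765>\sqrt2/2=\eps$, so your chain of $k$ equal steps does not consist of $\eps$-steps. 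The structural reason is that along a mixture segment the \emph{squared} Hellinger distance scales linearly, not quadratically, in the mixing parameter: what concavity of the square root (equivalently, the convexity of $(x,y)\mapsto(\sqrt x-\sqrt y)^2$ you propose to cite from Lemma~\ref{lemma:Hdistance}) actually gives is $\dH^2\bigl(Q_1\Pr_0,(1-\lambda)Q_1\Pr_0+\lambda Q_1\Pr_1\bigr)\le\lambda\,\dH^2(Q_1\Pr_0,Q_1\Pr_1)$, the paper's display \eqref{eq:dHhom}. Hence a step of parameter increment $1/k$ is only guaranteed to lie within $D/\sqrt k$, not $D/k$, in Hellinger distance, and your stronger conclusion $\omega_H^{(Q_1)}(k\eps)\le k\,\omega_H^{(Q_1)}(\eps)$ is not established; as written the argument does not even deliver the $k^2$ bound.

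The repair is the paper's proof: place $k^2$ (not $k$) grid points $\lambda_j=j/k^2$ on the segment, use convexity of $Q_1\P$ to realize each mixture as $Q_1\Pr_{\lambda_j}$ with $\Pr_{\lambda_j}\in\P$ (as you do), and bound a consecutive step by writing $Q_1\Pr_{\lambda_{j+1}}=(1-\eta_j)Q_1\Pr_{\lambda_j}+\eta_j Q_1\Pr_1$ with $\eta_j=(\lambda_{j+1}-\lambda_j)/(1-\lambda_j)$; two applications of the linear bound give $\dH(Q_1\Pr_{\lambda_j},Q_1\Pr_{\lambda_{j+1}})\le\sqrt{\eta_j}\sqrt{1-\lambda_j}\,D=D/k\le\eps$, and telescoping over the $k^2$ steps yields exactly the factor $k^2$. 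Your peripheral points (the degenerate cases $\omega_H^{(Q_1)}(\eps)=\infty$ and empty suprema, the splitting into $\{q_0\le q_1\}$ and $\{q_0>q_1\}$) are fine but immaterial, since the defect lies in the single-step estimate itself.
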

\begin{proof}
The statement is trivial in case $k=1$. Thus, let $k\ge2$ and fix $\delta\in(0,\infty)$. Since $\omega_{H}^{(Q_1)}(k\eps) = \sup\{|\theta(\Pr_0)-\theta(\Pr_1)|:\dH(Q_1\Pr_0,Q_1\Pr_1)\le k\eps, \Pr_0,\Pr_1\in\P\}$ is finite, there exist $\Pr_0,\Pr_1\in\P$, such that $D:=\dH(Q_1\Pr_0,Q_1\Pr_1)\le k\eps$ and $|\theta(\Pr_0)-\theta(\Pr_1)| + \delta \ge \omega_{H}^{(Q_1)}(k\eps)$. For $\lambda\in[0,1]$ let $\Pr_\lambda = (1-\lambda)\Pr_0 + \lambda \Pr_1$, such that by convexity of $Q_1\P$, we have $Q_1\Pr_\lambda = (1-\lambda)Q_1\Pr_0 + \lambda Q_1\Pr_1 \in Q_1\P$. Note that for $\mu:=Q_1\Pr_0+Q_1\Pr_1$, $q_0 = \frac{dQ_1\Pr_0}{d\mu}$, $q_1=\frac{dQ_1\Pr_1}{d\mu}$, $q_\lambda = \frac{dQ_1\Pr_\lambda}{d\mu} = (1-\lambda)q_0+\lambda q_1$ and because of concavity of the square root, we have
\begin{align}
\dH^2(Q_1\Pr_0,Q_1\Pr_\lambda) &= 2\left(1-\int_{\mathcal Z} \sqrt{q_0[(1-\lambda)q_0+\lambda q_1]}\,d\mu\right)\notag\\
&\le
2\left(1-\int_{\mathcal Z} \sqrt{q_0}[(1-\lambda)\sqrt{q_0}+\lambda \sqrt{q_1}]\,d\mu\right) \notag\\
&=
2\left(1-(1-\lambda) - \lambda\int_{\mathcal Z} \sqrt{q_0q_1}\,d\mu\right) \notag\\
&=
\lambda 2\left(1-\rho_H(Q_1\Pr_0,Q_1\Pr_1)\right) 
=
\lambda \dH^2(Q_1\Pr_0,Q_1\Pr_1).\label{eq:dHhom}
\end{align}
For $j\in\{0,\dots, k^2\}$, choose $\lambda_j := \frac{j}{k^2}$ and note that therefore $\Pr_{\lambda_0}=\Pr_0$ und $\Pr_{\lambda_{k^2}}=\Pr_1$. In view of display~\eqref{eq:dHhom}, we see that $\dH(Q_1\Pr_0,Q_1\Pr_{\lambda_1}) \le \sqrt{\lambda_1} \dH(Q_1\Pr_0,Q_1\Pr_1) \le \eps$. Interchanging the roles of $\Pr_0$ and $\Pr_1$, the same inequality \eqref{eq:dHhom} yields $\dH(Q_1\Pr_{\lambda_j},Q_1\Pr_1) \le \sqrt{1-\lambda_j} D$, for every $j\in\{0,\dots, k^2\}$. For $j\in\{1,\dots, k^2-1\}$, set $\eta_j := \frac{\lambda_{j+1}-\lambda_j}{1-\lambda_j}$, and note that $1-\eta_j = \frac{1-\lambda_{j+1}}{1-\lambda_j}$ and
\begin{align*}
(1-\eta_j) \Pr_{\lambda_j} + \eta_j \Pr_1 &= (1-\eta_j)(1-\lambda_j)\Pr_0 + (1-\eta_j)\lambda_j \Pr_1 + \eta_j \Pr_1 \\
&=
(1-\lambda_{j+1})\Pr_0 + (1-\lambda_j)\eta_j \Pr_1 + \lambda_j \Pr_1\\
&=
(1-\lambda_{j+1})\Pr_0 + \lambda_{j+1} \Pr_1 = \Pr_{\lambda_{j+1}}.
\end{align*}
Using \eqref{eq:dHhom} again, now with $\Pr_{\lambda_j}$ replacing $\Pr_0$ and $\Pr_{\lambda_{j+1}}$ replacing $\Pr_\lambda$, yields 
$$
\dH(Q_1\Pr_{\lambda_j}, Q_1\Pr_{\lambda_{j+1}}) \le \sqrt{\eta_j} \dH(Q_1\Pr_{\lambda_j}, Q_1\Pr_1)
\le
\sqrt{\eta_j}\sqrt{1-\lambda_j} D =\sqrt{\frac{1}{k^2}}D \le \eps.
$$
Hence, for each $j\in\{0,\dots, k^2-1\}$ the probability measures $Q_1\Pr_{\lambda_j}$ und $Q_1\Pr_{\lambda_{j+1}}$ are at a Hellinger distance of at most $\eps$. Therefore, 
\begin{align*}
|\theta(\Pr_0) - \theta(\Pr_1)| = \left|\sum_{j=0}^{k^2-1}\theta(\Pr_{\lambda_j})-\theta(\Pr_{\lambda_{j+1}})\right| \le \sum_{j=0}^{k^2-1}|\theta(\Pr_{\lambda_j})-\theta(\Pr_{\lambda_{j+1}})|
\le k^2\omega_{H}^{(Q)}(\eps).
\end{align*}
Consequently, we obtain $\omega_{H}^{(Q)}(k\eps)\le  k^2\omega_{H}^{(Q)}(\eps) + \delta$. The proof is finished because $\delta>0$ was arbitrary.
\end{proof}

\begin{lemma}\label{lemma:CondC}
Suppose that Condition~\ref{cond:ellh} is satisfied (cf. Section~\ref{sec:Constructive}). Then, for $\bar{r} = \sum_{j=1}^k \frac{s_j}{t_j}$ and $0\le\eps\le h_0^{\left[1+\bar{r}\right](\max_j t_j)}$, we have
$$
\omega_{TV}(\eps) \quad\le\quad 4D_0\eps^{\frac{1}{1+\bar{r}}}.
$$
If, in addition, $B_{\P,\theta}(\ell_h) = 0$ for all $h\in(0,{h}_0]^k$, then, for all $\eps>0$, 
\begin{align*}
\omega_{TV}(\eps) \quad&\le\quad 2D_0 \left(h_0^{-\sum_{j=1}^k s_j}\right) \eps.
\end{align*}
\end{lemma}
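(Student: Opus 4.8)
The plan is to run the bias--variance decomposition that underlies this whole section ``in reverse'': instead of using $\ell_h$ to build an estimator, I would use the proxy functional $\Pr\mapsto\E_\Pr[\ell_h]$ together with Condition~\ref{cond:ellh} to directly upper bound the modulus. Fix $\Pr_0,\Pr_1\in\P$ with $\dtv(\Pr_0,\Pr_1)\le\eps$ and any $h\in(0,h_0]^k$. By the triangle inequality,
$$
|\theta(\Pr_0)-\theta(\Pr_1)| \;\le\; |\theta(\Pr_0)-\E_{\Pr_0}[\ell_h]| + |\E_{\Pr_0}[\ell_h]-\E_{\Pr_1}[\ell_h]| + |\E_{\Pr_1}[\ell_h]-\theta(\Pr_1)|.
$$
The first and third terms are each at most $B_{\P,\theta}(\ell_h)\le D_0\,k^{-1}\sum_{j=1}^k h_j^{t_j}$ by \eqref{eq:BiasCondition}. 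For the middle term I would use that $\Pr_0-\Pr_1$ is a finite signed measure of total mass zero whose total variation norm equals $2\dtv(\Pr_0,\Pr_1)$, so that $|\E_{\Pr_0}[\ell_h]-\E_{\Pr_1}[\ell_h]| = \big|\int \ell_h\,d(\Pr_0-\Pr_1)\big|\le 2\|\ell_h\|_\infty\,\dtv(\Pr_0,\Pr_1)\le 2D_0\,\eps\prod_{j=1}^k h_j^{-s_j}$, again by \eqref{eq:BiasCondition}. Taking the supremum over admissible pairs yields, for every $h\in(0,h_0]^k$,
$$
\omega_{TV}(\eps)\;\le\; 2D_0\Big(\tfrac{1}{k}\sum_{j=1}^k h_j^{t_j}\Big) + 2D_0\,\eps\prod_{j=1}^k h_j^{-s_j}.
$$

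The second step is to choose $h$ so as to balance the two terms. Guided by the tuning in Theorem~\ref{THM:CONSTRUCTIVE} (with $\eps$ playing the role of $\tfrac{1}{\sqrt n}\tfrac{e^\alpha+1}{e^\alpha-1}$), I would set $h_j:=\eps^{1/(t_j(1+\bar r))}$. Then $h_j^{t_j}=\eps^{1/(1+\bar r)}$ for each $j$, so the bias term is $2D_0\eps^{1/(1+\bar r)}$; and $\prod_j h_j^{-s_j}=\eps^{-\bar r/(1+\bar r)}$, so the second term is $2D_0\,\eps^{1-\bar r/(1+\bar r)}=2D_0\eps^{1/(1+\bar r)}$. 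Adding gives $\omega_{TV}(\eps)\le 4D_0\eps^{1/(1+\bar r)}$, as claimed. The only admissibility point to check is $h_j\le h_0$ for all $j$, i.e.\ $\eps\le h_0^{t_j(1+\bar r)}$ for all $j$; since $h_0\in(0,1]$, the binding constraint comes from the largest $t_j$, which is exactly the hypothesis $0<\eps\le h_0^{(1+\bar r)(\max_j t_j)}$. The boundary case $\eps=0$ is trivial because then $\Pr_0=\Pr_1$.

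For the addendum, if $B_{\P,\theta}(\ell_h)=0$ for all $h\in(0,h_0]^k$ the bias term drops and only $2D_0\,\eps\prod_{j=1}^k h_j^{-s_j}$ survives; since each $s_j\ge 0$, this is minimized over $(0,h_0]^k$ by taking $h_j=h_0$, which is always admissible, giving $\omega_{TV}(\eps)\le 2D_0\,h_0^{-\sum_j s_j}\,\eps$ for all $\eps>0$ with no range restriction.

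I do not anticipate a genuine obstacle: the argument is essentially a one-line application of Condition~\ref{cond:ellh} followed by an elementary optimization over $h$. The only places needing mild care are (i) getting the constant right in the total-variation duality step --- here one should exploit that $\Pr_0-\Pr_1$ integrates to zero, or simply invoke the crude bound $|\int \ell_h\,d(\Pr_0-\Pr_1)|\le \|\ell_h\|_\infty\,|\Pr_0-\Pr_1|(\Omega)=2\|\ell_h\|_\infty\dtv(\Pr_0,\Pr_1)$ --- and (ii) verifying the admissibility range for $h$, which is precisely where the hypothesis $\eps\le h_0^{(1+\bar r)\max_j t_j}$ is used.
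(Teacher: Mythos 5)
Your argument is correct and is essentially the paper's own proof: the same triangle-inequality decomposition through $\E_\Pr[\ell_h]$, the same bounds $2\dtv(\Pr_0,\Pr_1)\,\|\ell_h\|_\infty$ and $2B_{\P,\theta}(\ell_h)$ from Condition~\ref{cond:ellh}, and the identical choice $h_j^*=\eps^{1/(t_j(1+\bar r))}$ (respectively $h_j=h_0$ in the zero-bias case), with the range condition on $\eps$ used exactly to ensure $h_j^*\le h_0$. The only difference is presentational (you state the bound for general $h$ before optimizing, and note the trivial $\eps=0$ case explicitly), so nothing further is needed.
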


\begin{proof}
Set $h^* := h^*(\eps) := (h_1^*,\dots, h_k^*)^T\in\R^k$, where $h_j^* :=  \eps^{\frac{1}{t_j(1+\bar{r})}}$ and note that by assumption, $h_j^*\le h_0\le 1$. Now, for $\dtv(\Pr_0,\Pr_1)\le \eps$,
\begin{align*}
|\theta(\Pr_0)-\theta(\Pr_1)| &\le 
|\E_{\Pr_0}[\ell_{h^*}] - \E_{\Pr_1}[\ell_{h^*}]| + |\theta(\Pr_0) - \E_{\Pr_0}[\ell_{h^*}]| + |\theta(\Pr_1) - \E_{\Pr_1}[\ell_{h^*}]| \\
& 
\le \left|\int_{\mathcal X} \ell_{h^*} \,d(\Pr_0-\Pr_1)\right| +2B_{\P,\theta}(\ell_{h^*}) \\
&\le 
 2\dtv(\Pr_0,\Pr_1) D_0 \prod_{j=1}^k (h_j^*)^{-s_j}  + 2D_0 \frac{1}{k} \sum_{j=1}^k (h_j^*)^{t_j} \\
&\le
 4D_0\eps^{\frac{1}{1+\bar{r}}}.
\end{align*}
The statement about the zero bias case is analogous.
\end{proof}


\section{Improved lower bounds for large $\alpha$}
\label{sec:ApplargeAlpha}

In this section we discuss a way to somewhat improve the lower bound of Section~\ref{sec:ApplowerB} in the case where the privacy level $\alpha>0$ is large and not considered as a fixed constant as sample size $n$ increases. For the sake of brevity, we only consider non-interactive channel distributions $Q$ with identical marginals $Q_1$. Recall the definition of the privatized Hellinger-modulus $\omega_H^{(Q_1)}$ at the beginning of Section~\ref{sec:Attainability}, i.e., 
$$
\omega_H^{(Q_1)}(\eps) = \sup \left\{ |\theta(\Pr_0)-\theta(\Pr_1)| : \dH(Q_1 \Pr_0,Q_1 \Pr_1) \le \eps, \Pr_0,\Pr_1\in\P\right\}.
$$
Using the notation of Section~\ref{sec:ApplowerB}, it is easy to see (cf. the proof of Lemma~\ref{lemma:g1H}) that
$$
\omega_H^{(Q_1)}(g_H(\eta)^-) \;\le\; \Delta_2^{(n)}(Q,\eta),
$$
where 
$$
g_H(\eta) := \inf\{ d_H(\Pr_0, \Pr_1) : \pi(\Pr_0^{\otimes n}, \Pr_1^{\otimes n})\le \eta, \Pr_j\in\P, j=0,1\}.
$$
Now, one can proceed as in the proof of Lemma~3.2 of \citet{Donoho91} and apply the result of Lemma~3.3 in the same reference to show that for all $\eta_0\in(0,1)$ and every $\eps_0>0$, there exists a finite positive constant $c = c(\eta_0,\eps_0)>0$, such that 
$$
g_H(\eta) \;\ge \; c\sqrt{\frac{|\log\eta|}{n}},
$$
for all $\eta\in(0,\eta_0)$ and for all $n>|\log\eta|/\eps_0$. Thus, if $Q_1$ is $\alpha$-private, we obtain from Theorem~\ref{thm:lowerDeltaA} that, for all such $\eta$ and $n$,
$$
\mathcal M_n(Q,\P,\theta) \;\ge \; \frac{\eta}{2} l\left( \frac{1}{4} \omega_H^{(Q_1)}\left( \frac{c}{2}\sqrt{\frac{|\log \eta|}{n}}\right)\right).
$$
It therefore remains to lower bound the privatized Hellinger-modulus.
\begin{lemma}\label{lemma:largeAlpha}
For an arbitrary one-dimensional $\alpha$-differentially private channel $Q_1:\mathcal B(\mathcal Z)\times\mathcal X\to[0,1]$ and every $\varepsilon\ge0$, we have
$$
\omega_{TV}\left( \frac{\varepsilon}{e^{\alpha/2}} \right)\; \le\; \omega_H^{(Q_1)}(\varepsilon).
$$
\end{lemma}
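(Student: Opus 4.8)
The plan is to reduce the assertion to the following pointwise contraction estimate: for any two probability measures $\Pr_0,\Pr_1$ on $(\mathcal X,\mathcal F)$,
\[
\dH(Q_1\Pr_0,Q_1\Pr_1)\;\le\; e^{\alpha/2}\,\dtv(\Pr_0,\Pr_1).
\]
Granting this, any pair $\Pr_0,\Pr_1\in\P$ with $\dtv(\Pr_0,\Pr_1)\le \varepsilon\,e^{-\alpha/2}$ satisfies $\dH(Q_1\Pr_0,Q_1\Pr_1)\le\varepsilon$, whence $|\theta(\Pr_0)-\theta(\Pr_1)|\le\omega_H^{(Q_1)}(\varepsilon)$ by the very definition of the privatised Hellinger modulus; taking the supremum over all such pairs gives $\omega_{TV}(\varepsilon\,e^{-\alpha/2})\le\omega_H^{(Q_1)}(\varepsilon)$, the degenerate situations ($\P=\varnothing$, or $\varepsilon=0$, or $\Pr_0=\Pr_1$) being immediate.

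To establish the displayed inequality I would fix $\Pr_0,\Pr_1$, write $\delta:=\dtv(\Pr_0,\Pr_1)$ (the case $\delta=0$ being trivial) and $\mu:=\Pr_0+\Pr_1$ with $\mu$-densities $m_0,m_1$. Because $Q_1$ is $\alpha$-differentially private, $Q_1\Pr\ll Q_1(\cdot\,|\,x_0)=:\nu$ for every probability measure $\Pr$ on $\mathcal X$ and every fixed $x_0\in\mathcal X$, so one may work with $\nu$-densities throughout; set $q_j=\tfrac{d Q_1\Pr_j}{d\nu}$ and fix a jointly measurable version $q_1(z\,|\,x)=\tfrac{dQ_1(\cdot|x)}{d\nu}(z)$ (available since $\mathcal Z=\R^q$ is standard Borel). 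The privacy constraint reads $q_1(z\,|\,x)\le e^\alpha q_1(z\,|\,x')$ for $\nu$-a.e.\ $z$ and $\mu$-a.e.\ $x,x'$. Multiplying this inequality by suitable nonnegative densities and integrating in $x,x'$ yields the two facts that drive the proof: (i) $q_0(z)/q_1(z)\in[e^{-\alpha},e^{\alpha}]$ for $\nu$-a.e.\ $z$; and (ii), after the Hahn-type decomposition $\Pr_0=(1-\delta)H+\delta P^{+}$, $\Pr_1=(1-\delta)H+\delta P^{-}$, where $P^{\pm}$ are the normalised positive and negative parts of $\Pr_0-\Pr_1$ and $H$ has $\mu$-density $(m_0\wedge m_1)/(1-\delta)$, the output densities $g,r^{+},r^{-}$ of $Q_1H,Q_1P^{+},Q_1P^{-}$ obey $r^{\pm}\le e^{\alpha}g$ $\nu$-a.e., while $q_0=(1-\delta)g+\delta r^{+}$, $q_1=(1-\delta)g+\delta r^{-}$, and $q_0-q_1=\delta(r^{+}-r^{-})$.

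I would then split on the size of $\delta$. When $\delta\le\tfrac12$: on $\{g>0\}$ one has $(\sqrt{q_0}+\sqrt{q_1})^2\ge4\min(q_0,q_1)\ge4(1-\delta)g$, while by (ii) $q_0=q_1=0$ $\nu$-a.e.\ on $\{g=0\}$, hence
\[
\dH^2(Q_1\Pr_0,Q_1\Pr_1)=\int\frac{(q_0-q_1)^2}{(\sqrt{q_0}+\sqrt{q_1})^2}\,d\nu\;\le\;\frac{\delta^2}{4(1-\delta)}\int\frac{(r^{+}-r^{-})^2}{g}\,d\nu,
\]
and, using $|r^{+}-r^{-}|/g\le e^{\alpha}$ from (ii) together with $\int|r^{+}-r^{-}|\,d\nu=2\,\dtv(Q_1P^{+},Q_1P^{-})\le2$, the right-hand side is at most $\tfrac{e^{\alpha}\delta^2}{2(1-\delta)}\le e^{\alpha}\delta^2$. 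When $\delta>\tfrac12$: fact (i) gives $\tfrac{|\sqrt{q_0}-\sqrt{q_1}|}{\sqrt{q_0}+\sqrt{q_1}}=\bigl|\tanh\bigl(\tfrac14\log\tfrac{q_0}{q_1}\bigr)\bigr|\le\tfrac{\alpha}{4}$ $\nu$-a.e., so $(\sqrt{q_0}-\sqrt{q_1})^2\le\tfrac{\alpha}{4}|q_0-q_1|$ and therefore, by the data-processing inequality for total variation, $\dH^2(Q_1\Pr_0,Q_1\Pr_1)\le\tfrac{\alpha}{2}\,\dtv(Q_1\Pr_0,Q_1\Pr_1)\le\tfrac{\alpha}{2}\delta\le\alpha\delta^2\le e^{\alpha}\delta^2$, where $\delta\le2\delta^2$ (valid for $\delta>\tfrac12$) and $\alpha\le e^{\alpha}$ were used. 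In either regime $\dH(Q_1\Pr_0,Q_1\Pr_1)\le e^{\alpha/2}\delta$, as required.

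The hardest point is the small-$\delta$ regime: the ``cheap'' estimates of the form $\dH^2(Q_1\Pr_0,Q_1\Pr_1)\lesssim\dtv(Q_1\Pr_0,Q_1\Pr_1)$ coming from data processing (or its $\alpha$-weighted refinement) are only linear in $\delta$, whereas the lemma demands the quadratic rate $e^{\alpha}\delta^2$; recovering the quadratic behaviour forces one to exploit the common component $(1-\delta)Q_1H$ shared by $Q_1\Pr_0$ and $Q_1\Pr_1$ through the $\chi^2$-type bound in (ii), which is exactly why the two-case split seems unavoidable. The remaining ingredients — the existence of a jointly measurable density, and the passage from the ``supremum over $x$'' form of the privacy constraint to the $\nu\otimes\mu$-a.e.\ pointwise inequalities in (i) and (ii) by integrating against densities — are routine and should simply be recorded carefully.
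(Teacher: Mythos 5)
Your argument is correct, but it reaches the key estimate by a genuinely different route than the paper. Both proofs reduce the lemma to the contraction $\dH(Q_1\Pr_0,Q_1\Pr_1)\le e^{\alpha/2}\,\dtv(\Pr_0,\Pr_1)$, and your deduction of the modulus inequality from it is the same (and the right) one. For the contraction itself, the paper uses a single chain of inequalities: starting from the elementary bound $\bigl(\sqrt{a}-\sqrt{b}\bigr)^2\le (a-b)^2/\bigl(4(a\land b)\bigr)$, it bounds the denominator $\int q(z|x)p_0\,dx \land \int q(z|x)p_1\,dx$ from below by $\inf_{x^*}q(z|x^*)$, bounds the squared numerator by $\int q(z|x)|p_0-p_1|\,dx\cdot \sup_{x'}q(z|x')\int|p_0-p_1|\,dx$, invokes the privacy ratio $\sup_{x',x^*}q(z|x')/q(z|x^*)\le e^\alpha$, and concludes by Fubini; no case distinction and no decomposition of $\Pr_0,\Pr_1$ is needed. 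You instead split $\Pr_0,\Pr_1$ into a common component $H$ and normalized difference parts $P^{\pm}$, obtain the quadratic rate via a $\chi^2$-type bound exploiting the shared mass $(1-\delta)Q_1H$ when $\delta\le\tfrac12$, and fall back on the likelihood-ratio (tanh) bound plus data processing for $\dtv$ when $\delta>\tfrac12$; all the individual steps check out (in particular $r^{\pm}\le e^\alpha g$ and $e^{-\alpha}\le q_0/q_1\le e^\alpha$ follow from the measure-level privacy inequality, and the constants combine to $e^\alpha\delta^2$ in both regimes). Your remark that the two-case split ``seems unavoidable'' is, however, contradicted by the paper's one-line-per-step argument, which recovers the quadratic dependence on $\dtv$ directly because the factor $\int q(z|x)|p_0-p_1|\,dx$ integrates out to a second copy of $2\dtv(\Pr_0,\Pr_1)$; so the paper's proof is shorter, while yours has the mild structural advantage of isolating where the privacy constraint enters (only through $r^{\pm}\le e^\alpha g$ and the bounded likelihood ratio) and of using only measure-level inequalities once the routine density/measurability bookkeeping you flag is written down.
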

\begin{proof}
Use the elementary relation $\left[2(\sqrt{a}-\sqrt{b}) \sqrt{a\land b} \right]^2 \le [a-b]^2$, for $a, b\ge 0$, and the privacy constraint 
$$
\frac{q(z|x')}{q(z|x^*)} \le e^\alpha, \quad\forall z\in\mathcal Z, x',x^*\in\X,
$$
for a suitable version $q$, 
to obtain
\begin{align*}
&d_H(Q_1P_0, Q_1P_1) = \left[ \int \left( \sqrt{\int q(z|x)p_0(x)\,dx} - \sqrt{\int q(z|x)p_1(x)\,dx}\right)^2 \,dz\right]^{1/2}\\
&\le
\frac{1}{2}\left[ \int \frac{\left(\int q(z|x)[p_0(x)-p_1(x)]\,dx\right)^2}{\int q(z|x)p_0(x)\,dx\land \int q(z|x)p_1(x)\,dx}\,dz \right]^{1/2} \\
&\le
\frac{1}{2}\left[ \int \frac{\int q(z|x)|p_0(x)-p_1(x)|\,dx\cdot \sup_{x'}q(z|x')\int |p_0(x)-p_1(x)|\,dx}{\inf_{x^*} q(z|x^*)}\,dz \right]^{1/2} \\
&\le
\frac{1}{2}
\left[ 
\int \sup_{x',x^*} \frac{q(z|x')}{q(z|x^*)} 
 \int q(z|x)|p_0(x)-p_1(x)|\,dx\;\int |p_0(x)-p_1(x)|\,dx\,dz \right]^{1/2} \\
 &\le
e^{\alpha/2} \frac{1}{2}
 \int |p_0(x)-p_1(x)|\,dx = e^{\alpha/2} d_{TV} (P_0,P_1).
\end{align*}
\end{proof}
Recall that the $\alpha$-dependence of the lower bound in Section~\ref{sec:ApplowerB} was determined by 
$$
\omega_{TV}\left( \frac{1-\eta}{2\sqrt{n}(e^\alpha-1)}\right),
$$
which, for sufficiently large $\alpha$, is smaller than the new bound
$$
\omega_{TV}\left( \frac{c}{2}\frac{\sqrt{|\log \eta|}}{\sqrt{n}e^{\alpha/2}}\right),
$$
that follows from Lemma~\ref{lemma:largeAlpha}. 


\bibliography{arXiv}{}

\end{document}